\DeclareSymbolFont{rsfscript}{OMS}{rsfs}{m}{b}
\DeclareSymbolFontAlphabet{\mathrsfs}{rsfscript}
\definecolor{purple}{rgb}{0.8,0.12,0.8}
\definecolor{orange}{rgb}{1.0,0.7,0.0}
\definecolor{pink}{rgb}{1,0.5,0.8}
\definecolor{blackg}{rgb}{0.1,0.25,0.1}
\definecolor{ForestGreen}{cmyk}{0.91,0,0.88,0.42}
\definecolor{Turquoise}{cmyk}{0.85,0,0.20,0}
\definecolor{GreenYellow}{cmyk}{0.15,0,0.69,0} 
\definecolor{Yellow}{cmyk}{0,0,1.,0} 
\definecolor{Goldenrod}{cmyk}{0,0.10,0.84,0} 
\definecolor{Dandelion}{cmyk}{0,0.29,0.84,0} 
\definecolor{Apricot}{cmyk}{0,0.32,0.52,0} 
\definecolor{Peach}{cmyk}{0,0.50,0.70,0} 
\definecolor{Melon}{cmyk}{0,0.46,0.50,0} 
\definecolor{YellowOrange}{cmyk}{0,0.42,1.,0} 
\definecolor{Orange}{cmyk}{0,0.61,0.87,0} 
\definecolor{BurntOrange}{cmyk}{0,0.51,1.,0} 
\definecolor{Bittersweet}{cmyk}{0,0.75,1.,0.24} 
\definecolor{RedOrange}{cmyk}{0,0.77,0.87,0} 
\definecolor{Mahogany}{cmyk}{0,0.85,0.87,0.35} 
\definecolor{Maroon}{cmyk}{0,0.87,0.68,0.32} 
\definecolor{BrickRed}{cmyk}{0,0.89,0.94,0.28} 
\definecolor{Red}{cmyk}{0,1.,1.,0} 
\definecolor{OrangeRed}{cmyk}{0,1.,0.50,0} 
\definecolor{RubineRed}{cmyk}{0,1.,0.13,0} 
\definecolor{WildStrawberry}{cmyk}{0,0.96,0.39,0} 
\definecolor{Salmon}{cmyk}{0,0.53,0.38,0} 
\definecolor{CarnationPink}{cmyk}{0,0.63,0,0} 
\definecolor{Magenta}{cmyk}{0,1.,0,0} 
\definecolor{VioletRed}{cmyk}{0,0.81,0,0} 
\definecolor{Rhodamine}{cmyk}{0,0.82,0,0} 
\definecolor{Mulberry}{cmyk}{0.34,0.90,0,0.02} 
\definecolor{RedViolet}{cmyk}{0.07,0.90,0,0.34} 
\definecolor{Fuchsia}{cmyk}{0.47,0.91,0,0.08} 
\definecolor{Lavender}{cmyk}{0,0.48,0,0} 
\definecolor{Thistle}{cmyk}{0.12,0.59,0,0} 
\definecolor{Orchid}{cmyk}{0.32,0.64,0,0} 
\definecolor{DarkOrchid}{cmyk}{0.40,0.80,0.20,0} 
\definecolor{Purple}{cmyk}{0.45,0.86,0,0} 
\definecolor{Plum}{cmyk}{0.50,1.,0,0} 
\definecolor{Violet}{cmyk}{0.79,0.88,0,0} 
\definecolor{RoyalPurple}{cmyk}{0.75,0.90,0,0} 
\definecolor{BlueViolet}{cmyk}{0.86,0.91,0,0.04} 
\definecolor{Periwinkle}{cmyk}{0.57,0.55,0,0} 
\definecolor{CadetBlue}{cmyk}{0.62,0.57,0.23,0} 
\definecolor{CornflowerBlue}{cmyk}{0.65,0.13,0,0} 
\definecolor{MidnightBlue}{cmyk}{0.98,0.13,0,0.43} 
\definecolor{NavyBlue}{cmyk}{0.94,0.54,0,0} 
\definecolor{RoyalBlue}{cmyk}{1.,0.50,0,0} 
\definecolor{Blue}{cmyk}{1.,1.,0,0} 
\definecolor{Cerulean}{cmyk}{0.94,0.11,0,0} 
\definecolor{Cyan}{cmyk}{1.,0,0,0} 
\definecolor{ProcessBlue}{cmyk}{0.96,0,0,0} 
\definecolor{SkyBlue}{cmyk}{0.62,0,0.12,0} 
\definecolor{Turquoise}{cmyk}{0.85,0,0.20,0} 
\definecolor{TealBlue}{cmyk}{0.86,0,0.34,0.02} 
\definecolor{Aquamarine}{cmyk}{0.82,0,0.30,0} 
\definecolor{BlueGreen}{cmyk}{0.85,0,0.33,0} 
\definecolor{Emerald}{cmyk}{1.,0,0.50,0} 
\definecolor{JungleGreen}{cmyk}{0.99,0,0.52,0} 
\definecolor{SeaGreen}{cmyk}{0.69,0,0.50,0} 
\definecolor{Green}{cmyk}{1.,0,1.,0} 
\definecolor{ForestGreen}{cmyk}{0.91,0,0.88,0.12} 
\definecolor{PineGreen}{cmyk}{0.92,0,0.59,0.25} 
\definecolor{LimeGreen}{cmyk}{0.50,0,1.,0} 
\definecolor{YellowGreen}{cmyk}{0.44,0,0.74,0} 
\definecolor{SpringGreen}{cmyk}{0.26,0,0.76,0} 
\definecolor{OliveGreen}{cmyk}{0.64,0,0.95,0.40} 
\definecolor{RawSienna }{cmyk}{0,0.72,1.,0.45} 
\definecolor{Sepia}{cmyk}{0,0.83,1.,0.70} 
\definecolor{Brown}{cmyk}{0,0.81,1.,0.60} 
\definecolor{Tan}{cmyk}{0.14,0.42,0.56,0} 
\definecolor{Gray}{cmyk}{0,0,0,0.50} 
\definecolor{Black}{cmyk}{0,0,0,1.} 
\definecolor{White}{cmyk}{0,0,0,0} 
\newcommand{\hs}{\hat{s}}
\newcommand{\tPhi}{\tilde{\Phi}}
\newcommand{\tDe}{\tilde{\De}}
\newcommand{\tsi}{\tilde{\si}}
\newcommand{\tOm}{\tilde{\Om}}
\DeclareMathOperator{\Pos}{{\mathrm{Pos}}}
\newcommand\bGa{{\bf \Gamma}}
\def\fini{{\mathrm{fin}}}
\newcommand{\cA}{\mathcal{A}}
\newcommand{\cC}{\mathcal{C}}
\newcommand{\cE}{\mathcal{E}}
\newcommand{\cF}{\mathcal{F}}
\newcommand{\cH}{\mathcal{H}}
\newcommand{\cL}{\mathcal{L}}
\newcommand{\cM}{\mathcal{M}}
\newcommand{\cP}{\mathcal{P}}
\newcommand{\cR}{\mathcal{R}}
\newcommand{\cW}{\mathcal{W}}
\newcommand{\cLR}{\cL\cR}
\newcommand{\sg}{\langle}
\newcommand{\sd}{\rangle}
\newcommand{\bA}{\mathbf{A}}
\newcommand{\bC}{\mathbf{C}}
\newcommand{\bH}{\mathbf{H}}
\newcommand{\bL}{\mathbf{L}}
\newcommand{\bM}{\mathbf{M}}
\newcommand{\bP}{\mathbf{P}}
\newcommand{\bT}{\mathbf{T}}
\newcommand{\bc}{\mathbf{c}}
\newcommand{\bff}{\mathbf{f}}
\newcommand{\bm}{\mathbf{m}}
\newcommand{\bs}{\mathbf{s}}
\newcommand{\bt}{\mathbf{t}}
\newcommand{\fB}{\mathfrak{B}}
\newcommand{\fH}{\mathfrak{H}}
\newcommand{\fI}{\mathfrak{I}}
\newcommand{\nZ}{\mathbb{Z}}
\newcommand{\nR}{\mathbb{R}}
\newcommand{\nN}{\mathbb{N}}
\newcommand{\nQ}{\mathbb{Q}}
\newcommand{\al}{\alpha}
\newcommand{\si}{\sigma}
\newcommand{\la}{\lambda}
\newcommand{\ga}{\gamma}
\newcommand{\va}{\varphi}
\newcommand{\Ga}{\Gamma}
\newcommand{\De}{\Delta}
\newcommand{\Om}{\Omega}
\newcommand{\tA}{\tilde{A}}
\newcommand{\tW}{\tilde{W}}
\newcommand{\tw}{\tilde{w}}
\newcommand{\tL}{\tilde{L}}
\newcommand{\tcH}{\tilde{\cH}}
\newcommand{\tS}{\tilde{S}}
\newcommand{\tit}{\tilde{t}}
\newcommand{\sqs}{\sqsubset}
\newcommand{\ov}{\overline}
\newcommand{\hw}{w^{\circ}}
\newtheorem{Th}{Theorem}[section]
\newtheorem{Lem}[Th]{Lemma}
\newtheorem{Cor}[Th]{Corollary}
\newtheorem{Prop}[Th]{Proposition}
\newtheorem{Def-Prop}[Th]{Definition-Proposition}
\newtheorem{conjecture}[Th]{Conjecture}
\newtheorem{convention}[Th]{Convention}
\theoremstyle{definition}
\newtheorem{Exa}[Th]{Example}
\newtheorem{Claim}[Th]{Claim}
\theoremstyle{remark}
\newtheorem{Rem}[Th]{Remark}
\def\le{\hspace{0.1em}\mathop{\leqslant}\nolimits\hspace{0.1em}}
\def\ge{\hspace{0.1em}\mathop{\geqslant}\nolimits\hspace{0.1em}}
\def\notle{\hspace{0.1em}\mathop{\not\leqslant}\nolimits\hspace{0.1em}}
\def\NM{{\mathbb{N}}}
\def\RM{{\mathbb{R}}}
\def\ZM{{\mathbb{Z}}}
\def\Arm{{\mathrm{A}}}
\def\Brm{{\mathrm{B}}}
\def\Crm{{\mathrm{C}}}
\def\Drm{{\mathrm{D}}}
\def\Frm{{\mathrm{F}}}
\def\Grm{{\mathrm{G}}}
\def\a{\alpha}
\def\b{\beta}
\def\ga{\gamma}
\def\Ga{\Gamma}
\def\g{\gamma}
\def\G{\Gamma}
\def\d{\delta}
\def\e{\varepsilon}
\def\l{\lambda}
\def\o{\omega}
\def\s{\sigma}
\def\FC{{\mathcal{F}}}
\def\LC{{\mathcal{L}}}
\def\PC{{\mathcal{P}}}
\def\UC{{\mathcal{U}}}
\def\WC{{\mathcal{W}}}
\def\FCt{{\tilde{\mathcal{F}}}}
\def\cb{{\mathbf c}}
\def\Iti{{\tilde{I}}}
\def\Lti{{\tilde{L}}}
\def\Sti{{\tilde{S}}}
\def\Wti{{\tilde{W}}}
\def\cbt{{\tilde{\cb}}}
\def\deg{\mathop{\mathrm{deg}}\nolimits}
\def\ker{\mathop{\mathrm{ker}}\nolimits}
\def\mod{\mathop{\mathrm{mod}}\nolimits}
\def\longto{\longrightarrow}
\def\SS{\scriptstyle}
\def\SSS{\scriptscriptstyle}
\def\finl{~$\SS{\blacksquare}$}
\def\eqna{\begin{eqnarray*}}
\def\endeqna{\end{eqnarray*}}
\def\alc{{\mathrm{Alc}}}
\def\bfit{\bfseries\itshape}
\def\add{\hskip0.3mm{\SSS{\stackrel{\bullet}{}}}\hskip0.3mm}
\def\spe{{\mathrm{Spe}}}
\def\dotcup{\hskip0.5mm\dot{\cup}\hskip0.5mm}
\definecolor{purple}{rgb}{0.8,0.12,0.8}
\definecolor{orange}{rgb}{1.0,0.7,0.0}
\definecolor{pink}{rgb}{1,0.5,0.8}
\definecolor{blackg}{rgb}{0.1,0.25,0.1}
\definecolor{ForestGreen}{cmyk}{0.91,0,0.88,0.42}
\definecolor{Turquoise}{cmyk}{0.85,0,0.20,0}
\definecolor{GreenYellow}{cmyk}{0.15,0,0.69,0} 
\definecolor{Yellow}{cmyk}{0,0,1.,0} 
\definecolor{Goldenrod}{cmyk}{0,0.10,0.84,0} 
\definecolor{Dandelion}{cmyk}{0,0.29,0.84,0} 
\definecolor{Apricot}{cmyk}{0,0.32,0.52,0} 
\definecolor{Peach}{cmyk}{0,0.50,0.70,0} 
\definecolor{Melon}{cmyk}{0,0.46,0.50,0} 
\definecolor{YellowOrange}{cmyk}{0,0.42,1.,0} 
\definecolor{Orange}{cmyk}{0,0.61,0.87,0} 
\definecolor{BurntOrange}{cmyk}{0,0.51,1.,0} 
\definecolor{Bittersweet}{cmyk}{0,0.75,1.,0.24} 
\definecolor{RedOrange}{cmyk}{0,0.77,0.87,0} 
\definecolor{Mahogany}{cmyk}{0,0.85,0.87,0.35} 
\definecolor{Maroon}{cmyk}{0,0.87,0.68,0.32} 
\definecolor{BrickRed}{cmyk}{0,0.89,0.94,0.28} 
\definecolor{Red}{cmyk}{0,1.,1.,0} 
\definecolor{OrangeRed}{cmyk}{0,1.,0.50,0} 
\definecolor{RubineRed}{cmyk}{0,1.,0.13,0} 
\definecolor{WildStrawberry}{cmyk}{0,0.96,0.39,0} 
\definecolor{Salmon}{cmyk}{0,0.53,0.38,0} 
\definecolor{CarnationPink}{cmyk}{0,0.63,0,0} 
\definecolor{Magenta}{cmyk}{0,1.,0,0} 
\definecolor{VioletRed}{cmyk}{0,0.81,0,0} 
\definecolor{Rhodamine}{cmyk}{0,0.82,0,0} 
\definecolor{Mulberry}{cmyk}{0.34,0.90,0,0.02} 
\definecolor{RedViolet}{cmyk}{0.07,0.90,0,0.34} 
\definecolor{Fuchsia}{cmyk}{0.47,0.91,0,0.08} 
\definecolor{Lavender}{cmyk}{0,0.48,0,0} 
\definecolor{Thistle}{cmyk}{0.12,0.59,0,0} 
\definecolor{Orchid}{cmyk}{0.32,0.64,0,0} 
\definecolor{DarkOrchid}{cmyk}{0.40,0.80,0.20,0} 
\definecolor{Purple}{cmyk}{0.45,0.86,0,0} 
\definecolor{Plum}{cmyk}{0.50,1.,0,0} 
\definecolor{Violet}{cmyk}{0.79,0.88,0,0} 
\definecolor{RoyalPurple}{cmyk}{0.75,0.90,0,0} 
\definecolor{BlueViolet}{cmyk}{0.86,0.91,0,0.04} 
\definecolor{Periwinkle}{cmyk}{0.57,0.55,0,0} 
\definecolor{CadetBlue}{cmyk}{0.62,0.57,0.23,0} 
\definecolor{CornflowerBlue}{cmyk}{0.65,0.13,0,0} 
\definecolor{MidnightBlue}{cmyk}{0.98,0.13,0,0.43} 
\definecolor{NavyBlue}{cmyk}{0.94,0.54,0,0} 
\definecolor{RoyalBlue}{cmyk}{1.,0.50,0,0} 
\definecolor{Blue}{cmyk}{1.,1.,0,0} 
\definecolor{Cerulean}{cmyk}{0.94,0.11,0,0} 
\definecolor{Cyan}{cmyk}{1.,0,0,0} 
\definecolor{ProcessBlue}{cmyk}{0.96,0,0,0} 
\definecolor{SkyBlue}{cmyk}{0.62,0,0.12,0} 
\definecolor{Turquoise}{cmyk}{0.85,0,0.20,0} 
\definecolor{TealBlue}{cmyk}{0.86,0,0.34,0.02} 
\definecolor{Aquamarine}{cmyk}{0.82,0,0.30,0} 
\definecolor{BlueGreen}{cmyk}{0.85,0,0.33,0} 
\definecolor{Emerald}{cmyk}{1.,0,0.50,0} 
\definecolor{JungleGreen}{cmyk}{0.99,0,0.52,0} 
\definecolor{SeaGreen}{cmyk}{0.69,0,0.50,0} 
\definecolor{Green}{cmyk}{1.,0,1.,0} 
\definecolor{ForestGreen}{cmyk}{0.91,0,0.88,0.12} 
\definecolor{PineGreen}{cmyk}{0.92,0,0.59,0.25} 
\definecolor{LimeGreen}{cmyk}{0.50,0,1.,0} 
\definecolor{YellowGreen}{cmyk}{0.44,0,0.74,0} 
\definecolor{SpringGreen}{cmyk}{0.26,0,0.76,0} 
\definecolor{OliveGreen}{cmyk}{0.64,0,0.95,0.40} 
\definecolor{RawSienna }{cmyk}{0,0.72,1.,0.45} 
\definecolor{Sepia}{cmyk}{0,0.83,1.,0.70} 
\definecolor{Brown}{cmyk}{0,0.81,1.,0.60} 
\definecolor{Tan}{cmyk}{0.14,0.42,0.56,0} 
\definecolor{Gray}{cmyk}{0,0,0,0.50} 
\definecolor{Black}{cmyk}{0,0,0,1.} 
\definecolor{White}{cmyk}{0,0,0,0} 
\begin{document}

\title{Asymptotic lowest two-sided cell}

\author{{\sc C\'edric Bonnaf\'e}}
\address{C. Bonnaf\'e, 
Institut de Math\'ematiques et de Mod\'elisation de Montpellier (CNRS: UMR 5149), 
Universit\'e Montpellier 2,
Case Courrier 051,
Place Eug\`ene Bataillon,
34095 MONTPELLIER Cedex,
FRANCE} 

\makeatletter
\email{cedric.bonnafe@math.univ-montp2.fr}
\makeatother

\author{{\sc J\'er\'emie Guilhot}}

\address{J. Guilhot, School of Mathematics, University of East Anglia, Norwich NR4 7TJ, UK }
\email{j.guilhot@uea.ac.uk}


\subjclass{According to the 2000 classification: Primary 20C08; Secondary 20C15}

\date{\today}

\thanks{The first author is partly 
supported by the ANR (Project No JC07-192339)}


\maketitle

\pagestyle{myheadings}

\markboth{\sc C. Bonnaf\'e \& J. Guilhot}{\sc Asymptotic lowest two-sided cell}

\maketitle

\begin{abstract}
To a Coxeter system $(W,S)$ (with $S$ finite) 
and a weight function $L : W \to \NM$ is associated 
a partition of $W$ into Kazhdan-Lusztig (left, right or two-sided) $L$-cells. 
Let $S^\circ = \{s \in S~|~L(s)=0\}$, $S^+=\{s \in S~|~L(s) > 0\}$ and 
let $\cb$ be a Kazhdan-Lusztig (left, right or two-sided) $L$-cell. 
According to the semicontinuity conjecture of the first author, there 
should exist a positive natural number $m$ such that, for any weight function 
$L' : W \to \NM$ such that $L(s^+)=L'(s^+) > m L'(s^\circ)$ for all 
$s^+ \in S^+$ and $s^\circ \in S^\circ$, $\cb$ is a union of 
Kazhdan-Lusztig (left, right or two-sided) $L'$-cells. 

The aim of this paper is to prove this conjecture whenever 
$(W,S)$ is an affine Weyl group and $\cb$ is contained in the lowest 
two-sided $L$-cell.
\end{abstract}


\section{Introduction}


Let $(W,S)$ be a Coxeter system (with $S$ finite) and let $\G$ be a totally ordered 
abelian group. Let $L : W \to \G$ be a weight function in the sense of Lusztig 
\cite[\S 3.1]{bible}. To such a datum is associated a partition of $W$ into 
Kazhdan-Lusztig left, right or two-sided $L$-cells \cite[Chapter 8]{bible}. 
By virtue of~\cite[Corollary 2.5]{semicontinuity}, the computation of these partitions 
can be reduced to the case where $L$ has only non-negative values, 
which we assume here in this introduction. 
We then set
$$S^\circ = \{s \in S~|~L(s) = 0\}\qquad \text{and}\qquad 
S^+=\{s \in S~|~L(s) > 0\}.$$
A particular case of the semicontinuity conjecture of the first 
author~\cite[Conjecture~A(a)]{semicontinuity} 
can be stated as follows:

\bigskip

\noindent{\bf Semicontinuity Conjecture (asymptotic case).} 
{\it There exists a positive integer $m$ such that, 
for any Kazhdan-Lusztig (left, right or two-sided) $L$-cell $\cb$ 
and for any weight function $L' : W \to \G$ such that 
$L(s^+)=L'(s^+) > m L'(s^\circ)$ for all $s^+ \in S^+$ and 
$s^\circ \in S^\circ$, the subset $\cb$ is a union of 
Kazhdan-Lusztig (left, right or two-sided) $L'$-cells. }

\bigskip

The computation of the partition into Kazhdan-Lusztig cells is in general 
a very tough problem and a general proof of the semicontinuity conjecture 
would be very helpful. Even whenever it is not proved, 
it gives a lot of speculative ``upper bounds'' for the cells 
(for the inclusion order): at least, it can be seen as a guide 
along the computations. 

Note that the full semicontinuity conjecture~\cite[Conjecture A]{semicontinuity} 
(not only the asymptotic case) 
has been verified in different situations (see for instance the discussion 
in \cite[\S 5]{semicontinuity}). Note also that it has been established by the second author 
whenever $(W,S)$ is an affine Weyl group with $|S|=3$ (see~\cite{jeju4}). 
Our aim here is to prove a result slightly different in spirit than the previous ones. 
Indeed, it works for all affine Weyl groups and non-negative weight function $L$ 
but it focuses only on one particular two-sided cell, namely the 
lowest one (which we denote by $\cb_{\min}^L$).

\bigskip

\noindent{\bf Theorem.} 
{\it Assume that $(W,S)$ is an affine Weyl group. There exists a positive integer $m$ such that, for any Kazhdan-Lusztig (left, right or two-sided) 
$L$-cell $\cb$ contained in $\cb_{\min}^L$ and for any weight function 
$L' : W \to \G$ such that $L(s^+)=L'(s^+) > m L'(s^\circ)$ for all 
$s^+ \in S^+$ and $s^\circ \in S^\circ$, the subset $\cb$ is a union of 
Kazhdan-Lusztig (left, right or two-sided) $L'$-cells.}

\bigskip

The main ingredient of the proof of this result is the generalized induction 
of the second author~\cite{jeju3} together with the particular geometric description 
of the lowest two-sided cell and its left subcells. 

\medskip

The paper is organized as follows. In the literature, the lowest two-sided 
cell is defined whenever $L$ takes only positive values on $S$ (i.e. $S=S^+$). 
The aim of the first four sections is to extend this description of the case 
where $L$ is allowed to vanish on some elements of $S$ 
and to relate it to the semidirect product 
decomposition of $W$ associated to the partition $S = S^\circ ~\dot{\cup}~ S^+$ 
as in~\cite{Bonnafe-Dyer} (see also~\cite[\S 2.E]{semicontinuity}). 
It must be noticed that the proof of a key lemma (see 
Lemma~\ref{dot-property}) requires a case-by-case analysis: this lemma is of geometric 
nature and does not involve Kazhdan-Lusztig theory. 

In Section 5, we introduce Kazhdan-Lusztig theory and, in Section 6, we 
recall a more sophisticated version of the semicontinuity conjecture 
and we state our main results. The proof of these results is 
then done in the last two sections.

\bigskip

\section{Affine Weyl groups and Geometric realization}

\medskip

In this paper, we fix an euclidean $\RM$-vector space $V$ of dimension $r \ge 1$ and we denote 
by $\Phi$ an {\it irreducible} root system in $V$ of rank $r$: the scalar product will be denoted 
by $(,) : V \times V \longto \RM$. The dual of $V$ will be denoted by $V^*$ 
and $\langle , \rangle : V \times V^* \longto \RM$ will denote the canonical pairing. 
If $\a \in \Phi$, we denote by $\check{\a} \in V^*$ the associated coroot (if $x \in V$, then 
$\langle x,\check{\a} \rangle = 2 (x,\a)/(\a,\a)$) and by $\check{\Phi}$ the dual root system. We fix a positive system $\Phi^{+}$ and  for $\a \in \Phi^+$ we set
$$H_{\a,0}=\{x \in V~|~\langle x,\check{\a} \rangle = 0\}.$$
Then the Weyl group $\Om_{0}$ of $\Phi$ is generated by the orthogonal reflection with respect to the hyperplanes $H_{\al,0}$. It acts on the root lattice $\sg \Phi\sd$ and the semidirect product $\Om_{0}\ltimes \sg \Phi\sd$ is an affine Weyl group of type $\check{\Phi}$.

\subsection{Geometric realizations}
\label{geometric}
For $\a \in \Phi^+$ and $n \in \ZM$, we set
$$H_{\a,n}=\{x \in V~|~\langle x,\check{\a} \rangle = n\}$$
Then $H_{\a,n}$ is an {\it affine} hyperplane in $V$. Let
$$\FC=\{H_{\a,n}~|~\a \in \Phi^+\text{ and }n \in \ZM\}.$$
If $H \in \FC$, we denote by $\s_H$ the orthogonal reflection with respect to $H$. Let
$\Om= \langle \s_H ~|~H \in \FC\rangle$. Then $\Om$ is isomorphic to $W_{0}\ltimes \sg \Phi\sd$. We shall regard $\Om$ as acting on the right of $V$. 

\bigskip

An {\it alcove} is a connected component of the set
$$V-\underset{H\in\mathcal{F}}{\bigcup}H.$$
It is well-known that $\Om$ acts simply transitively on the set of alcoves $\alc(\cF)$. 
Recall also that, if $A$ is an alcove, then its closure $\overline{A}$ is 
a fundamental domain for the action of $\Om$ on $V$. 

\bigskip

The group $\Omega$ acts on the set of faces (the codimension 1 facets) of alcoves. We denote by $S$ the set of $\Om$-orbits in the set of faces. Note that if $A\in\alc(\cF)$, then the faces of $A$ is a set of representatives of $S$ since $\ov{A}$ is a fundamental domain for the action of $\Om$. If a face $f$ is contained in the orbit $s\in S$, we say that $f$ is of type $s$. To each $s\in S$ we can associate an involution $A\rightarrow sA$ of $\alc(\cF)$: the alcove $sA$ is the unique alcove which shares with $A$ a face of type $s$. Let $W$ be the group generated by all such involutions. Then $(W,S)$ is a Coxeter system and it is  isomomorphic to the affine Weyl group $W_{0}\ltimes \sg\Phi\sd$ (hence we also have $\Omega\simeq W$). We shall regard $W$ as acting on the left of $\alc(\cF)$. The action of $\Omega$ commutes with the action of $W$.

\bigskip 

We denote by $A_{0}$ the fundamental alcove associated to $\Phi$:
$$A_{0}=\{x\in V\mid 0<\sg x,\check{\alpha}\sd<1 \text{ for all $\alpha\in\Phi^{+}$}\}.$$
Let $A\in\alc(\cF)$. Then there exists a unique $w\in W$ such that $wA_{0}=A$.   We will freely identify $W$ with the set of alcoves $\alc(\cF)$
\bigskip

\subsection{Associated Coxeter system}
Let $\ell : W \to \NM$ denote the length function (with respect to the Coxeter system $(W,S)$). 
We denote by $\LC(W)$ the set of finite sequences $(w_1,\dots,w_n)$ of elements 
of $W$ such that $\ell(w_1 \cdots w_n)=\ell(w_1) + \cdots + \ell(w_n)$. 
If $(w_1,\dots,w_n)$ is a finite sequence of elements of $W$ then, 
in order to simplify notation, we shall write $w_1 \add w_2 \add \cdots \add w_n$ 
if $(w_1,\dots,w_n) \in \LC(W)$. 
If $I$ is a subset of $S$, we denote by $W_I$ the subgroup of $W$ generated 
by $I$. We denote by $X_I$ the set of elements $w \in W$ which are of minimal 
length in $wW_I$: it is a set of representatives of $W/W_I$. It follows from 
the irreducibility of $\Phi$ that $W_I$ is finite whenever $I$ is a {\it proper} 
subset of $S$: in this case, the longest element of $W_I$ will be denoted by $w_I$. 

\bigskip

\begin{Exa}
\label{standard-conjugate}
Let $\l \in V$ be a $0$-dimensional facet of an alcove. We denote by $W_{\la}$ the stabilizer in $W$ of the set of alcoves containing $\la$. It can be shown that $W_{\la}$ is the standard parabolic subgroup of $W$ generated by $S_\l=S\cap W_{\la}$
(in other words, with the previous notation, $W_\l=W_{S_\l}$). Note that $W_\l$ is finite: the longest element of $W_\l$ will be 
denoted by $w_\l$ and we set $X_\l=X_{S_\l}$.\finl
\end{Exa}

Let $H=H_{\alpha,n}\in \cF$ with $\a \in \Phi^+$ and $n \in \ZM$. Then $H$ divides $V-H$ into two half-spaces
\begin{align*}
V_{H}^{+}&=\{\mu\in V\mid \sg x,\check{\alpha}\sd>n\},\\
V_{H}^{-}&=\{\mu\in V\mid \sg x,\check{\alpha}\sd<n\}.
\end{align*}
We say that an hyperplane $H$ separates the alcoves $A$ and $B$ if $A\subset V_{H}^{+}$ and $B\subset V_{H}^{-}$ or $A\subset V_{H}^{-}$ and $B\subset V_{H}^{+}$. For $A,B\in\alc(\cF)$, we set
$$H(A,B)=\{H\in \cF| H\text{ separates $A$ and $B$}\}.$$
\begin{Prop}
\label{length-hyp}
Let $x,y\in W$ and $A\in\alc(\cF)$. We have
\begin{enumerate}
\item $\ell(x)=|H(A,xA)|$;
\item $x\add y$ if and only if  $H(A,yA)\cap H(yA,xyA)=\emptyset$.
\end{enumerate}
\end{Prop}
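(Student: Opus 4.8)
The plan is to establish (1) first — that $|H(A,xA)|=\ell(x)$ for \emph{every} alcove $A$ — and then to deduce (2) from it by a purely combinatorial computation with separating hyperplanes. Two elementary facts about the arrangement $\cF$ will be used repeatedly. First, if two alcoves $B,B'$ are adjacent, i.e. $B'=sB$ for some $s\in S$, then $H(B,B')$ is a singleton: it consists only of the hyperplane spanned by the common face, since $B$ and $B'$ lie on opposite sides of that hyperplane and on the same side of every other one. Second, for any three alcoves $B_0,B_1,B_2$ one has $H(B_0,B_2)=H(B_0,B_1)\bigtriangleup H(B_1,B_2)$, because a hyperplane separates $B_0$ from $B_2$ exactly when it separates an odd number of the consecutive pairs $(B_0,B_1)$, $(B_1,B_2)$.

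For (1), pick a reduced expression $x=s_1\cdots s_n$ with $n=\ell(x)$ and form the gallery $A=B_0,\ B_1=s_nA,\ B_2=s_{n-1}s_nA,\dots,\ B_n=s_1\cdots s_nA=xA$, where $B_k=s_{n-k+1}B_{k-1}$, so consecutive alcoves are adjacent. Iterating the symmetric-difference identity shows that $H(A,xA)$ is contained in the set of the $n$ walls crossed along this gallery, hence $|H(A,xA)|\le n=\ell(x)$. For the reverse inequality one uses that the Coxeter arrangement $\cF$ is simplicial, so that any two alcoves can be joined by a gallery of length exactly $|H(A,xA)|$: this is proved by induction on $|H(A,xA)|$, choosing a wall of $A$ that separates $A$ from $xA$, reflecting $A$ across it, and noting (again by the symmetric-difference identity) that $|H(\cdot\,,xA)|$ drops by one. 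Given such a minimal gallery $A=C_0,C_1,\dots,C_m=xA$ with $m=|H(A,xA)|$, let $t_i\in S$ be the type of the face shared by $C_{i-1}$ and $C_i$, so $C_i=t_iC_{i-1}$; then $xA=t_mt_{m-1}\cdots t_1A$, and since the left action of $W$ on $\alc(\cF)$ is simply transitive we get $x=t_m\cdots t_1$, whence $\ell(x)\le m=|H(A,xA)|$. This gives (1), and in particular shows $|H(A,xA)|$ is independent of $A$.

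For (2), apply (1) three times, once with base alcove $A$, once with base alcove $yA$: $\ell(y)=|H(A,yA)|$, $\ell(x)=|H(yA,xyA)|$, and $\ell(xy)=|H(A,xyA)|$. The symmetric-difference identity applied to $A,yA,xyA$ gives
$$|H(A,xyA)|=\bigl|H(A,yA)\bigtriangleup H(yA,xyA)\bigr|=|H(A,yA)|+|H(yA,xyA)|-2\,\bigl|H(A,yA)\cap H(yA,xyA)\bigr|,$$
i.e. $\ell(xy)=\ell(x)+\ell(y)-2\,|H(A,yA)\cap H(yA,xyA)|$. Since $\ell(xy)\le\ell(x)+\ell(y)$ always holds, we conclude that $x\add y$ (that is, $\ell(xy)=\ell(x)+\ell(y)$) if and only if $H(A,yA)\cap H(yA,xyA)=\emptyset$, which is exactly the assertion of (2).

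The only step requiring genuine care is the reverse inequality in (1): producing a gallery of length $|H(A,xA)|$, which rests on the simpliciality of the arrangement (adjacent alcoves separated by exactly one wall, and $\ov{A_0}$ a fundamental domain) and on checking that crossing a separating wall of $A$ strictly decreases the number of walls separating from $xA$. Everything else — both the upper bound in (1) and all of (2) — is just bookkeeping with the singleton and symmetric-difference facts stated above.
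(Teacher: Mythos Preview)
Your proof is correct and follows the standard gallery argument. The paper itself states this proposition without proof, treating it as a well-known fact from the theory of affine reflection groups (it is essentially the content of \cite[Ch.~V, \S3, no.~2, Th.~1 and Ch.~IV, \S1, Exercice~16]{Bourbaki}). So there is nothing to compare against; your argument is precisely the one that underlies the result in the literature.
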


\def\fini{{\mathrm{fin}}}

\bigskip

\subsection{Weight functions} 
Let $(\Gamma,+)$ be a totally ordered abelian group: the order on $\G$ will be denoted by $\le$. 
Let $L : W \to \Ga$ be a {\it weight function} on $W$, that is a function satisfying 
$L(ww')=L(w)+L(w')$ whenever $\ell(ww')=\ell(w)+\ell(w')$. Recall that 
this implies the following property:
\begin{equation}\label{st}
\text{\it If $s$, $t \in S$ are conjugate in $W$, then $L(s)=L(t)$.}
\end{equation}
We denote by $\text{Weight}(W,\Ga)$ the set of weight function from $W$ to $\Ga$. Throughout this paper, 
we will always assume that $L$ is {\it non-negative} that is  $L(s) \ge 0$ for all $s \in S$. 
The weight function $L$ is called {\it positive} if $L(s) > 0$ for all $s \in S$ 
(in other words, $L$ is positive if and only if $L(w) > 0$ if $w \neq 1$). 
Note that a weight function on $W$ is completely determined by its values on the generators $s\in S$: the element of the set $\{L(s)\mid s\in S\}$ 
are called the parameters.

\bigskip

\begin{Exa}
The map $W \to \G$, $w \mapsto 0$ is a weight function (and will be denoted by $0$): 
it is not positive (if $W \neq 1$). On the other hand, 
$\ell : W \to \ZM$ is a positive weight function.\finl
\end{Exa}

\bigskip

Here is a first consequence of the non-negativeness assumption:

\bigskip

\begin{Prop}\label{addition L}
Let $x$, $y \in W$. If $L(x)=0$, then $L(xy)=L(yx)=L(y)$.
\end{Prop}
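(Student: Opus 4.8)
The plan is to reduce the statement to the case where $x$ is a single generator $s \in S$ with $L(s) = 0$, and then to settle that case by a dichotomy on whether multiplication by $s$ raises or lowers length. First I would record the elementary observation that $L(x) = 0$ forces $L(s) = 0$ for every letter $s$ occurring in a reduced expression $x = s_1 \cdots s_k$: since $\ell(x) = k$ we have $L(x) = L(s_1) + \cdots + L(s_k)$, and as each $L(s_i) \ge 0$ by the non-negativeness assumption, the vanishing of the sum forces the vanishing of every term. This is the only step that uses non-negativeness; everything else is formal.

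Next I would prove the key special case: if $s \in S$ satisfies $L(s) = 0$, then $L(sz) = L(z)$ for all $z \in W$, and symmetrically $L(zs) = L(z)$. There are two cases for the left version. If $\ell(sz) = \ell(z) + 1$, then directly from the weight-function axiom $L(sz) = L(s) + L(z) = L(z)$. Otherwise $\ell(sz) = \ell(z) - 1$, and applying the axiom to the factorization $z = s \cdot (sz)$, which satisfies $\ell(z) = \ell(s) + \ell(sz)$, gives $L(z) = L(s) + L(sz) = L(sz)$. Either way $L(sz) = L(z)$, and the statement for right multiplication is proved identically.

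Finally, to obtain the proposition in general, I would take a reduced expression $x = s_1 \cdots s_k$ as above, so that $L(s_i) = 0$ for all $i$, and iterate the special case: $L(xy) = L(s_1(s_2 \cdots s_k y)) = L(s_2 \cdots s_k y) = \cdots = L(y)$, peeling off one generator at a time from the left, and likewise $L(yx) = L((y s_1 \cdots s_{k-1}) s_k) = L(y s_1 \cdots s_{k-1}) = \cdots = L(y)$ by repeated right multiplication. I do not anticipate any genuine obstacle: the argument is a short formal consequence of the weight-function axiom and of the standard behaviour of $\ell$ under multiplication by a generator, with non-negativeness entering only through the initial reduction to generators with zero weight.
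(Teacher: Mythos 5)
Your proof is correct and follows essentially the same route as the paper's: both reduce to the case of a single generator $s$ with $L(s)=0$ via the non-negativity argument on a reduced expression, and both settle that case by the dichotomy on whether left (or right) multiplication by $s$ raises or lowers length. The only cosmetic difference is that you phrase the reduction as an explicit iteration while the paper phrases it as an induction on $\ell(x)$.
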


\bigskip

\begin{proof}
Let $l = \ell(x)$ and let $s_1$,\dots, $s_l$ 
be elements of $S$ such that $x=s_1\cdots s_l$.  
Then $L(x)=L(s_1)+\cdots+L(s_l)$, so $L(s_i)=0$ for all $i \in \{1,2,\dots,l\}$, 
because $L$ is non-negative. So, arguing by induction on the length of $x$, 
we may (and we will) assume that $\ell(x)=1$, i.e. that $x=s_1$. Two cases 
may occur:

\medskip

$\bullet$ If $xy > y$, then $\ell(xy)=\ell(x)+\ell(y)$, so $L(xy)=L(x)+L(y)=L(y)$, as desired. 

\medskip

$\bullet$ If $xy < y$, then $\ell(y)=\ell(x) + \ell(xy)$, so $L(y)=L(x)+L(xy)=L(xy)$, 
as desired.
\end{proof}

\bigskip

\subsection{$L$-special points} 
\label{L-special}

Let $H \in \FC$ and assume that $H$ supports a face of type $s\in S$: we then set $L_H=L(s)$. Note that this is well defined since if $H$ supports  faces of type $s,t\in S$ then $s$ and $t$ are conjugate in W \cite[Lemma 2.1]{Bremke}.  Then $L_H=L_{H\si}$ for all $\si \in \Om$. 
If $\la$ is a $0$-dimensional facet  of an alcove, we set
$$L_\l=\sum_{\substack{H \in \FC \\ \l \in H}} L_H=L(w_{\la}).$$
Note that $L_{\l\si}=L_\l$ for all $\si \in \Om$. We set
$$\nu_{L}:=\max_{\la\in V} L_{\l}.$$
We then say that $\la$ is an {\it $L$-special point} if $L_\l=\nu_{L}$. We denote by $\spe_{L}(W)$ the set of $L$-special points: it is stable 
under the action of $\Om$. Since $\ov{A_{0}}$ is a fundamental domain for the action of $\Om$, the set $\spe_{L}(W)\cap \ov{A_{0}}$ is a set of representative of orbits of $\spe_{L}(W)$ under the action of $\Om$. 

\begin{Exa}
If $L=\ell$ is the usual length function then $L_{\la}$ is just the number of hyperplanes which go through $\la$ hence $\nu_{\ell}=|\Phi^{+}|$  and the set of $\ell$-special points is equal to the weight lattice
$$P(\Phi)=\{v \in V~|~\forall~\a \in \Phi,~\langle v, \check{\a} \rangle \in \ZM\}.$$
Hence we recover the original definition of special points by Lusztig in 
\cite{Lusztig-1980}.\finl
\end{Exa}

\begin{quotation}
\begin{convention}\label{convention}
{\it If $W$ is not of type $\tilde{\Crm}_{r}$ ($r\ge 1$) then any two parallel 
hyperplanes have same weight \cite[Lemma 2.2]{Bremke}.  In the case where $W$ is of type
$\tilde{\Crm}_{r}$ with generators $t$, $s_{1}$,..., $s_{r-1}$, $t'$ such that 
$\sg t,s_1,\dots,s_{r-1}\sd= W_{0}$ and $\sg s_1,\dots,s_{r-1}\sd$ is of type 
$\Arm_{r-1}$, 
by symmetry of the Dynkin diagram, we may (and we will) assume that 
$L(t)\ge L(t')$.}
\end{convention}
\end{quotation}

\bigskip

Recall that the type $\tilde{\Crm}_1$ is also the type $\tilde{\Arm}_1$.

\bigskip

\begin{Rem}
Note that with our Convention \ref{convention} for $\tilde{\Crm}_{r}$ ($r \ge 1$), 
the point $0 \in V$ is always an $L$-special point.\finl
\end{Rem}

\bigskip
For $\al\in\Phi$ we set
$$L_\a:=\max_{n \in \ZM}~ L_{H_{\a,n}}.$$
\begin{Rem}
Note that if $W$ is not of type $\tilde{\Crm}$ then since any two parallel hyperplanes have same weight we have $L_{\al}=L_{H_{\al,n}}$ for all $n$. In general we will say that $H=H_{\al,n}\in\cF$ is of {\it maximal weight} if $L_{H}=L_{\al}$.\finl 
\end{Rem}
\medskip
We denote by $\Phi^{L}$ the subset of $\Phi$ which consists of all roots of positive weight. Note that $\Phi^{L}$ is a root system of rank $r$, not necessarily irreducible,  and that $\Phi^{L}\cap\Phi^{+}$ is a positive system in $\Phi^{L}$: see the proof of Lemma \ref{dot-property} where we classify the root systems $\Phi^{L}$. We denote by $\De^{L}$ the unique simple system contained  in $\Phi^{L}\cap \Phi^{+}$. We have
\begin{enumerate}
\item[$\bullet$] If $W$ is not of type $\tilde{\Crm}$ or if $L(t)=L(t')$ in type $\tilde{\Crm}$ then
$$\spe_{L}(W)=\{v \in V~|~\forall~\a \in \Phi^{L},~\langle v, \check{\a} \rangle \in \ZM\}.$$
\item[$\bullet$] If $W$ is of type $\tilde{\Crm}$ and $L(t)>L(t')$ then 
$$\spe_{L}(W)=\sg \Phi^{L}\sd\varsubsetneq \{v \in V~|~\forall~\a \in \Phi^{L},~\langle v, \check{\a} \rangle \in \ZM\}.$$
\end{enumerate}
In other words, the $L$-special points are those points of $V$ which lies in the intersection of $|\Phi^{L}\cap \Phi^{+}|$ hyperplanes of maximal weight.

\bigskip

Let $\cF^{L}=\{H\in\cF\mid L_{H}>0\}$. Let $\la$  be a $0$-dimensional facet  of an alcove which is contained in an hyperplane of positive weight.  An {\it $L$-quarter} with vertex $\la$ is a connected component of
$$V-\bigcup_{\substack{H \in \FC^{L} \\ \l \in H}} H.$$
It is an open simplicial cone: it has $r$ walls. 

\bigskip

Let $\al\in\Phi^{L}$. A {\it maximal $L$-strip orthogonal to $\a$} is a connected component of 
$$V-\bigcup_{\substack{n \in \ZM \\ L_{H_{\a,n}}=L_\a}} H_{\a,n}.$$
If $A$ is an alcove, we denove by $\UC_\a^L(A)$ the unique maximal $L$-strip 
orthogonal to $\a$ containing $A$. Finally, we set 
$$\UC^L(A_0) = \bigcup_{\substack{\a \in \Phi^+\\ L_\a > 0}} \UC_\a^L(A_0).$$

\section{On the lowest two-sided cell} 
\label{lowest}
We keep the notation of the previous section. We fix a non-negative weight function $L\in \text{Weight}(W,\Ga)$.


\subsection{Definition and examples}
\label{def-lowest}
Recall that we have set $\nu_{L}:=\max_{\la\in V} L_{\l}$. Since $W_{\la}$ is a standard parabolic subgroup of $W$, one can easily see that 
$$\nu_{L}=\max_{I\in \PC_\fini(S)} L(w_{I})$$
where $\PC_\fini(S)$ denotes the set of subsets $I$ of $S$ such that $W_I$ is finite. We set
$$\WC=\bigcup_{I \in \PC_\fini(S)} W_I.$$
Then we define the  {\it lowest two-sided cell}  of $W$ by
$$\cb^L_{\min}(W)=\{xwy~|~w \in \WC,~x \add w \add y \text{ and } L(w)=\nu_L\}.$$
We shall see later (see Section \ref{KL-lowest}) the reason for this terminology. When the group $W$ is clear from the context, we will write $\cb^{L}_{\min}$ instead of $\cb^{L}_{\min}(W)$. Note the following immediate property of $\cb^L_{\min}$:

\bigskip

\begin{Lem}\label{cmin addition}
Let $w$, $x$, $y \in W$ be such that $w \in \cb^L_{\min}$ and $x \add w \add y$. Then 
$xwy \in \cb^L_{\min}$.
\end{Lem}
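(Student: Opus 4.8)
The plan is to unwind the definition of $\cb^L_{\min}$ and transport the ``extra'' factors $x$ and $y$ across the central element, using nothing more than the subadditivity $\ell(ab) \le \ell(a) + \ell(b)$ of the length function together with the hypothesis $x \add w \add y$.

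First I would write the membership $w \in \cb^L_{\min}$ explicitly: there are $w' \in \WC$ with $L(w') = \nu_L$ and elements $x'$, $y' \in W$ with $w = x' w' y'$ and $(x', w', y') \in \LC(W)$, so that $\ell(w) = \ell(x') + \ell(w') + \ell(y')$. Combining this with $x \add w \add y$ gives
$$\ell(xwy) = \ell(x) + \ell(x') + \ell(w') + \ell(y') + \ell(y).$$

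Next, since $xwy = (xx')\,w'\,(y'y)$, two applications of subadditivity give $\ell(xwy) \le \ell(xx') + \ell(w') + \ell(y'y)$, and a further application to each of $\ell(xx')$ and $\ell(y'y)$ gives $\ell(xx') + \ell(w') + \ell(y'y) \le \ell(x) + \ell(x') + \ell(w') + \ell(y') + \ell(y)$. By the displayed equality the two ends of this chain coincide, so every inequality is an equality; in particular $\ell(xx') = \ell(x) + \ell(x')$, $\ell(y'y) = \ell(y') + \ell(y)$ and $\ell\bigl((xx')w'(y'y)\bigr) = \ell(xx') + \ell(w') + \ell(y'y)$. The last identity says $(xx', w', y'y) \in \LC(W)$, i.e. $xx' \add w' \add y'y$; since $w' \in \WC$ and $L(w') = \nu_L$, this exhibits $xwy = (xx')\,w'\,(y'y)$ as an element of $\cb^L_{\min}$, as desired.

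I do not expect any genuine obstacle here: the argument is purely formal manipulation of the length function, and the only point demanding a moment's attention is ordering the subadditivity estimates so that the squeeze argument closes.
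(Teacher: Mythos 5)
Your argument is correct and is precisely the "immediate" verification the paper has in mind (the paper states the lemma without proof). The squeeze using subadditivity of $\ell$ to force $(xx',w',y'y)\in\LC(W)$ from $\ell(xwy)=\ell(x)+\ell(x')+\ell(w')+\ell(y')+\ell(y)$ is the standard way to see it, and there is no gap.
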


\bigskip
The set $\cb^{L}_{\min}$ can change quite dramatically when the parameters are varying as shown in the following example.

\begin{Exa}
\label{B2}
Let $W$ be of type $\tilde{\Crm}_2$ with diagram and weight function given by
\begin{center}
\begin{picture}(150,32)
\put( 40, 10){\circle{10}}
\put( 44.5,  8){\line(1,0){31.75}}
\put( 44.5,  12){\line(1,0){31.75}}
\put( 81, 10){\circle{10}}
\put( 85.5,  8){\line(1,0){29.75}}
\put( 85.5,  12){\line(1,0){29.75}}
\put(120, 10){\circle{10}}
\put( 38, 20){$a$}
\put( 78, 20){$b$}
\put(118, 20){$c$}
\put( 38, -5){$t$}
\put( 78, -5){$s$}
\put(118,-5){$t'$}
\end{picture}
\end{center}
where $a,b,c\in\Ga$ and, by convention, we assume that $a\geq c$. We start by describing the set
$$\WC^{\max}=\{w\in\WC\mid L(w)=\nu_{L}\}.$$
We have
$$\WC^{\max}=
\begin{cases}
\{tsts\}&\mbox{ if $a > c$ and $b>0$,}\\
 \{tsts,tst\}&\mbox{ if $a>c$ and $b=0$,}\\
\{tsts,t'st's\}&\mbox{ if $a=c > 0$ and $b>0$,}\\
 \{tsts,tst,t'st's,t'st',tt'\}&\mbox{ if $a=c>0$ and $b=0$,}\\
\{sts,stst,st's,st'st'\}&\mbox{ if $a=c=0$ and $b>0$,}\\
\WC&\mbox{ if $a,b,c=0$.}\\
\end{cases}$$
If $\WC^{\max}=\WC$ then we get $\cb_{\min}^{L}=W$. Otherwise the corresponding sets $\cb_{\min}^{L}$ are described in the following figures: the black alcove is the fundamental alcove $A_{0}$, the alcoves with a star correspond to the set $\cW^{\max}$ and the set $\cb^{L}_{\min}$ consists of all the alcoves lying in the gray area (via the identification $w\leftrightarrow wA_{0}$).

\psset{linewidth=.13mm}
\psset{unit=.5cm}


\begin{textblock}{10}(2,10.5)

\begin{center}
\begin{pspicture}(-6,-6)(6,6)

\pspolygon[fillstyle=solid,fillcolor=black](0,0)(0,1)(.5,.5)

\pspolygon[fillstyle=solid,fillcolor=lightgray](0,0)(0,-6)(-6,-6)
\pspolygon[fillstyle=solid,fillcolor=lightgray](1,-1)(1,-6)(6,-6)
\pspolygon[fillstyle=solid,fillcolor=lightgray](2,0)(6,0)(6,-4)
\pspolygon[fillstyle=solid,fillcolor=lightgray](1,1)(6,6)(6,1)
\pspolygon[fillstyle=solid,fillcolor=lightgray](1,3)(1,6)(4,6)
\pspolygon[fillstyle=solid,fillcolor=lightgray](0,2)(0,6)(-4,6)
\pspolygon[fillstyle=solid,fillcolor=lightgray](-1,1)(-6,1)(-6,6)
\pspolygon[fillstyle=solid,fillcolor=lightgray](-2,0)(-6,0)(-6,-4)

\rput(-.2,-.5){{\footnotesize $\ast$}}

\psline(-6,-6)(-6,6)
\psline(-5,-6)(-5,6)
\psline(-4,-6)(-4,6)
\psline(-3,-6)(-3,6)
\psline(-2,-6)(-2,6)
\psline(-1,-6)(-1,6)
\psline(0,-6)(0,6)
\psline(1,-6)(1,6)
\psline(2,-6)(2,6)
\psline(3,-6)(3,6)
\psline(4,-6)(4,6)
\psline(5,-6)(5,6)
\psline(6,-6)(6,6)

\psline(-6,-6)(6,-6)
\psline(-6,-5)(6,-5)
\psline(-6,-4)(6,-4)
\psline(-6,-3)(6,-3)
\psline(-6,-2)(6,-2)
\psline(-6,-1)(6,-1)
\psline(-6,0)(6,0)
\psline(-6,1)(6,1)
\psline(-6,2)(6,2)
\psline(-6,3)(6,3)
\psline(-6,4)(6,4)
\psline(-6,5)(6,5)
\psline(-6,6)(6,6)

\psline(0,0)(1,1)
\psline(0,1)(1,0)

\psline(1,0)(2,1)
\psline(1,1)(2,0)

\psline(2,0)(3,1)
\psline(2,1)(3,0)

\psline(3,0)(4,1)
\psline(3,1)(4,0)

\psline(4,0)(5,1)
\psline(4,1)(5,0)

\psline(5,0)(6,1)
\psline(5,1)(6,0)

\psline(-1,0)(0,1)
\psline(-1,1)(0,0)

\psline(-2,0)(-1,1)
\psline(-2,1)(-1,0)

\psline(-3,0)(-2,1)
\psline(-3,1)(-2,0)

\psline(-4,0)(-3,1)
\psline(-4,1)(-3,0)

\psline(-5,0)(-4,1)
\psline(-5,1)(-4,0)

\psline(-6,0)(-5,1)
\psline(-6,1)(-5,0)

\psline(0,1)(1,2)
\psline(0,2)(1,1)

\psline(1,1)(2,2)
\psline(1,2)(2,1)

\psline(2,1)(3,2)
\psline(2,2)(3,1)

\psline(3,1)(4,2)
\psline(3,2)(4,1)

\psline(4,1)(5,2)
\psline(4,2)(5,1)

\psline(5,1)(6,2)
\psline(5,2)(6,1)

\psline(-1,1)(0,2)
\psline(-1,2)(0,1)

\psline(-2,1)(-1,2)
\psline(-2,2)(-1,1)

\psline(-3,1)(-2,2)
\psline(-3,2)(-2,1)

\psline(-4,1)(-3,2)
\psline(-4,2)(-3,1)

\psline(-5,1)(-4,2)
\psline(-5,2)(-4,1)

\psline(-6,1)(-5,2)
\psline(-6,2)(-5,1)

\psline(0,2)(1,3) 
\psline(0,3)(1,2) 

\psline(1,2)(2,3) 
\psline(1,3)(2,2) 

\psline(2,2)(3,3) 
\psline(2,3)(3,2) 
 
\psline(3,2)(4,3) 
\psline(3,3)(4,2) 

\psline(4,2)(5,3) 
\psline(4,3)(5,2) 

\psline(5,2)(6,3) 
\psline(5,3)(6,2) 

\psline(-1,2)(0,3) 
\psline(-1,3)(0,2) 

\psline(-2,2)(-1,3) 
\psline(-2,3)(-1,2) 

\psline(-3,2)(-2,3) 
\psline(-3,3)(-2,2) 

\psline(-4,2)(-3,3) 
\psline(-4,3)(-3,2) 

\psline(-5,2)(-4,3) 
\psline(-5,3)(-4,2) 

\psline(-6,2)(-5,3) 
\psline(-6,3)(-5,2)

\psline(0,3)( 1,4) 
\psline(0,4)( 1,3) 

\psline(1,3)( 2,4) 
\psline(1,4)( 2,3) 

\psline(2,3)( 3,4) 
\psline(2,4)( 3,3) 
 
\psline(3,3)( 4,4) 
\psline(3,4)( 4,3) 

\psline(4,3)( 5,4) 
\psline(4,4)( 5,3) 

\psline(5,3)( 6,4) 
\psline(5,4)( 6,3) 

\psline(-1,3)( 0,4) 
\psline(-1,4)( 0,3) 

\psline(-2,3)( -1,4) 
\psline(-2,4)( -1,3) 

\psline(-3,3)( -2,4) 
\psline(-3,4)( -2,3) 

\psline(-4,3)( -3,4) 
\psline(-4,4)( -3,3) 

\psline(-5,3)( -4,4) 
\psline(-5,4)( -4,3) 

\psline(-6,3)( -5,4) 
\psline(-6,4)( -5,3)

\psline(0,4)(1,5)
\psline(0,5)(1,4)

\psline(1,4)(2,5)
\psline(1,5)(2,4)

\psline(2,4)(3,5)
\psline(2,5)(3,4)

\psline(3,4)(4,5)
\psline(3,5)(4,4)

\psline(4,4)(5,5)
\psline(4,5)(5,4)

\psline(5,4)(6,5)
\psline(5,5)(6,4)

\psline(-1,4)(0,5)
\psline(-1,5)(0,4)

\psline(-2,4)(-1,5)
\psline(-2,5)(-1,4)

\psline(-3,4)(-2,5)
\psline(-3,5)(-2,4)

\psline(-4,4)(-3,5)
\psline(-4,5)(-3,4)

\psline(-5,4)(-4,5)
\psline(-5,5)(-4,4)

\psline(-6,4)(-5,5)
\psline(-6,5)(-5,4)

\psline(0,5)(1,6)
\psline(0,6)(1,5)

\psline(1,5)(2,6)
\psline(1,6)(2,5)

\psline(2,5)(3,6)
\psline(2,6)(3,5)

\psline(3,5)(4,6)
\psline(3,6)(4,5)

\psline(4,5)(5,6)
\psline(4,6)(5,5)

\psline(5,5)(6,6)
\psline(5,6)(6,5)

\psline(-1,5)(0,6)
\psline(-1,6)(0,5)

\psline(-2,5)(-1,6)
\psline(-2,6)(-1,5)

\psline(-3,5)(-2,6)
\psline(-3,6)(-2,5)

\psline(-4,5)(-3,6)
\psline(-4,6)(-3,5)

\psline(-5,5)(-4,6)
\psline(-5,6)(-4,5)

\psline(-6,5)(-5,6)
\psline(-6,6)(-5,5)

\psline(0,0)(1,-1)
\psline(0,-1)(1,0)

\psline(1,0)(2,-1)
\psline(1,-1)(2,0)

\psline(2,0)(3,-1)
\psline(2,-1)(3,0)

\psline(3,0)(4,-1)
\psline(3,-1)(4,0)

\psline(4,0)(5,-1)
\psline(4,-1)(5,0)

\psline(5,0)(6,-1)
\psline(5,-1)(6,0)

\psline(-1,0)(0,-1)
\psline(-1,-1)(0,0)

\psline(-2,0)(-1,-1)
\psline(-2,-1)(-1,0)

\psline(-3,0)(-2,-1)
\psline(-3,-1)(-2,0)

\psline(-4,0)(-3,-1)
\psline(-4,-1)(-3,0)

\psline(-5,0)(-4,-1)
\psline(-5,-1)(-4,0)

\psline(-6,0)(-5,-1)
\psline(-6,-1)(-5,0)

\psline(0,-1)(1,-2)
\psline(0,-2)(1,-1)

\psline(1,-1)(2,-2)
\psline(1,-2)(2,-1)

\psline(2,-1)(3,-2)
\psline(2,-2)(3,-1)

\psline(3,-1)(4,-2)
\psline(3,-2)(4,-1)

\psline(4,-1)(5,-2)
\psline(4,-2)(5,-1)

\psline(5,-1)(6,-2)
\psline(5,-2)(6,-1)

\psline(-1,-1)(0,-2)
\psline(-1,-2)(0,-1)

\psline(-2,-1)(-1,-2)
\psline(-2,-2)(-1,-1)

\psline(-3,-1)(-2,-2)
\psline(-3,-2)(-2,-1)

\psline(-4,-1)(-3,-2)
\psline(-4,-2)(-3,-1)

\psline(-5,-1)(-4,-2)
\psline(-5,-2)(-4,-1)

\psline(-6,-1)(-5,-2)
\psline(-6,-2)(-5,-1)

\psline(0,-2)(1,-3) 
\psline(0,-3)(1,-2) 

\psline(1,-2)(2,-3) 
\psline(1,-3)(2,-2) 

\psline(2,-2)(3,-3) 
\psline(2,-3)(3,-2) 
 
\psline(3,-2)(4,-3) 
\psline(3,-3)(4,-2) 

\psline(4,-2)(5,-3) 
\psline(4,-3)(5,-2) 

\psline(5,-2)(6,-3) 
\psline(5,-3)(6,-2) 

\psline(-1,-2)(0,-3) 
\psline(-1,-3)(0,-2) 

\psline(-2,-2)(-1,-3) 
\psline(-2,-3)(-1,-2) 

\psline(-3,-2)(-2,-3) 
\psline(-3,-3)(-2,-2) 

\psline(-4,-2)(-3,-3) 
\psline(-4,-3)(-3,-2) 

\psline(-5,-2)(-4,-3) 
\psline(-5,-3)(-4,-2) 

\psline(-6,-2)(-5,-3) 
\psline(-6,-3)(-5,-2)

\psline(0,-3)( 1,-4) 
\psline(0,-4)( 1,-3) 

\psline(1,-3)( 2,-4) 
\psline(1,-4)( 2,-3) 

\psline(2,-3)( 3,-4) 
\psline(2,-4)( 3,-3) 
 
\psline(3,-3)( 4,-4) 
\psline(3,-4)( 4,-3) 

\psline(4,-3)( 5,-4) 
\psline(4,-4)( 5,-3) 

\psline(5,-3)( 6,-4) 
\psline(5,-4)( 6,-3) 

\psline(-1,-3)( 0,-4) 
\psline(-1,-4)( 0,-3) 

\psline(-2,-3)( -1,-4) 
\psline(-2,-4)( -1,-3) 

\psline(-3,-3)( -2,-4) 
\psline(-3,-4)( -2,-3) 

\psline(-4,-3)( -3,-4) 
\psline(-4,-4)( -3,-3) 

\psline(-5,-3)( -4,-4) 
\psline(-5,-4)( -4,-3) 

\psline(-6,-3)( -5,-4) 
\psline(-6,-4)( -5,-3)

\psline(0,-4)(1,-5)
\psline(0,-5)(1,-4)

\psline(1,-4)(2,-5)
\psline(1,-5)(2,-4)

\psline(2,-4)(3,-5)
\psline(2,-5)(3,-4)

\psline(3,-4)(4,-5)
\psline(3,-5)(4,-4)

\psline(4,-4)(5,-5)
\psline(4,-5)(5,-4)

\psline(5,-4)(6,-5)
\psline(5,-5)(6,-4)

\psline(-1,-4)(0,-5)
\psline(-1,-5)(0,-4)

\psline(-2,-4)(-1,-5)
\psline(-2,-5)(-1,-4)

\psline(-3,-4)(-2,-5)
\psline(-3,-5)(-2,-4)

\psline(-4,-4)(-3,-5)
\psline(-4,-5)(-3,-4)

\psline(-5,-4)(-4,-5)
\psline(-5,-5)(-4,-4)

\psline(-6,-4)(-5,-5)
\psline(-6,-5)(-5,-4)

\psline(0,-5)(1,-6)
\psline(0,-6)(1,-5)

\psline(1,-5)(2,-6)
\psline(1,-6)(2,-5)

\psline(2,-5)(3,-6)
\psline(2,-6)(3,-5)

\psline(3,-5)(4,-6)
\psline(3,-6)(4,-5)

\psline(4,-5)(5,-6)
\psline(4,-6)(5,-5)

\psline(5,-5)(6,-6)
\psline(5,-6)(6,-5)

\psline(-1,-5)(0,-6)
\psline(-1,-6)(0,-5)

\psline(-2,-5)(-1,-6)
\psline(-2,-6)(-1,-5)

\psline(-3,-5)(-2,-6)
\psline(-3,-6)(-2,-5)

\psline(-4,-5)(-3,-6)
\psline(-4,-6)(-3,-5)

\psline(-5,-5)(-4,-6)
\psline(-5,-6)(-4,-5)

\psline(-6,-5)(-5,-6)
\psline(-6,-6)(-5,-5)

\rput(0,-6.7){The set $\bc^{L}_{\min}$ for $a>c$ and $b>0$}
\end{pspicture}
\end{center}

\end{textblock}


\begin{textblock}{10}(9.55,10.5)
\begin{center}
\begin{pspicture}(-6,-6)(6,6)

\pspolygon[fillstyle=solid,fillcolor=black](0,0)(0,1)(.5,.5)

\pspolygon[fillstyle=solid,fillcolor=lightgray](0,0)(0,-6)(-6,-6)
\pspolygon[fillstyle=solid,fillcolor=lightgray](1,-1)(1,-6)(6,-6)
\pspolygon[fillstyle=solid,fillcolor=lightgray](2,0)(6,0)(6,-4)
\pspolygon[fillstyle=solid,fillcolor=lightgray](1,1)(6,6)(6,1)
\pspolygon[fillstyle=solid,fillcolor=lightgray](1,3)(1,6)(4,6)
\pspolygon[fillstyle=solid,fillcolor=lightgray](0,2)(0,6)(-4,6)
\pspolygon[fillstyle=solid,fillcolor=lightgray](-1,1)(-6,1)(-6,6)
\pspolygon[fillstyle=solid,fillcolor=lightgray](-2,0)(-6,0)(-6,-4)

\pspolygon[fillstyle=solid,fillcolor=lightgray](0,0)(0,-6)(1,-6)(1,-1)
\pspolygon[fillstyle=solid,fillcolor=lightgray](1,1)(6,1)(6,0)(2,0)
\pspolygon[fillstyle=solid,fillcolor=lightgray](-1,1)(-6,1)(-6,0)(-2,0)
\pspolygon[fillstyle=solid,fillcolor=lightgray](0,2)(0,6)(1,6)(1,3)

\rput(-.2,-.5){{\footnotesize $\ast$}}
\rput(.2,-.5){{\footnotesize $\ast$}}

\psline(-6,-6)(-6,6)
\psline(-5,-6)(-5,6)
\psline(-4,-6)(-4,6)
\psline(-3,-6)(-3,6)
\psline(-2,-6)(-2,6)
\psline(-1,-6)(-1,6)
\psline(0,-6)(0,6)
\psline(1,-6)(1,6)
\psline(2,-6)(2,6)
\psline(3,-6)(3,6)
\psline(4,-6)(4,6)
\psline(5,-6)(5,6)
\psline(6,-6)(6,6)

\psline(-6,-6)(6,-6)
\psline(-6,-5)(6,-5)
\psline(-6,-4)(6,-4)
\psline(-6,-3)(6,-3)
\psline(-6,-2)(6,-2)
\psline(-6,-1)(6,-1)
\psline(-6,0)(6,0)
\psline(-6,1)(6,1)
\psline(-6,2)(6,2)
\psline(-6,3)(6,3)
\psline(-6,4)(6,4)
\psline(-6,5)(6,5)
\psline(-6,6)(6,6)

\psline(0,0)(1,1)
\psline(0,1)(1,0)

\psline(1,0)(2,1)
\psline(1,1)(2,0)

\psline(2,0)(3,1)
\psline(2,1)(3,0)

\psline(3,0)(4,1)
\psline(3,1)(4,0)

\psline(4,0)(5,1)
\psline(4,1)(5,0)

\psline(5,0)(6,1)
\psline(5,1)(6,0)

\psline(-1,0)(0,1)
\psline(-1,1)(0,0)

\psline(-2,0)(-1,1)
\psline(-2,1)(-1,0)

\psline(-3,0)(-2,1)
\psline(-3,1)(-2,0)

\psline(-4,0)(-3,1)
\psline(-4,1)(-3,0)

\psline(-5,0)(-4,1)
\psline(-5,1)(-4,0)

\psline(-6,0)(-5,1)
\psline(-6,1)(-5,0)

\psline(0,1)(1,2)
\psline(0,2)(1,1)

\psline(1,1)(2,2)
\psline(1,2)(2,1)

\psline(2,1)(3,2)
\psline(2,2)(3,1)

\psline(3,1)(4,2)
\psline(3,2)(4,1)

\psline(4,1)(5,2)
\psline(4,2)(5,1)

\psline(5,1)(6,2)
\psline(5,2)(6,1)

\psline(-1,1)(0,2)
\psline(-1,2)(0,1)

\psline(-2,1)(-1,2)
\psline(-2,2)(-1,1)

\psline(-3,1)(-2,2)
\psline(-3,2)(-2,1)

\psline(-4,1)(-3,2)
\psline(-4,2)(-3,1)

\psline(-5,1)(-4,2)
\psline(-5,2)(-4,1)

\psline(-6,1)(-5,2)
\psline(-6,2)(-5,1)

\psline(0,2)(1,3) 
\psline(0,3)(1,2) 

\psline(1,2)(2,3) 
\psline(1,3)(2,2) 

\psline(2,2)(3,3) 
\psline(2,3)(3,2) 
 
\psline(3,2)(4,3) 
\psline(3,3)(4,2) 

\psline(4,2)(5,3) 
\psline(4,3)(5,2) 

\psline(5,2)(6,3) 
\psline(5,3)(6,2) 

\psline(-1,2)(0,3) 
\psline(-1,3)(0,2) 

\psline(-2,2)(-1,3) 
\psline(-2,3)(-1,2) 

\psline(-3,2)(-2,3) 
\psline(-3,3)(-2,2) 

\psline(-4,2)(-3,3) 
\psline(-4,3)(-3,2) 

\psline(-5,2)(-4,3) 
\psline(-5,3)(-4,2) 

\psline(-6,2)(-5,3) 
\psline(-6,3)(-5,2)

\psline(0,3)( 1,4) 
\psline(0,4)( 1,3) 

\psline(1,3)( 2,4) 
\psline(1,4)( 2,3) 

\psline(2,3)( 3,4) 
\psline(2,4)( 3,3) 
 
\psline(3,3)( 4,4) 
\psline(3,4)( 4,3) 

\psline(4,3)( 5,4) 
\psline(4,4)( 5,3) 

\psline(5,3)( 6,4) 
\psline(5,4)( 6,3) 

\psline(-1,3)( 0,4) 
\psline(-1,4)( 0,3) 

\psline(-2,3)( -1,4) 
\psline(-2,4)( -1,3) 

\psline(-3,3)( -2,4) 
\psline(-3,4)( -2,3) 

\psline(-4,3)( -3,4) 
\psline(-4,4)( -3,3) 

\psline(-5,3)( -4,4) 
\psline(-5,4)( -4,3) 

\psline(-6,3)( -5,4) 
\psline(-6,4)( -5,3)

\psline(0,4)(1,5)
\psline(0,5)(1,4)

\psline(1,4)(2,5)
\psline(1,5)(2,4)

\psline(2,4)(3,5)
\psline(2,5)(3,4)

\psline(3,4)(4,5)
\psline(3,5)(4,4)

\psline(4,4)(5,5)
\psline(4,5)(5,4)

\psline(5,4)(6,5)
\psline(5,5)(6,4)

\psline(-1,4)(0,5)
\psline(-1,5)(0,4)

\psline(-2,4)(-1,5)
\psline(-2,5)(-1,4)

\psline(-3,4)(-2,5)
\psline(-3,5)(-2,4)

\psline(-4,4)(-3,5)
\psline(-4,5)(-3,4)

\psline(-5,4)(-4,5)
\psline(-5,5)(-4,4)

\psline(-6,4)(-5,5)
\psline(-6,5)(-5,4)

\psline(0,5)(1,6)
\psline(0,6)(1,5)

\psline(1,5)(2,6)
\psline(1,6)(2,5)

\psline(2,5)(3,6)
\psline(2,6)(3,5)

\psline(3,5)(4,6)
\psline(3,6)(4,5)

\psline(4,5)(5,6)
\psline(4,6)(5,5)

\psline(5,5)(6,6)
\psline(5,6)(6,5)

\psline(-1,5)(0,6)
\psline(-1,6)(0,5)

\psline(-2,5)(-1,6)
\psline(-2,6)(-1,5)

\psline(-3,5)(-2,6)
\psline(-3,6)(-2,5)

\psline(-4,5)(-3,6)
\psline(-4,6)(-3,5)

\psline(-5,5)(-4,6)
\psline(-5,6)(-4,5)

\psline(-6,5)(-5,6)
\psline(-6,6)(-5,5)

\psline(0,0)(1,-1)
\psline(0,-1)(1,0)

\psline(1,0)(2,-1)
\psline(1,-1)(2,0)

\psline(2,0)(3,-1)
\psline(2,-1)(3,0)

\psline(3,0)(4,-1)
\psline(3,-1)(4,0)

\psline(4,0)(5,-1)
\psline(4,-1)(5,0)

\psline(5,0)(6,-1)
\psline(5,-1)(6,0)

\psline(-1,0)(0,-1)
\psline(-1,-1)(0,0)

\psline(-2,0)(-1,-1)
\psline(-2,-1)(-1,0)

\psline(-3,0)(-2,-1)
\psline(-3,-1)(-2,0)

\psline(-4,0)(-3,-1)
\psline(-4,-1)(-3,0)

\psline(-5,0)(-4,-1)
\psline(-5,-1)(-4,0)

\psline(-6,0)(-5,-1)
\psline(-6,-1)(-5,0)

\psline(0,-1)(1,-2)
\psline(0,-2)(1,-1)

\psline(1,-1)(2,-2)
\psline(1,-2)(2,-1)

\psline(2,-1)(3,-2)
\psline(2,-2)(3,-1)

\psline(3,-1)(4,-2)
\psline(3,-2)(4,-1)

\psline(4,-1)(5,-2)
\psline(4,-2)(5,-1)

\psline(5,-1)(6,-2)
\psline(5,-2)(6,-1)

\psline(-1,-1)(0,-2)
\psline(-1,-2)(0,-1)

\psline(-2,-1)(-1,-2)
\psline(-2,-2)(-1,-1)

\psline(-3,-1)(-2,-2)
\psline(-3,-2)(-2,-1)

\psline(-4,-1)(-3,-2)
\psline(-4,-2)(-3,-1)

\psline(-5,-1)(-4,-2)
\psline(-5,-2)(-4,-1)

\psline(-6,-1)(-5,-2)
\psline(-6,-2)(-5,-1)

\psline(0,-2)(1,-3) 
\psline(0,-3)(1,-2) 

\psline(1,-2)(2,-3) 
\psline(1,-3)(2,-2) 

\psline(2,-2)(3,-3) 
\psline(2,-3)(3,-2) 
 
\psline(3,-2)(4,-3) 
\psline(3,-3)(4,-2) 

\psline(4,-2)(5,-3) 
\psline(4,-3)(5,-2) 

\psline(5,-2)(6,-3) 
\psline(5,-3)(6,-2) 

\psline(-1,-2)(0,-3) 
\psline(-1,-3)(0,-2) 

\psline(-2,-2)(-1,-3) 
\psline(-2,-3)(-1,-2) 

\psline(-3,-2)(-2,-3) 
\psline(-3,-3)(-2,-2) 

\psline(-4,-2)(-3,-3) 
\psline(-4,-3)(-3,-2) 

\psline(-5,-2)(-4,-3) 
\psline(-5,-3)(-4,-2) 

\psline(-6,-2)(-5,-3) 
\psline(-6,-3)(-5,-2)

\psline(0,-3)( 1,-4) 
\psline(0,-4)( 1,-3) 

\psline(1,-3)( 2,-4) 
\psline(1,-4)( 2,-3) 

\psline(2,-3)( 3,-4) 
\psline(2,-4)( 3,-3) 
 
\psline(3,-3)( 4,-4) 
\psline(3,-4)( 4,-3) 

\psline(4,-3)( 5,-4) 
\psline(4,-4)( 5,-3) 

\psline(5,-3)( 6,-4) 
\psline(5,-4)( 6,-3) 

\psline(-1,-3)( 0,-4) 
\psline(-1,-4)( 0,-3) 

\psline(-2,-3)( -1,-4) 
\psline(-2,-4)( -1,-3) 

\psline(-3,-3)( -2,-4) 
\psline(-3,-4)( -2,-3) 

\psline(-4,-3)( -3,-4) 
\psline(-4,-4)( -3,-3) 

\psline(-5,-3)( -4,-4) 
\psline(-5,-4)( -4,-3) 

\psline(-6,-3)( -5,-4) 
\psline(-6,-4)( -5,-3)

\psline(0,-4)(1,-5)
\psline(0,-5)(1,-4)

\psline(1,-4)(2,-5)
\psline(1,-5)(2,-4)

\psline(2,-4)(3,-5)
\psline(2,-5)(3,-4)

\psline(3,-4)(4,-5)
\psline(3,-5)(4,-4)

\psline(4,-4)(5,-5)
\psline(4,-5)(5,-4)

\psline(5,-4)(6,-5)
\psline(5,-5)(6,-4)

\psline(-1,-4)(0,-5)
\psline(-1,-5)(0,-4)

\psline(-2,-4)(-1,-5)
\psline(-2,-5)(-1,-4)

\psline(-3,-4)(-2,-5)
\psline(-3,-5)(-2,-4)

\psline(-4,-4)(-3,-5)
\psline(-4,-5)(-3,-4)

\psline(-5,-4)(-4,-5)
\psline(-5,-5)(-4,-4)

\psline(-6,-4)(-5,-5)
\psline(-6,-5)(-5,-4)

\psline(0,-5)(1,-6)
\psline(0,-6)(1,-5)

\psline(1,-5)(2,-6)
\psline(1,-6)(2,-5)

\psline(2,-5)(3,-6)
\psline(2,-6)(3,-5)

\psline(3,-5)(4,-6)
\psline(3,-6)(4,-5)

\psline(4,-5)(5,-6)
\psline(4,-6)(5,-5)

\psline(5,-5)(6,-6)
\psline(5,-6)(6,-5)

\psline(-1,-5)(0,-6)
\psline(-1,-6)(0,-5)

\psline(-2,-5)(-1,-6)
\psline(-2,-6)(-1,-5)

\psline(-3,-5)(-2,-6)
\psline(-3,-6)(-2,-5)

\psline(-4,-5)(-3,-6)
\psline(-4,-6)(-3,-5)

\psline(-5,-5)(-4,-6)
\psline(-5,-6)(-4,-5)

\psline(-6,-5)(-5,-6)
\psline(-6,-6)(-5,-5)

\rput(0,-6.7){The set $\bc^{L}_{\min}$ for $a>c$ and $b=0$}

\end{pspicture}
\end{center}
\end{textblock}


\begin{textblock}{10}(2,17.5)

\begin{center}
\begin{pspicture}(-6,6)(6,-6)
\psset{linewidth=.13mm}

\pspolygon[fillstyle=solid,fillcolor=black](0,0)(0,1)(.5,.5)

\pspolygon[fillstyle=solid,fillcolor=lightgray](0,0)(0,-6)(-6,-6)
\pspolygon[fillstyle=solid,fillcolor=lightgray](1,-1)(1,-6)(6,-6)
\pspolygon[fillstyle=solid,fillcolor=lightgray](2,0)(6,0)(6,-4)
\pspolygon[fillstyle=solid,fillcolor=lightgray](1,1)(6,6)(6,1)
\pspolygon[fillstyle=solid,fillcolor=lightgray](1,3)(1,6)(4,6)
\pspolygon[fillstyle=solid,fillcolor=lightgray](0,2)(0,6)(-4,6)
\pspolygon[fillstyle=solid,fillcolor=lightgray](-1,1)(-6,1)(-6,6)
\pspolygon[fillstyle=solid,fillcolor=lightgray](-2,0)(-6,0)(-6,-4)

\pspolygon[fillstyle=solid,fillcolor=lightgray](1,0)(2,0)(6,-4)(6,-5)(0,1)
\pspolygon[fillstyle=solid,fillcolor=lightgray](1,2)(1,3)(4,6)(5,6)
\pspolygon[fillstyle=solid,fillcolor=lightgray](0,1)(0,2)(-4,6)(-5,6)
\pspolygon[fillstyle=solid,fillcolor=lightgray](-1,0)(-2,0)(-6,-4)(-6,-5)

\rput(-.2,-.5){{\footnotesize $\ast$}}
\rput(-.2,1.5){{\footnotesize $\ast$}}

\psline(-6,-6)(-6,6)
\psline(-5,-6)(-5,6)
\psline(-4,-6)(-4,6)
\psline(-3,-6)(-3,6)
\psline(-2,-6)(-2,6)
\psline(-1,-6)(-1,6)
\psline(0,-6)(0,6)
\psline(1,-6)(1,6)
\psline(2,-6)(2,6)
\psline(3,-6)(3,6)
\psline(4,-6)(4,6)
\psline(5,-6)(5,6)
\psline(6,-6)(6,6)

\psline(-6,-6)(6,-6)
\psline(-6,-5)(6,-5)
\psline(-6,-4)(6,-4)
\psline(-6,-3)(6,-3)
\psline(-6,-2)(6,-2)
\psline(-6,-1)(6,-1)
\psline(-6,0)(6,0)
\psline(-6,1)(6,1)
\psline(-6,2)(6,2)
\psline(-6,3)(6,3)
\psline(-6,4)(6,4)
\psline(-6,5)(6,5)
\psline(-6,6)(6,6)

\psline(0,0)(1,1)
\psline(0,1)(1,0)

\psline(1,0)(2,1)
\psline(1,1)(2,0)

\psline(2,0)(3,1)
\psline(2,1)(3,0)

\psline(3,0)(4,1)
\psline(3,1)(4,0)

\psline(4,0)(5,1)
\psline(4,1)(5,0)

\psline(5,0)(6,1)
\psline(5,1)(6,0)

\psline(-1,0)(0,1)
\psline(-1,1)(0,0)

\psline(-2,0)(-1,1)
\psline(-2,1)(-1,0)

\psline(-3,0)(-2,1)
\psline(-3,1)(-2,0)

\psline(-4,0)(-3,1)
\psline(-4,1)(-3,0)

\psline(-5,0)(-4,1)
\psline(-5,1)(-4,0)

\psline(-6,0)(-5,1)
\psline(-6,1)(-5,0)

\psline(0,1)(1,2)
\psline(0,2)(1,1)

\psline(1,1)(2,2)
\psline(1,2)(2,1)

\psline(2,1)(3,2)
\psline(2,2)(3,1)

\psline(3,1)(4,2)
\psline(3,2)(4,1)

\psline(4,1)(5,2)
\psline(4,2)(5,1)

\psline(5,1)(6,2)
\psline(5,2)(6,1)

\psline(-1,1)(0,2)
\psline(-1,2)(0,1)

\psline(-2,1)(-1,2)
\psline(-2,2)(-1,1)

\psline(-3,1)(-2,2)
\psline(-3,2)(-2,1)

\psline(-4,1)(-3,2)
\psline(-4,2)(-3,1)

\psline(-5,1)(-4,2)
\psline(-5,2)(-4,1)

\psline(-6,1)(-5,2)
\psline(-6,2)(-5,1)

\psline(0,2)(1,3) 
\psline(0,3)(1,2) 

\psline(1,2)(2,3) 
\psline(1,3)(2,2) 

\psline(2,2)(3,3) 
\psline(2,3)(3,2) 
 
\psline(3,2)(4,3) 
\psline(3,3)(4,2) 

\psline(4,2)(5,3) 
\psline(4,3)(5,2) 

\psline(5,2)(6,3) 
\psline(5,3)(6,2) 

\psline(-1,2)(0,3) 
\psline(-1,3)(0,2) 

\psline(-2,2)(-1,3) 
\psline(-2,3)(-1,2) 

\psline(-3,2)(-2,3) 
\psline(-3,3)(-2,2) 

\psline(-4,2)(-3,3) 
\psline(-4,3)(-3,2) 

\psline(-5,2)(-4,3) 
\psline(-5,3)(-4,2) 

\psline(-6,2)(-5,3) 
\psline(-6,3)(-5,2)

\psline(0,3)( 1,4) 
\psline(0,4)( 1,3) 

\psline(1,3)( 2,4) 
\psline(1,4)( 2,3) 

\psline(2,3)( 3,4) 
\psline(2,4)( 3,3) 
 
\psline(3,3)( 4,4) 
\psline(3,4)( 4,3) 

\psline(4,3)( 5,4) 
\psline(4,4)( 5,3) 

\psline(5,3)( 6,4) 
\psline(5,4)( 6,3) 

\psline(-1,3)( 0,4) 
\psline(-1,4)( 0,3) 

\psline(-2,3)( -1,4) 
\psline(-2,4)( -1,3) 

\psline(-3,3)( -2,4) 
\psline(-3,4)( -2,3) 

\psline(-4,3)( -3,4) 
\psline(-4,4)( -3,3) 

\psline(-5,3)( -4,4) 
\psline(-5,4)( -4,3) 

\psline(-6,3)( -5,4) 
\psline(-6,4)( -5,3)

\psline(0,4)(1,5)
\psline(0,5)(1,4)

\psline(1,4)(2,5)
\psline(1,5)(2,4)

\psline(2,4)(3,5)
\psline(2,5)(3,4)

\psline(3,4)(4,5)
\psline(3,5)(4,4)

\psline(4,4)(5,5)
\psline(4,5)(5,4)

\psline(5,4)(6,5)
\psline(5,5)(6,4)

\psline(-1,4)(0,5)
\psline(-1,5)(0,4)

\psline(-2,4)(-1,5)
\psline(-2,5)(-1,4)

\psline(-3,4)(-2,5)
\psline(-3,5)(-2,4)

\psline(-4,4)(-3,5)
\psline(-4,5)(-3,4)

\psline(-5,4)(-4,5)
\psline(-5,5)(-4,4)

\psline(-6,4)(-5,5)
\psline(-6,5)(-5,4)

\psline(0,5)(1,6)
\psline(0,6)(1,5)

\psline(1,5)(2,6)
\psline(1,6)(2,5)

\psline(2,5)(3,6)
\psline(2,6)(3,5)

\psline(3,5)(4,6)
\psline(3,6)(4,5)

\psline(4,5)(5,6)
\psline(4,6)(5,5)

\psline(5,5)(6,6)
\psline(5,6)(6,5)

\psline(-1,5)(0,6)
\psline(-1,6)(0,5)

\psline(-2,5)(-1,6)
\psline(-2,6)(-1,5)

\psline(-3,5)(-2,6)
\psline(-3,6)(-2,5)

\psline(-4,5)(-3,6)
\psline(-4,6)(-3,5)

\psline(-5,5)(-4,6)
\psline(-5,6)(-4,5)

\psline(-6,5)(-5,6)
\psline(-6,6)(-5,5)

\psline(0,0)(1,-1)
\psline(0,-1)(1,0)

\psline(1,0)(2,-1)
\psline(1,-1)(2,0)

\psline(2,0)(3,-1)
\psline(2,-1)(3,0)

\psline(3,0)(4,-1)
\psline(3,-1)(4,0)

\psline(4,0)(5,-1)
\psline(4,-1)(5,0)

\psline(5,0)(6,-1)
\psline(5,-1)(6,0)

\psline(-1,0)(0,-1)
\psline(-1,-1)(0,0)

\psline(-2,0)(-1,-1)
\psline(-2,-1)(-1,0)

\psline(-3,0)(-2,-1)
\psline(-3,-1)(-2,0)

\psline(-4,0)(-3,-1)
\psline(-4,-1)(-3,0)

\psline(-5,0)(-4,-1)
\psline(-5,-1)(-4,0)

\psline(-6,0)(-5,-1)
\psline(-6,-1)(-5,0)

\psline(0,-1)(1,-2)
\psline(0,-2)(1,-1)

\psline(1,-1)(2,-2)
\psline(1,-2)(2,-1)

\psline(2,-1)(3,-2)
\psline(2,-2)(3,-1)

\psline(3,-1)(4,-2)
\psline(3,-2)(4,-1)

\psline(4,-1)(5,-2)
\psline(4,-2)(5,-1)

\psline(5,-1)(6,-2)
\psline(5,-2)(6,-1)

\psline(-1,-1)(0,-2)
\psline(-1,-2)(0,-1)

\psline(-2,-1)(-1,-2)
\psline(-2,-2)(-1,-1)

\psline(-3,-1)(-2,-2)
\psline(-3,-2)(-2,-1)

\psline(-4,-1)(-3,-2)
\psline(-4,-2)(-3,-1)

\psline(-5,-1)(-4,-2)
\psline(-5,-2)(-4,-1)

\psline(-6,-1)(-5,-2)
\psline(-6,-2)(-5,-1)

\psline(0,-2)(1,-3) 
\psline(0,-3)(1,-2) 

\psline(1,-2)(2,-3) 
\psline(1,-3)(2,-2) 

\psline(2,-2)(3,-3) 
\psline(2,-3)(3,-2) 
 
\psline(3,-2)(4,-3) 
\psline(3,-3)(4,-2) 

\psline(4,-2)(5,-3) 
\psline(4,-3)(5,-2) 

\psline(5,-2)(6,-3) 
\psline(5,-3)(6,-2) 

\psline(-1,-2)(0,-3) 
\psline(-1,-3)(0,-2) 

\psline(-2,-2)(-1,-3) 
\psline(-2,-3)(-1,-2) 

\psline(-3,-2)(-2,-3) 
\psline(-3,-3)(-2,-2) 

\psline(-4,-2)(-3,-3) 
\psline(-4,-3)(-3,-2) 

\psline(-5,-2)(-4,-3) 
\psline(-5,-3)(-4,-2) 

\psline(-6,-2)(-5,-3) 
\psline(-6,-3)(-5,-2)

\psline(0,-3)( 1,-4) 
\psline(0,-4)( 1,-3) 

\psline(1,-3)( 2,-4) 
\psline(1,-4)( 2,-3) 

\psline(2,-3)( 3,-4) 
\psline(2,-4)( 3,-3) 
 
\psline(3,-3)( 4,-4) 
\psline(3,-4)( 4,-3) 

\psline(4,-3)( 5,-4) 
\psline(4,-4)( 5,-3) 

\psline(5,-3)( 6,-4) 
\psline(5,-4)( 6,-3) 

\psline(-1,-3)( 0,-4) 
\psline(-1,-4)( 0,-3) 

\psline(-2,-3)( -1,-4) 
\psline(-2,-4)( -1,-3) 

\psline(-3,-3)( -2,-4) 
\psline(-3,-4)( -2,-3) 

\psline(-4,-3)( -3,-4) 
\psline(-4,-4)( -3,-3) 

\psline(-5,-3)( -4,-4) 
\psline(-5,-4)( -4,-3) 

\psline(-6,-3)( -5,-4) 
\psline(-6,-4)( -5,-3)

\psline(0,-4)(1,-5)
\psline(0,-5)(1,-4)

\psline(1,-4)(2,-5)
\psline(1,-5)(2,-4)

\psline(2,-4)(3,-5)
\psline(2,-5)(3,-4)

\psline(3,-4)(4,-5)
\psline(3,-5)(4,-4)

\psline(4,-4)(5,-5)
\psline(4,-5)(5,-4)

\psline(5,-4)(6,-5)
\psline(5,-5)(6,-4)

\psline(-1,-4)(0,-5)
\psline(-1,-5)(0,-4)

\psline(-2,-4)(-1,-5)
\psline(-2,-5)(-1,-4)

\psline(-3,-4)(-2,-5)
\psline(-3,-5)(-2,-4)

\psline(-4,-4)(-3,-5)
\psline(-4,-5)(-3,-4)

\psline(-5,-4)(-4,-5)
\psline(-5,-5)(-4,-4)

\psline(-6,-4)(-5,-5)
\psline(-6,-5)(-5,-4)

\psline(0,-5)(1,-6)
\psline(0,-6)(1,-5)

\psline(1,-5)(2,-6)
\psline(1,-6)(2,-5)

\psline(2,-5)(3,-6)
\psline(2,-6)(3,-5)

\psline(3,-5)(4,-6)
\psline(3,-6)(4,-5)

\psline(4,-5)(5,-6)
\psline(4,-6)(5,-5)

\psline(5,-5)(6,-6)
\psline(5,-6)(6,-5)

\psline(-1,-5)(0,-6)
\psline(-1,-6)(0,-5)

\psline(-2,-5)(-1,-6)
\psline(-2,-6)(-1,-5)

\psline(-3,-5)(-2,-6)
\psline(-3,-6)(-2,-5)

\psline(-4,-5)(-3,-6)
\psline(-4,-6)(-3,-5)

\psline(-5,-5)(-4,-6)
\psline(-5,-6)(-4,-5)

\psline(-6,-5)(-5,-6)
\psline(-6,-6)(-5,-5)

\rput(0,-6.7){The set $\bc^{L}_{\min}$ for $a=c>0$ and $b>0$}

\end{pspicture}
\end{center}
\end{textblock}


\begin{textblock}{10}(9.55,17.5)
\begin{center}
\begin{pspicture}(-6,-6)(6,6)

\pspolygon[fillstyle=solid,fillcolor=black](0,0)(0,1)(.5,.5)

\pspolygon[fillstyle=solid,fillcolor=lightgray](0,0)(0,-6)(-6,-6)
\pspolygon[fillstyle=solid,fillcolor=lightgray](1,-1)(1,-6)(6,-6)
\pspolygon[fillstyle=solid,fillcolor=lightgray](2,0)(6,0)(6,-4)
\pspolygon[fillstyle=solid,fillcolor=lightgray](1,1)(6,6)(6,1)
\pspolygon[fillstyle=solid,fillcolor=lightgray](1,3)(1,6)(4,6)
\pspolygon[fillstyle=solid,fillcolor=lightgray](0,2)(0,6)(-4,6)
\pspolygon[fillstyle=solid,fillcolor=lightgray](-1,1)(-6,1)(-6,6)
\pspolygon[fillstyle=solid,fillcolor=lightgray](-2,0)(-6,0)(-6,-4)
\pspolygon[fillstyle=solid,fillcolor=lightgray](-1,0)(-2,0)(-6,-4)(-6,-5)
\pspolygon[fillstyle=solid,fillcolor=lightgray](1,0)(2,0)(6,-4)(6,-5)
\pspolygon[fillstyle=solid,fillcolor=lightgray](1,2)(1,3)(4,6)(5,6)
\pspolygon[fillstyle=solid,fillcolor=lightgray](-0.5,1.5)(-5,6)(-4,6)(0,2)(0,1)

\pspolygon[fillstyle=solid,fillcolor=lightgray](0,1)(.5,1.5)(0,2)

\pspolygon[fillstyle=solid,fillcolor=lightgray](0,0)(0,-1)(0.5,-.5)

\pspolygon[fillstyle=solid,fillcolor=lightgray](0.5,1.5)(0,2)(0,6)(1,6)(1,2)
\pspolygon[fillstyle=solid,fillcolor=lightgray](.5,.5)(1,1)(6,1)(6,0)(1,0)
\pspolygon[fillstyle=solid,fillcolor=lightgray](.5,-.5)(0,-1)(0,-6)(1,-6)(1,-1)
\pspolygon[fillstyle=solid,fillcolor=lightgray](-.5,.5)(-1,1)(-6,1)(-6,0)(-1,0)

\rput(-.2,-.5){{\footnotesize $\ast$}}
\rput(.2,-.5){{\footnotesize $\ast$}}

\rput(-.2,1.5){{\footnotesize $\ast$}}
\rput(.2,1.5){{\footnotesize $\ast$}}

\rput(.8,.5){{\footnotesize $\ast$}}

\psline(-6,-6)(-6,6)
\psline(-5,-6)(-5,6)
\psline(-4,-6)(-4,6)
\psline(-3,-6)(-3,6)
\psline(-2,-6)(-2,6)
\psline(-1,-6)(-1,6)
\psline(0,-6)(0,6)
\psline(1,-6)(1,6)
\psline(2,-6)(2,6)
\psline(3,-6)(3,6)
\psline(4,-6)(4,6)
\psline(5,-6)(5,6)
\psline(6,-6)(6,6)

\psline(-6,-6)(6,-6)
\psline(-6,-5)(6,-5)
\psline(-6,-4)(6,-4)
\psline(-6,-3)(6,-3)
\psline(-6,-2)(6,-2)
\psline(-6,-1)(6,-1)
\psline(-6,0)(6,0)
\psline(-6,1)(6,1)
\psline(-6,2)(6,2)
\psline(-6,3)(6,3)
\psline(-6,4)(6,4)
\psline(-6,5)(6,5)
\psline(-6,6)(6,6)

\psline(0,0)(1,1)
\psline(0,1)(1,0)

\psline(1,0)(2,1)
\psline(1,1)(2,0)

\psline(2,0)(3,1)
\psline(2,1)(3,0)

\psline(3,0)(4,1)
\psline(3,1)(4,0)

\psline(4,0)(5,1)
\psline(4,1)(5,0)

\psline(5,0)(6,1)
\psline(5,1)(6,0)

\psline(-1,0)(0,1)
\psline(-1,1)(0,0)

\psline(-2,0)(-1,1)
\psline(-2,1)(-1,0)

\psline(-3,0)(-2,1)
\psline(-3,1)(-2,0)

\psline(-4,0)(-3,1)
\psline(-4,1)(-3,0)

\psline(-5,0)(-4,1)
\psline(-5,1)(-4,0)

\psline(-6,0)(-5,1)
\psline(-6,1)(-5,0)

\psline(0,1)(1,2)
\psline(0,2)(1,1)

\psline(1,1)(2,2)
\psline(1,2)(2,1)

\psline(2,1)(3,2)
\psline(2,2)(3,1)

\psline(3,1)(4,2)
\psline(3,2)(4,1)

\psline(4,1)(5,2)
\psline(4,2)(5,1)

\psline(5,1)(6,2)
\psline(5,2)(6,1)

\psline(-1,1)(0,2)
\psline(-1,2)(0,1)

\psline(-2,1)(-1,2)
\psline(-2,2)(-1,1)

\psline(-3,1)(-2,2)
\psline(-3,2)(-2,1)

\psline(-4,1)(-3,2)
\psline(-4,2)(-3,1)

\psline(-5,1)(-4,2)
\psline(-5,2)(-4,1)

\psline(-6,1)(-5,2)
\psline(-6,2)(-5,1)

\psline(0,2)(1,3) 
\psline(0,3)(1,2) 

\psline(1,2)(2,3) 
\psline(1,3)(2,2) 

\psline(2,2)(3,3) 
\psline(2,3)(3,2) 
 
\psline(3,2)(4,3) 
\psline(3,3)(4,2) 

\psline(4,2)(5,3) 
\psline(4,3)(5,2) 

\psline(5,2)(6,3) 
\psline(5,3)(6,2) 

\psline(-1,2)(0,3) 
\psline(-1,3)(0,2) 

\psline(-2,2)(-1,3) 
\psline(-2,3)(-1,2) 

\psline(-3,2)(-2,3) 
\psline(-3,3)(-2,2) 

\psline(-4,2)(-3,3) 
\psline(-4,3)(-3,2) 

\psline(-5,2)(-4,3) 
\psline(-5,3)(-4,2) 

\psline(-6,2)(-5,3) 
\psline(-6,3)(-5,2)

\psline(0,3)( 1,4) 
\psline(0,4)( 1,3) 

\psline(1,3)( 2,4) 
\psline(1,4)( 2,3) 

\psline(2,3)( 3,4) 
\psline(2,4)( 3,3) 
 
\psline(3,3)( 4,4) 
\psline(3,4)( 4,3) 

\psline(4,3)( 5,4) 
\psline(4,4)( 5,3) 

\psline(5,3)( 6,4) 
\psline(5,4)( 6,3) 

\psline(-1,3)( 0,4) 
\psline(-1,4)( 0,3) 

\psline(-2,3)( -1,4) 
\psline(-2,4)( -1,3) 

\psline(-3,3)( -2,4) 
\psline(-3,4)( -2,3) 

\psline(-4,3)( -3,4) 
\psline(-4,4)( -3,3) 

\psline(-5,3)( -4,4) 
\psline(-5,4)( -4,3) 

\psline(-6,3)( -5,4) 
\psline(-6,4)( -5,3)

\psline(0,4)(1,5)
\psline(0,5)(1,4)

\psline(1,4)(2,5)
\psline(1,5)(2,4)

\psline(2,4)(3,5)
\psline(2,5)(3,4)

\psline(3,4)(4,5)
\psline(3,5)(4,4)

\psline(4,4)(5,5)
\psline(4,5)(5,4)

\psline(5,4)(6,5)
\psline(5,5)(6,4)

\psline(-1,4)(0,5)
\psline(-1,5)(0,4)

\psline(-2,4)(-1,5)
\psline(-2,5)(-1,4)

\psline(-3,4)(-2,5)
\psline(-3,5)(-2,4)

\psline(-4,4)(-3,5)
\psline(-4,5)(-3,4)

\psline(-5,4)(-4,5)
\psline(-5,5)(-4,4)

\psline(-6,4)(-5,5)
\psline(-6,5)(-5,4)

\psline(0,5)(1,6)
\psline(0,6)(1,5)

\psline(1,5)(2,6)
\psline(1,6)(2,5)

\psline(2,5)(3,6)
\psline(2,6)(3,5)

\psline(3,5)(4,6)
\psline(3,6)(4,5)

\psline(4,5)(5,6)
\psline(4,6)(5,5)

\psline(5,5)(6,6)
\psline(5,6)(6,5)

\psline(-1,5)(0,6)
\psline(-1,6)(0,5)

\psline(-2,5)(-1,6)
\psline(-2,6)(-1,5)

\psline(-3,5)(-2,6)
\psline(-3,6)(-2,5)

\psline(-4,5)(-3,6)
\psline(-4,6)(-3,5)

\psline(-5,5)(-4,6)
\psline(-5,6)(-4,5)

\psline(-6,5)(-5,6)
\psline(-6,6)(-5,5)

\psline(0,0)(1,-1)
\psline(0,-1)(1,0)

\psline(1,0)(2,-1)
\psline(1,-1)(2,0)

\psline(2,0)(3,-1)
\psline(2,-1)(3,0)

\psline(3,0)(4,-1)
\psline(3,-1)(4,0)

\psline(4,0)(5,-1)
\psline(4,-1)(5,0)

\psline(5,0)(6,-1)
\psline(5,-1)(6,0)

\psline(-1,0)(0,-1)
\psline(-1,-1)(0,0)

\psline(-2,0)(-1,-1)
\psline(-2,-1)(-1,0)

\psline(-3,0)(-2,-1)
\psline(-3,-1)(-2,0)

\psline(-4,0)(-3,-1)
\psline(-4,-1)(-3,0)

\psline(-5,0)(-4,-1)
\psline(-5,-1)(-4,0)

\psline(-6,0)(-5,-1)
\psline(-6,-1)(-5,0)

\psline(0,-1)(1,-2)
\psline(0,-2)(1,-1)

\psline(1,-1)(2,-2)
\psline(1,-2)(2,-1)

\psline(2,-1)(3,-2)
\psline(2,-2)(3,-1)

\psline(3,-1)(4,-2)
\psline(3,-2)(4,-1)

\psline(4,-1)(5,-2)
\psline(4,-2)(5,-1)

\psline(5,-1)(6,-2)
\psline(5,-2)(6,-1)

\psline(-1,-1)(0,-2)
\psline(-1,-2)(0,-1)

\psline(-2,-1)(-1,-2)
\psline(-2,-2)(-1,-1)

\psline(-3,-1)(-2,-2)
\psline(-3,-2)(-2,-1)

\psline(-4,-1)(-3,-2)
\psline(-4,-2)(-3,-1)

\psline(-5,-1)(-4,-2)
\psline(-5,-2)(-4,-1)

\psline(-6,-1)(-5,-2)
\psline(-6,-2)(-5,-1)

\psline(0,-2)(1,-3) 
\psline(0,-3)(1,-2) 

\psline(1,-2)(2,-3) 
\psline(1,-3)(2,-2) 

\psline(2,-2)(3,-3) 
\psline(2,-3)(3,-2) 
 
\psline(3,-2)(4,-3) 
\psline(3,-3)(4,-2) 

\psline(4,-2)(5,-3) 
\psline(4,-3)(5,-2) 

\psline(5,-2)(6,-3) 
\psline(5,-3)(6,-2) 

\psline(-1,-2)(0,-3) 
\psline(-1,-3)(0,-2) 

\psline(-2,-2)(-1,-3) 
\psline(-2,-3)(-1,-2) 

\psline(-3,-2)(-2,-3) 
\psline(-3,-3)(-2,-2) 

\psline(-4,-2)(-3,-3) 
\psline(-4,-3)(-3,-2) 

\psline(-5,-2)(-4,-3) 
\psline(-5,-3)(-4,-2) 

\psline(-6,-2)(-5,-3) 
\psline(-6,-3)(-5,-2)

\psline(0,-3)( 1,-4) 
\psline(0,-4)( 1,-3) 

\psline(1,-3)( 2,-4) 
\psline(1,-4)( 2,-3) 

\psline(2,-3)( 3,-4) 
\psline(2,-4)( 3,-3) 
 
\psline(3,-3)( 4,-4) 
\psline(3,-4)( 4,-3) 

\psline(4,-3)( 5,-4) 
\psline(4,-4)( 5,-3) 

\psline(5,-3)( 6,-4) 
\psline(5,-4)( 6,-3) 

\psline(-1,-3)( 0,-4) 
\psline(-1,-4)( 0,-3) 

\psline(-2,-3)( -1,-4) 
\psline(-2,-4)( -1,-3) 

\psline(-3,-3)( -2,-4) 
\psline(-3,-4)( -2,-3) 

\psline(-4,-3)( -3,-4) 
\psline(-4,-4)( -3,-3) 

\psline(-5,-3)( -4,-4) 
\psline(-5,-4)( -4,-3) 

\psline(-6,-3)( -5,-4) 
\psline(-6,-4)( -5,-3)

\psline(0,-4)(1,-5)
\psline(0,-5)(1,-4)

\psline(1,-4)(2,-5)
\psline(1,-5)(2,-4)

\psline(2,-4)(3,-5)
\psline(2,-5)(3,-4)

\psline(3,-4)(4,-5)
\psline(3,-5)(4,-4)

\psline(4,-4)(5,-5)
\psline(4,-5)(5,-4)

\psline(5,-4)(6,-5)
\psline(5,-5)(6,-4)

\psline(-1,-4)(0,-5)
\psline(-1,-5)(0,-4)

\psline(-2,-4)(-1,-5)
\psline(-2,-5)(-1,-4)

\psline(-3,-4)(-2,-5)
\psline(-3,-5)(-2,-4)

\psline(-4,-4)(-3,-5)
\psline(-4,-5)(-3,-4)

\psline(-5,-4)(-4,-5)
\psline(-5,-5)(-4,-4)

\psline(-6,-4)(-5,-5)
\psline(-6,-5)(-5,-4)

\psline(0,-5)(1,-6)
\psline(0,-6)(1,-5)

\psline(1,-5)(2,-6)
\psline(1,-6)(2,-5)

\psline(2,-5)(3,-6)
\psline(2,-6)(3,-5)

\psline(3,-5)(4,-6)
\psline(3,-6)(4,-5)

\psline(4,-5)(5,-6)
\psline(4,-6)(5,-5)

\psline(5,-5)(6,-6)
\psline(5,-6)(6,-5)

\psline(-1,-5)(0,-6)
\psline(-1,-6)(0,-5)

\psline(-2,-5)(-1,-6)
\psline(-2,-6)(-1,-5)

\psline(-3,-5)(-2,-6)
\psline(-3,-6)(-2,-5)

\psline(-4,-5)(-3,-6)
\psline(-4,-6)(-3,-5)

\psline(-5,-5)(-4,-6)
\psline(-5,-6)(-4,-5)

\psline(-6,-5)(-5,-6)
\psline(-6,-6)(-5,-5)

\rput(0,-6.7){The set $\bc^{L}_{\min}$ for $a=c>0$ and $b=0$}

\end{pspicture}
\end{center}
\end{textblock}

$\ $\\
\newpage


\begin{textblock}{10}(5.55,4)
\begin{center}
\begin{pspicture}(-6,-6)(6,6)

\pspolygon[fillstyle=solid,fillcolor=black](0,0)(0,1)(.5,.5)

\pspolygon[fillstyle=solid,fillcolor=lightgray](0,0)(0,-6)(-6,-6)
\pspolygon[fillstyle=solid,fillcolor=lightgray](1,-1)(1,-6)(6,-6)
\pspolygon[fillstyle=solid,fillcolor=lightgray](2,0)(6,0)(6,-4)
\pspolygon[fillstyle=solid,fillcolor=lightgray](1,1)(6,6)(6,1)
\pspolygon[fillstyle=solid,fillcolor=lightgray](1,3)(1,6)(4,6)
\pspolygon[fillstyle=solid,fillcolor=lightgray](0,2)(0,6)(-4,6)
\pspolygon[fillstyle=solid,fillcolor=lightgray](-1,1)(-6,1)(-6,6)
\pspolygon[fillstyle=solid,fillcolor=lightgray](-2,0)(-6,0)(-6,-4)

\pspolygon[fillstyle=solid,fillcolor=lightgray](-1,0)(-2,0)(-6,-4)(-6,-5)
\pspolygon[fillstyle=solid,fillcolor=lightgray](1,0)(2,0)(6,-4)(6,-5)
\pspolygon[fillstyle=solid,fillcolor=lightgray](1,2)(1,3)(4,6)(5,6)
\pspolygon[fillstyle=solid,fillcolor=lightgray](-0.5,1.5)(-5,6)(-4,6)(0,2)(0,1)

\pspolygon[fillstyle=solid,fillcolor=lightgray](0,1)(-1,1)(-6,6)(-5,6)
\pspolygon[fillstyle=solid,fillcolor=lightgray](1,2)(2,2)(6,6)(5,6)
\pspolygon[fillstyle=solid,fillcolor=lightgray](1,0)(1,-1)(6,-6)(6,-5)
\pspolygon[fillstyle=solid,fillcolor=lightgray](-1,0)(-1,-1)(-6,-6)(-6,-5)

\pspolygon[fillstyle=solid,fillcolor=lightgray](0,0)(-1,0)(-1,-1)
\pspolygon[fillstyle=solid,fillcolor=lightgray](1,1)(2,2)(1,2)

\rput(-.2,-.5){{\footnotesize $\ast$}}
\rput(-.5,-.2){{\footnotesize $\ast$}}
\rput(-.5,1.2){{\footnotesize $\ast$}}
\rput(-.2,1.5){{\footnotesize $\ast$}}

\psline(-6,-6)(-6,6)
\psline(-5,-6)(-5,6)
\psline(-4,-6)(-4,6)
\psline(-3,-6)(-3,6)
\psline(-2,-6)(-2,6)
\psline(-1,-6)(-1,6)
\psline(0,-6)(0,6)
\psline(1,-6)(1,6)
\psline(2,-6)(2,6)
\psline(3,-6)(3,6)
\psline(4,-6)(4,6)
\psline(5,-6)(5,6)
\psline(6,-6)(6,6)

\psline(-6,-6)(6,-6)
\psline(-6,-5)(6,-5)
\psline(-6,-4)(6,-4)
\psline(-6,-3)(6,-3)
\psline(-6,-2)(6,-2)
\psline(-6,-1)(6,-1)
\psline(-6,0)(6,0)
\psline(-6,1)(6,1)
\psline(-6,2)(6,2)
\psline(-6,3)(6,3)
\psline(-6,4)(6,4)
\psline(-6,5)(6,5)
\psline(-6,6)(6,6)

\psline(0,0)(1,1)
\psline(0,1)(1,0)

\psline(1,0)(2,1)
\psline(1,1)(2,0)

\psline(2,0)(3,1)
\psline(2,1)(3,0)

\psline(3,0)(4,1)
\psline(3,1)(4,0)

\psline(4,0)(5,1)
\psline(4,1)(5,0)

\psline(5,0)(6,1)
\psline(5,1)(6,0)

\psline(-1,0)(0,1)
\psline(-1,1)(0,0)

\psline(-2,0)(-1,1)
\psline(-2,1)(-1,0)

\psline(-3,0)(-2,1)
\psline(-3,1)(-2,0)

\psline(-4,0)(-3,1)
\psline(-4,1)(-3,0)

\psline(-5,0)(-4,1)
\psline(-5,1)(-4,0)

\psline(-6,0)(-5,1)
\psline(-6,1)(-5,0)

\psline(0,1)(1,2)
\psline(0,2)(1,1)

\psline(1,1)(2,2)
\psline(1,2)(2,1)

\psline(2,1)(3,2)
\psline(2,2)(3,1)

\psline(3,1)(4,2)
\psline(3,2)(4,1)

\psline(4,1)(5,2)
\psline(4,2)(5,1)

\psline(5,1)(6,2)
\psline(5,2)(6,1)

\psline(-1,1)(0,2)
\psline(-1,2)(0,1)

\psline(-2,1)(-1,2)
\psline(-2,2)(-1,1)

\psline(-3,1)(-2,2)
\psline(-3,2)(-2,1)

\psline(-4,1)(-3,2)
\psline(-4,2)(-3,1)

\psline(-5,1)(-4,2)
\psline(-5,2)(-4,1)

\psline(-6,1)(-5,2)
\psline(-6,2)(-5,1)

\psline(0,2)(1,3) 
\psline(0,3)(1,2) 

\psline(1,2)(2,3) 
\psline(1,3)(2,2) 

\psline(2,2)(3,3) 
\psline(2,3)(3,2) 
 
\psline(3,2)(4,3) 
\psline(3,3)(4,2) 

\psline(4,2)(5,3) 
\psline(4,3)(5,2) 

\psline(5,2)(6,3) 
\psline(5,3)(6,2) 

\psline(-1,2)(0,3) 
\psline(-1,3)(0,2) 

\psline(-2,2)(-1,3) 
\psline(-2,3)(-1,2) 

\psline(-3,2)(-2,3) 
\psline(-3,3)(-2,2) 

\psline(-4,2)(-3,3) 
\psline(-4,3)(-3,2) 

\psline(-5,2)(-4,3) 
\psline(-5,3)(-4,2) 

\psline(-6,2)(-5,3) 
\psline(-6,3)(-5,2)

\psline(0,3)( 1,4) 
\psline(0,4)( 1,3) 

\psline(1,3)( 2,4) 
\psline(1,4)( 2,3) 

\psline(2,3)( 3,4) 
\psline(2,4)( 3,3) 
 
\psline(3,3)( 4,4) 
\psline(3,4)( 4,3) 

\psline(4,3)( 5,4) 
\psline(4,4)( 5,3) 

\psline(5,3)( 6,4) 
\psline(5,4)( 6,3) 

\psline(-1,3)( 0,4) 
\psline(-1,4)( 0,3) 

\psline(-2,3)( -1,4) 
\psline(-2,4)( -1,3) 

\psline(-3,3)( -2,4) 
\psline(-3,4)( -2,3) 

\psline(-4,3)( -3,4) 
\psline(-4,4)( -3,3) 

\psline(-5,3)( -4,4) 
\psline(-5,4)( -4,3) 

\psline(-6,3)( -5,4) 
\psline(-6,4)( -5,3)

\psline(0,4)(1,5)
\psline(0,5)(1,4)

\psline(1,4)(2,5)
\psline(1,5)(2,4)

\psline(2,4)(3,5)
\psline(2,5)(3,4)

\psline(3,4)(4,5)
\psline(3,5)(4,4)

\psline(4,4)(5,5)
\psline(4,5)(5,4)

\psline(5,4)(6,5)
\psline(5,5)(6,4)

\psline(-1,4)(0,5)
\psline(-1,5)(0,4)

\psline(-2,4)(-1,5)
\psline(-2,5)(-1,4)

\psline(-3,4)(-2,5)
\psline(-3,5)(-2,4)

\psline(-4,4)(-3,5)
\psline(-4,5)(-3,4)

\psline(-5,4)(-4,5)
\psline(-5,5)(-4,4)

\psline(-6,4)(-5,5)
\psline(-6,5)(-5,4)

\psline(0,5)(1,6)
\psline(0,6)(1,5)

\psline(1,5)(2,6)
\psline(1,6)(2,5)

\psline(2,5)(3,6)
\psline(2,6)(3,5)

\psline(3,5)(4,6)
\psline(3,6)(4,5)

\psline(4,5)(5,6)
\psline(4,6)(5,5)

\psline(5,5)(6,6)
\psline(5,6)(6,5)

\psline(-1,5)(0,6)
\psline(-1,6)(0,5)

\psline(-2,5)(-1,6)
\psline(-2,6)(-1,5)

\psline(-3,5)(-2,6)
\psline(-3,6)(-2,5)

\psline(-4,5)(-3,6)
\psline(-4,6)(-3,5)

\psline(-5,5)(-4,6)
\psline(-5,6)(-4,5)

\psline(-6,5)(-5,6)
\psline(-6,6)(-5,5)

\psline(0,0)(1,-1)
\psline(0,-1)(1,0)

\psline(1,0)(2,-1)
\psline(1,-1)(2,0)

\psline(2,0)(3,-1)
\psline(2,-1)(3,0)

\psline(3,0)(4,-1)
\psline(3,-1)(4,0)

\psline(4,0)(5,-1)
\psline(4,-1)(5,0)

\psline(5,0)(6,-1)
\psline(5,-1)(6,0)

\psline(-1,0)(0,-1)
\psline(-1,-1)(0,0)

\psline(-2,0)(-1,-1)
\psline(-2,-1)(-1,0)

\psline(-3,0)(-2,-1)
\psline(-3,-1)(-2,0)

\psline(-4,0)(-3,-1)
\psline(-4,-1)(-3,0)

\psline(-5,0)(-4,-1)
\psline(-5,-1)(-4,0)

\psline(-6,0)(-5,-1)
\psline(-6,-1)(-5,0)

\psline(0,-1)(1,-2)
\psline(0,-2)(1,-1)

\psline(1,-1)(2,-2)
\psline(1,-2)(2,-1)

\psline(2,-1)(3,-2)
\psline(2,-2)(3,-1)

\psline(3,-1)(4,-2)
\psline(3,-2)(4,-1)

\psline(4,-1)(5,-2)
\psline(4,-2)(5,-1)

\psline(5,-1)(6,-2)
\psline(5,-2)(6,-1)

\psline(-1,-1)(0,-2)
\psline(-1,-2)(0,-1)

\psline(-2,-1)(-1,-2)
\psline(-2,-2)(-1,-1)

\psline(-3,-1)(-2,-2)
\psline(-3,-2)(-2,-1)

\psline(-4,-1)(-3,-2)
\psline(-4,-2)(-3,-1)

\psline(-5,-1)(-4,-2)
\psline(-5,-2)(-4,-1)

\psline(-6,-1)(-5,-2)
\psline(-6,-2)(-5,-1)

\psline(0,-2)(1,-3) 
\psline(0,-3)(1,-2) 

\psline(1,-2)(2,-3) 
\psline(1,-3)(2,-2) 

\psline(2,-2)(3,-3) 
\psline(2,-3)(3,-2) 
 
\psline(3,-2)(4,-3) 
\psline(3,-3)(4,-2) 

\psline(4,-2)(5,-3) 
\psline(4,-3)(5,-2) 

\psline(5,-2)(6,-3) 
\psline(5,-3)(6,-2) 

\psline(-1,-2)(0,-3) 
\psline(-1,-3)(0,-2) 

\psline(-2,-2)(-1,-3) 
\psline(-2,-3)(-1,-2) 

\psline(-3,-2)(-2,-3) 
\psline(-3,-3)(-2,-2) 

\psline(-4,-2)(-3,-3) 
\psline(-4,-3)(-3,-2) 

\psline(-5,-2)(-4,-3) 
\psline(-5,-3)(-4,-2) 

\psline(-6,-2)(-5,-3) 
\psline(-6,-3)(-5,-2)

\psline(0,-3)( 1,-4) 
\psline(0,-4)( 1,-3) 

\psline(1,-3)( 2,-4) 
\psline(1,-4)( 2,-3) 

\psline(2,-3)( 3,-4) 
\psline(2,-4)( 3,-3) 
 
\psline(3,-3)( 4,-4) 
\psline(3,-4)( 4,-3) 

\psline(4,-3)( 5,-4) 
\psline(4,-4)( 5,-3) 

\psline(5,-3)( 6,-4) 
\psline(5,-4)( 6,-3) 

\psline(-1,-3)( 0,-4) 
\psline(-1,-4)( 0,-3) 

\psline(-2,-3)( -1,-4) 
\psline(-2,-4)( -1,-3) 

\psline(-3,-3)( -2,-4) 
\psline(-3,-4)( -2,-3) 

\psline(-4,-3)( -3,-4) 
\psline(-4,-4)( -3,-3) 

\psline(-5,-3)( -4,-4) 
\psline(-5,-4)( -4,-3) 

\psline(-6,-3)( -5,-4) 
\psline(-6,-4)( -5,-3)

\psline(0,-4)(1,-5)
\psline(0,-5)(1,-4)

\psline(1,-4)(2,-5)
\psline(1,-5)(2,-4)

\psline(2,-4)(3,-5)
\psline(2,-5)(3,-4)

\psline(3,-4)(4,-5)
\psline(3,-5)(4,-4)

\psline(4,-4)(5,-5)
\psline(4,-5)(5,-4)

\psline(5,-4)(6,-5)
\psline(5,-5)(6,-4)

\psline(-1,-4)(0,-5)
\psline(-1,-5)(0,-4)

\psline(-2,-4)(-1,-5)
\psline(-2,-5)(-1,-4)

\psline(-3,-4)(-2,-5)
\psline(-3,-5)(-2,-4)

\psline(-4,-4)(-3,-5)
\psline(-4,-5)(-3,-4)

\psline(-5,-4)(-4,-5)
\psline(-5,-5)(-4,-4)

\psline(-6,-4)(-5,-5)
\psline(-6,-5)(-5,-4)

\psline(0,-5)(1,-6)
\psline(0,-6)(1,-5)

\psline(1,-5)(2,-6)
\psline(1,-6)(2,-5)

\psline(2,-5)(3,-6)
\psline(2,-6)(3,-5)

\psline(3,-5)(4,-6)
\psline(3,-6)(4,-5)

\psline(4,-5)(5,-6)
\psline(4,-6)(5,-5)

\psline(5,-5)(6,-6)
\psline(5,-6)(6,-5)

\psline(-1,-5)(0,-6)
\psline(-1,-6)(0,-5)

\psline(-2,-5)(-1,-6)
\psline(-2,-6)(-1,-5)

\psline(-3,-5)(-2,-6)
\psline(-3,-6)(-2,-5)

\psline(-4,-5)(-3,-6)
\psline(-4,-6)(-3,-5)

\psline(-5,-5)(-4,-6)
\psline(-5,-6)(-4,-5)

\psline(-6,-5)(-5,-6)
\psline(-6,-6)(-5,-5)

\rput(0,-6.7){The set $\bc^{L}_{\min}$ for $a=c=0$ and $b>0$}

\end{pspicture}
\end{center}
\end{textblock}
\end{Exa}

$\ $\\
\vspace{6cm}


\subsection{Some alternative description of $\cb^{L}_{\min}$}
Let $\LC_L(W)$ be the set of finite sequences $(w_1,\dots,w_n)$ 
of elements of $W$ such that $L(w_1\cdots w_n)=L(w_1)+\cdots + L(w_n)$. 
\begin{Exa}
For instance, $\LC_\ell(W)=\LC(W)$. Note also that $\LC(W) \subset \LC_L(W)$, 
by definition of a weight function: the inclusion might be strict, as it is 
shown by the case where $L=0$. Finally, if $L(s) > 0$ for all $s \in S$, 
then $\LC_L(W)=\LC(W)$.\finl
\end{Exa}
\begin{Exa}\label{exemple:zero}
If $L=0$, then $\nu_L=0$, $\spe_L(W)$ is the set of $0$-dimensional facets, $\LC_L(W)$ is the 
set of finite sequences of elements of $W$ and 
$\cb^L_{\min}=W$.\finl
\end{Exa}

For $A,B\in\alc(\Phi)$, we set
$$H^{L}(A,B)=\{H\in \cF^{L}| H\text{ separates $A$ and $B$}\}.$$
Then we have (compare to Proposition \ref{length-hyp}):
\begin{Prop}
\label{geom-L}
Let $x,y\in W$ and $A\in\alc(W)$. We have
\begin{enumerate}
\item $L(x)=\sum_{H\in H^{L}(A,xA)} L_{H}$;
\item $(x,y)\in\cL_{L}(W)$ if and only if $H^{L}(A,yA)\cap H^{L}(yA,xyA)=\emptyset$.
\end{enumerate}
\end{Prop}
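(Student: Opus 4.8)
The plan is to deduce both assertions from Proposition~\ref{length-hyp} by ``unfolding a reduced gallery''. The one point needing a little care is that the left action of $W$ on $\alc(\cF)$ preserves the \emph{type} of a face, which holds because this action commutes with the right action of $\Om$ and the types are the $\Om$-orbits of faces.

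For part~(1): fix a reduced expression $x = s_1 s_2\cdots s_l$ with $l = \ell(x)$ and $s_i\in S$. Since $A$ and $s_iA$ share a face of type $s_i$ and left translation by $s_1\cdots s_{i-1}$ preserves types, $s_1\cdots s_{i-1}A$ and $s_1\cdots s_iA$ share a face of type $s_i$; being adjacent, they are separated by exactly the hyperplane $H_i$ containing that face, so $L_{H_i} = L(s_i)$. Thus
$$A,\ s_1A,\ s_1s_2A,\ \dots,\ s_1\cdots s_lA = xA$$
is a gallery of length $l$. As $|H(A,xA)| = \ell(x) = l$ by Proposition~\ref{length-hyp}(1), and any gallery from $A$ to $xA$ must cross every hyperplane of $H(A,xA)$ at least once, this gallery of $l$ steps crosses exactly the $l$ hyperplanes of $H(A,xA)$, each once; hence $H_1,\dots,H_l$ are distinct with $\{H_1,\dots,H_l\} = H(A,xA)$. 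Since hyperplanes of $H(A,xA)$ not in $\cF^L$ have weight $0$, we get
$$\sum_{H\in H^L(A,xA)} L_H \;=\; \sum_{H\in H(A,xA)} L_H \;=\; \sum_{i=1}^l L_{H_i} \;=\; \sum_{i=1}^l L(s_i) \;=\; L(x).$$

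For part~(2): I would first record the elementary fact that for alcoves $B_1,B_2,B_3$, a hyperplane separates $B_1$ from $B_3$ if and only if it separates exactly one of $(B_1,B_2)$, $(B_2,B_3)$; equivalently $H(B_1,B_3)$ --- and so also $H^L(B_1,B_3)$ --- is the symmetric difference of $H(B_1,B_2)$ and $H(B_2,B_3)$. Apply this with $(B_1,B_2,B_3) = (A,\,yA,\,xyA)$ and put $U = H^L(A,yA)$, $V = H^L(yA,xyA)$; using part~(1) on the three alcove-pairs (with $yA$ used to compute $L(x)$),
$$L(xy) \;=\; \sum_{H\in U} L_H \;+\; \sum_{H\in V} L_H \;-\; 2\sum_{H\in U\cap V} L_H \;=\; L(x) + L(y) - 2\delta,$$
where $\delta = \sum_{H\in U\cap V} L_H$. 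Because $L_H > 0$ for all $H\in\cF^L$ and $\G$ is totally ordered, $\delta \ge 0$, with $\delta = 0$ if and only if $U\cap V = \emptyset$; hence $L(xy) = L(x)+L(y)$, i.e.\ $(x,y)\in\cL_L(W)$, holds if and only if $H^L(A,yA)\cap H^L(yA,xyA) = \emptyset$.

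The only genuinely non-formal step is the type-preservation of the left $W$-action on faces; without it the unfolded gallery would not carry the right weight $L(s_i)$ at its $i$-th step. The remaining ingredients --- that a length-minimal gallery crosses precisely the separating hyperplanes (Proposition~\ref{length-hyp}) and the symmetric-difference identity --- are standard, and the rest is bookkeeping.
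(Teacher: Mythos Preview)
There is a genuine error in your argument for part~(1). The sequence $A,\,s_1A,\,s_1s_2A,\,\ldots,\,s_1\cdots s_lA$ is \emph{not} a gallery: consecutive terms need not be adjacent. The left $W$-action on $\alc(\cF)$ is purely combinatorial---it is not induced by any action on $V$---and it does not preserve adjacency or face types. Concretely, $s_1\cdots s_{i-1}A$ and $s_1\cdots s_iA$ differ by left multiplication by the reflection $s_1\cdots s_{i-1}s_is_{i-1}\cdots s_1$, which is not in $S$ in general; already in type $\tilde{\Arm}_1$ with $x=ts$ one finds that $tA_0$ and $tsA_0$ are separated by several hyperplanes. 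Your appeal to commutativity with $\Om$ does not rescue this, because the left $W$-action does not act on faces at all, so ``left translation preserves types'' has no content here.

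The fix is immediate: run the reduced word from the right. The sequence
\[
A,\ s_lA,\ s_{l-1}s_lA,\ \ldots,\ s_1\cdots s_lA=xA
\]
\emph{is} a gallery, since $s_i\cdots s_lA=s_i(s_{i+1}\cdots s_lA)$ shares with $s_{i+1}\cdots s_lA$ a face of type $s_i$ by the very definition of the involution $s_i$. The weights crossed are $L(s_l),\ldots,L(s_1)$, and your remaining argument (minimality via Proposition~\ref{length-hyp}(1), distinctness of the $H_i$, dropping the weight-zero hyperplanes) then goes through verbatim. Your proof of part~(2) is correct as written. The paper itself offers no proof of this proposition beyond the parenthetical ``compare to Proposition~\ref{length-hyp}'', so once the gallery direction is corrected your write-up is a perfectly good justification.
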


\bigskip

The set $\cb^{L}_{\min}$ can be described as follows.
\bigskip

\begin{Prop}
\label{c-min}
The following equalities hold:
\eqna
\cb^L_{\min} &=& \{xwy~|~w \in \WC, (x,w,y) \in \LC_L(W) \text{ and } L(w)=\nu_L\} \\
&=& \{xw_\l y~|~\l \in \spe_L(W)\text{ and } (x,w_\l,y) \in \LC_L(W)\} \\
&=& \{w~|~w(A_0) \not\subset \UC^L(A_0)\}.
\endeqna
\end{Prop}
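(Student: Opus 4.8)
Write $\cb=\cb^{L}_{\min}(W)$, and let $\cb_1,\cb_2,\cb_3$ denote the three sets appearing on the right–hand side of the proposition, in the order written. The plan is to prove the cycle of inclusions $\cb\subseteq\cb_1\subseteq\cb_2\subseteq\cb_3\subseteq\cb$. The inclusion $\cb\subseteq\cb_1$ is immediate because $\LC(W)\subseteq\LC_L(W)$. For $\cb_1\subseteq\cb_2$, start from $g=xwy$ with $w\in W_I$ for some proper $I\subseteq S$, $(x,w,y)\in\LC_L(W)$ and $L(w)=\nu_L$, and pick a $0$-dimensional facet $\lambda$ of $\overline{A_0}$ fixed pointwise by $W_I$; then $W_I\subseteq W_\lambda$, and from the length-additive factorisation $w_\lambda=w\add(w^{-1}w_\lambda)$ (using $\ell(w^{-1}w_\lambda)=\ell(w_\lambda)-\ell(w)$, valid since $w\in W_\lambda$) one gets $L(w_\lambda)=L(w)+L(w^{-1}w_\lambda)\ge\nu_L$; since also $L(w_\lambda)=L_\lambda\le\nu_L$, equality holds, so $\lambda\in\spe_L(W)$ and $L(u)=0$ where $u:=w^{-1}w_\lambda$. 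Then $g=x\,w_\lambda\,(u^{-1}y)$, and as $L(u^{-1}y)=L(y)$ by Proposition~\ref{addition L} we have $L(x)+L(w_\lambda)+L(u^{-1}y)=L(x)+L(w)+L(y)=L(g)$, i.e.\ $(x,w_\lambda,u^{-1}y)\in\LC_L(W)$; hence $g\in\cb_2$.

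The essential step is $\cb_2\subseteq\cb_3$. Let $g=xw_\lambda y$ with $\lambda\in\spe_L(W)$ and $(x,w_\lambda,y)\in\LC_L(W)$. Since $gA_0$ is a single alcove and each $\UC^L_\alpha(A_0)$ is open, the condition $gA_0\not\subseteq\UC^L(A_0)$ is equivalent to: for every $\alpha\in\Phi^+$ with $L_\alpha>0$ some maximal-weight hyperplane lies in $H^L(A_0,gA_0)$. By the description of $\spe_L(W)$ in \S\ref{L-special}, for each such $\alpha$ there is a maximal-weight hyperplane $H_{\alpha,n_\alpha}$ through $\lambda$; and because $w_\lambda$ is the longest element of the parabolic $W_\lambda$, the set $H(A_0,w_\lambda A_0)$ consists of all $\ell(w_\lambda)$ hyperplanes through $\lambda$, hence contains every $H_{\alpha,n_\alpha}$. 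Extracting $(w_\lambda,y),(x,w_\lambda y)\in\LC_L(W)$ from the hypothesis and applying Proposition~\ref{geom-L}(2) twice, together with the identity $H^L(A,C)=H^L(A,B)\mathbin{\triangle}H^L(B,C)$, gives $H^L(yA_0,w_\lambda yA_0)\subseteq H^L(A_0,w_\lambda yA_0)\subseteq H^L(A_0,gA_0)$. It then remains to transport the information from $H(A_0,w_\lambda A_0)$ to $H(yA_0,w_\lambda yA_0)$: regarding $W\cong\Om$ as acting on the right of $V$ by isometries, one has $H(yA_0,w_\lambda yA_0)=H(A_0,w_\lambda A_0)\cdot y$ (right $\Om$-action), a translation that preserves every $L_H$ and sends the family $(H_{\alpha,n_\alpha})_{\alpha\in\Phi^L\cap\Phi^+}$ to a family of maximal-weight hyperplanes whose directions again form a set of representatives of the $\{\pm\alpha\}$-classes of $\Phi^L$ (the linear part of $y$ lies in $\Om_0$ and permutes $\Phi^L$, since $L$ is $\Om$-invariant). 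Therefore $H^L(A_0,gA_0)$ contains a maximal-weight hyperplane orthogonal to every root of $\Phi^L\cap\Phi^+$, i.e.\ $g\in\cb_3$.

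For $\cb_3\subseteq\cb$ I would argue by induction on $\ell(g)$ using Lemma~\ref{cmin addition}. If $\nu_L=0$ everything is trivial (Example~\ref{exemple:zero}), so assume $\nu_L>0$; then $A_0\subseteq\UC^L(A_0)$ shows $1\notin\cb_3$ and there is nothing to prove for $g=1$. If $g\in\cb_3$ has a left descent $s$ with $sg\in\cb_3$ (or a right descent $t$ with $gt\in\cb_3$), then $sg\in\cb$ by induction and, since $s\add(sg)$, Lemma~\ref{cmin addition} applied to $g=s\cdot(sg)$ gives $g\in\cb$ (and symmetrically on the right). In the remaining case, every neighbour of $gA_0$ lying closer to $A_0$ is contained in $\UC^L(A_0)$ although $gA_0$ is not; one deduces that each such neighbour meets $\overline{gA_0}$ along a face supported on a maximal-weight hyperplane, shows that the common facet of these faces is a single $L$-special point $\mu$, and concludes that $g=w_\mu$, so $g\in\cb$ (with $x=y=1$). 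This closes the cycle.

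The step I expect to be the genuine obstacle is this last one: showing that the facet $\mu$ produced in the extremal case of the induction — equivalently, a vertex of the polyhedral region cut out by those maximal-weight hyperplanes separating $A_0$ from $gA_0$ that lie ``closest'' to $gA_0$ — is really $L$-special, rather than merely a point lying on maximal-weight hyperplanes in the directions indexed by the descents of $g$. This forces one to use the exact lattice description of $\spe_L(W)$ from \S\ref{L-special}, hence the case-by-case classification of the root systems $\Phi^L$ that underlies Lemma~\ref{dot-property}; type $\tilde{\Crm}_r$, where parallel hyperplanes may carry different weights and $\spe_L(W)$ is strictly smaller than the coweight lattice of $\Phi^L$, is where the argument must be handled with the most care. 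A pervasive secondary point is to keep the left $W$-action on $\alc(\cF)$ (which need not preserve face types) rigorously separate from the right $\Om$-action on $V$ (which is isometric, hence weight-preserving and compatible with root directions): all transport of weights between separating sets should pass through the latter, as in the identity $H(yA_0,w_\lambda yA_0)=H(A_0,w_\lambda A_0)\cdot y$ exploited above.
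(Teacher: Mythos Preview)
Your chain $\cb\subseteq\cb_1\subseteq\cb_2\subseteq\cb_3$ is fine and close to the paper's (the paper organises it slightly differently, first proving $\cb=\cb_1=\cb_2$ by an induction on $\ell(x)+\ell(y)$, then $\cb_2\subseteq\cb_3$). Your $\cb_2\subseteq\cb_3$ via transport under the right $\Omega$-action is a clean variant of the paper's direct hyperplane argument.

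The genuine gap is in $\cb_3\subseteq\cb$. Your geometric analysis of the extremal case uses only the inward faces of $gA_0$, i.e.\ only the \emph{left} descents; from that information alone one cannot conclude $g=w_\mu$. Concretely, in $\tilde A_2$ with equal parameters take $g$ with $gA_0=A_0+\rho$ (the tip of the $L$-quarter $\cC'_1$ with vertex $\rho=\omega_1+\omega_2$): both inward faces of $gA_0$ lie on maximal-weight hyperplanes and both neighbours are in $\UC^L(A_0)$, yet $\ell(g)=4$ while every $w_\mu$ has length $3$, so $g\ne w_\mu$. What saves the induction here is the \emph{right} descent $s_0$, since $gs_0=w_0\in\cb_3$; but nothing in your ``common facet is an $L$-special point'' paragraph invokes the right-descent hypothesis, and right descents have no simple interpretation on $gA_0$ (they reflect $gA_0$ through a wall of $A_0$, not through a wall of $gA_0$). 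So even if the two-sided extremal case really does force $g=w_\mu$ --- which is plausible --- your argument does not prove it, and the case analysis you anticipate (lattice description of $\spe_L(W)$, type $\tilde\Crm_r$, etc.) is not what is actually needed.

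The paper avoids this entirely by a direct construction rather than a minimality argument: for any $w\in\cb_3$ it locates $wA_0$ in an $L$-quarter $\cC'_\sigma$ (a connected component of $V\setminus\UC^L(A_0)$), observes that the vertex $\lambda_\sigma$ of $\cC'_\sigma$ is $L$-special by construction, picks any $z$ with $zA_0\subset\cC'_\sigma$ and $\lambda_\sigma\in\overline{zA_0}$, and verifies by elementary hyperplane bookkeeping that $(wz^{-1},\,w_{\lambda_\sigma},\,w_{\lambda_\sigma}z)\in\LC_L(W)$, hence $w\in\cb_2=\cb$. This is uniform in $w$ and needs no extremal case; it is also what sets up the canonical decomposition $w=x_w a_w \hw_{\lambda_\sigma} b_\sigma$ used in the rest of the paper.
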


\bigskip
\begin{proof}
Let 
\eqna
A &=& \{xwy~|~w \in \WC, (x,w,y) \in \LC_L(W) \text{ and } L(w)=\nu_L\}, \\
B &=& \{xw_\l y~|~\l \in \spe_L(W)\text{ and } (x,w_\l,y) \in \LC_L(W)\}, \\
C &=& \{w~|~w A_0 \not\subset \UC^L(A_0)\}.
\endeqna
It is clear that $\cb^L_{\min}$, $B \subset A$. Now, let $z \in A$. 
Then there exists $w \in \WC$ and $x$, $y \in W$ such that $z=xwy$, 
$L(z)=L(x)+L(w)+L(y)$ and $L(w)=\nu_L$. 

\medskip

Let us first prove that $z \in B$. 
There exists a $0$-dimensional facet $\l$ 
such that $w \in W_\l$. If $L(w_\l) < \nu_L$, then $L(w) \le L(w_\l) < \nu_L$, 
which is impossible. Therefore, $L(w_\l)=\nu_L$, so $\l \in \spe_L(W)$. 
Write $w=w_\l a$, with $a \in W_\l$. Then, since $w_\l$ is the longest element of $W_\l$, 
we get that $\ell(w_\l)=\ell(w)+\ell(a)$, so $L(w)+L(a)=L(w_\l)$, so $L(a)=0$. 
By Proposition \ref{addition L}, it follows that $L(ay)=L(y)$. 
Then $z = x w_\l a y$, and $L(z)=L(x)+L(w)+L(y)=L(x)+L(w_\l)+L(ay)$. 
This shows that $z \in B$. So $A=B$.

\medskip

Let us now prove that $z \in \cb^L_{\min}$. We shall argue by induction on $\ell(x)+\ell(y)$. 
The result being obvious if $\ell(x)+\ell(y)=0$, we assume that $\ell(x)+\ell(y) > 0$. 
By symmetry, we may assume that $x=sx'$, with $s \in S$ and $sx' > x'$. Let $z'=sz$. 
Therefore, $L(z)=L(sx')+L(w)+L(y)=L(s)+L(x')+L(w)+L(y) \ge L(s) + L(x'wy)=L(s) + L(z') \ge L(z')$. 
Consequently, $L(z)=L(s)+L(z')$ and $L(z')=L(x')+L(w)+L(y)$. So $z' \in A$ and, 
by the induction hypothesis, $z' \in \cb^L_{\min}$. 
Two cases may occur:

\medskip

$\bullet$ If $sz' > z'$, then $z=sz' \in \cb^L_{\min}$ by Lemma \ref{cmin addition}, as desired.

\medskip

$\bullet$ If $sz' < z'$, then $L(z) = L(sz') \le L(z')$. Since we have already proved that 
$L(z) \ge L(z')$, this forces $L(s)=0$. Write $z'=a w' b$ with $w' \in \WC$, $L(w')=\nu_L$ and 
$a \add w' \add b$. Since $z=sz' < z'$, this means that $z$ is obtained from the expression 
$aw'b$ by removing a simple reflection $s'$ conjugate to $s$ from a reduced expression of 
$a$, $b$ or $w'$. It it is removed from $a$ or $b$, then $z=a'w'b'$ with $a' \add w' \add b'$, 
so $z \in \cb^L_{\min}$. If it is removed from $w'$, then $z=a w'' b'$ with $L(w'')=L(w')=\nu_L$ 
and $a \add w'' \add b$, so $z \in \cb^L_{\min}$. 

\medskip

Therefore, we have proved that $A=B=\cb^L_{\min}$.

\medskip

It remains to show that $C=\cb^L_{\min}$. Let $z\in\cb^{L}_{\min}=B$. Then there exist $x,y\in W$ and $\la\in\spe_{L}(W)$ such that $z=xw_{\la}y$ and $(x,w_{\la},y)\in\cL_{L}(W)$. In particular we have $(w_{\la},y)\in\cL_{L}(W)$ hence, using Proposition \ref{geom-L}, we get
\begin{equation*}
H^{L}(A_{0},yA_{0})\cap H^{L}(yA_{0},w_{\la}yA_{0})=\emptyset\tag{$\ast$}.
\end{equation*}
Let $\al\in\Phi^{L}$. Since $\la$ is a special point there exists an hyperplane $H_{\al,n_{\la}}$ of weight $L_{\al}$ which contains $\la$. The hyperplane $H_{\al,n_{\la}}$ separates $yA_{0}$ and $w_{\la}A_{0}$ and is of maximal weight, therefore by $(\ast)$ it cannot lie in $H^{L}(A_{0},yA_{0})$ and it follows that it separates  $A_{0}$ and $w_{\la}yA_{0}$. Therefore for all $\mu\in w_{\la}yA_{0}$ we have
$$\begin{array}{ccc}
\sg \mu, \check{\al}\sd>n_{\la}& \mbox{ if $n_{\la}\geq 1$}\\
\sg \mu, \check{\al}\sd<n_{\la}& \mbox{ if $n_{\la}\leq 0$}\\
\end{array}$$
and $w_{\la} y\notin \UC_{\al}^L(A_0)$. But this holds for all $\al\in\Phi^{L}$, thus $w_{\la} yA_{0}\notin \UC^L(A_0)$.  Now we have $(x,w_{\la},y)\in \cL_{L}(W)$ therefore 
$$H^{L}(A_{0},w_{\la}yA_{0})\cap H^{L}(w_{\la}yA_{0},xw_{\la}yA_{0})=\emptyset$$
from where we see that $H_{\al,n_{\la}}$ does not lie $H^{L}(w_{\la}yA_{0},xw_{\la}yA_{0})$. Hence  $x_{\la}w_{\la} yA_{0}\notin \UC_{\al}^L(A_0)$ as required.

\medskip

Let us now prove that $C\subset \cb^L_{\min}$. Let $w\in C$. The idea is to follow the proof of \cite[Proposition 5.5]{Bremke} using $\Phi^{L}$ instead of $\Phi$. The alcove $wA_{0}$ lies in some connected component of
$$V-\bigcup_{\al\in\Phi^{L}} H_{\al,0}.$$
The group  $\Om^{L}_{0}=\sg \sigma_{H_{\al,0}}\mid \al\in\Phi^{L}\sd$ is easily seen to act simply transitively on this set of connected components, therefore there exists $\sigma\in \Omega^{L}_{0}$ such that
$$wA_{0}\subset \cC_{\si}:=\{v\in V\mid \sg v,\check{\al}\sd>0 \text{ for $\al\in \sigma(\De^{L})$}\}$$
This implies that there exist $r$ linearly independent roots $\beta_{1},\ldots,\beta_{r}$ in $\Phi^{L}\cap \Phi^{+}$ such that 
$$\cC_{\si}:=\{v\in V\mid \sg v,\check{\beta_{i}}\sd<0 \text{ for $1\leq i\leq k$, }\sg v,\check{\beta_{i}}\sd>0 \text{ for $k+1\leq i\leq r$}\sd\}.$$
Removing the alcoves which lies in $ \UC^L(A_0)$ we obtain the following $L$-quarter, which is a translate of $\cC_{\si}$:
$$\cC_{\si}':=\{v\in V\mid \sg v,\check{\beta_{i}}\sd<b_{i} \text{ for $1\leq i\leq k$, }\sg v,\check{\beta_{i}}\sd>b_{i} \text{ for $k+1\leq i\leq r$}\sd\}$$
where
$$b_{i}=
\begin{cases}
0& \mbox{ if $1\leq i\leq k$,}\\
1& \mbox{if $k+1\leq i\leq r$ and $L(H_{\beta_{i},0})=L(H_{\beta_{i},1})$,}\\
2& \mbox{otherwise.}
\end{cases}
$$
Let $\la_{\si}$ be the vertex of $\cC_{\si}'$ that is the point of $V$ which satisfies $\sg\la_{\si},\check{\beta_{i}}\sd=b_{i}$ for all $1\leq i\leq r$. Then $\la_{\si}$ is a special point: for $\al\in\Phi^{L}$ we set $n^{\la}_{\al}=\sg \la_{\si}, \check{\al}\sd$. Note that for all $\al\in\Phi^{L}$ we have
\begin{equation*}
\tag{$\dag$}\cC_{\si}\subset V^{+}_{H_{\al,n^{\la}_{\al}}}\text{ if $n^{\la}_{\al}>0$}\quad\text{ and }\cC_{\si}\subset V^{-}_{H_{\al,n^{\la}_{\al}}}\text{ if $n^{\la}_{\al}\leq  0$}.
\end{equation*}
Now let $z\in W$ be such that $z(A_{0})\subset \cC_{\si}'$, $\la\in\ov{z_{\si}A_{0}}$.
We get for $\al\in\Phi^{L}$ (using $(\dag)$)
\begin{itemize}
\item if $H_{\al,n}\in H^{L}(wA_{0},zA_{0})$ then $|n|> |n_{\al}^{\la}|$; 
\item if $H_{\al,n}\in H^{L}(A_{0},zA_{0})$ then $|n|\leq |n_{\al}^{\la}|$;
\item if $H_{\al,n}\in H^{L}(w_{\la}zA_{0},zA_{0})$ then $n=n_{\al}^{\la}$.
\end{itemize}
Finally putting all this together we get that
$$\begin{array}{ccccc}
&H^{L}(A_{0},w_{\la}zA_{0})\cap H^{L}(w_{\la}zA_{0},zA_{0})&=& \emptyset\\
\text{and} & H^{L}(A_{0},zA_{0})\cap H^{L}(zA_{0},wz^{-1}(zA_{0}))&=& \emptyset.\\
\end{array}$$
Hence $w=wz^{-1}w_{\la}w_{\la}z$ and $(wz^{-1},w_{\la},w_{\la}z)\in\cL_{L}(W)$. 
\end{proof}
\begin{Rem}
By direct product, one can easily show that Proposition \ref{c-min} still holds when $W$ is not irreducible.\finl
\end{Rem}


\subsection{The elements of $\cb^{L}_{\min}$}

Keeping the notation of the proof of Proposition \ref{c-min}, every element $\si\in\Om^{L}_{0}$ defines an $L$-quarter $\cC_{\si}$ with vertex $0$ and an $L$-quarter $\cC'_{\si}$ (which is a translate of $\cC_{\si}$) with vertex $\la_{\si}$. We get the following equality
$$\cb^{L}_{\min}(W)=\bigcup_{\si\in\Om^{L}_{0}} \{w\in W\mid wA_{0}\subset \cC'_{\si}\}=\bigcup_{\si\in\Om^{L}_{0}} N^{L}_{\si}(W).$$
We will simply write $N^{L}_{\si}$ if it is clear from the context what the  group $W$ is. Note that any two sets $\cC_{\si},\cC_{\si'}$ are separated by at least one maximal strip, hence the above union is disjoint. In fact, the sets $\ov{\cC_{\si}}$ are the connected components of the closure of $\{\mu\in V\mid \mu \in wA_{0}, w\in \cb^{L}_{\min}\}$.

\bigskip

Let $b_{\si}$ be the unique element such that $\la_{\si}\in\ov{b_{\si}A_{0}}$ and $b_{\si}$ has minimal length in the coset $W_{\la_{\si}}b_{\si}$. For a $0$-dimensional facet $\la$ of an alcove, we set $S^{\circ}_{\la}:=\{s\in S_{\la}\mid L(s)=0\}$  and we denote by $\hw_{\la}$ the element of minimal length in $w_{\la}W_{S^{\circ}_{\la}}$

\begin{Lem}
Every element $w\in\cb^{L}_{\min}$ can be uniquely written under the form $x_{w}a_{w}\hw_{\la_{\si}}b_{\si}$ where $\si\in\Om^{L}_{0}$, $a_{w}\in W_{S_{\la_\si}^{\circ}}$ and $x_{w}\in X_{\la_{\si}}$. 
\end{Lem}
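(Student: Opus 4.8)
The plan is to establish, for each $\si\in\Om^{L}_{0}$, the identity of subsets of $W$
$$N^{L}_{\si}\;=\;X_{\la_{\si}}\cdot W_{S_{\la_{\si}}^{\circ}}\cdot\hw_{\la_{\si}}\,b_{\si}$$
together with the uniqueness of the resulting writing $w=x_{w}a_{w}\hw_{\la_{\si}}b_{\si}$ of each element. Since $\cb^{L}_{\min}=\bigcup_{\si\in\Om^{L}_{0}}N^{L}_{\si}$ is a \emph{disjoint} union, $\si$ is then determined by $w\in\cb^{L}_{\min}$, and the Lemma follows. From now on fix $\si$, put $\la:=\la_{\si}$, and recall that $\hw_{\la}=w_{\la}w_{S_{\la}^{\circ}}$, where $w_{\la}$, $w_{S_{\la}^{\circ}}$ denote the longest elements of $W_{\la}$, $W_{S_{\la}^{\circ}}$; thus $\hw_{\la}\in W_{\la}$ and $w_{\la}\hw_{\la}=w_{S_{\la}^{\circ}}$. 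The uniqueness of the factorization is immediate: if $xa\hw_{\la}b_{\si}=x'a'\hw_{\la}b_{\si}$ with $x,x'\in X_{\la}$ and $a,a'\in W_{S_{\la}^{\circ}}$, then cancelling $b_{\si}$ gives $x(a\hw_{\la})=x'(a'\hw_{\la})$ with $a\hw_{\la},a'\hw_{\la}\in W_{\la}$, whence $x=x'$ and $a\hw_{\la}=a'\hw_{\la}$ (hence $a=a'$) by uniqueness of the decomposition $W=X_{\la}W_{\la}$.

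The geometric core of the argument is a description of the alcoves of $\cC'_{\si}$ whose closure contains $\la$: I claim these are exactly the alcoves $a\hw_{\la}b_{\si}A_{0}$, $a\in W_{S_{\la}^{\circ}}$, and that $a\mapsto a\hw_{\la}b_{\si}A_{0}$ is injective. Since $b_{\si}$ has minimal length in $W_{\la}b_{\si}$, no hyperplane through $\la$ separates $b_{\si}A_{0}$ from $A_{0}$ (Proposition~\ref{length-hyp}); applying $w_{\la}$ carries $b_{\si}A_{0}$ to the opposite side of every hyperplane through $\la$, and $w_{S_{\la}^{\circ}}$ then carries it back across the reflecting hyperplanes of $W_{S_{\la}^{\circ}}$, which have weight $0$. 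Consequently $\hw_{\la}b_{\si}A_{0}$ is separated from $A_{0}$ by every positive-weight hyperplane through $\la$, in particular by every wall of $\cC'_{\si}$. On the other hand, by Convention~\ref{convention} and the explicit description of $\cC'_{\si}$ recalled in the proof of Proposition~\ref{c-min} (its $r$ walls $H_{\be_{i},b_{i}}$, with $b_{i}\in\{0,1,2\}$, are maximal-weight hyperplanes through $\la$), a short computation shows that $A_{0}$ lies on the side of each wall opposite to $\cC'_{\si}$; hence $\hw_{\la}b_{\si}A_{0}$ lies on the $\cC'_{\si}$-side of each wall, so $\hw_{\la}b_{\si}A_{0}\subseteq\cC'_{\si}$ (and its closure contains $\la$ as $\hw_{\la}\in W_{\la}$). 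Finally the weight-$0$ hyperplanes through $\la$ subdivide $\cC'_{\si}$, near $\la$, into $|W_{S_{\la}^{\circ}}|$ alcoves permuted simply transitively by $W_{S_{\la}^{\circ}}$ — here one uses the identification, coming from Lemma~\ref{dot-property} and the surrounding material, of $W_{S_{\la}^{\circ}}$ with the relevant reflection group — which yields the parametrization.

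Granting this, the Lemma reduces to a single geometric assertion. Indeed, let $w\in N^{L}_{\si}$ and decompose $wb_{\si}^{-1}=xv$ with $x\in X_{\la}$, $v\in W_{\la}$, so that $w=xvb_{\si}$ and $wA_{0}=x(vb_{\si}A_{0})$, the closure of $vb_{\si}A_{0}$ containing $\la$. If $vb_{\si}A_{0}\subseteq\cC'_{\si}$, the previous paragraph gives $vb_{\si}A_{0}=a\hw_{\la}b_{\si}A_{0}$ for a unique $a\in W_{S_{\la}^{\circ}}$, hence $v=a\hw_{\la}$ and $w=xa\hw_{\la}b_{\si}$, which is the desired factorization; conversely, to know that the $\si$ attached to $w$ is unique one must check that every element $xa\hw_{\la}b_{\si}$ ($x\in X_{\la}$, $a\in W_{S_{\la}^{\circ}}$) satisfies $x(a\hw_{\la}b_{\si}A_{0})\subseteq\cC'_{\si}$. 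So everything comes down to the claim: \emph{if $x\in X_{\la}$ and $A$ is an alcove with $\la\in\ov{A}$, then $A\subseteq\cC'_{\si}$ if and only if $xA\subseteq\cC'_{\si}$.}

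This equivalence is the main obstacle, since a priori an element of $X_{\la}$ moves $A$ across many hyperplanes, walls of $\cC'_{\si}$ included. My plan is to deduce it from two inputs. First, the construction in the proof of Proposition~\ref{c-min}, carried out with the distinguished tip element $z_{\si}=\hw_{\la}b_{\si}$, yields for $w\in N^{L}_{\si}$ a factorization $w=(wz_{\si}^{-1})\cdot\hw_{\la}\cdot b_{\si}$ with $(wz_{\si}^{-1},\,w_{\la},\,w_{S_{\la}^{\circ}}b_{\si})\in\LC_{L}(W)$ (using $w_{\la}\hw_{\la}=w_{S_{\la}^{\circ}}$ and $L(w_{S_{\la}^{\circ}}b_{\si})=L(b_{\si})$, Proposition~\ref{addition L}). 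Second, an $\LC_{L}$-factorization only constrains the positive-weight hyperplanes (Proposition~\ref{geom-L}), whereas $\hw_{\la}$ differs from $w_{\la}$ by the weight-$0$ element $w_{S_{\la}^{\circ}}$. Keeping careful track of which hyperplanes through $\la$ are "consumed" by the $w_{\la}$-factor (the maximal-weight ones) and which (those of weight $0$) remain available should force the $W_{\la}$-component of $wb_{\si}^{-1}$ to lie in $W_{S_{\la}^{\circ}}\hw_{\la}$, and, read in reverse, give the other implication. I expect this bookkeeping — of the same nature as the computation in the second paragraph but finer — to be the delicate point, and it is presumably where the case analysis of the root systems $\Phi^{L}$ (Lemma~\ref{dot-property}, and especially the type $\tilde{\Crm}$ treated apart in Convention~\ref{convention}) enters.
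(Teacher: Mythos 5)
Your route is substantially different from the paper's, and at the step you yourself call the ``main obstacle'' you outline a plan rather than give a proof. Once the factorization $w=(wz^{-1})\,w_{\la}\,(w_{\la}z)$ with $(wz^{-1},w_{\la},w_{\la}z)\in\LC_L(W)$ from the proof of Proposition~\ref{c-min} is in hand, the paper's argument is a short, purely algebraic weight computation. One writes $w_{\la}z=y'b_{\si}$ with $y'\in W_{\la_{\si}}$ (possible since both $w_{\la}zA_0$ and $b_{\si}A_0$ contain $\la_{\si}$ in their closure and $b_{\si}$ is the minimal coset representative) and $wz^{-1}=x_wx'$ with $x_w\in X_{\la_{\si}}$, $x'\in W_{\la_{\si}}$, and then computes $L(w_{\la}\cdot w_{\la}z)$ in two ways: via the $\LC_L$-factorization as $L(w_{\la})+L(y')+L(b_{\si})$, and via $w_{\la}\cdot w_{\la}z=(w_{\la}y')b_{\si}$ together with $\ell(w_{\la}y')=\ell(w_{\la})-\ell(y')$ as $L(w_{\la})-L(y')+L(b_{\si})$. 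This forces $L(y')=0$, hence $y'\in W_{S^{\circ}_{\la_{\si}}}$; the same applies to $x'$, and the identity $W_{S^{\circ}_{\la_{\si}}}w_{\la_{\si}}W_{S^{\circ}_{\la_{\si}}}=W_{S^{\circ}_{\la_{\si}}}w_{\la_{\si}}$ then produces $a_w$. No hyperplane bookkeeping and no case analysis are needed.

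The gap in your proposal sits in the fourth paragraph: the assertion that for $x\in X_{\la}$ and $\la\in\ov{A}$ one has $A\subseteq\cC'_{\si}$ if and only if $xA\subseteq\cC'_{\si}$ is stated but not established. You propose to deduce it from the $\LC_L$-factorization by an unspecified ``bookkeeping'' of hyperplanes and predict that a case-by-case analysis of $\Phi^{L}$ will be required. That prediction is off: the case analysis is what the paper uses for Lemma~\ref{dot-property}, not for the present statement, and citing Lemma~\ref{dot-property} as an input is in any event a forward reference (it comes after this Lemma and relies on it). You are also proving strictly more than asked: the equality $N^{L}_{\si}=X_{\la_{\si}}\,W_{S^{\circ}_{\la_{\si}}}\,\hw_{\la_{\si}}b_{\si}$ needs both directions of your equivalence, whereas the Lemma only needs that the $W_{\la_{\si}}$-component of $wb_{\si}^{-1}$ lies in $W_{S^{\circ}_{\la_{\si}}}\hw_{\la_{\si}}$ when $w\in N^{L}_{\si}$ --- exactly what the two-sided computation $L(y')=L(x')=0$ delivers, with no geometry beyond Proposition~\ref{c-min}. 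As it stands, the plan in your last paragraph would have to be carried out in full for the proposal to count as a proof.
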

\begin{proof}
Let $w\in\cb^{L}_{\min}$. We know by the previous proof that $w=wz^{-1}w_{\la}w_{\la}z$ and $(wz^{-1},w_{\la},w_{\la}z)\in\cL_{L}(W)$ where $z$ is such that $z(A_{0})\subset \cC_{\si}'$ and  $\la_{\si}\in\ov{zA_{0}}$. Both $b_{\si}A_{0}$ and $w_{\la}zA_{0}$ contains $\la_{\si}$ in their closure hence they lie in the same right coset with respect to $W_{\la_{\si}}$. Since $b_{\si}$ has minimal length in $W_{\la_{\si}}b_{\si}$ there exists $y'\in W_{\la_{\si}}$ such that $w_{\la}z=y'b_{\si}$ and $(y',b_{\si})\in\cL(W)$. Next let $x_{w}$ be the element of minimal length in $(wz^{-1})W_{\la_{\si}}$ and let $x'$ be such that $wz^{-1}=x_{w}x'$. Assume for a moment that $x',y'\in W_{S_{\la_{\si}}^{\circ}}$. Then 
$$x'w_{\la_{\si}}y'\in W_{S^{\circ}_{\la_{\si}}}w_{\la_{\si}}W_{S^{\circ}_{\la_{\si}}}=W_{S^{\circ}_{\la_{\si}}}w_{\la_{\si}}$$ 
and we write $x'w_{\la_{\si}}y'=a_{w}\hw_{\la_{\si}}$. Finally $w$ can be written uniquely as $x_{w}a_{w}w^{\circ}_{\la}b_{\si}$. 

\bigskip

Let us now prove that $x',y'\in W_{S_{\la_{\si}}^{\circ}}$ that is $L(x')=L(y')=0$. Recall that $b_{\si}$ has minimal length in $W_{\la}b_{\si}$. On the one hand we have
$$L(w_{\la}w_{\la}z)=L(w_{\la})+L(w_{\la}z)=L(w_{\la})+L(y'b_{\si})=L(w_{\la})+L(y')+L(b_{\si}).$$
On the other hand
$$L(w_{\la}w_{\la}z)=L(w_{\la}y'b_{\si})=L(w_{\la}y')+L(b_{\si})=L(w_{\la})-L(y')+L(b_{\si})$$
hence $L(y')=0$. Similarly one can show that $L(x')=0$. The proof is complete
\end{proof}
%

\bigskip

Later on, we will need the following result. We put it and prove it here because it uses the notation introduced in this section. 
\begin{Lem}
\label{dot-property}
Let $w=x_{w}a_{w}\hw_{\la_{\si}}b_{\si}$ where $\si\in\Om^{L}_{0}$, $a_{w}\in W_{S_{\la_\si}^{\circ}}$ and $x_{w}\in X_{\la_{\si}}$. 
Then we have
\begin{enumerate}
\item $x_{w}\add a_{w}\add w^{\circ}_{\la_{\si}}\add b_{\si}$;
\item if $w'<a_{w}\hw_{\la}b_{\si}$ and $w'\in\bc^{L}_{\min}$ then either $w'=a_{w'}\hw_{\la_{\si}}b_{\si}$ where $a_{w'}<a_{w}$ or  $w'=x_{w'}a_{w'}\hw_{\la_{\si'}}y_{\si'}$ where $y_{\si'}<b_{\si}$.
\end{enumerate}
\end{Lem}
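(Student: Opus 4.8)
The plan is to argue geometrically, converting every $\add$ into a disjointness statement about separating hyperplanes via Propositions~\ref{length-hyp} and~\ref{geom-L}, and to rely on the description of $\cb^L_{\min}$ in terms of the $L$-quarters $\cC'_\si$ given by Proposition~\ref{c-min} and by the preceding factorisation lemma. Write $\la=\la_\si$, $I=S_{\la_\si}$ and $I^\circ=S^\circ_{\la_\si}$; recall that $b_\si$ is minimal in $W_Ib_\si$, that $x_w\in X_{\la_\si}$ is minimal in $x_wW_I$, and that $\hw_{\la_\si}$ is minimal in $w_\la W_{I^\circ}$. The one fact I would isolate at the start is a rigidity property of $\hw_{\la_\si}$: since conjugate generators have equal weight (property~\eqref{st}), the subset $I^\circ$ of $I$ is stable under the opposition involution of the Coxeter graph of $W_I$ induced by $w_\la$, so $w_\la W_{I^\circ}=W_{I^\circ}w_\la$; hence $\hw_{\la_\si}$ is simultaneously the minimal-length left and right $W_{I^\circ}$-coset representative inside this double coset, and in particular $a_w\add\hw_{\la_\si}$ for every $a_w\in W_{I^\circ}$. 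I would also use the standard equivalence: $b_\si$ is minimal in $W_Ib_\si$ if and only if no hyperplane through $\la$ separates $A_0$ from $b_\si A_0$.

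For part~(1): since $a_w\hw_{\la_\si}\in W_I$ and $x_w\in X_{\la_\si}$, one has $x_w\add a_w\hw_{\la_\si}$, which together with the rigidity property gives $x_w\add a_w\add\hw_{\la_\si}$; and since $a_w\hw_{\la_\si}\in W_I$ and $b_\si$ is minimal in $W_Ib_\si$, one has $a_w\hw_{\la_\si}\add b_\si$. It remains to chain these, i.e. to show $x_wa_w\hw_{\la_\si}\add b_\si$, and by Proposition~\ref{length-hyp}(2) this reduces to $H(A_0,b_\si A_0)\cap H(b_\si A_0,wA_0)=\emptyset$. Here one uses that $H(A_0,b_\si A_0)$ contains no hyperplane through $\la$, while --- since $wA_0\subset\cC'_\si$, a simplicial cone with apex $\la$ whose walls pass through $\la$ --- every hyperplane in $H(b_\si A_0,wA_0)$ is either a wall of $\cC'_\si$ through $\la$, hence not in $H(A_0,b_\si A_0)$, or a hyperplane lying strictly beyond $\la$ in the cone, which cannot separate $A_0$ from $b_\si A_0$. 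This gives $\ell(w)=\ell(x_w)+\ell(a_w)+\ell(\hw_{\la_\si})+\ell(b_\si)$.

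For part~(2): first note that $w'':=a_w\hw_{\la_\si}b_\si$ belongs to $\cb^L_{\min}$ --- indeed $a_w\hw_{\la_\si}\in\WC$, $L(a_w\hw_{\la_\si})=\nu_L$ (by part~(1), since $L(a_w)=0$), and $a_w\hw_{\la_\si}\add b_\si$ --- and that $w''A_0$ is the alcove of $\cC'_\si$ adjacent to its vertex $\la$ (it shares $\la$ with $b_\si A_0$ because $a_w\hw_{\la_\si}\in W_I$). Given $w'<w''$ with $w'\in\cb^L_{\min}$, the factorisation lemma writes $w'=x_{w'}a_{w'}\hw_{\la_{\si'}}b_{\si'}$, and the claim is that either $x_{w'}=1$, $\si'=\si$, $b_{\si'}=b_\si$, $a_{w'}<a_w$, or else $b_{\si'}<b_\si$. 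I would prove this by comparing $w'$ and $w''$ in the Bruhat order, using the geometric membership criterion $w'A_0\not\subset\UC^L(A_0)$ of Proposition~\ref{c-min}: the reflections deleted from a reduced expression $a_w\cdot\hw_{\la_\si}\cdot b_\si$ of $w''$ either all come from the $a_w$-block, in which case $w'A_0$ still lies in the star of $\la$ and in $\cC'_\si$, giving the first alternative; or at least one comes from the $\hw_{\la_\si}b_\si$-block, in which case --- using that $w''A_0$ sits at the corner of $\cC'_\si$ and that the precise shape of $\cC'_\si$ near $\la$ (the constants $b_i\in\{0,1,2\}$) is governed by the classification of $\Phi^L$ --- the corner alcove of the quarter containing $w'A_0$ must be strictly closer to $A_0$, that is $b_{\si'}<b_\si$. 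Both parts thus ultimately rest on classifying the root systems $\Phi^L$, which I would carry out at the very beginning of the proof.

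The step I expect to be the main obstacle is this last, type-by-type, verification in part~(2): controlling how the Bruhat order interacts with the corner combinatorics of the $L$-quarters. The delicate configurations are those where $\Phi^L$ is reducible and those where $W$ is of type $\tilde{\Crm}_{r}$ with $L(t)\neq L(t')$, since there $\spe_L(W)=\sg\Phi^L\sd$ is strictly smaller than the coweight-type lattice and the value $b_i=2$ genuinely occurs; in each such case one must check directly that the only way to strictly shrink $w''$ inside $\cb^L_{\min}$ while keeping the corner $(\si,b_\si)$ fixed is to shrink $a_w$. This is why the proof is organised around an explicit list of the possibilities for $(\Phi^L,L|_{\Phi^L})$.
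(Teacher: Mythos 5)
Your overall strategy — reduce each $\add$ to a statement that a suitable intersection of sets of separating hyperplanes is empty, and identify $\Phi^L$ by a type-by-type classification — is exactly the paper's strategy, and the preliminary facts you isolate (the rigidity $w_\la W_{I^\circ}=W_{I^\circ}w_\la$, the characterisation of $X_{\la_\si}$, and the minimality of $b_\si$) are all correct and used.

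The gap is in the one place where you invoke the geometry of the cone. For part~(1) you reduce to
$H(A_0,b_\si A_0)\cap H(b_\si A_0,wA_0)=\emptyset$
and then assert that any hyperplane in $H(b_\si A_0,wA_0)$ not passing through $\la=\la_\si$ ``lies strictly beyond $\la$ in the cone, which cannot separate $A_0$ from $b_\si A_0$.'' This sentence is precisely the hard content of the lemma, and it is false in general unless one controls the value $\langle\la_\si,\check\beta\rangle$ for those directions $\beta$ of positive weight $0$ whose zero-level hyperplane $H_{\beta,0}$ actually meets the cone. The paper isolates exactly this control as a standalone statement (Claim 3 / 3' in its proof of Lemma~\ref{dot-property}): for any such $\beta$, $\langle\la_\si,\check\beta\rangle$ is either an integer or lies in $(0,1)$. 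With this in hand, the argument you sketch closes (integer case: the separating hyperplane passes through $\la_\si$; $(0,1)$ case: $A_0$ and $b_\si A_0$ sit in the same unit $\beta$-strip, so no hyperplane of direction $\beta$ can be in $H(A_0,b_\si A_0)$). Without it, the argument has no leverage: for instance a direction $\beta\notin\Phi^L$ with $\langle\la_\si,\check\beta\rangle$ large and non-integral could a priori produce a hyperplane separating $A_0$ from $b_\si A_0$ and also lying between the corner alcove of $\cC'_\si$ and $wA_0$. You gesture at ``classifying $\Phi^L$,'' but a classification on its own proves nothing until you say what invariant of $\Phi^L$ you are verifying. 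The proposal does not name this invariant, and that is the step that ``would fail'' if one tried to turn it into a proof.

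Part~(2) has a similar problem. The dichotomy you draw — whether the deleted reflections come from the $a_w$-block or from the $\hw_{\la_\si}b_\si$-block — does not line up with the cases that actually matter. Reflections can be deleted from $\hw_{\la_\si}$ without $b_{\si'}$ shrinking, and the subword relation between $w'$ and $w''$ does not directly tell you whether $W_{\la_{\si'}}=W_{\la_\si}$. The paper's proof instead splits on $W_{\la_{\si'}}=W_{\la_\si}$ versus $W_{\la_{\si'}}\neq W_{\la_\si}$; in the first case it is a standard Bruhat-order calculation in the quotient $W_{\la_\si}\backslash W$, and in the second it shows that the minimal coset representative $z'$ of $W_{\la_\si}w'$ satisfies $z'<b_\si$ and that $H(b_{\si'}A_0,z'A_0)\cap H(A_0,b_{\si'}A_0)=\emptyset$ by a separation argument about $\la_{\si'}$ and the special point of $z'A_0$, the last step quoting the hyperplane computation already done for part~(1). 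Your sketch of the second alternative (``the corner alcove of the quarter containing $w'A_0$ must be strictly closer to $A_0$'') is again the conclusion rather than a proof of it.

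In short: same route, correct reductions, correct preliminary observations, but the two statements that require the actual case-by-case work — the integrality/boundedness of $\langle\la_\si,\check\gamma\rangle$ for $\gamma$ with $H_{\gamma,0}\cap\cC_\si\neq\emptyset$ in part~(1), and the emptiness of $H(b_{\si'}A_0,z'A_0)\cap H(A_0,b_{\si'}A_0)$ in part~(2) — are asserted rather than identified and proved.
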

Eventhough this result might look fairly natural, it is in fact quite long to prove and involved a case by case analysis.
\begin{proof}
We prove 1. The fact that $a_{w}\add w^{\circ}_{\la_{\si}}\add b_{\si}$ is clear by definition therefore we only need to show that $x_{w}\add (a_{w}w^{\circ}_{\la_{\si}} b_{\si})$. To this end we show that
$$D:=H(A_{0},a_{w}w^{\circ}_{\la_{\si}}b_{\si}A_{0})\cap H(a_{w}w^{\circ}_{\la_{\si}}b_{\si}A_{0},x_{w}a_{w}w^{\circ}_{\la_{\si}}b_{\si}A_{0})=\emptyset.$$
{\bf Claim 1.} {\it If $\sg\la_{\si},\check{\beta}\sd\in\nZ$ then $H_{\beta,n}\notin D$ for all $n\in\nZ$. }

\begin{proof}
Let $n_{\b}=\sg\la_{\si},\check{\beta}\sd$ and assume that $n_{\b}> 0$, the case $n_{\b}\leq 0$ is similar. Since $x_{w}\in X_{\la_{\si}}$, there are no hyperplane containing $\la_{\si}$ which lies in 
$$H(a_{w}w^{\circ}_{\la_{\si}}b_{\si}A_{0},x_{w}a_{w}w^{\circ}_{\la_{\si}}b_{\si}A_{0}).$$
Hence $H_{\b,n_{\b}}\notin D$. Now let $n\neq n_{\b}$. Then since $\la_{\si}\in \ov{a_{w}w^{\circ}_{\la_{\si}}b_{\si}A_{0}}$, we have
$$n<\sg \mu,\check{\b} \sd<n_{\b}+1\text{ for all $\mu\in a_{w}w^{\circ}_{\la_{\si}}b_{\si}A_{0}$}.$$
If  $n>n_{\b}$ we have $H_{\b,n}\notin H(A_{0},a_{w}w^{\circ}_{\la_{\si}}b_{\si}A_{0})$ and $H_{\b,n}\notin D$. If $n<n_{\b}$ then $H_{\b,n}\notin H(a_{w}w^{\circ}_{\la_{\si}}b_{\si}A_{0},x_{w}a_{w}w^{\circ}_{\la_{\si}}b_{\si}A_{0})$ and $H_{\b,n}\notin D$.  
\end{proof}
\noindent
{\bf Claim 2.} {\it Let $\beta\in\Phi^{+}$ be such that there exists a hyperplane of direction $\beta$ in $D$. Then $H_{\beta,0}\cap \cC_{\si}\neq\emptyset$ }
\begin{proof}
 Let $\beta\in\Phi^{+}$ be such that there exists a hyperplane of direction $\beta$ in $D$. By the previous claim, we know that $\sg \la_{\si},\check{\b}\sd\notin \nZ$. Let $n\in\nZ$ be such that $n<\sg \la_{\si},\check{\b}\sd<n+1$. We will assume that $n>0$. (The case $n\leq 0$ is similar.) Note that we must have $H_{\beta,n}\cap \cC'_{\si}\neq 0$. Translating by $-\la_{\si}$ we get 
$$t_{-\la_{\si}}(H_{\beta,n})\cap \cC_{\si}\neq\emptyset.$$
Let $x\in t_{-\la_{\si}}(H_{\beta,n})\cap \cC_{\si}$. Note that we have $\sg x,\check{\beta}\sd=n-\sg \la_{\si},\check{\b} \sd$. Let 
$$\delta=\frac{\sg \la_{y},\check{\b} \sd -n}{\sg \la_{y},\check{\b}\sd}<1.$$
Then $0 < \d < 1$ and an easy calculation to show that 
$x+\d \la_{\si}\in H_{\beta,0}\cap \cC_{\si}$.
\end{proof}

\noindent
{\bf Claim 3.} {\it  Let $\beta\in\Phi^{+}$ be such that $H_{\beta,0}\cap \cC_{\si}\neq \emptyset$. Then we have either
\begin{enumerate}
\item $\sg \la_{\si},\check{\beta} \sd \in\nZ$,
\item $0< \sg \la_{\si},\check{\beta} \sd <1 $.
\end{enumerate}
}

\medskip
\noindent
We now prove that Claims (1)--(3) implies that $D=\emptyset$.  By Claim 2, the only hyperplanes that can lie in $D$ are those of the form $H_{\beta,n}$ where $H_{\beta,0}\cap \cC_{\si}\neq \emptyset$. But then Claim 3 implies that we have either 
\begin{enumerate}
\item $\sg \la_{\si},\beta \sd \in\nZ$,
\item $0< \sg \la_{\si},\beta \sd <1 $.
\end{enumerate}
If we are in case (1), we have $H_{\beta,n}\notin D$ by Claim 1. If we are in case (2), then the alcove $a_{w}w^{\circ}_{\la_{\si}}b_{\si}A_{0}$,  which contains $\la_{\si}$ in its closure, must satisfies
$$a_{w}w^{\circ}_{\la_{\si}}b_{\si}A_{0}\subset\{x\in V\mid 0<\sg x,\check{\beta}\sd<1 \}.$$
But so does $A_{0}$, therefore there are no hyperplane of direction $\beta$ which lies on $H(A_{0},a_{w}w^{\circ}_{\la_{\si}}b_{\si}A_{0})$ and $H_{\beta,n}\notin D$. Thus $D=\emptyset$ as required.\\

\medskip
\noindent

%


\bigskip
\noindent

It remains to prove Claim 3. We will proceed by a case by case analysis but first we want to express Claim 3 in a form which is easier to check. To do so, we need to introduce some more notation. \\

Any $\si\in\Omega^{L}_{0}$ defines a partition $\De^{+}_{\si}\cup \De_{\si}^{-}$ of $\De^{L}$ where 
$$\De_{\si}^{+}=\{\al\in\De^{L}\mid \al\si\in \Phi^{+}\} \text{ and } \De_{\si}^{-}=\{\al\in\De^{L}\mid \al\si\in \Phi^{-}\}.$$
\begin{Rem}Note that we can obtain all partition of $\De^{L}$ in this way, but that two distinct $\si$ might give rise to the same partition.\finl 
\end{Rem}
To such a partition, we associate $\la_{\De^{+}_{\si},\De^{-}_{\si}}\in V$ defined by
$$\sg \la_{\De^{+}_{\si},\De^{-}_{\si}},\check{\al}\sd=0  \text{ if $\al\in\De^{-}_{\si}$ and }  \sg \la_{\De^{+}_{\si},\De^{-}_{\si}},\check{\al}\sd=b_{\al}  \text{ if $\al\in\De^{+}_{\si}$}$$
where
$$b_{\al}=
\begin{cases}
1& \mbox{if $k+1\leq i\leq r$ and $L(H_{\al,0})=L(H_{\al,1})$};\\
2& \mbox{otherwise}.
\end{cases}
$$
Then we have $\la_{\si}=(\la_{\De^{+}_{\si},\De^{-}_{\si}})\si$. Claim 3 is then easily seen to be equivalent to the following statement, by applying $\si^{-1}$. (In the expression $\cC_{1}$, the 1 denotes the identity of $\Om^{L}_{0}$.)

\bigskip
\noindent
{\bf Claim 3'.} {\it  Let $\gamma\in\Phi^{+}$ such that $H_{\gamma,0}\cap \cC_{1}$ and let $\si\in\Om_{0}^{L}$. Then we have either
\begin{enumerate}
\item $\sg \la_{\tDe^{+}_{\si},\tDe^{-}_{\si}}, \check{\gamma} \sd \in\nZ$
\item $0< \sg \la_{\tDe^{+}_{\si},\tDe^{-}_{\si}},\check{\gamma} \sd 
<1 \text{ if $\gamma\si\in \Phi^{+}$}$
\item $-1< \sg \la_{\tDe^{+}_{\si},\tDe^{-}_{\si}},\check{\gamma} \sd <0 \text{ if $\gamma\si\in \Phi^{-}$}$
\end{enumerate}
}

\begin{proof}
As mentioned earlier, we proceed by a case by case analysis. Note that it is enough to prove the claim for $\gamma\notin\Phi^{L}$, since for all $\gamma\in\Phi^{L}$ we have $\sg \la_{\tDe^{+}_{\si},\tDe^{-}_{\si}},\check{\gamma} \sd \in\nZ$ as $\la_{\tDe^{+}_{\si},\tDe^{-}_{\si}}$ is a $L$-special point. 

\bigskip
 \noindent
{\bf Type $\tilde{\Grm}_{2}$.} It is a straightforward verification. \\

\noindent
{\bf Type $\tilde{\Frm}_{4}$}. Let $V=\nR^{4}$ with orthonormal basis 
$(\e_{i})_{1\leq i\leq 4}$. The root sytem $\Phi$ of type $\Frm_{4}$ consists of $24$ long roots and $24$ short roots:
$$\pm\e_{i}\pm\e_{j}\text{ and } \pm\e_{i}, \frac{1}{2}(\pm\e_{1}\pm\e_{2}\pm\e_{3}\pm\e_{4}).$$
Assume that $S^{\circ}=\{s_{1},s_{2}\}$. We get that $\Phi^{L}$ is of type $\Drm_{4}$ and consists of the roots $\pm\e_{i}\pm\e_{j}$. We choose the following simple system:
$$\De^{L}=\{\e_{1}-\e_{2},\e_{2}-\e_{3},\e_{3}-\e_{4}, \e_{3}+\e_{4}\}.$$
We have $\cC_{1}=\{x\in V| \sg x,\check{\al}\sd>0, \text{for all $\al\in\De^{L}$}\}$. In other words
$$\begin{cases}
x_{1}-x_{2}&>0\\
x_{2}-x_{3}&>0\\
x_{3}-x_{4}&>0\\
x_{3}+x_{4}&>0\\
\end{cases}
$$
for all $x_{1},x_{2},x_{3},x_{4}\in \cC_{1}$. In particular, we have $x_{1}>x_{2}>x_{3}>|x_{4}|$. The first step is to determine the set $\fB$ of roots $\gamma\in\Phi\backslash \Phi^{L}$ which satisfies 
$$H_{\gamma,0}\cap \cC_{1}\neq\emptyset.$$
We find 
$$\fB=\{\pm\frac{1}{2}(\e_{1}-\e_{2}-\e_{3}+\e_{4}), \pm\frac{1}{2}(\e_{1}-\e_{2}-\e_{3}-\e_{4}) \}.$$
The set of points $\{\la_{\De^{+}_{\si},\De^{-}_{\si}}\mid \si\in\Om^{L}_{0}\}$ is the set of points $(x_{1},x_{2},x_{3},x_{4})\in V$ which are solutions to the systems
$$\begin{cases}
x_{1}-x_{2}&=\delta_{1}\\
x_{2}-x_{3}&=\delta_{2}\\
x_{3}-x_{4}&=\delta_{3}\\
x_{3}+x_{4}&=\delta_{4}\\
\end{cases}
$$
where $\delta_{i}=0$ or $1$. Claim 3' then follows by a straightforward computations. We find that $\sg \la_{\De^{+}_{\si},\De^{-}_{\si}},\check{\gamma}\sd\in\nZ$ in all cases.\\

\noindent
Assume that $S^{\circ}=\{t_{1},t_{2},t_{3}\}$. We get that $\Phi^L$ is of type $D_{4}$ and consists of the roots $\pm\e_{i}, \frac{1}{2}(\pm\e_{1}\pm\e_{2}\pm\e_{3}\pm\e_{4})$. We choose the following simple system:
$$\De^{L}=\{\frac{1}{2}(\e_{1}-\e_{2}-\e_{3}-\e_{4}),\e_{2},\e_{3}, \e_{4}\}.$$
We have $\cC_{1}=\{x\in V| \sg x,\check{\al}\sd>0, \text{for all $\al\in\De^{L}$}\}$. 
The set $\fB$ of roots $\gamma\in\Phi\backslash \Phi^{L}$ which satisfies  $H_{\gamma,0}\cap \cC_{0}^{L}\neq\emptyset$ is 
$$\fB=\{\pm(\e_{2}-\e_{3}), \pm(\e_{3}-\e_{4}), \pm(\e_{2}-\e_{4})\}.$$
The set of points $\la_{\De^{+}_{\si},\De^{-}_{\si}}$ is the set of points $(x_{1},x_{2},x_{3},x_{4})\in V$ which are solutions to the systems
$$\begin{cases}
x_{1}-x_{2}-x_{3}-x_{4}&=\delta_{1}\\
x_{2}&=\delta_{2}\\
x_{3}&=\delta_{3}\\
x_{4}&=\delta_{4}\\
\end{cases}
$$
where $\delta_{i}=0$ or $\frac{1}{2}$ (since for all $\al\in\De^{L}$ we have $\check{\al}=2\al$). Let $\gamma=\e_{i}-\e_{j}$ where $j>i>1$ and let $\si\in\Om^{0}_{L}$. Then $\si$ defines a partition of $\De^{L}$, which in turns defines the $\delta$'s. There are 3 cases to consider:
\begin{itemize}
\item Suppose that $\delta_{i}=\delta_{j}$. Then $\sg\la_{\De^{+}_{\si},\De^{-}_{\si}},\check{\gamma}\sd=x_{i}-x_{j}=0\in\nZ$ as required;
\item Suppose that $\frac{1}{2}=\delta_{i}>\delta_{j}=0$. Then $\si$ sends $\e_{i}$ to a positive root and $\e_{j}$ to a negative one. Hence it sends $\gamma$ to a positive root. We get 
$$\sg\la_{\De^{+}_{\si},\De^{-}_{\si}},\check{\gamma}\sd=x_{i}-x_{j}=\frac{1}{2}$$ as required.
\item Suppose that $\frac{1}{2}=\delta_{j}>\delta_{i}=0$. Then $\si$ sends $\e_{i}$ to a negative root and $\e_{j}$ to a positive one. Hence it sends $\gamma$ to a negative root. We get $$\sg\la_{\De^{+}_{\si},\De^{-}_{\si}},\check{\gamma}\sd=x_{i}-x_{j}=-\frac{1}{2}$$ as required.
\end{itemize}

\vspace{.3cm}
\noindent
{\bf Type $\tilde{\Brm}_{n}$}. Let $V=\nR^{n}$ with basis $(\e_{i})_{1 \le i \le n}$. The root sytem $\Phi$ of type $\Brm_{n}$ consists of $2n$ short roots $\pm\e_{i}$ and $2n(n-1)$ long roots roots $\pm\e_{i}\pm\e_{j}$. \\

\noindent
Assume that $S^{\circ}=\{t\}$. We get that $\Phi^L$ of type $\Drm_{n}$ and consists of the roots $\pm\e_{i}\pm\e_{j}$. We choose the following simple system:
$$\De^{L}=\{\e_{1}-\e_{2},\ldots,\e_{n-1}-\e_{n}, \e_{n-1}+\e_{n}\}.$$
The set $\fB$ of roots $\gamma\in\Phi\backslash \Phi^{L}$ which satisfies  $H_{\gamma,0}\cap \cC_{1}\neq\emptyset$ is 
$$\fB=\{\pm\e_{n} \}.$$
The set of points $\la_{\De^{+}_{\si},\De^{-}_{\si}}$ is the set of points $(x_{1},x_{2},x_{3},x_{4})\in V$ which are solutions to the systems
$$\begin{cases}
x_{1}-x_{2}&=\delta_{1}\\
x_{2}-x_{3}&=\delta_{2}\\
\qquad\vdots&\\
x_{n-1}-x_{n}&=\delta_{n-1}\\
x_{n-1}+x_{n}&=\delta_{n}\\
\end{cases}
$$
where $\delta_{i}=0$ or $1$. This implies that $x_{n}=-1/2,0$ or $1/2$. Therefore we get
$$\sg\la_{\De^{+}_{\si},\De^{-}_{\si}},\check{\e}_{n} \sd =2(\la_{\De^{+}_{\si},\De^{-}_{\si}},\e_{n})=-1,0\text{ or } 1$$
as required.\\

\noindent
Assume that $I_{0}=\{s_{1},\ldots,s_{n}\}$. We get that $\Phi^L$ of type $(\Arm_{1})^{n}$ and consists of the roots $\pm\e_{i}$. We choose the following simple system:
$$\De^{L}=\{\e_{i}\}.$$
The set $\fB$ of roots $\gamma\in\Phi\backslash \Phi^{L}$ which satisfies  $H_{\gamma,0}\cap \cC_{1}\neq\emptyset$ is 
$$\fB=\{\e_{i}-\e_{j}| i<j \}$$
The set of points $\la_{\De^{+}_{\si},\De^{-}_{\si}}$ is the set of points $(x_{1},x_{2},x_{3},x_{4})\in V$ which are solutions to the systems
$$\begin{cases}
x_{1}&=\delta_{1}\\
x_{2}&=\delta_{2}\\
\qquad\vdots&\\
x_{n}&=\delta_{n}\\
\end{cases}
$$
where $\delta_{i}=0$ or $\frac{1}{2}$. Let $\gamma=\e_{i}-\e_{j}$ where $j>i$ and let $\si\in\Om^{0}_{L}$. Then $\si$ defines a partition of $\De^{L}$, which in turns defines the $\delta$'s. There are 3 cases to consider:
\begin{itemize}
\item Suppose that $\delta_{i}=\delta_{j}$. Then $\sg\la_{\De^{+}_{\si},\De^{-}_{\si}},\check{\gamma}\sd=x_{i}-x_{j}=0\in\nZ$ as required;
\item Suppose that $\frac{1}{2}=\delta_{i}>\delta_{j}=0$. Then $\si$ sends $\e_{i}$ to a positive root and $\e_{j}$ to a negative one. Hence it sends $\gamma$ to a positive root. We get 
$$\sg\la_{\De^{+}_{\si},\De^{-}_{\si}},\check{\gamma}\sd=x_{i}-x_{j}=\frac{1}{2}$$ as required.
\item Suppose that $\frac{1}{2}=\delta_{j}>\delta_{i}=0$. Then $\si$ sends $\e_{i}$ to a negative root and $\e_{j}$ to a positive one. Hence it sends $\gamma$ to a negative root. We get $$\sg\la_{\De^{+}_{\si},\De^{-}_{\si}},\check{\gamma}\sd=x_{i}-x_{j}=-\frac{1}{2}$$ as required.
\end{itemize}

\vspace{.3cm}
\noindent
{\bf Type $\tilde{\Crm}_{n}$}. Let $V=\nR^{n}$ with orthonormal basis $(\e_i)_{1 \le i \le n}$. The root sytem $\Phi$ of type $\Crm_{n}$ consists of $2n$ long roots $\pm 2\e_{i}$ and $2n(n-1)$ short roots roots $\pm\e_{i}\pm\e_{j}$. \\

\noindent
Assume that $I_{0}=\{t'\}$. Then $\Phi^L=\Phi$ and the statement is trivial since $\Phi\backslash \Phi^{L}=\emptyset$. \\

\noindent
Assume that $I_{0}=\{s_{1},\ldots,s_{n-1}\}$. We get that $\Phi^L$ is of type $(\Arm_{1})^{n}$ and consists of the roots $\pm2\e_{i}$. We choose the following simple system:
$$\De^{L}=\{2\e_{i}\}.$$
The set $\fB$ of roots $\gamma\in\Phi\backslash \Phi^{L}$ which satisfies  $H_{\gamma,0}\cap \cC_{1}\neq\emptyset$ is 
$$\fB=\{\e_{i}-\e_{j}| i<j \}$$
The set of points $\la_{\De^{+}_{\si},\De^{-}_{\si}}$ is the set of points $(x_{1},\ldots,x_{n})\in V$ which are solutions to the systems
$$\begin{cases}
x_{1}&=\delta_{1}\\
x_{2}&=\delta_{2}\\
\qquad\vdots&\\
x_{n}&=\delta_{n}\\
\end{cases}
$$
where $\delta_{i}=0$ or $1$ if $L(t)=L(t')$ or where $\delta_{i}=0$ or $2$ if $L(t)>L(t')$. We find that $\sg\la_{\De^{+}_{\si},\De^{-}_{\si}},\check{\gamma}\sd\in\nZ$ in all cases.\\

\noindent
Assume that $I_{0}=\{s_{1},\ldots,s_{n-1},t'\}$. It is the same thing as the previous case, except that the $\delta$'s only take values $0$ or $2$. \\

\noindent
Assume that $I_{0}=\{t,t'\}$. We get that $\Phi^L$ is of type $D_{n}$ and consists of the roots $\pm\e_{i}\pm \e_{j}$. We choose the following simple system:
$$\De^{L}=\{\e_{1}-\e_{2},\ldots,\e_{n-1}-\e_{n}, \e_{n-1}+\e_{n}\}.$$
The set $\fB$ of roots $\gamma\in\Phi\backslash \Phi^{L}$ which satisfies  $H_{\gamma,0}\cap \cC_{1}\neq\emptyset$ is 
$$\fB=\{\pm2\e_{n} \}.$$
The set of points $\la_{\De^{+}_{\si},\De^{-}_{\si}}$ is the set of points $(x_{1},\ldots,x_{n})\in V$ which are solutions to the systems
$$\begin{cases}
x_{1}-x_{2}&=\delta_{1}\\
x_{2}-x_{3}&=\delta_{2}\\
\qquad\vdots&\\
x_{n-1}-x_{n}&=\delta_{n-1}\\
x_{n-1}+x_{n}&=\delta_{n}\\
\end{cases}
$$
where $\delta_{i}=0$ or $1$. Let $\gamma=2\e_{n}$ and let  $\si\in\Om^{0}_{L}$. Then $\si$ defines a partition of $\De^{L}$, which in turns defines the $\delta$'s. There are 3 cases to consider:
\begin{itemize}
\item Suppose that $\delta_{n-1}=\delta_{n}$. Then $\sg\la_{\De^{+}_{\si},\De^{-}_{\si}},\check{\gamma}\sd=x_{n}=0\in\nZ$ as required;
\item Suppose that $\frac{1}{2}=\delta_{n-1}>\delta_{n}=0$. Then $\si$ sends $2\e_{n}=\gamma$ to a negative root and we have
$$\sg\la_{\De^{+}_{\si},\De^{-}_{\si}},\check{\gamma}\sd=x_{n}=-\frac{1}{2}$$ as required.
\item Suppose that $\frac{1}{2}=\delta_{n}>\delta_{n-1}=0$.  Then $\si$ sends $2\e_{n}=\gamma$ to a positive root and we have
$$\sg\la_{\De^{+}_{\si},\De^{-}_{\si}},\check{\gamma}\sd=x_{n}=\frac{1}{2}$$ as required.\end{itemize}

The proof of Claim 3' (hence of Statement (1)) is complete. 
\end{proof}

We now prove (2). Let $w'<a_{w}\hw_{\la_{\si}}b_{\si}$ be such that $w'\in\bc^{L}_{\min}$ and write $$w'=x_{w'}a_{w'}\hw_{\la_{\si'}}b_{\si'}.$$
Assume that $W_{\la_{\si'}}=W_{\la_{\si}}$. Then 
since
$$a_{w'}\hw_{\la_{\si'}}b_{\si'}<z_{w'}a_{w'}\hw_{\la_{\si'}}b_{\si'}<a_{w}\hw_{\la_{\si}}b_{\si}$$
and $a_{w'}\hw_{\la_{\si'}},a_{w}\hw_{\la_{\si'}}\in W_{\la_{\si}}$, we get (see \cite[Proof of Lemma 9. 10]{bible}) that either $b_{\si'}<b_{\si}$ or $b_{\si'}=b_{\si}$ and $a_{w'}\hw_{\la'}<a_{w}\hw_{\la'}$ which implies that $a_{w'}<a_{w}$ as required. 

\medskip
\noindent
Assume that $W_{\la_{\si'}}\neq W_{\la_{\si}}$. Write $w'=w_{\si}z'$ where $w_{\si}\in W_{\la_{\si}}$ and $z'$ has minimal length in the coset $W_{\la_{\si}}w'$. 

\medskip
\noindent
Note that since $w'=w_{\si}z'<a_{w}\hw_{\la_{\si}}b_{\si}$ we get that $z'\leq b_{\si}$. But we must have $z'< b_{\si}$ otherwise we would have  $w_{\si}<a_{w}\hw_{\la_{\si}}$, which together with the condition $w'\in\cb^{L}_{\min}$ would imply that $w_{\si}=u'\hw_{\la_{\si}}$ for some $u'\in W^{\circ}_{\la_{\si}}$ and $\la_{\si}= \la_{\si'}$.

\medskip
\noindent
If we show that 
$$D'=H(b_{\si'}A_{0},z'A_{0})\cap H(A_{0},b_{\si'}A_{0})=\emptyset$$
then the result will follow. Indeed if $D'=\emptyset$ then there exists an $x\in W$ such that $z'=xb_{\si'}$ and $x\add b_{\si'}$ and we get $b_{\si'}<b_{\si}$ since $b_{\si'}\leq z<b_{\si}$. 

\medskip
\noindent
Let $\la'$ be the unique $L$-special point which contains $z'A_{0}$ and $w^{\circ}_{\la_{\si}}z'A_{0}$. A hyperplane $H$ which lies in $D'$ cannot contain $\la_{\si'}$ (otherwise $H\notin H(A_{0},b_{\si'}A_{0}$)) nor $\la'$  (otherwise  $H\notin H(b_{\si'}A_{0},z'A_{0})$) but it has to separate these two points. Hence it also separates any alcoves which contains $\la_{\si'}$ and any alcoves which contains $\la'$. In particular it separates $a_{w'}w^{\circ}_{\la_{\si'}}b_{\si'}A_{0}$ and $w'A_{0}=x_{w'}a_{w'}\hw_{\la_{\si'}}b_{\si'}A_{0}$ but there are no such hyperplanes as we have shown in the proof of Statement~(1).  
\end{proof}


\begin{Exa}[{{\bf positive weight functions}}]\label{lowest positive}
In this example, and only in this example, we assume that $L$ is {\bfit positive}. Let $\PC_L(S)$ be the set of proper subsets $I$ of $S$ such that 
$L(w_I)=\nu_L$. If $I \in \PC_L(S)$, then $W_{I}\simeq W_{0}$ (because $L$ is positive). 
Note also that, since $L$ is positive, 
$$\PC_L(S)=\{S_\l~|~\l \in \spe_L(W)\}$$
$$\{w \in \WC~|~L(w)=\nu_L\}=\{w_I~|~I \in \PC_L(S)\}=\{w_\l~|~\l \in \spe_L(W)\}.
\leqno{\text{and}}$$
Recall also that $\LC_L(W)=\LC(W)$. 
Therefore 
\eqna
\cb^L_{\min}&=&\{xw_Iy \in W~|~x, y \in W,~x \add w_I \add y\text{ and }I \in \PC_L(S)\}\\
&=&\{xw_\l y~|~x, y \in W,~x \add w_{\la} \add y\text{ and } \la\in \spe_L(W)\}.
\endeqna
It we set $M_{\lambda}^L=\{z\in W~|~w_\l \add z \text{ and } 
sw_{\lambda}z\notin \cb_{\min}^L\text{ for all $s\in S_{\lambda}$}\}$ as in \cite[Proof of Theorem 5.4]{Bremke}, then we obtain the following decomposition of $\cb_{\min}^{L}$
$$\cb_{\min}^L=
\dot{\bigcup_{\substack{\lambda\in \spe_L^0(W)\\ z\in M_{\lambda}^L}}}
N_{\lambda,z}\quad \text{(disjoint union)}$$
 where $\spe_L^0(W)=\spe_L(W) \cap \overline{A}_0$
 is a set of representatives for the 
$\Omega$-orbits on $\spe_L(W)$ and 
$$N_{\la,z}=\{xw_{\la}z~|~ x\in X_{\la}\}.$$
It is easily seen that the set $M^{L}_{\la}$ consists of our elements $b_{\si}$ and that the sets $N_{\la,z}$ correspond to $N_{\si}^L$.\finl
\end{Exa}


\section{Semidirect product decomposition}
\label{semidirect}
\medskip

We fix a non-negative weight function $L : W \to \G$ where $\Ga$ is a totally abelian group. 
The aim of this section is to express the lowest two-sided cell $\cb^{L}_{\min}$ in relation to the decomposition of $W$ as  semidirect product 
of two Coxeter groups as in \cite{Bonnafe-Dyer}.

\subsection{Coxeter groups} 
If $I$ is a subset of $S$, we set 
$$I^\circ=\{s \in I~|~L(s)=0\}\quad\text{and}\quad I^+=\{s \in I~|~L(s) > 0\},$$
so that $I=I^\circ \dotcup I^+$. We also set
$$\Iti=\{wsw^{-1}~|~w \in W_{I^\circ}\text{ and }s \in I^+\}$$
and we denote by $\Wti_{\Iti}$ the subgroup of $W$ generated by $\Iti$. For 
simplification, we set $W^\circ=W_{S^\circ}$ and $\Wti=\Wti_{\Sti}$. 

Note that, if $s \in I^\circ$ and $t \in I^+$, then $L(s) \neq L(t)$, 
so $s$ and $t$ are not conjugate in $W$. It then follows from \cite{Bonnafe-Dyer} that 
\refstepcounter{Th}
\begin{equation}\label{eq:wtilde}
W_I = W_{I^\circ} \ltimes \Wti_\Iti\quad\text{\it and}\quad \text{\it $(\Wti_\Iti,\Iti)$ 
is a Coxeter group.}
\end{equation}
If $I=S$, we get that 
$$W = W^\circ \ltimes \Wti\quad\text{and}\quad\text{ $(\Wti,\Sti)$ is a Coxeter group.}$$

\medskip
%

We will assume that $W^{\circ}$ is finite. Note however, that this assumption is not very restrictive when dealing with an affine Weyl group. 
Indeed, by direct products, we can assume that $W$ is irreducible. 
In this case, either $L=0$ (and then $\cb^L_{\min}=W$ and the problem 
is uninteresting) or $S^\circ$ is a proper subset of $S$ (and then $W^\circ$ 
is finite because $W$ is irreducible).


\bigskip

\subsection{The group $\tOm$} We keep the notation of Section \ref{geometric}. Let 
$$S_{\Om}=\{\si_{H}\mid H \text{ is a wall of $A_{0}$}\}.$$
Then $(\Om,S_{\Om})$ is a Coxeter system. Let 
$$S^{\circ}_{\Om}=\{\si_{H}\mid H \text{ is a wall of $A_{0}$ and $L_{H}=0$} \}$$
and
$$S^{+}_{\Om}=\{\si_{H}\mid H \text{ is a wall of $A_{0}$ and $L_{H}>0$} \}.$$
Then we have $\Om=\Om^{\circ}\ltimes \tOm$ where $\Om^{\circ}$ is generated by $S_{\Om}^{\circ}$ and $\tOm$ is generated by 
$$\tS_{\Om}:=\{\rho \si_{H} \rho^{-1}\mid \rho\in \Om^{\circ} \text{ and } \si_{H}\in S^{+}_{\Om}\}=\{\si_{H\rho}\mid \rho\in \Om^{\circ} \text{ and } \si_{H}\in S^{+}_{\Om}\}.$$
We set
$$\FCt=\{H\in\cF\mid \si_{H}\in \tOm\}.$$
It is clear by definition that $\tOm$ is generated by  $\{\si_{H}\mid H\in \FCt\}$. Further, the following conditions are satisfied
\begin{enumerate}
\item[(D1)] $\tOm$ stabilizes $\FCt$.
\item[(D2)] The group $\tOm$, endowed with the discrete topology, acts properly on $V$. 
\end{enumerate}
We prove (D1). Let $\tsi\in\tOm$ and $H\in\FCt$, that is $\si_{H}\in\tOm$. We have $\tsi \si_{H}\tsi^{-1}=\si_{H\tsi}\in\tOm$ and, therefore, $H\tsi\in\FCt$. Condition (D2) follows easily form the fact that $\Om$, endowed with the discrete topology, acts properly on $V$. \\
We denote by $\alc(\FCt)$ the set of alcoves with respect to $\FCt$, that is the connected components of 
$$V-\bigcup_{H\in \FCt} H.$$
Let $\tA_{0}$ be the unique alcove (with respect to $\FCt$) which contains $A_{0}$. Then we have (see \cite[Chapter 5, \S 3]{Bourbaki} and \cite[\S 4]{Bonnafe-Dyer}) 
\begin{enumerate}
\item The group $\tOm$ is generated by the orthogonal reflections with respect to the wall of $\tA_{0}$.
\item $\ov{\tA_{0}}$ is a fundamental domain for the action of $\tOm$.
\item $\ov{\tA_{0}}=\underset{\rho\in\Om^{\circ}}{\bigcup} \ov{A_{0}}\rho$.
\item Any element $\si_{H}\in\tOm$ is conjugate in $\tOm$ to an orthogonal reflection with respect to a wall of $\tA_{0}$.
\end{enumerate}
It follows that $\tOm$ is an affine Weyl group (see \cite[\S 4]{Bonnafe-Dyer}). Note that $\tOm$ is not necessarily irreducible.  In fact, as we expect, the group $\tOm$ is nothing else that the group generated by the reflections with respect to the hyperlanes in $\cF^{L}=\{H\in\cF\mid L_{H}>0\}$.
\begin{Lem}
We have  $\FCt=\cF^{L}$.
\end{Lem}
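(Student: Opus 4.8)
The plan is to establish the two inclusions $\cF^{L}\subseteq\FCt$ and $\FCt\subseteq\cF^{L}$ separately. The only ingredients needed are already in hand: the normality of $\tOm$ in $\Om$ coming from the decomposition $\Om=\Om^{\circ}\ltimes\tOm$, the $\Om$-invariance of the weight (namely $L_{H}=L_{H\si}$ for $\si\in\Om$), and the fact that $\ov{A_{0}}$ is a fundamental domain, which combined with the simple transitivity of $\Om$ on alcoves shows that every $H\in\cF$ can be written $H=H_{0}\si$ with $H_{0}$ a wall of $A_{0}$ and $\si\in\Om$; for such a decomposition one has $\si_{H}=\si^{-1}\si_{H_{0}}\si$ (with our right action of $\Om$ on $V$) and $L_{H}=L_{H_{0}}$.

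For the inclusion $\cF^{L}\subseteq\FCt$, I would take $H\in\cF^{L}$ and write $H=H_{0}\si$ as above. Since $L_{H_{0}}=L_{H}>0$, the reflection $\si_{H_{0}}$ lies in $S^{+}_{\Om}$, and hence in $\tS_{\Om}$ (take $\rho=1$ in the definition of $\tS_{\Om}$), hence in $\tOm$. As $\tOm$ is normal in $\Om$, the conjugate $\si_{H}=\si^{-1}\si_{H_{0}}\si$ again lies in $\tOm$, i.e. $H\in\FCt$.

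For the reverse inclusion $\FCt\subseteq\cF^{L}$, I would take $H\in\FCt$, so $\si_{H}\in\tOm$, and again write $H=H_{0}\si$, so that $\si_{H_{0}}=\si\si_{H}\si^{-1}\in\tOm$ by normality. But $\si_{H_{0}}$ is the reflection in a wall of $A_{0}$, so $\si_{H_{0}}\in S_{\Om}=S^{\circ}_{\Om}\dotcup S^{+}_{\Om}$. If $\si_{H_{0}}\in S^{\circ}_{\Om}$, then $\si_{H_{0}}\in\Om^{\circ}\cap\tOm$, which is trivial because the product $\Om=\Om^{\circ}\ltimes\tOm$ is semidirect; this is absurd since a reflection is never the identity. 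Hence $\si_{H_{0}}\in S^{+}_{\Om}$, so $L_{H}=L_{H_{0}}>0$ and $H\in\cF^{L}$.

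I do not expect a genuine obstacle here: unlike Lemma~\ref{dot-property}, this statement is a purely formal consequence of the semidirect product structure and the $\Om$-invariance of $L$, with no case analysis. The only points to handle with care are bookkeeping ones --- verifying $\si_{H\si}=\si^{-1}\si_{H}\si$ under the right action, noting that $\{H\rho\mid\rho\in\Om^{\circ}\}=\{H\rho^{-1}\mid\rho\in\Om^{\circ}\}$ so that $S^{+}_{\Om}\subseteq\tS_{\Om}$, and recalling that $S^{\circ}_{\Om}$ generates $\Om^{\circ}$ --- all of which are immediate from the definitions introduced in this section.
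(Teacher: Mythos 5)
Your proof is correct, and the two inclusions are handled soundly. The forward inclusion $\cF^{L}\subseteq\FCt$ is essentially the same as the paper's: the paper writes $\si=\rho\tilde\si$ with $\rho\in\Om^{\circ}$, $\tilde\si\in\tOm$, first replaces $H'$ by $H'\rho$ to land in $\tS_{\Om}$, and then conjugates by $\tilde\si$; invoking normality of $\tOm$ directly, as you do, collapses these two steps into one and is arguably cleaner, but it is the same idea.

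The reverse inclusion $\FCt\subseteq\cF^{L}$ is where your route genuinely diverges. The paper appeals to the previously established geometric facts (3) and (4) about $\tOm$ and $\tA_{0}$: every reflection of $\tOm$ is conjugate in $\tOm$ to a reflection in a wall of $\tA_{0}$, and these walls are of the form $H\rho$ with $H$ a wall of $A_{0}$ of positive weight; hence $H$ has positive weight. You instead stay entirely inside the semidirect product picture: writing $H=H_{0}\si$ with $H_{0}$ a wall of $A_{0}$, normality gives $\si_{H_{0}}\in\tOm$, and the partition $S_{\Om}=S^{\circ}_{\Om}\dotcup S^{+}_{\Om}$ together with $\Om^{\circ}\cap\tOm=\{1\}$ rules out $\si_{H_{0}}\in S^{\circ}_{\Om}$. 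This is more elementary in that it does not rely on the alcove geometry of $\tOm$ or on the classification of its wall reflections; it only uses the group-theoretic content of $\Om=\Om^{\circ}\ltimes\tOm$ and the $\Om$-invariance of $L$. The tradeoff is that the paper's argument exercises and reinforces the geometric description of $\tA_{0}$ that it just set up and will continue to use, while yours is a self-contained algebraic shortcut. Both are correct; your bookkeeping of the right action ($\si_{H\si}=\si^{-1}\si_{H}\si$ for the right action and $L_{H\si}=L_{H}$) is accurate.
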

\begin{proof}
Let $H\in \cF^{L}$. There exists $\si\in \Omega$ and a wall $H'$ of $A_{0}$ of positive weight such that $H'\si=H$. Write $\si=\rho\tilde{\si}$ where $\rho\in \Om^{\circ}$ and $\tilde{\si}\in \tOm$. Then $\si_{H'\rho}\in \tOm$ and we have
$$\si_{H}=\tilde{\si}\si_{H'\rho} \tilde{\si}^{-1}$$
therefore $\si_{H}\in \tOm$. 

Conversely, let $H\in\FCt$ that is $\si_{H}\in\tOm$. By (4), $\si_{H}$ is conjugate (in $\tOm$) to $\si_{H'}$ where $H'$ is a wall of $\tA_{0}$. By (3), we know that the walls of $\tA_{0}$ are of the form $H\rho$ where $H$ is a wall of $A_{0}$ of positive weight. In particular, $H'$ has positive weight. It follows that $H$ has positive weight and $H\in\cF^{L}$ as required. 
\end{proof}
Finally we want to define a root system associated to $\tOm$. Let
$$\tPhi:=\{b_{\al}\al\mid \al\in \Phi^{L}\}$$
where $b_{\al}$ is defined to be the smallest integer such that $H_{\al,b_{\al}}$ has positive weight. We also fix a set of positive roots \
$$\tPhi^{+}=\{b_{\al}\al\mid \al\in\Phi^{L}\cap \Phi^{+}\}.$$
\begin{Rem}
If $\Om$ is not of type $\tilde{\Crm}$ then we simply have $\tPhi=\Phi^{L}$. Indeed in this case, any two parallele hyperplane have same weights, hence $b_{\al}=1$ for all $\al\in\Phi^{L}$. If $\Om$ is of type $\tilde{\Crm}$, then we may have $b_{\al}=2$ for some choices of parameters, namely when $L(t)>L(t')=0$ (see Convention \ref{convention}).\finl
\end{Rem}
\begin{Rem}
We have $\Om^{L}_{0}=\tOm_{0}$ where $\tOm_{0}=\sg \si_{H_{\tilde{\al},0}}\mid \tilde{\al}\in\tPhi \sd$.\finl
\end{Rem}
\begin{Lem}
The group $\tOm$ is the affine Weyl group associated to $\tPhi$. Further the alcove $\tA_{0}$ is the fundamental alcove associated to $\tPhi$, that is
$$\tA_{0}=\{x\in V\mid 0<\sg x,\check{\al} \sd<1 \text{ for all $\al\in\tPhi^{+}$}\}$$
\end{Lem}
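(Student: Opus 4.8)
The plan is to identify the hyperplane arrangement $\cF^{L}$ with the arrangement of affine hyperplanes of the rescaled root system $\tPhi$, and then to read off the description of $\tA_{0}$. Much of what is needed is already available: the preceding lemma gives $\FCt=\cF^{L}$, the properties (1)--(4) recalled above show that $\tOm$ is an affine Weyl group whose fundamental alcove is $\tA_{0}$ (the unique alcove of $\cF^{L}$ containing $A_{0}$), and the Remark above records that $\Om^{L}_{0}=\tOm_{0}=\sg\si_{H_{\tal,0}}\mid\tal\in\tPhi\sd$. It therefore suffices to prove
\[
\cF^{L}=\{H_{\tal,n}\mid\tal\in\tPhi,\ n\in\nZ\},\qquad H_{\tal,n}=\{x\in V\mid\sg x,\check{\tal}\sd=n\}.
\]
Indeed, this says that $\tOm$, which is generated by the reflections in the hyperplanes of $\cF^{L}$, is exactly the affine Weyl group attached to $\tPhi$, and, combined with $\Om^{L}_{0}=\tOm_{0}$, it identifies $\tPhi$ as the finite root system underlying $\tOm$.

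To establish this equality, note first that for $\al\in\Phi^{L}$ one has $\check{\tal}=\check{(b_{\al}\al)}=b_{\al}^{-1}\check{\al}$, so that $H_{\tal,n}=H_{\al,b_{\al}n}$; hence the claim amounts to showing, for each $\al\in\Phi^{L}$, that the integers $n$ with $L_{H_{\al,n}}>0$ are precisely the multiples of $b_{\al}$. When $\Om$ is not of type $\tilde{\Crm}$ all hyperplanes orthogonal to $\al$ share the same positive weight $L_{\al}$ and $b_{\al}=1$, so there is nothing to check. In type $\tilde{\Crm}$ one uses the explicit geometric model together with Convention~\ref{convention}: the hyperplanes orthogonal to a fixed $\al\in\Phi^{L}$ fall into parallel classes whose weight depends only on $n$ modulo $b_{\al}$, one class carrying the maximal weight $L_{\al}>0$ and, when $b_{\al}=2$, the other carrying weight $0$; since $0\in V$ is an $L$-special point, $H_{\al,0}$ has maximal weight, which forces the positive-weight hyperplanes orthogonal to $\al$ to be exactly the $H_{\al,n}$ with $b_{\al}\mid n$. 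This is essentially a rerun of the case-by-case determination of $\Phi^{L}$ carried out in the proof of Lemma~\ref{dot-property}, and it is the only place where genuine bookkeeping is needed; this is the step I expect to be the main obstacle.

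Finally, to identify the fundamental alcove, put $\tA_{0}'=\{x\in V\mid 0<\sg x,\check{\tal}\sd<1\text{ for all }\tal\in\tPhi^{+}\}$. This set is open and convex, hence connected, and it meets no hyperplane of $\cF^{L}$: on it $\sg\cdot,\check{\tal}\sd$ lies in $(0,1)$ for $\tal\in\tPhi^{+}$ and in $(-1,0)$ for $-\tal\in\tPhi^{+}$, so it is never an integer. Thus $\tA_{0}'$ is contained in a single alcove of $\cF^{L}$. Since $b_{\al}\ge 1$, rewriting the defining inequalities as $0<\sg x,\check{\al}\sd<b_{\al}$ for $\al\in\Phi^{L}\cap\Phi^{+}$ and comparing with $A_{0}=\{x\mid 0<\sg x,\check{\al}\sd<1\text{ for all }\al\in\Phi^{+}\}$ gives $A_{0}\subseteq\tA_{0}'$; as also $A_{0}\subseteq\tA_{0}$, the alcove of $\cF^{L}$ containing $\tA_{0}'$ must be $\tA_{0}$, that is, $\tA_{0}'\subseteq\tA_{0}$. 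For the reverse inclusion, fix $\al\in\Phi^{L}\cap\Phi^{+}$: the alcove $\tA_{0}$ is connected, contains a point of $A_{0}$ at which $\sg\cdot,\check{\al}\sd\in(0,1)\subseteq(0,b_{\al})$, and meets neither $H_{\al,0}$ nor $H_{\al,b_{\al}}$ (both lie in $\cF^{L}$), so by the intermediate value theorem $\sg\cdot,\check{\al}\sd$ stays in $(0,b_{\al})$ throughout $\tA_{0}$, that is, $\sg\cdot,\check{\tal}\sd\in(0,1)$ there. Hence $\tA_{0}\subseteq\tA_{0}'$ and $\tA_{0}=\tA_{0}'$, as claimed; apart from the weight-periodicity analysis of the second paragraph, everything here is soft.
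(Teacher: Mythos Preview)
Your proof is correct and follows the same route as the paper: establish $\cF^{L}=\{H_{\tal,n}\mid \tal\in\tPhi,\ n\in\ZM\}$ and then combine this with $A_{0}\subset\tA_{0}$ to identify the fundamental alcove. The paper's proof is a two-line sketch that simply asserts this equality, whereas you fill in the details; your description of the type~$\tilde{\Crm}$ weight-periodicity as ``the main obstacle'' is overly cautious, since all one needs is that in type $\tilde{\Crm}$ the weights $L_{H_{\al,n}}$ for fixed $\al$ are $2$-periodic in $n$ with $H_{\al,0}$ carrying the maximal weight (by Convention~\ref{convention}), so the positive-weight hyperplanes are exactly the $H_{\al,b_{\al}n}$.
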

\begin{proof}
The first statement is clear since we have 
$$\cF^{L}=\{H_{b_{\al}\al,n}\mid \al\in\Phi^{L}\cap \Phi^{+}, n\in\nZ\}.$$  
The second statement follows easily from the above equality and the fact that $A_{0}\subset \tA_{0}$. 
\end{proof}
Doing as in Section \ref{geometric}, we obtain another geometric realization of $\tOm$, namely as a group generated by involutions on the set $\alc(\FCt)$. Indeed, $\tOm$ acts transitively on the set of faces of alcoves in $\alc(\FCt)$: we denote by $\{\tit_{1},\ldots,\tit_{m}\}$ the set of $\tOm$-orbits in the set of faces.  Note that the set of faces of $\tA_{0}$ is a set of representatives of the set of orbits. To each  $\tit_{i}$ we can associate an involution $\tA\mapsto \tit_{i}\tA$ of $\alc(\FCt)$ where $\tit_{i}\tA$ is the unique alcove of $\alc(\FCt)$ which shares with $\tA$ a face of type $\tit_{i}$.  The group generated by all the $\tilde{t}_{i}$ is an affine Weyl group isomorphic to $\tOm$. We would like to use the notation $\tW$ and $\tS$ for this group, and eventually we will, but before one needs to be careful since $\tW$ also denotes the group appearing in the semidirect product decomposition of $W$ (where $W$ is the group generated by involutions on $\alc(\cF)$). 

\subsection{Alcoves of $\tW$} 
Recall the definition of $(W,S)$ in Section~\ref{geometric} and that
$$S^{\circ}=\{s'\in S\mid L(s)=0\}\text{ and } S^{+}=\{s\in S\mid L(s)>0\}.$$
Then we have $W=W^{\circ}\ltimes \tW$ where $W^{\circ}$ is generated by $S^{\circ}$ and $\tW$ is generated by 
$$\tS=\{wtw^{-1}\mid t\in S^{+}\text{ and } w\in W^{\circ}\}.$$
\begin{Lem}
Let $\tilde{t}\in \tS=\{wtw^{-1}\mid t\in S^{+}\text{ and } w\in W^{\circ}\}$. Then there exists a unique wall $H$ of $\tA_{0}$ such that  
$$\tilde{t} A_{0}=A_{0}\si_{H}.$$
\end{Lem}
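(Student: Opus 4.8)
The plan is to translate the statement into the language of the right $\Om$-action on $\alc(\cF)$ and then to identify $\tS_\Om$ with the set of reflections in the walls of $\tA_{0}$. First I would recall the canonical isomorphism $\phi : W \xrightarrow{\ \sim\ } \Om$ determined by $wA_{0} = A_{0}\,\phi(w)$; it is well defined and bijective because $\Om$ acts simply transitively on the right of $\alc(\cF)$ and this action commutes with the left action of $W$. With this notation, the assertion $\tilde t A_{0} = A_{0}\,\si_{H}$ says exactly $\phi(\tilde t) = \si_{H}$. Since for $s \in S$ the alcove $sA_{0}$ is the one sharing with $A_{0}$ the face of type $s$, and that face is supported by the wall $H_{s}$ of $A_{0}$ with $L_{H_{s}} = L(s)$, we get $\phi(s) = \si_{H_{s}}$; as the walls of $A_{0}$ are precisely the $H_{s}$ $(s \in S)$, the map $\phi$ carries $S^{\circ}$ to $S^{\circ}_{\Om}$ and $S^{+}$ to $S^{+}_{\Om}$, hence $\phi(W^{\circ}) = \Om^{\circ}$ and $\phi(\tS) = \{\rho\,\si_{H}\,\rho^{-1} \mid \rho \in \Om^{\circ},\ \si_{H} \in S^{+}_{\Om}\} = \tS_{\Om}$. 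In particular $\phi(\tilde t) \in \tS_{\Om}$.

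The core step is to show that $H \mapsto \si_{H}$ is a bijection from the set of walls of $\tA_{0}$ onto $\tS_{\Om}$. Here I would use the equality $\FCt = \cF^{L}$ established above together with the decomposition $\ov{\tA_{0}} = \bigcup_{\rho \in \Om^{\circ}} \ov{A_{0}}\rho$. For a wall $H_{f}$ of $A_{0}$ and $\rho \in \Om^{\circ}$ one has $\si_{H_{f}\rho} = \rho^{-1}\si_{H_{f}}\rho$, which lies in $\Om^{\circ}$ when $L_{H_{f}} = 0$ and in $\tOm$ when $L_{H_{f}} > 0$; since $\Om = \Om^{\circ} \ltimes \tOm$, this gives the criterion $H_{f}\rho \in \FCt \iff L_{H_{f}} > 0$. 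Now each small alcove $A_{0}\rho$ lies inside $\tA_{0}$, its codimension one faces are supported by the hyperplanes $H_{f}\rho$, and those with $L_{H_{f}} = 0$ are not in $\FCt$ (so lie in the interior of $\tA_{0}$) while those with $L_{H_{f}} > 0$ lie on a hyperplane of $\FCt$ and hence span a wall of $\tA_{0}$. Conversely, a wall $H'$ of $\tA_{0}$ meets $\ov{\tA_{0}} = \bigcup_{\rho}\ov{A_{0}}\rho$ in a codimension one face, so it meets some $\ov{A_{0}}\rho$ in codimension one, and (as $H' \in \FCt$ is disjoint from $A_{0}\rho$) that intersection is a facet of $A_{0}\rho$; thus $H' = H_{f}\rho$ with $L_{H_{f}} > 0$. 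Letting $\rho$ run over the group $\Om^{\circ}$, this yields $\{\si_{H} \mid H \text{ a wall of } \tA_{0}\} = \tS_{\Om}$, and injectivity of the map "reflection $\mapsto$ reflecting hyperplane" turns it into a bijection.

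Combining the two steps finishes the proof: $\phi(\tilde t) \in \tS_{\Om}$, so $\phi(\tilde t) = \si_{H}$ for one and only one hyperplane $H$, which is automatically a wall of $\tA_{0}$; equivalently, $\tilde t A_{0} = A_{0}\si_{H}$ for a unique wall $H$ of $\tA_{0}$. The only genuinely delicate point is the geometric bookkeeping in the core step, matching the walls of $\tA_{0}$ with the positive-weight walls $H_{f}\rho$ of the small alcoves; but this follows directly from properties (1)--(4) of $\tA_{0}$ recalled above and from $\FCt = \cF^{L}$, with no case analysis needed. Everything else is a formal consequence of the isomorphism $W \simeq \Om$ being compatible with the two semidirect product decompositions.
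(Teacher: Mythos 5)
Your argument is correct and is essentially the paper's proof in a more structured form: the paper computes directly $\tilde t A_{0}=wtw^{-1}A_{0}=A_{0}\si_{H'\rho}$ (with $wA_{0}=A_{0}\rho$, $\rho\in\Om^{\circ}$, and $H'$ supporting the type-$t$ face of $A_{0}$), which is exactly the content of your observation that the isomorphism $\phi:W\to\Om$ carries $W^{\circ}$ to $\Om^{\circ}$, $S^{+}$ to $S^{+}_{\Om}$, and hence $\tS$ to $\tS_{\Om}$. The one place you go beyond the paper is in explicitly establishing that the walls of $\tA_{0}$ are precisely the hyperplanes whose reflections form $\tS_{\Om}$ (matching codimension-one faces of the small alcoves $A_{0}\rho$ with the boundary facets of $\tA_{0}$ via $\FCt=\cF^{L}$ and properties (1)--(4)); the paper leaves this implicit behind its closing phrase ``and the result follows''.
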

\begin{proof}
Let $w \in W^\circ$ and $t \in S^+$ be such that $\tilde{t}=wtw^{-1}$. 
Let $\rho\in\Om^{\circ}$ be such that $wA_{0}=A_{0}\rho$ and let $H'$ be the unique hyperplane which contains the face of type $t$ of $A_{0}$. Then we have
$$wtw^{-1}A_{0}=wtA_{0}\rho^{-1}=wA_{0}\si_{H'}\rho^{-1}=A_{0}\rho \si_{H'}\rho^{-1}=A_{0}\si_{H'\rho}$$
and the result follows. 
\end{proof}

Therefore there is a natural bijection between the set $\tS$ and the set of faces of $\tA_{0}$ and therefore between $\tS$ and the set of orbits $\{\tit_{1},\ldots,\tit_{m}\}$: we will freely identify those two sets.
Note that an element $\tilde{t}\in\tS$ can be viewed as acting on the set of alcoves $\alc(\cF)$ when it is considered as an element of $\tW\subset W$ but  it can also be viewed as acting on $\alc(\FCt)$ if $\tilde{t}$ is considered as acting on $\alc(\FCt)$ via the action defined at the end of the previous section. In the following lemma, we show that these two actions behaves well with one another. 
\begin{Lem}
\label{Atilde}
If $\tw \in \tW$, then 
$$\tw A_{0}\subset \tw \tA_{0}.$$
From where it follows that 
$$\ov{\bigcup_{w^{\circ}\in W^{\circ}} w^{\circ}\tw A_{0}}=\ov{\tw\tA_{0}}.$$
\end{Lem}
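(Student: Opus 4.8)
The statement to prove is Lemma~\ref{Atilde}: if $\tw\in\tW$, then $\tw A_0\subset\tw\tA_0$, and consequently $\overline{\bigcup_{w^\circ\in W^\circ}w^\circ\tw A_0}=\overline{\tw\tA_0}$.

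Here is my plan. The first claim $\tw A_0\subset\tw\tA_0$ is essentially a restatement of the basic fact (already recorded in the excerpt) that $A_0\subset\tA_0$, transported by the right action of $\Om$. More precisely, the subtle point is the interplay between the \emph{left} action of $\tW$ on $\alc(\cF)$ and the \emph{right} action of $\tOm$ on $V$, together with the identification $\tOm\simeq\tW$. I would first reduce to showing $\tw A_0\subset\tw\tA_0$ where on the left $\tw$ denotes the element of $\tW\subset W$ acting on the left of $\alc(\cF)$, and on the right $\tw$ denotes the corresponding element of $\tOm$ acting on the right of $V$ (the identification being the one discussed after Lemma on $\tS$). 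I would argue by induction on the length $\ell_{\tS}(\tw)$ of $\tw$ with respect to the generating set $\tS$ of $\tW$. The base case $\tw=1$ is exactly $A_0\subset\tA_0$, which was established in the proof that $\tA_0$ is the fundamental alcove associated to $\tPhi$. For the inductive step write $\tw=\tit\,\tw'$ with $\tit\in\tS$ and $\ell_{\tS}(\tw')=\ell_{\tS}(\tw)-1$, so by induction $\tw' A_0\subset\tw'\tA_0$. Applying the left action of $\tit$ (which is an automorphism of the inclusion structure since $\tit$ sends alcoves of $\alc(\cF)$ inside the corresponding alcove of $\alc(\FCt)$, by the Lemma identifying $\tit A_0=A_0\si_H$ for a wall $H$ of $\tA_0$) gives $\tw A_0=\tit\tw' A_0\subset\tit\tw'\tA_0=\tw\tA_0$, where the last equality uses that the left action of $\tit$ on $\alc(\FCt)$ corresponds to the right action $A_0\mapsto A_0\si_H$ under the identification of $\tS$ with the faces of $\tA_0$. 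The key point making the induction go through is that multiplication on the left by $\tit$ is compatible with the coarser alcove structure $\FCt$: two alcoves of $\alc(\cF)$ lying in the same $\FCt$-alcove are sent by $\tit$ to two alcoves again lying in a common $\FCt$-alcove.

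For the second (consequence) claim, I would argue as follows. Since $\overline{\tA_0}=\bigcup_{\rho\in\Om^\circ}\overline{A_0}\rho=\bigcup_{w^\circ\in W^\circ}\overline{w^\circ A_0}$ (using the identification $w^\circ A_0=A_0\rho$ for the appropriate $\rho\in\Om^\circ$, which is exactly property (3) in the list of properties of $\tA_0$), applying the left action of $\tw$ and using that the left action of $W^\circ$ commutes with nothing problematic here — rather, one transports: $\overline{\tw\tA_0}=\tw\,\overline{\tA_0}=\bigcup_{w^\circ\in W^\circ}\overline{\tw w^\circ A_0}$. One then needs $\tw w^\circ A_0=w^\circ\tw A_0$ as sets of alcoves, i.e. that the left action of $\tw\in\tW$ and of $w^\circ\in W^\circ$ on $\alc(\cF)$ can be swapped in this particular way — this is where the semidirect product structure $W=W^\circ\ltimes\tW$ enters, combined with the fact that $W^\circ$ normalizes $\tW$, so that $\tw w^\circ=w^\circ\tw''$ for some $\tw''\in\tW$; a little care shows $\tw'' = (w^\circ)^{-1}\tw w^\circ$ and that the union over $w^\circ$ is unchanged. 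Combining, $\overline{\tw\tA_0}=\bigcup_{w^\circ\in W^\circ}\overline{w^\circ\tw A_0}=\overline{\bigcup_{w^\circ\in W^\circ}w^\circ\tw A_0}$, the last step being the elementary fact that a finite union of closed sets (here $W^\circ$ is finite) equals the closure of the corresponding union of the open alcoves, since each $\overline{w^\circ\tw A_0}$ is the closure of $w^\circ\tw A_0$.

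The main obstacle I anticipate is not any single hard estimate but rather bookkeeping the three different actions at play: the left action of $W=W^\circ\ltimes\tW$ on $\alc(\cF)$, the right action of $\Om=\Om^\circ\ltimes\tOm$ on $V$, and the left action of $\tW\simeq\tOm$ on the coarser set $\alc(\FCt)$ — and being scrupulous that the identifications between $\tS$, the faces of $\tA_0$, and the orbit set $\{\tit_1,\dots,\tit_m\}$ are used consistently. In particular one must verify that "$\tit$ maps an $\FCt$-alcove to an $\FCt$-alcove coherently with its action on the finer alcoves", which follows because $\tit$, as a reflection $\si_H$ with $H$ a wall of $\tA_0$ (hence $H\in\FCt$), genuinely permutes $\alc(\FCt)$ and its restriction to the finer structure is the given involution on $\alc(\cF)$. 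Once this compatibility is pinned down, both assertions of the lemma fall out by the length induction and the finite union argument sketched above.
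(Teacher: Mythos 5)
Your argument for the first inclusion $\tw A_0\subset\tw\tA_0$ is correct and is essentially the paper's: the paper handles the generator case $\tit\in\tS$ (showing $\tit A_0=A_0\si_H$ and $\tit\tA_0=\tA_0\si_H$ for the same $\si_H$, then applying $A_0\subset\tA_0$) and remarks that the general case follows; your explicit length induction fills in that step.

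Your argument for the second assertion, however, has a genuine gap. The chain $\overline{\tw\tA_0}=\tw\,\overline{\tA_0}=\bigcup_{w^\circ}\overline{\tw w^\circ A_0}$ is based on applying the \emph{left} $W$-action of $\tw$ term by term to $\overline{\tA_0}=\bigcup_{w^\circ}\overline{w^\circ A_0}$, and this gives the wrong alcoves. The correct operation is the \emph{right} $\Omega$-action of $\si_{\tw}\in\tOm$ (where $\tw A_0=A_0\si_{\tw}$ and, by the first part, $\tw\tA_0=\tA_0\si_{\tw}$): since the left $W$-action and right $\Omega$-action on $\alc(\cF)$ commute, one has $\overline{w^\circ A_0}\si_{\tw}=\overline{w^\circ A_0\si_{\tw}}=\overline{w^\circ\tw A_0}$, so
$$\overline{\tw\tA_0}=\overline{\tA_0}\,\si_{\tw}=\bigcup_{w^\circ\in W^\circ}\overline{w^\circ A_0}\,\si_{\tw}=\bigcup_{w^\circ\in W^\circ}\overline{w^\circ\tw A_0},$$
with the factor $\tw$ on the \emph{right} of $w^\circ$, not the left. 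Your attempted repair --- writing $\tw w^\circ=w^\circ\tw''$ with $\tw''=(w^\circ)^{-1}\tw w^\circ$ and claiming the union over $w^\circ$ is unchanged --- does not work: since $\tW$ is normal but $W^\circ$ is not, $\tw''\tw^{-1}=(w^\circ)^{-1}\tw w^\circ\tw^{-1}\in\tW$, so $\tw''$ never differs from $\tw$ by a left factor in $W^\circ$ unless $\tw''=\tw$; hence $\tw''A_0$ and $\tw A_0$ generally lie in distinct $\FCt$-alcoves, and $\bigcup_{w^\circ}\overline{\tw w^\circ A_0}$ is generally \emph{not} equal to $\bigcup_{w^\circ}\overline{w^\circ\tw A_0}$. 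For a concrete counterexample, take $W$ of type $\tilde{\Arm}_1$ with $S=\{s,t\}$, $L(s)=0$, $L(t)>0$, $A_0=(0,1)$, $\tA_0=(-1,1)$, and $\tw=t$: then $\{w^\circ t A_0\}_{w^\circ\in W^\circ}=\{(1,2),(2,3)\}$, whose union of closures is $\overline{(1,3)}=\overline{t\tA_0}$, whereas $\{tw^\circ A_0\}_{w^\circ\in W^\circ}=\{(1,2),(-2,-1)\}$, whose union of closures is not $\overline{(1,3)}$. So this step needs to be replaced by the right-action transport sketched above.
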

\begin{proof}
Let $\tit\in\tS$. Then  $\tit \tA_{0}$ is the unique alcove in $\alc(\FCt)$ which shares with $\tA_{0}$ a face of type $\tit$, hence we have 
$$\tit \tA_{0}= \tA_{0}\si_{H}$$
where $H$ is the hyperplane which supports the face of $\tA_{0}$ of type $\tit$.  By the previous lemma we see that
$$\tit A_{0}=A_{0}\si_{H}.$$
Hence since $A_{0}\subset \tA_{0}$, the first assertion follows. The second assertion follows from  $\ov{\tA_{0}}=\underset{\rho\in\Om^{\circ}}{\bigcup} \ov{A_{0}}\rho=\underset{w\in W^{\circ}}{\bigcup} \ov{wA_{0}}$.
\end{proof}

\subsection{The lowest two-sided cell of $\tW$}
\label{tildeL}
Let $\Lti$ denote the restriction of $L$ to $\Wti$.  By \cite[Corollary 1.4]{Bonnafe-Dyer}, it 
is a {\it positive} weight function. Note that we have $\tL(wtw^{-1})=L(t)$ for all $w\in W^{\circ}$. 
\begin{Th}\label{theo:cmin}
We have
$$\cb^L_{\min}(W)=W^\circ \cdot \cb^\Lti_{\min}(\tW)\quad\text{and}\quad N^{L}_{\si}(W)= W^\circ \cdot N^{\tL}_{\si}(\tW)$$
for all $\si\in \Om^{L}_{0}$.  
\end{Th}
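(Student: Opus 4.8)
The plan is to prove the two stated equalities by exploiting the semidirect product $W = W^\circ \ltimes \tW$ together with the geometric description of $\cb^L_{\min}$ obtained in Proposition~\ref{c-min} and the reduction of $\tOm$ to the affine Weyl group attached to $\tPhi$. First I would record the key compatibility: since $\Lti$ is positive on $\tW$ (by \cite[Corollary 1.4]{Bonnafe-Dyer}), the set $\FCt = \cF^L$ is exactly the set of hyperplanes relevant to $\tW$, and the root system $\tPhi$ built from $\Phi^L$ (with the $b_\al$ adjustment in type $\tilde\Crm$) is precisely the one governing the $\Lti$-special points, $\Lti$-quarters and $\Lti$-strips inside $V$. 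Concretely, I would observe that $\spe_L(W) = \spe_{\Lti}(\tW)$ as subsets of $V$, that $\Om^L_0 = \tOm_0$ (already noted in the excerpt), and that for each $\si \in \Om^L_0$ the $L$-quarter $\cC'_\si$ and the $\Lti$-quarter defined relative to $\tPhi$ coincide. Hence $\UC^L(A_0)$ and the analogous union $\UC^{\Lti}(\tA_0)$ are ``the same'' region of $V$, up to passing between $A_0$ and $\tA_0$.

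Next I would prove $N^L_\si(W) = W^\circ \cdot N^{\Lti}_\si(\tW)$, which is really the heart of the matter; the statement about $\cb^L_{\min}$ then follows by taking the disjoint union over $\si \in \Om^L_0$, using the decomposition $\cb^L_{\min}(W) = \bigsqcup_\si N^L_\si(W)$ from Section~3.3 and the same decomposition for $\tW$. For the inclusion $W^\circ \cdot N^{\Lti}_\si(\tW) \subseteq N^L_\si(W)$: take $\tw \in \tW$ with $\tw \tA_0 \subset \cC'_\si$ (the $\Lti$-quarter in $\tA(\FCt)$) and $w^\circ \in W^\circ$. By Lemma~\ref{Atilde}, $w^\circ \tw A_0 \subset \tw \tA_0 \subset \cC'_\si$, and since $\cC'_\si$ is a union of $A_0$-alcoves (its walls lie in $\cF^L \subseteq \cF$), this says $w^\circ \tw A_0 \subset \cC'_\si$, i.e. $w^\circ \tw \in N^L_\si(W)$. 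For the reverse inclusion: if $w \in N^L_\si(W)$, write $w = w^\circ \tw$ with $w^\circ \in W^\circ$, $\tw \in \tW$ (the semidirect product factorization). Then $w A_0 \subset \cC'_\si$; applying $\ov{\bigcup_{v^\circ \in W^\circ} v^\circ \tw A_0} = \ov{\tw \tA_0}$ from Lemma~\ref{Atilde} and the fact that $W^\circ$ permutes the $A_0$-alcoves inside a fixed $\tA_0$-alcove, I get $\tw \tA_0 \subseteq \cC'_\si$ (using that $\cC'_\si$ is $W^\circ$-invariant as a region, being cut out by $\cF^L = \FCt$-hyperplanes through $\la_\si$, each of which is a $\tA_0$-wall direction). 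Hence $\tw \in N^{\Lti}_\si(\tW)$ and $w \in W^\circ \cdot N^{\Lti}_\si(\tW)$. Care is needed that the $\la_\si$ attached to $\si \in \Om^L_0$ is the same special point whether computed with $\Phi^L$ or with $\tPhi$ — this holds because the defining inequalities $\langle \la_\si, \check\al\rangle = b_\al$ involve exactly the data (which parallel hyperplanes have positive/maximal weight) that defines $\tPhi$.

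Then I would assemble the statement about $\cb^L_{\min}$: summing over $\si$, $\cb^L_{\min}(W) = \bigsqcup_\si N^L_\si(W) = \bigsqcup_\si W^\circ \cdot N^{\Lti}_\si(\tW) = W^\circ \cdot \bigsqcup_\si N^{\Lti}_\si(\tW) = W^\circ \cdot \cb^{\Lti}_{\min}(\tW)$, where I should note that the union $W^\circ \cdot \bigsqcup_\si(\cdots)$ is legitimate because distinct $N^L_\si$ are separated by maximal $L$-strips (as recorded in Section~3.3), so no collapsing occurs. Alternatively, and perhaps more cleanly, I could bypass the cell-by-cell argument and work directly from the third description in Proposition~\ref{c-min}: $w \in \cb^L_{\min}(W) \iff w A_0 \not\subset \UC^L(A_0)$. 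Writing $w = w^\circ \tw$ and using Lemma~\ref{Atilde} to replace $w A_0$ by the alcove $\tw \tA_0$ of $\alc(\FCt)$, and identifying $\UC^L(A_0)$ with the corresponding union of $\Lti$-strips around $\tA_0$ (again because $\tPhi$ has the same directions and the same maximal-weight hyperplanes as $\Phi^L$), this becomes $\tw \tA_0 \not\subset \UC^{\Lti}(\tA_0) \iff \tw \in \cb^{\Lti}_{\min}(\tW)$, independently of $w^\circ$.

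The main obstacle I anticipate is the bookkeeping in type $\tilde\Crm$, where $\tPhi \neq \Phi^L$: one must check that the passage from $\Phi^L$-data to $\tPhi$-data (the factors $b_\al \in \{1,2\}$) does not disturb the identifications of special points, quarters, and strips — in particular that the $L$-quarter $\cC'_\si$ defined via the $b_i$'s in the proof of Proposition~\ref{c-min} is literally the fundamental-style $\Lti$-quarter for $\tW$ at $\la_\si$. This is exactly the point where the positivity of $\Lti$ (so that $\LC_{\Lti}(\tW) = \LC(\tW)$ and the ``positive weight function'' description of Example~\ref{lowest positive} applies verbatim to $\tW$) does the work: it guarantees that $\cb^{\Lti}_{\min}(\tW)$ admits the clean description in terms of $\tPhi$-special points and $\tPhi$-strips, which is what allows the comparison with $\cb^L_{\min}(W)$ to go through. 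A secondary, more routine point is verifying that $W^\circ$ acts freely enough on alcoves that $|W^\circ \cdot N^{\Lti}_\si(\tW)| = |W^\circ| \cdot |N^{\Lti}_\si(\tW)|$, i.e. the product decomposition $w = w^\circ \tw$ is genuinely unique on these elements — but this is immediate from \eqref{eq:wtilde}.
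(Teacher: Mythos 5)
Your proposal is correct and takes essentially the same route as the paper: both arguments hinge on $\FCt=\cF^{L}$, the resulting identity $\UC^{\Lti}(\tA_0)=\UC^{L}(A_0)$, Lemma~\ref{Atilde} to pass between $A_0$-alcoves and $\tA_0$-alcoves, and the geometric characterizations of $\cb^{L}_{\min}$ and $N^{L}_{\si}$ from Proposition~\ref{c-min}. The paper proves the $\cb^{L}_{\min}$ equality directly (this is precisely your ``alternatively, more cleanly'' paragraph) and then notes the $N^{L}_{\si}$ equality follows from the same ingredients, whereas you lead with the $N^{L}_{\si}$ equality and sum over $\si$ --- a harmless reordering of the same argument.
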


\bigskip

\begin{proof}
First, since $\FCt=\cF^{L}$ and $A_{0}\subset \tA_{0}$ we see that
$$\UC^{\tL}(\tA_0)=\UC^L(A_0).$$
Then applying the results of the previous section we get
$$\cb^\Lti_{\min}(\tW)= \{\tw \in \tW~|~\tw(\tA_0) \not\subset \UC^{\tL}(\tA_0)\}$$
$$\cb^L_{\min}(W)= \{w \in W~|~w(A_0) \not\subset \UC^L(A_0)\}.$$
Let $w\in \cb^L_{\min}$ and write $w=w^{\circ}\tw$ where $w^{\circ}\in W^{\circ}$ and $\tw\in \tW$. We have $w^{\circ}\tw A_{0}\notin \UC^{L}(A_0)\text{ that is } w^{\circ}\tw A_{0}\notin \UC^{\tL}(\tA_0).$
Since the only hyperplane separating $\tw A_{0}$ and $w^{\circ}\tw A_{0}$ are hyperplanes of weight $0$, this implies that $\tw A_{0}\notin\UC^{\tL}(\tA_0)$. Hence, by Lemma \ref{Atilde}, we get that $\tw \tA_{0}\notin\UC^{\tL}(\tA_0)$ and $\tw\in \cb^{\tL}_{\min}(\tW)$ as required. 

\medskip

Conversely let $w^{\circ}\tw\in W^\circ \cdot \cbt^\Lti_{\min}$. Since $\tw\in  \cbt^\Lti_{\min}$ we have $\tw \tA_{0}\notin  \UC^{\tL}(\tA_0)$. By Lemma \ref{Atilde}, it follows that $\tw A_{0}\notin \UC^L(A_0)$ and $w^{\circ}\tw A_{0}\notin  \UC^L(A_0)$ as required.  

\medskip

The second equality in the theorem follows easily from the fact that  $\tOm_{0}=\Om^{L}_{0}$, Lemma \ref{Atilde} and 
$$N^{L}_{\si}(W)=\{w\in W\mid wA_{0}\subset \cC'_{\si}\}\quad\text{and}\quad N^{\tL}_{\si}(\tW)=\{\tw\in \tW\mid \tw\tA_{0}\subset \cC'_{\si}\}. $$

\end{proof}

\begin{Rem}
Since $\cb^{L}_{\min}(W)$, $\cb^{\tL}_{\min}(\tW)$ and $W^{\circ}$ are stable by taking the inverse, we get that
$$\cb^L_{\min}(W)=W^\circ \cdot \cb^\Lti_{\min}(\tW)= \cb^\Lti_{\min}(\tW)\cdot W^{\circ}=W^{\circ} \cdot  \cb^\Lti_{\min}(\tW)\cdot W^{\circ}.~\SS{\blacksquare}$$
\end{Rem}


\section{Kazhdan-Lusztig cells }

\subsection{Iwahori-Hecke algebras}
\label{Iwahori-Hecke}
Recall that $\Gamma$ is a totally ordered abelian group, whose law is denoted by $+$ and whose 
order relation is denoted by $\le$. Let $\cA$ be the group algebra of $\Gamma$ over $\nZ$. We shall use the following notation for $\cA$
$$\cA=\mathop{\oplus}_{\ga\in\Gamma} \nZ v^{\ga} \text{ where } v^{\ga}.v^{\ga'}=v^{\ga+\ga'}.$$
Let $L:W\rightarrow \Gamma$ be a weight function. For $s\in S$ we set $v_{s}=v^{L(s)}.$

\bigskip
 
 We denote by $\cH=\cH(W,S,L)$ the corresponding generic Iwahori-Hecke algebra, that is the free associative $\cA$-algebra with $\cA$-basis $\{T_{w}\mid w\in W\}$ and multiplication given by
$$\begin{array}{ccll}
(\text{a}) & \qquad & T_w T_{w'} =T_{ww'} & 
\text{if } \ell(ww')=\ell(w)+\ell(w')\qquad \\
(\text{b}) & \qquad & T_s^2 = (v_{s}-v_{s}^{-1}) T_s +1\quad& 
\text{if } s \in S. \\
\end{array}$$

\bigskip

Let $\bar{\ }$ be the involution of $\cA$ which takes $v^{\ga}$ to $v^{-\ga}$. It is well known that this map can be extended to a ring involution on $\cH$ (we will also denote it by $\bar{\ }$) via the formula:
$$\ov{\sum_{w\in W} a_{w}T_{w}}=\sum_{w\in W} \ov{a_{w}}T^{-1}_{w^{-1}}.$$
For all  $w\in W$, by \cite[Theorem 5.2]{bible}, there exists a unique element $C_{w}\in \cH$ such~that
\begin{itemize}
\item $\ov{C_{w}}=C_{w}$,
\item $C_{w}\in T_{w}+(\bigoplus_{y<w}\cA_{<0} T_{y})$ where $\cA_{<0}=\bigoplus_{\ga< 0} \nZ v^{\ga}$.
\end{itemize}
From the second condition, it is clear that the set $\{C_{w},w\in W\}$ forms an $\cA$-basis of $\cH$, known as the Kazhdan-Lusztig basis. 

\bigskip

We write 
$$C_{w}=\sum_{y\in W} P_{y,w}T_{w} \text{ where $P_{y,w}\in \cA$.}$$
The elements $P_{y,w}$ are called the Kazhdan-Lusztig polynomials and they satisfy the following properties (\cite[\S 5.3]{bible})
\begin{enumerate}
\item $P_{y,y}=1$
\item $P_{y,w}=0$ if $y\notle w$,
\item $P_{y,w}\in \cA_{<0}$ if $y<w$,
\item $P_{y,w}=v_{s}^{-1}P_{sy,w}$ if $sy>y$ and $sw<w$.
\end{enumerate}
Following \cite[\S 6]{bible}, we now describe the multiplication rule for the $C_{w}$'s. For each $y,w\in W$ and $s\in S$ such that $sy<y<w<sw$ we define $M^{s}_{y,w}\in\cA$ by the inductive condition 
$$M_{y,w}^{s}-\underset{sz<z}{\sum_{y<z<w}}P_{y,z}M_{z,w}^{s}-v_{s}P_{y,w}\in \cA_{<0}$$
and the symmetry condition
$$\ov{M_{y,w}^{s}}=M_{y,w}^{s}.$$
For $w\in W$ and $s\in S$, we obtain the following multiplication formula for the Kazhdan-Lusztig basis 
$$C_{s}C_{w}=
\begin{cases}
C_{sw}+\underset{z;sz<z<w}{\sum}M_{z,w}^{s}C_{z}, & \mbox{if }w<sw,\\ 
(v_{s}+v_{s}^{-1})C_{w}, & \mbox{if }sw<w.
\end{cases}
$$
Since $C_{s}=T_{s}+v_{s}^{-1}T_{1}$ for all $s\in S$, one can see that
\begin{equation*}
T_{s}C_{w}=
\begin{cases}
C_{sw}-v_{s}^{-1}C_{w}+\underset{z;sz<z<w}{\sum}M_{z,w}^{s}C_{z}, & \mbox{if }w<sw,\\ 
v_{s}C_{w}, & \mbox{if }sw<w.
\end{cases}
\end{equation*}
We will also need the following relation for Kazhdan-Lusztig polynomials. Let $y<w\in W$ and $s\in S$ such that $sw<w$. We have
\begin{align*}
P_{y,w}&=v_{s}P_{y,sw}+P_{sy,sw}-\underset{sz<z}{\sum_{y\leq z<sw}}P_{y,z}M_{z,sw}&\text{if $sy<y$}\tag{1}\\
P_{y,w}&=v^{-1}_{s}P_{sy,w}&\text{if $sy>y$}\tag{2}
\end{align*}


Finally we define the preorders  $\leq_{\cL},\leq_{\cR},\leq_{\cLR}$ as in \cite{bible}. For instance $\leq_{\cL}$ is the transitive closure of the relation:
$$y\leftarrow_{\cL} w\Longleftrightarrow \text{there exists $s\in S$ such that $M^{s}_{y,w}\neq 0$}.$$
Each of these preorders give rise to an equivalence relation $\sim_{\cL}, \sim_{\cR}$ and $\sim_{\cLR}$. The equivalence classes associated to $\sim_{\cL}$, $\sim_{\cR}$ and $\sim_{\cLR}$ are called left, right and two-sided cells, respectively. The partition of $W$ in cells {\it depends on the choice of the weight function}.  The
preorders $\leq_{\cL},\leq_{\cR},\leq_{\cLR}$ induce partial orders on the left, right and two-sided cells, respectively. 

\begin{Rem}
\label{left-right}
We have $x\sim_{\cL} y$ if and only if $x^{-1}\sim_{\cR} y^{-1}$  \cite[\S 8]{bible}. It follows easily that a union of left cells which is stable by taking the inverse is a also a union of two-sided cells.\finl 
\end{Rem}

\begin{Rem}
\label{pos-neg}
All the above can also be defined for weight functions which take negative values. It is shown in \cite{semicontinuity} that the partition into cells with respect to a weight function $L^{-}$ is the same as the partition into cells with respect to $L$ where $L$ is defined by 
$$L(s)=\begin{cases}
L(s)&\mbox{if $L^{-}(s)\geq 0$,}\\
-L(s)&\mbox{if $L^{-}(s)<0$.}
\end{cases}$$
Note that $L$ is a non-negative weight function. Hence the computation of Kazhdan-Lusztig cells can be reduced to the non-negative case.\finl 
\end{Rem}


\subsection{Kazhdan-Lusztig lowest two-sided cell}
\label{KL-lowest}
In the case where $L$ is a positive weight function, it is a well known fact that there is a lowest (Kazhdan-Lusztig) two sided cell with respect to the partial order $\leq_{\cLR}$. This two-sided cell has been thoroughly studied \cite{Shi-1987,Shi-1988,Xi-book,Bremke,jeju2} and it is equal to 
$$\cb^{L}_{\min}=\{xwy~|~w \in \WC,~x \add w \add y \text{ and } L(w)=\nu_L\}$$
(Hence the name for the set $\cb^{L}_{\min}$.)  The aim of this section is to show that this presentation also holds for non-negative weight function. 

\bigskip
Let $L$ be a non-negative weight function. Then, following Section \ref{semidirect}, we have $W=W^{\circ}\ltimes \tW$. 
Let $\tL$ be the restriction of $L$ to $\tW$. Then $\tL$ is a positive weight function on $\tW$ and  $\tL(wtw^{-1})=L(t)$ for all $w\in W^{\circ}$ and $t\in S^{+}$. We denote by $\tcH=\cH(\tW,\tS,\tL)$ the corresponding Hecke algebra. The group $W^{\circ}$ acts on $\tW$ and stabilizes $\tS$ and $\tL$, therefore it naturally acts on $\tcH$ and we can define the the semidirect product of algebras 
$$W^{\circ}\ltimes \tcH.$$
It has an $\cA$-basis $(x\cdot T_{\tw})_{x\in W^{\circ}, \tw\in \tW}$ and the map 
$$x\cdot T_{\tw}\longmapsto T_{x\tw}$$
 defines an isomorphism of $\cA$-algebras from $\tcH$ to $\cH(W,S,L)$. The cells of $(W,S,L)$ can then be described in the following way. 

\begin{Th}(\cite[Corollary 2.13]{semicontinuity})
\label{cell-til} The left cells  (respectively the two-sided cells) of $(W,S,L)$ are of the form $W^{\circ}.C$ (respectively $W^{\circ}.C.W^{\circ}$) where $C$ is a left cell (respectively a two-sided cell) of $(\tW,\tS,\tL)$.
\end{Th}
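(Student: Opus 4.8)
The plan is to compare the Kazhdan--Lusztig structure of $(W,S,L)$ with that of $(\tW,\tS,\tL)$ by means of the algebra isomorphism $\Theta\colon \hW\ltimes\tcH\to\cH(W,S,L)$, $x\cdot T_{\tw}\mapsto T_{x\tw}$, recalled above, taking advantage of the fact that $\tL$ is positive (\cite[Corollary~1.4]{Bonnafe-Dyer}) and that $\hW$ is finite. The starting point is that $v_s=v^{L(s)}=1$ for every $s\in S^\circ$, so that $\cH(\hW,S^\circ,L|_{\hW})$ is simply the group algebra $\cA[\hW]$; in particular $T_xT_y=T_{xy}$ and $T_yT_x=T_{yx}$ for all $x\in\hW$ and $y\in W$, one has $\overline{T_x}=T_x$, and the Kazhdan--Lusztig element attached to $x\in\hW$ in $\cH(\hW,S^\circ,L|_{\hW})$ equals $T_x$. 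I would also record at the outset that conjugation by an element of $\hW$ is a Coxeter automorphism of $(\tW,\tS)$ preserving $\tL$, hence commutes with the bar involution of $\tcH$ and permutes the Kazhdan--Lusztig basis of $\tcH$ compatibly with its action on $\tW$.

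The heart of the proof would be the identity
\[
C_{x\tw}=T_x\cdot C^{\tcH}_{\tw}\qquad(x\in\hW,\ \tw\in\tW),
\]
where $C^{\tcH}_{\tw}$ is viewed inside $\cH$ via $\Theta$. Bar-invariance of the right-hand side is obtained by noting that the bar involution of $\cH$ corresponds under $\Theta$ to $x\cdot h\mapsto x\cdot\bar h$ --- this is checked on the generators $T_s$ ($s\in S^\circ$, where $\overline{T_s}=T_s^{-1}=T_s$) and $T_{\tit}$ ($\tit\in\tS$), using the compatibility recorded above --- so that $\overline{C^{\tcH}_{\tw}}=C^{\tcH}_{\tw}$ inside $\cH$ and $\overline{T_x\,C^{\tcH}_{\tw}}=T_x\,C^{\tcH}_{\tw}$. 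Then, writing $C^{\tcH}_{\tw}=\sum_{\tilde y}P^{\tcH}_{\tilde y,\tw}T_{\tilde y}$ with $P^{\tcH}_{\tw,\tw}=1$ and $P^{\tcH}_{\tilde y,\tw}\in\cA_{<0}$ for $\tilde y<\tw$ in $\tW$, one gets $T_x\,C^{\tcH}_{\tw}=T_{x\tw}+\sum_{\tilde y<\tw}P^{\tcH}_{\tilde y,\tw}\,T_{x\tilde y}$, and the identity follows from the uniqueness characterisation of $C_{x\tw}$ provided one knows that $x\tilde y<x\tw$ in the Bruhat order of $(W,S)$ whenever $\tilde y<\tw$ in the Bruhat order of $(\tW,\tS)$. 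This Bruhat-order comparison, along the cosets $\hW\tw$, between the two Coxeter systems is the step I expect to be the main obstacle; I would attack it by combining the reflection-subgroup theory of \cite{Bonnafe-Dyer} (which identifies the Bruhat order of $(\tW,\tS)$ with the restriction of that of $(W,S)$) with the geometric picture of Lemma~\ref{Atilde}: the $W$-alcove $\tw A_0$ lies in the $\tW$-alcove $\tw\tA_0$, left multiplication by $x\in\hW$ moves only among the $W$-alcoves contained in one $\tW$-alcove (crossing only hyperplanes of weight $0$), and a direct count of separating hyperplanes as in Propositions~\ref{length-hyp} and~\ref{geom-L} should yield the desired inequality.

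Once this identity is available, the conclusion follows by transporting the multiplicative structure. Writing $C_xC_y=\sum_z h_{x,y,z}\,C_z$ in $\cH$, and likewise in $\tcH$, a computation in the crossed product $\hW\ltimes\tcH$ --- moving the factors $T_{x'}$ past $C^{\tcH}_{\tw}$ via the $\hW$-equivariance of the Kazhdan--Lusztig basis of $\tcH$ --- yields
\[
C_{x\tw}\,C_{x'\tw'}=\sum_{\tz}h^{\tcH}_{\,{x'}^{-1}\tw x',\ \tw',\ \tz}\,C_{xx'\tz},
\]
so that $h_{x\tw,\,x'\tw',\,u}=0$ unless $u=xx'\tz$ for some $\tz\in\tW$, in which case it equals $h^{\tcH}_{{x'}^{-1}\tw x',\,\tw',\,\tz}$. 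Writing each element of $W$ uniquely as $x\tw$ with $x\in\hW$ and $\tw\in\tW$, and letting $\pi\colon W\to\tW$ be the map $x\tw\mapsto\tw$, this shows --- using that ${x'}^{-1}\tw x'$ runs over all of $\tW$ as $\tw$ does --- that $u\leq_{\cL}w$ in $W$ if and only if $\pi(u)\leq_{\cL}\pi(w)$ in $\tW$ (these left preorders being read off from the structure constants $h$, which by \cite[\S~8]{bible} give the same partition into cells as the $M^s$-description). Hence $u\sim_{\cL}w$ if and only if $\pi(u)\sim_{\cL}\pi(w)$, so the left cells of $(W,S,L)$ are exactly the fibres $\pi^{-1}(C)=\hW\cdot C$ of $\pi$ over the left cells $C$ of $(\tW,\tS,\tL)$. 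Running the same argument on the right (or invoking $w\mapsto w^{-1}$ together with Remark~\ref{left-right}) and combining, one gets that the two-sided cells of $(W,S,L)$ are exactly the sets $\hW\cdot C\cdot\hW$ with $C$ a two-sided cell of $(\tW,\tS,\tL)$, as asserted.
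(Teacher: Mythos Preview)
The paper does not prove this statement; it simply quotes it from \cite[Corollary~2.13]{semicontinuity}. Your argument is essentially the one given there, and it is correct in outline. Two remarks will tighten it.

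First, your ``main obstacle'' is not an obstacle at all. The uniqueness part of \cite[Theorem~5.2]{bible} only requires that the candidate be bar-invariant and congruent to $T_{x\tw}$ modulo $\cH_{<0}=\bigoplus_{y}\cA_{<0}T_y$; the Bruhat support condition $y<w$ is a \emph{consequence}, not a hypothesis. Since $T_x T_{\tilde y}=T_{x\tilde y}$ for $x\in W^\circ$ (because $v_s=1$ for $s\in S^\circ$), the elements $T_{x\tilde y}$ with $\tilde y$ ranging over $\tW$ are distinct standard basis vectors, so $T_x\,C^{\tcH}_{\tw}=T_{x\tw}+\sum_{\tilde y\neq\tw}P^{\tcH}_{\tilde y,\tw}T_{x\tilde y}\equiv T_{x\tw}\bmod\cH_{<0}$ with no Bruhat comparison needed. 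The identity $C_{x\tw}=T_x\,C^{\tcH}_{\tw}$ then follows immediately.

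Second, your passage from left cells to two-sided cells is a little too quick. The computation you give shows that $u\le_{\cL}w$ in $W$ if and only if $\pi(u)\le_{\cL}\pi(w)$ in $\tW$, but on the right one finds instead that $u\le_{\cR}w$ is equivalent to $\pi(u)\le_{\cR} a\,\pi(w)\,a^{-1}$ in $\tW$ for a specific $a\in W^\circ$ determined by the $W^\circ$-components of $u$ and $w$. Since $\sim_{\cL}$ already lets you move freely inside each coset $W^\circ\tw$, and since $W^\circ$-conjugation is by Coxeter automorphisms of $(\tW,\tS,\tL)$ and hence permutes the two-sided cells of $\tW$, combining the two relations shows that the two-sided cell of $W$ through $\tw$ is $W^\circ\cdot\bigl(\bigcup_{a\in W^\circ}aCa^{-1}\bigr)=W^\circ\cdot C\cdot W^\circ$, where $C$ is the two-sided cell of $\tW$ containing $\tw$. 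This is exactly the assertion, but the $W^\circ$-conjugation has to be tracked explicitly rather than swept into ``running the same argument on the right''.
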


Finally we are ready to prove one of the main result of this paper which gives a general presentation of the lowest two-sided cell, including the case when the weight function $L$ vanishes on some generators. Note that this theorem is already known when the parameters are positive: see \cite{Shi-1987,Shi-1988} for the equal parameter case and 
\cite[\S 5]{Bremke}, \cite[Chapter 3]{Xi-book} and \cite{jeju2} for the unequal parameters.

\begin{Th}
\label{main1}
Let $(W,S,L)$ be an irreducible affine Weyl group and let $L$ be a non-negative weight function on $W$. Set 
$$\nu_{L}=\max_{I\subset S} w_{I}\quad\text{and}\quad \WC=\bigcup_{I\subset S} W_I$$
 where $I$ runs over the subset of $S$ such that $W_{I}$ is finite. Then the lowest two-sided cell of $W$ is
 $$\cb^{L}_{\min}=\{xwy~|~w \in \WC,~x \add w \add y \text{ and } L(w)=\nu_L\}.$$
Further, the decomposition of $\cb^{L}_{\min}$ into left cells is
$$\cb^{L}_{\min}=\bigcup_{\si\in\Om_{0}^{L}} N^{L}_{\si}.$$
\end{Th}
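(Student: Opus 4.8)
The plan is to deduce Theorem~\ref{main1} from the structural results already assembled, essentially by reducing to the positive-parameter case via the semidirect product decomposition $W = W^\circ \ltimes \tW$ and then invoking the known theory for positive weight functions.

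First I would establish the presentation of $\cb^L_{\min}$ as the set $\{xwy \mid w \in \WC,\ x \add w \add y,\ L(w)=\nu_L\}$. This is literally the definition given in Section~\ref{def-lowest}; the content of the theorem is that this set is genuinely the lowest two-sided Kazhdan--Lusztig cell with respect to $\leq_{\cLR}$. For this I would argue as follows: by Theorem~\ref{theo:cmin} and the Remark following it, $\cb^L_{\min}(W) = W^\circ \cdot \cb^{\tL}_{\min}(\tW) \cdot W^\circ$, while $\tL$ is a \emph{positive} weight function on $\tW$ by \cite[Corollary 1.4]{Bonnafe-Dyer}. For positive weight functions on an affine Weyl group, the cited works \cite{Shi-1987,Shi-1988,Bremke,Xi-book,jeju2} show that $\cb^{\tL}_{\min}(\tW)$ is indeed the lowest two-sided cell of $(\tW,\tS,\tL)$. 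Then Theorem~\ref{cell-til} tells us that the two-sided cells of $(W,S,L)$ are exactly the sets $W^\circ \cdot C \cdot W^\circ$ with $C$ a two-sided cell of $(\tW,\tS,\tL)$; since the map $C \mapsto W^\circ\cdot C\cdot W^\circ$ respects the preorder $\leq_{\cLR}$ (this is part of the statement/proof of \cite[Corollary 2.13]{semicontinuity}), the lowest two-sided cell of $W$ is $W^\circ \cdot \cb^{\tL}_{\min}(\tW)\cdot W^\circ = \cb^L_{\min}(W)$. It remains to check that the $\tW$-side formula $\cb^{\tL}_{\min}(\tW) = \{\tx\tw\ty \mid \tw\in\WC_{\tW},\ \tx\add\tw\add\ty,\ \tL(\tw)=\nu_{\tL}\}$ together with $W = W^\circ\ltimes\tW$ yields the stated description with $\WC$, $\nu_L$ as defined for $W$ itself. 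This last bookkeeping step uses Proposition~\ref{c-min} and Theorem~\ref{theo:cmin}, which already identify $\cb^L_{\min}(W)$ with the combinatorially-defined set intrinsically on $W$, so no further work is needed beyond citing them.

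Next I would handle the left-cell decomposition $\cb^L_{\min} = \bigcup_{\si\in\Om_0^L} N^L_\si$. Again the strategy is to transport the positive-parameter statement. For positive $\tL$ on $\tW$, Example~\ref{lowest positive} (specialized to $\tW$) gives the decomposition of $\cb^{\tL}_{\min}(\tW)$ into left cells as a disjoint union of the sets $N^{\tL}_\si(\tW)$ indexed by $\si \in \tOm_0 = \Om_0^L$ (using the identification $\Om_0^L = \tOm_0$ from the Remark in Section~\ref{tildeL}, and that the $N^{\tL}_{\si}$ regroup the sets $N_{\la,z}$ of Example~\ref{lowest positive}). By Theorem~\ref{cell-til}, each left cell of $(W,S,L)$ is $W^\circ \cdot C$ for $C$ a left cell of $(\tW,\tS,\tL)$, and $W^\circ$ permutes these. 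Applying $W^\circ \cdot (-)$ to the $\tW$-decomposition and using the second equality of Theorem~\ref{theo:cmin}, $N^L_\si(W) = W^\circ\cdot N^{\tL}_\si(\tW)$, we obtain $\cb^L_{\min}(W) = \bigcup_{\si\in\Om_0^L} N^L_\si(W)$. I would then verify disjointness: distinct $\cC_\si$, $\cC_{\si'}$ are separated by a maximal strip (as noted in Section~``The elements of $\cb^L_{\min}$''), so the $N^L_\si$ are pairwise disjoint, and I would remark (or cite the relevant result, presumably proved in the subsequent sections of the paper) that each $N^L_\si$ is a single left cell — or, if the theorem only asserts the set-theoretic union into left cells without claiming each $N^L_\si$ is one cell, simply that the union is a union of left cells, which follows since $W^\circ\cdot(\text{left cell of }\tW)$ is a left cell of $W$ and each such decomposes as a union of the $N^L_\si$'s.

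The main obstacle I anticipate is not any single hard computation but rather making the reduction airtight: one must be careful that the abstract Kazhdan--Lusztig notion of ``lowest two-sided cell'' for $(W,S,L)$ really does correspond under Theorem~\ref{cell-til} to that of $(\tW,\tS,\tL)$ — i.e.\ that the bijection $C \mapsto W^\circ\cdot C\cdot W^\circ$ between two-sided cells is an isomorphism of posets for $\leq_{\cLR}$, including the existence of a minimum — and that the purely geometric set $\cb^L_{\min}(W)$ defined in Section~\ref{lowest} coincides with the cell-theoretic object. The first point is a property of \cite[Corollary 2.13]{semicontinuity} that I would need to state carefully (or re-derive from the semidirect-product structure of $\cH(W,S,L) \cong W^\circ\ltimes\tcH$, for which the preorders are governed by the $M^s$-coefficients, which are insensitive to the $W^\circ$-factor since $L$ vanishes there). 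The second point is exactly what Proposition~\ref{c-min} and Theorem~\ref{theo:cmin} were set up to provide. Once these are cited cleanly, the proof is short.
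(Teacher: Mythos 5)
Your proposal is correct and follows essentially the same route as the paper: reduce to $(\tW,\tS,\tL)$ via Theorem~\ref{cell-til}, invoke the known positive-parameter theory for the lowest two-sided cell and its left-cell decomposition on $\tW$, and transfer back through Theorem~\ref{theo:cmin}. The one point you flag as a potential concern — that $C\mapsto W^\circ\cdot C\cdot W^\circ$ is order-preserving for $\leq_{\cLR}$ — is indeed implicitly relied on by the paper's citation of \cite[Corollary 2.13]{semicontinuity}; your observation that it is transparent from the isomorphism $\cH\cong W^\circ\ltimes\tcH$ and the fact that $L$ vanishes on $W^\circ$ is the right justification.
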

\begin{proof}
As mentioned previously this result is already known when $L$ is a positive weight function. On the one hand, by Theorem \ref{cell-til}, the lowest  two-sided Kazhdan-Lusztig cell of $W$ with respect to $\leq_{\cLR}$ and the weight function $L$ is
$$W^{\circ}\cdot\bc\cdot W^{\circ}$$
where $\bc$ is the lowest Kazhdan-Lusztig cell of $(\tW,\tL)$. But in this case we know that $$\bc=\bc^{\tL}_{\min}(\tW)$$
since $\tL$ is a positive weight function. 
Then the result follows from Theorem \ref{theo:cmin}, where we proved that
$$W^{\circ}\cdot\cb^{\tL}_{\min}(\tW)\cdot W^{\circ}=\bc_{\min}^{L}(W).$$
The left cells lying in $\cb^{L}_{\min}$ are of the form $W^{\circ}\cdot N^{\tL}_{\si}(\tW)$ where $N^{\tL}_{\si}(\tW)$ is a left cell of $\bc^{\tL}_{\min}(\tW)$. Once again, by Theorem \ref{theo:cmin}, we know that $W^{\circ}\cdot N^{\tL}_{\si}(\tW)=N^{L}_{\si}$ as required. 
\end{proof}


\section{On the asymptotic semicontinuity of the lowest two-sided cell}
In this section, we fix a totally ordered abelian group $\Ga$.
\subsection{Semicontinuity conjecture}
Let $\bar{S}=\{\o_{1},...,\o_{m}\}$ be the set of conjugacy classes in $S$. Let $\nZ[\bar{S}]$ be the free $\nZ$-module with basis $\bar{S}$ and let $V'=\nR\otimes_{\nZ} \nZ[\bar{S}]$. We shall view the elements of $\nZ[\bar{S}]$ as embedded in $V'$.  We denote by $\o^{\ast}_{1},\ldots,\o^{\ast}_{m}$ the dual basis of $\o_{1},\ldots, \o_{m}$. For $(n_{1},...,n_{m})\in \nQ^{r}-\{0\}$ we set 
$$H_{n_{1}\o_{1}+\ldots+n_{m}\o_{m}}:=\ker(\sum n_{i}\o^{\ast}_{i}).$$
Such an hyperplane is called a rational hyperplane. \\

Since $\Ga$ is torsion-free, the natural map $\Ga\rightarrow \nQ\otimes_{\nZ}\Ga$ is injective, so we shall 
view $\Ga$ as embedded in the $\nQ$-vector space $\nQ\otimes_{\nZ}\Ga$: in particular, if $r\in\nQ$ and $\ga\in \Ga$
then $r\ga$ is well-defined. Moreover, the order on $\Ga$ extends uniquely to a total order 
on $\nQ\otimes_{\nZ}\Ga$ that we still denote by $\leq$. \\

Following \cite{Bourbaki} we now introduce the notion of facets and chambers associated to a finite set of rational hyperplanes. Let $H=H_{n_{1}\o_{1}+\ldots+n_{m}\o_{m}}$ where $n_{i}\in\nQ$. We say that a weight function $L\in\text{Weight}(W,\Ga)$ lies on $H$ if we have 
$$\sum^{m}_{i=1}n_{i}L(\o_{i})=0.$$
We say that two weight functions $L,L'$ lie on the same side of $H_{n_{1}\o_{1}+\ldots+n_{m}\o_{m}}$ if we have
$$\sum^{m}_{i=1}n_{i}L(\o_{i})>0\text{ and }\sum^{m}_{i=1}n_{i}L'(\o_{i})>0$$
or
$$\sum^{m}_{i=1}n_{i}L(\o_{i})<0\text{ and }\sum^{m}_{i=1}n_{i}L'(\o_{i})<0.$$
Let $\fH$ be a finite set of rational hyperplanes. We define an equivalence relation on $\text{Weight}(W,\Ga)$: we write $L\sim_{\fH} L'$
if for all $H\in\fH$ we have either
\begin{enumerate}
\item $L,L'\in H$;
\item $L,L'$ lie on the same side of $H$.
\end{enumerate}
The equivalence classes associated to this relation will be called $\fH$-facets. A $\fH$-chamber is a $\fH$-facet $\cF$ such that no weight function in  $\cF$ lies on a hyperplane $H\in\fH$. 

\begin{Rem}
In \cite{Bourbaki} the equivalence relation $\sim_{\fH}$ is defined on $V'$ in the following way: $\la\sim_{\fH}\mu\in V'$ if for all for all $H\in\fH$ we have either
\begin{enumerate}
\item $\la,\mu\in H$;
\item $\la,\mu$ lie on the same side of $H$.
\end{enumerate}
There is a one to one correspondance between the equivalence classes of this relation in $V'$ and the sets of facets in $\text{Weight}(W,\Ga)$. We will freely identify those two sets.\finl
\end{Rem}


\medskip

For an $\fH$-facet $\cF$ we denote by $W_{\cF}$ the parabolic subgroup generated by 
$$\{s\in S| L(s)=0 \text{ for all $L\in \cF$}\}.$$ 
We say that a subset $X$ of $W$ is stable by translation by $W_{I}$ ($I\subset S$) on the left  (respectively on both sides) if for all $w\in X$ we have $zw\in X$ (respectively $zwz'\in X$) for all $z\in W_{I}$ (respectively for all $z,z'\in W_{I}$). Finally we denote by $\cC_{\cL}(L)$ (respectively $\cC_{\cLR}(L)$) the partition of $W$ into left (respectively two-sided) cells with respect to the weight function $L$. \\

We can now state the first author's conjecture for the partition of $W$ into cells. It is enough to state it for left and two-sided cells (see Remark \ref{left-right}).
\begin{conjecture}
\label{SC}
There exists a finite set of rational hyperplanes $\fH$ of $V'$ satisfying the following properties
\begin{enumerate}
\item If $L_{1},L_{2}$ are two weight functions belonging to the same $\fH$-facet $\cF$ then $\cC_{\cL}(L_{1})$ (respectively $\cC_{\cLR}(L_{1})$) and $\cC_{\cL}(L_{2})$ (respectively $\cC_{\cLR}(L_{2})$) coincide (we denote these partitions by $\cC_{\cL}(\cF)$ and $\cC_{\cLR}(\cF)$).
\item Let $\cF$ be an $\fH$-facet. Then the cells of $\cC_{\cL}(\cF)$ (respectively $\cC_{\cLR}(\cF)$) are the smallest subsets of $W$ which are at the same time unions of cells of $\cC_{\cL}(C)$ (respectively $\cC_{\cLR}(C)$) for all chamber $C$ such that $\cF\subset \bar{C}$ and stable by translation on the left (respectively on both sides) by $W_{\cF}$.
\end{enumerate}
\end{conjecture}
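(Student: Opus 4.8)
I do not expect to settle Conjecture~\ref{SC} in full; the plan is to establish it for the restricted partition obtained by keeping track only of the cells contained in the lowest two-sided cell of an affine Weyl group, which is what the introduction announces. The first move is to eliminate the vanishing parameters. By the semidirect product decomposition $W=W^{\circ}\ltimes\tW$ together with Theorems~\ref{cell-til} and~\ref{theo:cmin}, every (left or two-sided) $L$-cell contained in $\cb^{L}_{\min}$ is the $W^{\circ}$-orbit of a cell of $(\tW,\tS,\tL)$ contained in $\cb^{\tL}_{\min}(\tW)$, where $\tL$ is a \emph{positive} weight function. Since $W^{\circ}$ depends only on the set $\{s\in S\mid L(s)=0\}$, and since the hypothesis $L(s^{+})=L'(s^{+})>mL'(s^{\circ})$ forces $L'$ to have, for $m$ large, exactly the same behaviour on $S^{\circ}$ and $S^{+}$ as $L$, the problem reduces to the following: for a positive weight function $\tL$ on an affine Weyl group $\tW$, every cell contained in $\cb^{\tL}_{\min}(\tW)$ is a union of $\tL'$-cells for all positive $\tL'$ lying in a suitable asymptotic cone around $\tL$. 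This is where the finite set of rational hyperplanes $\fH$ of Conjecture~\ref{SC} is to be produced.

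\textbf{The geometric input.} The lowest two-sided cell and its left subcells are explicitly described by Theorem~\ref{main1}: $\cb^{\tL}_{\min}=\bigcup_{\sigma\in\Om_{0}^{L}}N^{\tL}_{\sigma}$, with the finer decomposition into the sets $N_{\la,z}$, and every element is uniquely of the form $x_{w}a_{w}\hw_{\la_{\si}}b_{\si}$ subject to the length/weight compatibilities recorded in Lemma~\ref{dot-property}. The next step is to compare, for $\tL$ and for a nearby positive $\tL'$ in the asymptotic regime, the root systems $\tPhi$, the $L$-special points, the $L$-quarters $\cC'_{\si}$, and the regions $\UC^{L}(A_{0})$: one shows that pushing the ratios between the values on $S^{+}$ and the (small but positive) values coming from $S^{\circ}$ only \emph{refines} the stratification, so that $\cb^{\tL}_{\min}\subseteq\cb^{\tL'}_{\min}$ and each $N^{\tL}_{\sigma}$ is a union of sets $N^{\tL'}_{\sigma'}$. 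Combined with Lemma~\ref{dot-property}, this produces the candidate upper bound for the $\tL'$-cell partition and identifies the hyperplanes of $\fH$ as $\{L(\omega_{i})=0\}$ together with finitely many walls of the form $L(\omega_{i})=kL(\omega_{j})$ for $k$ below an explicit bound; the integer $m$ is that bound.

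\textbf{The Kazhdan--Lusztig input and the main obstacle.} To upgrade this geometric inclusion into a statement about cells, I would use the generalized induction of~\cite{jeju3}: it produces, for a sequence of parameters in which certain parameters dominate the others, a basis of the Hecke algebra adapted to a standard parabolic and a control of the pre-orders $\leq_{\cL}$ and $\leq_{\cLR}$ through those of a proper parabolic subgroup together with translation. Applied with the dominant parameters being those on $S^{+}$, and along the stratification of $\cb^{L}_{\min}$ coming from the $N_{\la,z}$, this should show that each $N^{L}_{\sigma}$ is $\leq_{\cL}$-closed, and in fact a union of left cells, for every $L'$ in the asymptotic cone, reproducing in the variable, unequal-parameter setting the arguments of~\cite{Bremke}, \cite[Chapter~3]{Xi-book} and~\cite{jeju2}. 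The main difficulty I expect lies precisely here: controlling the polynomials $M^{s}_{y,w}$ and the relevant structure constants \emph{uniformly} as $L'$ varies in the cone, i.e.\ proving that the generalized induction is stable in the asymptotic regime, so that no element of $\cb^{L}_{\min}$ can be dragged into a strictly smaller two-sided cell and no two of the pieces $N^{L}_{\sigma}$ can merge. A secondary point is the uniformity of $m$ over all cells $\cb\subseteq\cb^{L}_{\min}$ and the verification of the closure/stability condition~(2) of Conjecture~\ref{SC}, which should follow formally once the partition is shown to be locally constant on $\fH$-facets and to refine correctly across the walls of $\fH$.
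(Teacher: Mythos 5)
Your opening reduction is where the argument breaks, and it also departs from what the paper actually does. Theorem~\ref{cell-til} turns $L$-cells of $W$ into $W^\circ$-orbits of $\tL$-cells of $\tW$ precisely because $L$ vanishes on $S^\circ$; but the target $L'$ satisfies $L'(s^\circ)>0$, so the semidirect decomposition attached to $L'$ is trivial and $L'$-cells of $W$ are not $W^\circ$-orbits of cells of any $(\tW,\tL')$. The statement to be proved, ``$\bc$ is a union of $L'$-cells,'' is about cells of $(W,S,L')$ and cannot be transplanted to $\tW$. In the paper, $\tW$ is used only to compute $\cb^{L}_{\min}$ and its left pieces $N^{L}_{\si}$ (Theorems~\ref{theo:cmin} and~\ref{main1}); all the Kazhdan--Lusztig work for $L'$ happens on $W$ itself. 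Relatedly, the inclusion $\cb^{\tL}_{\min}\subseteq\cb^{\tL'}_{\min}$ you invoke compares objects in groups that are not even the same, and is not the assertion that is needed, namely that each $N^{L}_{\si}$ is a union of $L'$-left cells of $W$.

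More substantively, the proposal leaves unspecified the mechanism for the uniformity that you correctly flag as the central obstacle. The paper does not argue ``uniformly in $L'$ over a cone.'' Instead it introduces, in Section~7, a totally ordered abelian group $\bGa=\nZ[\bar{S}]/(X\cap(-X))$ for a positive subset $X=\Pos(\Phi)$ with $\Phi\in\cP_{+}(\nZ[\bar{S}])$, so that the dominance of the $S^{+}$-parameters is encoded once and for all in the order on $\bGa$, together with a single universal weight $\bL:W\to\bGa$. Conditions {\bf I1}--{\bf I5} of the generalized induction are then verified in $\bH=\cH(W,S,\bL)$ via the degree estimates of Lemma~\ref{KLasym} and Theorems~\ref{bound first}, \ref{bound} (with a delicate case split 1--4 in type $\tilde{\Crm}$), together with Lemma~\ref{dot-property} for {\bf I2}. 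The transfer to a concrete $L'$ is achieved by the specialization $\theta^{L'}_{\bGa}$ of Proposition~\ref{specialisation}, whose applicability only requires $\theta^{L'}_{\bGa}$ to be positive on the bounded set $\bGa_{+}(N_0)$ of Lemma~\ref{gen-bound}; unwinding that positivity condition is exactly what produces the explicit arrangements $\fH(m_1,m_2)$ and $\bar{\fH}(\bm)$ of Theorems~\ref{main2-BFG} and~\ref{main2-C}, hence both the bound $m$ and the finite hyperplane set $\fH$. Without this device, or an equivalent substitute, your plan stalls at the very step you single out as the main difficulty.
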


\begin{Rem}
\label{semi-pos-neg}
There are no restriction, such as non-negativity, on the weight functions in this conjecture. However, changing the sign of some values of the weight function $L$ has no effect on the partition of $W$ into cells (see Remark \ref{pos-neg}). Therefore, to prove the conjecture, it is enough to find a finite set of rational hyperplanes $\fH$ such that Statements (1) and (2) hold for all non-negative weight functions. Indeed, the conjecture will then hold for the minimal finite set of hyperplane  which contain $\fH$ and which is stable under the action of the linear maps $\tau_{i}:V'\rightarrow V'$ defined by $\tau_{i}(\o_{i})=-\o_{i}$ and $\tau_{i}(\o_{k})=\o_{k}$ if $k\neq i$.
\end{Rem}

When only looking at the lowest two-sided cell, Statement (1) in the above conjecture is a direct consequence of Theorem \ref{main1} (see below). We denote by $\rm{Left}\text{$(\bc^{L}_{\min})$}$ the set of left cells of $(W,S,L)$ lying in $\bc^{L}_{\min}$. 

\begin{Cor}[of Theorem 5.4]
\label{cor54}
Let $W$ be an irreducible affine Weyl group. There exists a finite set of rational hyperplanes $\fH$ such that 
\begin{enumerate}
\item If $L_{1},L_{2}$ are two weight functions belonging to the same $\fH$-facet $\cF$ then $\cb^{L_{1}}_{\min}=\cb^{L_{2}}_{\min}$ (we denote this set $\cb^{\cF}_{\min}$) and $\rm{Left}\text{$(\bc^{L_{1}}_{\min})$}$  and $\rm{Left}\text{$(\bc^{L_{2}}_{\min})$}$  coincide (we denote this partition by $\rm{Left}(\bc^{\cF}_{\min})$).
\end{enumerate}
\end{Cor}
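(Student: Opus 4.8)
The plan is to reduce Corollary \ref{cor54} to Theorem \ref{main1} via the semidirect product decomposition, exactly as the main text has been building up to. First I would recall that by Theorem \ref{main1}, for a non-negative weight function $L$ we have the explicit description
$$\cb^{L}_{\min}=\{xwy~|~w \in \WC,~x \add w \add y \text{ and } L(w)=\nu_L\}$$
together with the decomposition $\cb^{L}_{\min}=\bigcup_{\si\in\Om_{0}^{L}} N^{L}_{\si}$ into left cells. Both the set $\cb^{L}_{\min}$ and the family of left cells $N^{L}_{\si}$ are, by their geometric descriptions (Proposition \ref{c-min} and Section \ref{tildeL}), governed entirely by the following combinatorial data attached to $L$: the set $S^{\circ}=\{s\in S\mid L(s)=0\}$ (equivalently the root system $\Phi^{L}$ of roots of positive weight), the common value $\nu_{L}$ only insofar as it determines which $W_{I}$ are "maximal", and — in type $\tilde{\Crm}_{r}$ only — whether $L(t)>L(t')$, $L(t)=L(t')$, or $L(t')>L(t)$ (which affects the integers $b_{\al}$ and hence $\tPhi$, $\UC^{L}(A_{0})$ and the $L$-quarters $\cC'_{\si}$).

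Next I would make this dependence precise. Define $\fH$ to be the following finite set of rational hyperplanes of $V'=\nR\otimes_{\nZ}\nZ[\bar S]$: for each conjugacy class $\o_i\in\bar S$ the coordinate hyperplane $H_{\o_i}=\ker(\o_i^{\ast})$ (this records exactly the partition $S=S^{\circ}\dotcup S^{+}$), and, in the case where $W$ is of type $\tilde{\Crm}_{r}$ with the generators $t$, $s_1,\dots,s_{r-1}$, $t'$ of Convention \ref{convention}, the hyperplane $H_{\o_t-\o_{t'}}=\ker(\o_t^{\ast}-\o_{t'}^{\ast})$ recording whether $L(t)=L(t')$. (If $W$ is not of type $\tilde{\Crm}_{r}$, only the coordinate hyperplanes are needed.) I claim that two weight functions $L_1$, $L_2$ lying in the same $\fH$-facet $\cF$ have the same $S^{\circ}$ — because membership of $L$ in each coordinate hyperplane $H_{\o_i}$ is precisely the condition $L(\o_i)=0$ — and, in type $\tilde{\Crm}_{r}$, the same relative order of $L(t)$ and $L(t')$ among $\{<,=,>\}$, by the $H_{\o_t-\o_{t'}}$ hyperplane together with Convention \ref{convention} (which fixes $L(t)\ge L(t')$). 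Hence $L_1$ and $L_2$ give the same $\Phi^{L}$, the same $\tPhi$, the same group $\Om_0^{L}=\tOm_0$, the same $\UC^{L}(A_0)$ and the same $L$-quarters $\cC'_{\si}$.

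From this, Proposition \ref{c-min} gives $\cb^{L_1}_{\min}=\{w\mid wA_0\not\subset\UC^{L_1}(A_0)\}=\{w\mid wA_0\not\subset\UC^{L_2}(A_0)\}=\cb^{L_2}_{\min}$, and the decomposition $\cb^{L}_{\min}(W)=\bigcup_{\si\in\Om^{L}_{0}}N^{L}_{\si}(W)$ with $N^{L}_{\si}(W)=\{w\mid wA_0\subset\cC'_{\si}\}$ shows that the family $\{N^{L_1}_{\si}\}_{\si}$ and $\{N^{L_2}_{\si}\}_{\si}$ coincide; by Theorem \ref{main1} these are exactly the sets $\mathrm{Left}(\bc^{L_i}_{\min})$, so $\mathrm{Left}(\bc^{L_1}_{\min})=\mathrm{Left}(\bc^{L_2}_{\min})$. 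This is the whole content of statement (1) of the corollary. I do not expect a serious obstacle here: the only point requiring care is the bookkeeping in type $\tilde{\Crm}_{r}$, namely checking that the single extra hyperplane $H_{\o_t-\o_{t'}}$ really does capture all the dependence of $b_{\al}$ (hence of $\tPhi$ and the $\cC'_{\si}$) on the parameters — this is where Convention \ref{convention} and the Remark following it (that $b_{\al}=2$ can occur only when $L(t)>L(t')=0$) must be invoked explicitly; everything else is immediate from the geometric descriptions already established. I would also note in passing that, by Remark \ref{semi-pos-neg}, it suffices to have worked with non-negative $L$, so no further enlargement of $\fH$ beyond its $\tau_i$-closure is needed for the full statement.
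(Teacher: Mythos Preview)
Your argument is correct. Both your approach and the paper's rely on Theorem~\ref{main1}, but the routes diverge slightly from there. The paper's proof is a two-line counting argument: since $\cb^{L}_{\min}$ (and, implicitly, its left-cell decomposition) is determined by the values of $L$ on the finite set $\WC$, one can take $\fH$ to consist of the hyperplanes $\{L(w)=L(w')\}$ for $w,w'\in\WC$ (together with the coordinate hyperplanes), without ever naming them explicitly. You instead work from the geometric side of Proposition~\ref{c-min}, observing that $\UC^{L}(A_{0})$, $\Om^{L}_{0}$, and the $L$-quarters $\cC'_{\si}$ depend only on $S^{\circ}$ and, in type $\tilde{\Crm}_{r}$, on the sign of $L(t)-L(t')$; this lets you write down the hyperplanes $H_{\o_{i}}$ (plus $H_{\o_{t}-\o_{t'}}$ in type $\tilde{\Crm}$) directly. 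Your set is strictly smaller and essentially minimal, which is a nice bonus and foreshadows the explicit hyperplane sets $\fH(m_{1},m_{2})$ and $\fH(\bm)$ used later in Sections~\ref{typeBFG} and~\ref{typeC}; the paper's argument buys brevity at the cost of this explicitness. The one place where you should be slightly more careful is distinguishing the two ``$b$'' constants in play: the $b_{\a}$ defining $\tPhi$ (smallest $n$ with $L_{H_{\a,n}}>0$) and the $b_{i}$ in the proof of Proposition~\ref{c-min} defining the vertex $\l_{\si}$ of $\cC'_{\si}$ (which equals $2$ precisely when $L(H_{\b_{i},0})\neq L(H_{\b_{i},1})$, i.e.\ when $L(t)\neq L(t')$, regardless of whether $L(t')=0$). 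The latter is what governs the $\cC'_{\si}$, and it is indeed captured by $H_{\o_{t}-\o_{t'}}$ alone, so your conclusion stands.
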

\begin{proof}
By Theorem 5.4,  $\cb^{L}_{\min}$ only depends on the values of $L$ on the elements of the set $\cW$ (see Section \ref{def-lowest}). But $\cW$ is finite, hence it  is easy to find a finite set of rational hyperplanes such that (1) holds. 
\end{proof}

In the remaining of this paper, we will prove the following theorem which is concerned with the asymptotic behaviour of the lowest two-sided cell, hence providing new evidences for the semicontinuity conjecture. 

\begin{Th}
\label{main2}
Let $W$ be an irreducible affine Weyl group. There exists a finite set of rational hyperplanes 
$\fH$ satisfying property (1) in Corollary \ref{cor54} and satisfying the following property: 
if $\cF$ is an $\fH$-facet which is contained in $H_{\o_{i}}$ for some $i$,  then $\cb^{\cF}_{\min}$ 
is a union of two-sided cells  of $\cC_{\cLR}(C)$ and the left cells in 
$\rm{Left}(\bc^{\cF}_{\min})$ are union of left cells of $\cC_{\cL}(C)$ for all $\fH$-facets $C$ such that $\cF\subset \bar{C}$.
\end{Th}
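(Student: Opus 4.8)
The plan is to reduce Theorem~\ref{main2} to the known positive-parameter situation via the semidirect product decomposition $W = W^\circ \ltimes \tW$, exactly as in the proof of Theorem~\ref{main1}, and then to prove the asymptotic semicontinuity statement inside the positive-parameter group $\tW$ by a direct comparison of the combinatorial/geometric descriptions of $\cb^L_{\min}$ and its left cells $N^L_\si$ for nearby weight functions. First I would fix the finite set $\fH$ of rational hyperplanes: start with the set furnished by Corollary~\ref{cor54} (so that $\cb^\cF_{\min}$ and $\mathrm{Left}(\cb^\cF_{\min})$ are well-defined on $\fH$-facets), and enlarge it by finitely many more rational hyperplanes to be determined in the course of the proof, keeping it stable under the sign-change maps $\tau_i$ as in Remark~\ref{semi-pos-neg}. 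Since changing signs does not affect cells, it suffices to work with non-negative weight functions throughout.

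Next I would set up the reduction. Let $\cF$ be an $\fH$-facet contained in $H_{\o_i}$ for some $i$, and let $C$ be an $\fH$-chamber with $\cF \subset \overline C$. For $L \in \cF$ and $L' \in C$, the set $S^\circ_{L'}$ of generators on which $L'$ vanishes is contained in $S^\circ_L$ (the facet $\cF$ sits in more of the coordinate hyperplanes than $C$ does), so $W_{C} := W_{S^\circ_{L'}} \subseteq W_{S^\circ_L} =: W^\circ$. By Theorem~\ref{cell-til}, the left cells of $(W,S,L')$ are the sets $W_{C}\cdot D$ where $D$ runs over left cells of $(\tW_{L'}, \tS_{L'}, \tL')$, and similarly for two-sided cells; likewise Theorem~\ref{theo:cmin} and Theorem~\ref{main1} give $\cb^{L'}_{\min}(W) = W_{C}\cdot \cb^{\tL'}_{\min}(\tW_{L'})\cdot W_{C}$ and $N^{L'}_\si(W) = W_{C}\cdot N^{\tL'}_\si(\tW_{L'})$. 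Because $W^\circ$ normalizes $W_{C}$ and acts on $\tW_{L'}$, I would argue that $\cb^\cF_{\min}$ being a union of two-sided $\cC_{\cLR}(C)$-cells (and stable under two-sided translation by $W_\cF = W^\circ$) is \emph{equivalent} to: $\cb^{\tL}_{\min}(\tW_{L'})$ is a union of two-sided cells of $(\tW_{L'},\tL')$, and similarly for left cells with the appropriate left-translation by the image of $W^\circ$. This is the formal part and should go through by the same bookkeeping as Theorem~\ref{main1}; the point is that after the reduction the ambient group $\tW_{L'}$ carries the \emph{positive} weight function $\tL'$, where the classical description of $\cb^{\tL'}_{\min}$ in terms of $L'$-special points, $L'$-quarters and the generalized induction of~\cite{jeju3} applies verbatim.

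Then comes the core step: proving asymptotic semicontinuity of the lowest two-sided cell and its left subcells \emph{within a fixed positive-parameter affine Weyl group}. Here I would use the geometric description of Section~\ref{lowest}, the decomposition $\cb^{L}_{\min} = \bigcup_{\si\in\Om^L_0} N^L_\si$, and the explicit form of the elements of $N^L_\si$ as $x_w a_w \hw_{\la_\si} b_\si$ together with the ``$\add$-property'' of Lemma~\ref{dot-property}. The strategy is: (i) choose $\fH$ so that on each chamber $C$ the relevant combinatorial data ($\Phi^L$, $\De^L$, the $b_\al$'s, the vertices $\la_\si$, the quarters $\cC'_\si$, and hence the sets $N^L_\si$) are constant, so $\cb^\cF_{\min} = \bigsqcup_\si N^\cF_\si$ refines into chamber data in a controlled way; (ii) show that each $N^\cF_\si$ is a union of left $\cC_\cL(C)$-cells, and $\cb^\cF_{\min}$ a union of two-sided $\cC_\cLR(C)$-cells, for every chamber $C \supset \cF$. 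For (ii) I would invoke the generalized induction of~\cite{jeju3}: the left cells inside the lowest two-sided cell of a positive-parameter affine Weyl group are obtained by inducing from the finite parabolic $W_{\la_\si}$, and the induction data used (the set $X_{\la_\si}$, the element $\hw_{\la_\si}$, the ``translation'' elements $b_\si$) depend only on the $\fH$-facet, not on the chamber, as long as $\fH$ separates the weight functions for which the $\Phi^L$-combinatorics changes. Combined with Lemma~\ref{dot-property}(1)--(2), which controls exactly which elements below $a_w\hw_{\la_\si}b_\si$ lie in $\cb^L_{\min}$ and how they factor, this should show $N^\cF_\si$ is simultaneously a union of left cells for all chambers $C \supset \cF$, which is the required semicontinuity.

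I expect the main obstacle to be step (ii): translating the \emph{equality} of the combinatorial data across chambers into the \emph{cell-theoretic} statement that $N^\cF_\si$ is a union of Kazhdan--Lusztig left cells for each chamber $C$. The subtlety is that even though the \emph{sets} $N^L_\si$ and the lowest two-sided cell $\cb^L_{\min}$ are literally the same for all $L$ in a suitable facet/chamber (by Theorem~\ref{main1} and Corollary~\ref{cor54}), the Kazhdan--Lusztig \emph{partition} of $\cb^L_{\min}$ into left cells can genuinely change with the parameters; one must show the change only ever refines the fixed partition $\{N^\cF_\si\}$, never crosses it. This requires a careful analysis, via the generalized induction and the multiplication formulas of Section~\ref{Iwahori-Hecke}, of the $M$-polynomials and the preorder $\le_\cL$ restricted to $\cb^L_{\min}$, showing that no relation $y \leftarrow_\cL w$ with $w \in N^\cF_\si$ and $y \in N^\cF_{\si'}$, $\si \ne \si'$, can appear for any admissible $L$ --- essentially because such elements are separated by a maximal $L$-strip and the induction respects that separation. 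Carrying out that estimate uniformly over the chambers adjacent to $\cF$ is where the real work lies; everything else is the semidirect-product bookkeeping already rehearsed in Theorems~\ref{theo:cmin}, \ref{cell-til} and~\ref{main1}.
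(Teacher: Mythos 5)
Your proposal identifies the right ingredients (the decomposition $\cb^L_{\min}=\bigsqcup_\si N^L_\si$, Lemma~\ref{dot-property}, and the generalized induction of~\cite{jeju3}), but there is a genuine gap precisely at the step you yourself flag as the ``real work,'' and the semidirect-product reduction you lead with does not help you there. If $C$ is a chamber, every $L'\in C$ is a \emph{positive} weight function, so $S^{\circ}_{L'}=\emptyset$ and $\tW_{L'}=W$: your ``reduction to $\tW_{L'}$'' is vacuous. The substantive problem is then exactly what remains after that: the sets $U^{\le}_\si$ and $X_u$ used in the induction are attached to the facet $\cF$ (they are built from $\bL^{+}$-special points and the walls of $\bL^{+}$-quarters, which are \emph{not} the $L'$-special points), and you must verify conditions {\bf I1}--{\bf I5} --- in particular the degree estimate {\bf I5} --- for the Hecke algebra $(W,S,L')$ with this ``facet data'' as input, uniformly over all admissible $L'$ near $\cF$. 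You gesture at ``a careful analysis of the $M$-polynomials and $\le_{\cL}$'' but do not say how to make the estimate uniform, and I do not see how to do it directly.

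The paper's proof solves this by a device your proposal does not mention: it replaces $L'$ by a \emph{generic} weight function $\bL$ with values in the quotient $\bGa=\nZ[\bar S]/(X\cap(-X))$ of the free abelian group on $\bar S$, equipped with a total order $\Pos(\Phi)$ (with $\va_1=\sum_{\o\in\bar S^+}\o^\ast$) that makes the $\bar S^+$-part of the degree dominate. In this generic Hecke algebra $\bH=\cH(W,S,\bL)$, the degree estimates of Lemma~\ref{KLasym}, Theorem~\ref{bound first} and Theorem~\ref{bound} come out cleanly (Remarks~\ref{neg} and~\ref{diff-case} are exactly the statements ``negative $\bar S^+$-degree kills the term''), and {\bf I1}--{\bf I5} are verified for $\bL$. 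Then, and only then, one specializes: since the whole verification only involves elements of $W$ of length bounded by some $N_0$, Proposition~\ref{specialisation} and Lemma~\ref{gen-bound} show that for every $L'$ in a chamber separated from $\cF$ by hyperplanes cut out with constants depending on $N_0$ (the $m_i$ in $\fH(m_1,m_2)$ or $\fH(\bm)$), the Kazhdan--Lusztig data of $(W,S,L')$ is the image of that of $(W,S,\bL)$, so {\bf I1}--{\bf I5} transfer and $N^{\cF}_\si$ is a union of left $L'$-cells. Without this generic-then-specialize mechanism there is no control on how the $M$-polynomials vary across the chamber, and your step (ii) does not go through.
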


\begin{Rem} 
(a) At the end of this Theorem, we really mean {\it for all $\fH$-facets $C$ such that $\cF\subset \bar{C}$} and not {\it for all $\fH$-chambers $C$ such that $\cF\subset \bar{C}$} . It is clear that if it is true for all $\fH$-facets such that $\cF\subset \bar{C}$ then it will also be true for all $\fH$-chambers $C'$ such that $\cF\subset \bar{C'}$. But the converse is true only if the semicontinuity conjecture holds! \\

\noindent
(b) Arguying as in Remark \ref{semi-pos-neg}, to prove the theorem,  it is enough to find a finite set of rational hyperplanes such that (1) and (2) holds for non-negative weight functions and then take its closure under the action of the $\tau_{i}$'s.\\

\noindent
(c) To prove the theorem, it is ``enough'' to show that the left cells in $\rm{Left}(\bc^{\cF}_{\min})$ are union of left cells of $\cC_{\cL}(C)$ for all $\fH$-facets $C$ such that $\cF\subset \bar{C}$. Indeed $\cb^{\cF}_{\min}$ is stable by taking the inverse, hence, by Remark \ref{left-right}, if it is a union of left cells of $\cC_{\cL}(C)$, it is also a union of two-sided cells of  $\cC_{\cLR}(C)$.\finl 
\end{Rem}

\subsection{Irreducible affine Weyl groups of type $\tilde{\Brm}_n$ , 
$\tilde{\Frm}_4$ or $\tilde{\Grm}_2$.} 
\label{typeBFG}

Let $(W,S)$ be an irreducible affine Weyl group of one of the following types
$$
\renewcommand{\arraystretch}{1.6}
\begin{array}{ccccccccccccc}
\tilde{\Grm}_{2}:&
\begin{picture}(100,20)
\put(0,2){\circle{6}}\put(-1,9){$\SS{t}$}
\put(25,2){\circle{6}}\put(22,9){$\SS{s_1}$}
\put(50,2){\circle{6}}\put(47,9){$\SS{s_2}$}
\put(28,2){\line(1,0){19}}
\put(2.4,3.8){\line(1,0){20.2}}
\put(2.4,0.2){\line(1,0){20.2}}
\put(3,2){\line(1,0){19}}
\end{picture}\\
\tilde{\Frm}_{4}:&
\begin{picture}(100,20)
\put(0,2){\circle{6}}\put(-5,9){$\SS{s_2}$}
\put(25,2){\circle{6}}\put(22,9){$\SS{s_1}$}
\put(50,2){\circle{6}}\put(47,9){$\SS{t_1}$}
\put(75,2){\circle{6}}\put(70,9){$\SS{t_2}$}
\put(100,2){\circle{6}}\put(95,9){$\SS{t_3}$}
\put(3,2){\line(1,0){19}}
\put(27.7,3.2){\line(1,0){19.4}}
\put(27.7,0.8){\line(1,0){19.4}}
\put(53,2){\line(1,0){19}}
\put(78,2){\line(1,0){19}}
\end{picture}\\
\tilde{\Brm}_{n}:&
\begin{picture}(100,20)
\put(0,2){\circle{6}}\put(-1,9){$\SS{t}$}
\put(25,2){\circle{6}}\put(22,9){$\SS{s_1}$}
\put(75,2){\circle{6}}\put(65,9){$\SS{s_{n-2}}$}
\put(95,12){\circle{6}}\put(100,12){$\SS{s_{n-1}}$}
\put(95,-8){\circle{6}}\put(100,-8){$\SS{s_{n}}$}
\put(77.6,3.5){\line(2,1){14.7}}
\put(77.6,0.5){\line(2,-1){14.7}}
\put(2.7,3.2){\line(1,0){19.4}}
\put(2.7,0.8){\line(1,0){19.4}}
\put(28,2){\line(1,0){12}}
\put(72,2){\line(-1,0){12}}
\put(43,-1){$\cdots$}
\end{picture}
\end{array}
$$

\bigskip
\noindent
Then $|\bar{S}|=2$. We set $\bar{S}=\{\bs,\bt\}$ where $\bs$ (respectively $\bt$) is the subset of $S$ which consists of all the generators named with the letter $s$ (respectively~$t$). In this case, we will identify $\nZ[\bar{S}]$ with $\nZ^{2}$ through $(i,j)\longrightarrow i\bs+j\bt$. 

\medskip

Let $m_{1},m_{2}\in\nQ_{>0}$. We define the following  finite set of rational hyperplanes of~$V'$
$$\fH(m_{1},m_{2}):=\{H_{\bs+ m_{1}\bt}, H_{\bs- m_{1}\bt}, H_{\bs+ m_{2}\bt}, H_{\bs-m_{2}\bt},H_{\bs},H_{\bt}\}.$$
Note that $\fH(m,M)$ is stable under the actions of the $\tau_{i}$ (see Remark \ref{semi-pos-neg}).
In Figure~\ref{fH-mM}, we draw the finite set of hyperplanes $\fH(m,M)$ for some choice of constants $M,m\in \nQ_{>0}$. 
\psset{xunit=.55cm}
\psset{yunit=.55cm}
\begin{figure}[h!]
\caption{Set of hyperplanes $\bar{\fH}(m,M)$}
\label{fH-mM}
\begin{center}
\begin{pspicture}(-5,-5.5)(5,5.5)
\psline(-5,0)(5,0)
\psline(0,-5)(0,5)
\psline(-2,-5)(2,5)
\psline(-2,5)(2,-5)
\psline(-5,2)(5,-2)
\psline(-5,-2)(5,2)

\rput(1,0){$\bullet$}
\rput(1.2,.25){${\small \bs}$}

\rput(0,1){$\bullet$}
\rput(.25,1.25){${\small \bt}$}

\rput(.5,5){$H_{\bs}$}
\rput(5.5,.2){$H_{\bt}$}
\rput(4.7,1.15){$H_{\bs-m_{1}\bt}$}
\rput(2.9,4.5){$H_{\bs-m_{2}\bt}$}
\rput(3,.3){$\cC_{1}$}
\rput(.7,3){$\cC_{2}$}
\rput(2,2){$\cC$}
\end{pspicture}
\end{center} 
\end{figure}

\noindent
The set of weight functions corresponding to the $\fH$-facet $\cC_{1}$  of $V'$ is
$$\{L\in \text{Weight}(W,\Ga)\mid L(\bs)>m_{1}\cdot L(\bt)\text{ and } L(\bs),L(\bt)>0\}.$$
\begin{Th}
\label{main2-BFG}
There exists $m_{1},m_{2}\in\nQ_{>0}$ such that Theorem \ref{main2} holds for $\fH(m_{1},m_{2})$.  
\end{Th}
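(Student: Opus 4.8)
The strategy is to reduce the assertion of Theorem~\ref{main2-BFG} to the ``geometric'' facts about $\cb^{L}_{\min}$ that have already been established (Theorem~\ref{main1}, Corollary~\ref{cor54}, Theorem~\ref{theo:cmin} and Lemma~\ref{dot-property}), together with the generalized induction of~\cite{jeju3}. The facet $\cF$ contained in $H_{\bs}$ or $H_{\bt}$ corresponds to one of the parameters vanishing, and by Remark~\ref{semi-pos-neg}(b) and~(c) it suffices to produce, for non-negative weight functions, a finite set of rational hyperplanes (which will be $\fH(m_{1},m_{2})$ for suitable $m_{1},m_{2}$) such that: for every $\fH$-facet $\cF\subset H_{\o_{i}}$ and every $\fH$-facet $C$ with $\cF\subset\ov{C}$, each left cell in $\mathrm{Left}(\cb^{\cF}_{\min})$ is a union of left cells of $\cC_{\cL}(C)$. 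Corollary~\ref{cor54} already gives property~(1), i.e. the invariance of $\cb^{L}_{\min}$ and of $\mathrm{Left}(\cb^{L}_{\min})$ inside a facet; the new content is the inclusion statement across the closure relation $\cF\subset\ov C$.

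\textbf{Key steps.} First I would fix, once and for all, a finite set $\fH(m_{1},m_{2})$ with $m_{1},m_{2}$ to be chosen large enough; the hyperplanes $H_{\bs}$, $H_{\bt}$ are present so that the ``$L$ vanishes on one parameter'' facets are among the $\fH$-facets, and the hyperplanes $H_{\bs\pm m_{i}\bt}$ encode the asymptotic thresholds. Second, given $\cF\subset H_{\bt}$ (the case $H_{\bs}$ is symmetric, up to the diagram symmetry in type $\tilde{\Crm}$; note that in types $\tilde\Brm,\tilde\Frm,\tilde\Grm$ only one parameter can vanish on a genuine facet since $W^{\circ}$ must be finite), the semidirect decomposition $W=W^{\circ}\ltimes\tW$ of Section~\ref{semidirect} applies with $W^{\circ}=W_{S^{\circ}}$ finite, and by Theorem~\ref{theo:cmin} one has $\cb^{\cF}_{\min}=W^{\circ}\cdot\cb^{\tL}_{\min}(\tW)$ and each left cell is $W^{\circ}\cdot N^{\tL}_{\si}(\tW)$; here $\tL$ is a \emph{positive} weight function on $\tW$, so the classical description of the lowest two-sided cell and its left cells for positive parameters is available. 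Third, for a facet $C$ with $\cF\subset\ov C$, the weight function $L'\in C$ has $L'(s^{+})=L(s^{+})$ on $S^{+}$ but $L'(s^{\circ})>0$ small (bounded by the thresholds $m_{i}$). I would then apply the generalized induction theorem of~\cite{jeju3}: it produces, from a decomposition of an element $w\in\cb^{L'}_{\min}$ of the form $w=x_{w}a_{w}\hw_{\la_{\si}}b_{\si}$ (as in Lemma~\ref{dot-property}), generating relations $\leftarrow_{\cL}$ inside $\cC_{\cL}(L')$ that match exactly those one has in $\cC_{\cL}(L)$ for the cell $N^{L}_{\si}$. Concretely one shows that $w\leftarrow_{\cL}^{L'}w'$ whenever $w'$ lies in the same set $N^{L}_{\si}$ and $w'$ is obtained by the induction step, and conversely that any $L'$-left-cell arrow out of $\cb^{L'}_{\min}$ stays inside the relevant $N$; Lemma~\ref{dot-property}(2) is what guarantees that the only elements of $\cb^{L}_{\min}$ below $a_{w}\hw_{\la_{\si}}b_{\si}$ are again of the controlled form, so the induction closes up and does not leak to smaller cells. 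Fourth, I would combine these to conclude that each $N^{L}_{\si}$ (equivalently each left cell in $\mathrm{Left}(\cb^{\cF}_{\min})$) is a union of $\cC_{\cL}(C)$-left cells, provided $m_{1},m_{2}$ were chosen large enough that all the Kazhdan--Lusztig polynomial estimates used in the generalized induction (which depend only on the finitely many elements of $\WC$ and on the shape of $\UC^{L}(A_{0})$) are uniform over $C$ with $\cF\subset\ov C$; this is where the explicit value of the $m_{i}$ gets pinned down.

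\textbf{Main obstacle.} The delicate point is the interplay between the generalized induction and the semidirect product: one must check that the induction of~\cite{jeju3}, carried out inside $\tW$ with the positive weight $\tL$, lifts correctly through $W=W^{\circ}\ltimes\tW$ to give left-cell arrows in $W$ with respect to $L$ and, more importantly, with respect to the perturbed $L'\in C$, \emph{uniformly} as $L'$ ranges over $C$. This requires establishing that the relevant $M^{s}_{y,w}$ and $P_{y,w}$ entering the multiplication rules stabilize once $L'(s^{\circ})$ is small enough relative to $L'(s^{+})$ — precisely the kind of asymptotic statement the semicontinuity conjecture predicts — and the proof of this uniformity, via Lemma~\ref{dot-property} and a careful bookkeeping of hyperplanes separating the alcoves $a_{w}\hw_{\la_{\si}}b_{\si}A_{0}$ and $x_{w}a_{w}\hw_{\la_{\si}}b_{\si}A_{0}$, is the technical heart of the argument and will occupy the last two sections.
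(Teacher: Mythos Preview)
Your overall architecture is right: generalized induction from~\cite{jeju3} applied to the sets $U^{\le}_{\si}$ and $X_{u}$ built out of the decomposition $w=x_{w}a_{w}\hw_{\la_{\si}}b_{\si}$, with Lemma~\ref{dot-property} controlling what lies below $a_{w}\hw_{\la_{\si}}b_{\si}$, is exactly what the paper does. However, the mechanism by which the thresholds $m_{1},m_{2}$ are produced is not the one you sketch, and your proposal is vague precisely at that point.

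You say one must check that the $M^{s}_{y,w}$ and $P_{y,w}$ ``stabilize once $L'(s^{\circ})$ is small enough relative to $L'(s^{+})$'' and that ``the finitely many elements of $\WC$'' will pin down $m_{i}$. The paper does not argue polynomial-by-polynomial. Instead it introduces a \emph{generic} Hecke algebra $\bH$ over $\bGa=\nZ[\bar S]$ equipped with the lexicographic order making $\bs$ infinitely larger than $\bt$ (resp.\ $\bt$ infinitely larger than $\bs$), and proves the five conditions {\bf I1}--{\bf I5} of the generalized induction entirely in $\bH$. The crucial observation is that this proof (Lemma~\ref{KLasym} for the degree bound on $\bP_{y,w}$, and Theorem~\ref{bound} for {\bf I5}) involves only elements of length bounded by some $N_{0}$. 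One then invokes a specialization result (Proposition~\ref{specialisation} together with Lemma~\ref{gen-bound}): whenever $L(\bs)>N_{0}\cdot L(\bt)$, the map $\theta^{L}_{\bGa}$ sends $\bGa_{+}(N_{0})$ into the positive cone of $\Gamma$, so $\theta^{L}_{\bH}(\bC_{w})=C_{w}$ for all $w$ involved, and the generic proof of {\bf I1}--{\bf I5} descends verbatim to $\cH(W,S,L)$. This gives $m_{1}=N_{0}$; swapping the roles of $\bs$ and $\bt$ gives $m_{2}=1/N_{0}$. Without this generic-algebra device you have no concrete handle on $m_{i}$: the finiteness of $\WC$ alone does not bound the lengths of the $x\in X_{\la_{\si}}$ or of the $z$ appearing in the sums $\sum M^{s}_{z,u}C_{z}$, so your ``finitely many elements of $\WC$'' does not close the argument.

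Two smaller points. First, your proposal oscillates between running the induction in $\tW$ (with positive $\tL$) and lifting via $W=W^{\circ}\ltimes\tW$, versus running it directly in $W$ with the decomposition of Lemma~\ref{dot-property}. The paper does the latter; the semidirect product is used only earlier, to prove Theorem~\ref{main1}. Second, there is no diagram symmetry exchanging $\bs$ and $\bt$ in types $\tilde\Brm_{n}$, $\tilde\Frm_{4}$, $\tilde\Grm_{2}$; the two cases $\cF\subset H_{\bt}$ and $\cF\subset H_{\bs}$ are handled by two separate runs of the generic argument, yielding the two distinct constants $m_{1}$ and $m_{2}$.
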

The proof of this theorem will be given in Section \ref{proof-BFG}.

\begin{Rem}
Note that this theorem is equivalent to Theorem 1. Let $L$ be a non-negative weight function on $W$ which vanishes on a proper non-empty subset $S^{\circ}$ of $S$. Then we have either $L\in H_{\bs}$ or $H_{\bt}$. Assume that $L\in H_{\bt}$, that is  $L(t)=0$ for all $t\in \bt$. Let $\bc$ be an $L$-cell contained in $\cb^{L}_{\min}$ (note that we have either $\bc=\cb^{L}_{\min}$ or $\bc=N^{L}_{\si}$ for some $\si\in\Om^{L}_{0}$).  Then Theorem 1 implies that there exists an integer $m$ such that for all weight functions $L'$ such that $L'(\bs)>m\cdot L'(\bt)$, the set $\bc$ is a union of $L'$-cells. In other words, $\bc$ is a union of $L'$-cell  for all weight function $L'$ in $\cC_{1}$ (with $m_{1}=m$). Conversely, if Theorem 6.9 holds, then Theorem 1 holds for all for any integer $m$ greater than $m_{1}$. The case $L\in H_{\bs}$ is similar using $m=1/m_{2}$.\end{Rem}

\subsection{Irreducible affine Weyl group of type $\tilde{\Crm}$.} 
\label{typeC}
Let $W$ be an irreducible affine Weyl group of type $\tilde{\Crm}$ with diagram as follows
$$\begin{picture}(100,20)
\put(5,2){\circle{6}}\put(3,9){$\SS{t}$}
\put(30,2){\circle{6}}\put(25,9){$\SS{s_1}$}
\put(80,2){\circle{6}}\put(70,9){$\SS{s_{n-1}}$}
\put(105,2){\circle{6}}\put(103,9){$\SS{t'}$}
\put(7.7,3.2){\line(1,0){19.4}}
\put(7.7,0.8){\line(1,0){19.4}}
\put(82.7,3.2){\line(1,0){19.4}}
\put(82.7,0.8){\line(1,0){19.4}}
\put(33,2){\line(1,0){12}}
\put(77,2){\line(-1,0){12}}
\put(48,-1){$\cdots$}
\end{picture}
$$

\medskip
\noindent
Then $|\bar{S}|=3$. We set $\bar{S}=\{\bt,\bs,\bt'\}$ where $\bt=\{t\}$, $\bs=\{s_{1},\ldots,s_{n-1}\}$ and $\bt'=\{t'\}$.  In this case, we will identify $\nZ[\bar{S}]$ with $\nZ^{3}$ through $(i,j,k)\longrightarrow i\bt+j\bs+k\bt'$. \\

Let $\bm=(m_{1},\ldots,m_{6})\in\nQ^{6}_{>0}$. We define the following  finite set of rational hyperplanes of~$V'$
$$\fH(\bm):=\{H_{\bs},H_{\bt},H_{\bt'},H_{\bt-\bt'}, H_{\bt - m_{1}\bs},H_{\bt'- m_{2}\bs},H_{\bt- m_{3}(\bs+\bt')}, $$
$$H_{\bt'- m_{4}(\bs+\bt)}, H_{(\bt-\bt')\pm m_{5}\bs},H_{(\bt+\bt')- m_{6}\bs}\}.$$
We then set $\bar{\fH}(\bm)$ to be the closure of $\fH(\bm)$ under the actions of the $\tau_{i}$ (see Remark~\ref{semi-pos-neg}).
In Figure~\ref{fm}, we draw the finite set of hyperplanes $\fH(\bm)$ for some choice of constants $\bm\in \nQ^{6}_{>0}$. We intersect on the affine hyperplane with equation  $\bs^{\ast}(\mu)=1$. We put an arrow in a chamber $\cC$ pointing towards $\cF\subset V'$ to indicate that $\ov{\cC}\cap \cF\neq \emptyset$.  

\begin{figure}[h!]
\caption{Set of hyperplanes $\fH(\bm)$}
\label{fm}
\begin{center}
\psset{xunit=.5cm}
\psset{yunit=.5cm}
\begin{pspicture}(0,-2)(21,22)
\psset{linewidth=.1mm}
\SpecialCoor


%


\psline{->}(0,0)(0,20)
\psline{->}(0,0)(20,0)

\psline[linewidth=.5mm](4,0)(20,9)

\psline[linewidth=.5mm](0,4)(9,20)

\psline[linewidth=.5mm](2,0)(0,2)

\psline(0,0)(17,17)
\psline[linewidth=.5mm](1,1)(17,17)

\psline[linewidth=.5mm](6,1.2)(6,20)
\psline(6,0)(6,1.2)

\psline[linewidth=.5mm](1.2,6)(20,6)
\psline(0,6)(1.2,6)

\psline(4,0)(20,16)
\psline[linewidth=.5mm](10,6)(20,16)

\psline(0,4)(16,20)
\psline[linewidth=.5mm](6,10)(16,20)

\rput(15.5,2){{\psline{->}(0,0)(2,0)}}
\rput(19.5,2){$H_{\bf{s}}\cap H_{\bf{t'}}$}

\rput(17.5,6.5){{\psline{->}(0,0)(3,0)}}
\rput(22,6.5){$H_{\bf{s}}\cap H_{\bf{t'}}$}

\rput(17.5,6.5){{\psline{->}(0,0)(3,1.5)}}
\rput(21,8){$H_{\bf{s}}$}

\rput(16,9){{\psline{->}(0,0)(3,1.5)}}
\rput(19.7,11){$H_{\bf{s}}$}
\rput(16,9){{\psline{->}(0,0)(3,3)}}
\rput(20,12.5){$H_{\bf{s}}\cap H_{\bf{t}-\bf{t'}}$}

\rput(13,11){{\psline{->}(0,0)(3,3)}}
\rput(16.8,14.7){$H_{\bf{s}}\cap H_{\bf{t}-\bf{t'}}$}

\rput(9,11){$\cC'_{1}$}

\rput(11,9){$\cC_{1}$}
\rput(15,7.6){$\cC_{2}$}
\rput(16.6,6.5){$\cC_{3}$}
\rput(16.6,3.5){$\cC_{4}$}
\rput(3.5,1.5){$\cC_{5}$}
\rput(1,.4){{\footnotesize $\cC_{6}$}}
\rput(.4,1){{\footnotesize $\cC'_{6}$}}


\rput(0,0){$\bullet$}

\rput(-1.2,-1.4){$H_{\bt}\cap H_{\bt'}$}
\rput(-1,-1){\psline{->}(0,0)(.9,.9)}
\rput(13,-1){$H_{\bf{t'}}$}
\rput(14,-1){\psline{->}(0,0)(0,.9)}

\rput(20,-.6){$e_{\bf{t}}$}
\rput(-.6,20){$e_{\bf{t'}}$}

\rput{45}(10,10.5){$H_{\bt-\bt'}$}
\rput{45}(14,9.3){$H_{\bt-\bt'-m_{5}\bs}$}
\rput{45}(10,14.6){$H_{\bt-\bt'+m_{5}\bs}$}
\rput{28}(12,3.8){$H_{\bt-m_{3}(\bt'+\bs)}$}
\rput{63}(4.5,13){$H_{\bt'-m_{4}(\bt+\bs)}$}

\rput{-45}(1.3,1.3){$H_{\bt+\bt'-m_{6}\bs)}$}

\end{pspicture}
\end{center}
\end{figure}

\noindent
The chamber $\cC_{1}$ corresponds to the weight functions 
$$\{L\mid L(\bt)>L(\bt'), L(\bt')>m_{2}\cdot L(\bs), L(\bt)-L(\bt')<m_{5}\cdot L(\bs) \}.$$
\begin{Th}
\label{main2-C}
There exist $\bm\in\nQ_{>0}^{6}$ such that Theorem~\ref{main2} holds for $\bar{\fH}(\bm)$.
\end{Th}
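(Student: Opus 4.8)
The plan is to reduce the type $\tilde{\Crm}$ case to the same geometric and combinatorial machinery developed in Sections 3--5, exactly as in the proof of Theorem~\ref{main2-BFG}, but now organised along the more intricate chamber structure of $V'=\nR^3$ displayed in Figure~\ref{fm}. First I would fix the set $\bar{\fH}(\bm)$ and observe, using Corollary~\ref{cor54} (which already gives property (1) for \emph{some} finite set of rational hyperplanes), that after possibly enlarging $\bm$ we may assume each of the hyperplanes controlling the combinatorial type of $\cb^L_{\min}$ and of its left cells $N^L_\si$ is among the $H_{n_1\bt+n_2\bs+n_3\bt'}$ listed; the point of the particular hyperplanes $H_{\bt-m_1\bs}$, $H_{\bt'-m_2\bs}$, $H_{\bt-m_3(\bs+\bt')}$, $H_{\bt'-m_4(\bs+\bt)}$, $H_{(\bt-\bt')\pm m_5\bs}$, $H_{(\bt+\bt')-m_6\bs}$ is that they are the ``asymptotic'' hyperplanes: crossing $H_{\o_i}$ corresponds precisely to one of the three parameters tending to zero relative to the others, and the remaining $H$'s separate the various asymptotic regimes (who is the larger of $L(t),L(t')$, which of them dominates $L(\bs)$, etc.). I would then list the $\fH$-facets $\cF$ contained in some $H_{\o_i}$ — these are exactly the facets where $L$ vanishes on $\bt$, on $\bs$, or on $\bt'$ — and for each such $\cF$ identify the associated Coxeter group $W_\cF=W^\circ$ and the semidirect decomposition $W=W^\circ\ltimes\tW$ of Section~\ref{semidirect}.

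The heart of the argument is then the following: for a facet $\cF\subset H_{\o_i}$ and any $\fH$-facet $C$ with $\cF\subset\overline C$, I must show that every left cell in $\mathrm{Left}(\cb^\cF_{\min})$ is a union of left cells of $\cC_{\cL}(C)$; by Remark (c) after Theorem~\ref{main2} this suffices, since $\cb^\cF_{\min}$ is stable under inversion. By Theorem~\ref{cell-til} the left cells of $\cC_{\cL}(C)$ are of the form $W^\circ_C\cdot D$ for $D$ a left cell of $(\tW_C,\tS_C,\tL_C)$, and by Theorem~\ref{theo:cmin} together with Theorem~\ref{main1} the cells $N^L_\si$ have the explicit geometric form $W^\circ\cdot N^{\tL}_\si(\tW)$ with $N^{\tL}_\si(\tW)=\{\tw\mid \tw\tA_0\subset\cC'_\si\}$. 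So the problem becomes: as $L'$ ranges over $C$ (equivalently, over the positive weight functions on the \emph{fixed} affine Weyl group $\tW$ obtained by specialising), show that each $N^L_\si$ is a union of the left cells $N^{L'}_{\si'}(\tW)$ of the lowest $L'$-cell of $\tW$. This is where I would invoke the generalised induction of the second author \cite{jeju3}, as announced in the introduction, combined with Lemma~\ref{dot-property}: the lemma says that the ``$\add$-structure'' of the elements $x_w a_w \hw_{\la_\si} b_\si$ of $\cb^L_{\min}$ is compatible with the Bruhat order in the strong sense needed to run an inductive argument showing that $w\sim_{\cL} w'$ for $L$ implies $w\sim_{\cL}w'$ for $L'$ whenever both lie in the same $N^L_\si$, and that distinct $N^L_\si$'s are never merged. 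Concretely I would: (i) check that $\UC^{L'}(A_0)\subseteq\UC^{L}(A_0)$ as $L'$ degenerates to $L$ across $\cF$, so $\cb^{L}_{\min}\subseteq\cb^{L'}_{\min}$ and each $\cC_{\sigma}$ refines into $\cC'_{\si'}$'s; (ii) use the decomposition $w=x_w a_w\hw_{\la_\si}b_\si$ and part (1) of Lemma~\ref{dot-property} to write, inside $\tW$, each element of $N^L_\si$ as $x\cdot w_{\la'}\cdot z$ with $x\in X_{\la'}$ relative to the \emph{finer} arrangement; (iii) apply the left-cell description of Bremke/Xi/\cite{jeju2} for the positive weight function $L'$ to conclude $N^L_\si=\dot\bigcup N^{L'}_{\si'}$.

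The main obstacle, and the step I would expect to consume most of the work, is step (ii)--(iii): making the geometric ``subdivision'' of the $L$-quarters $\cC'_\si$ into the $L'$-quarters precise, and verifying that the generalised induction of \cite{jeju3} applies uniformly across \emph{all} chambers $C$ with $\cF\subset\overline C$ — in type $\tilde\Crm$ there are genuinely many such chambers (the figure shows $\cC_1,\dots,\cC_6,\cC'_1,\cC'_6$ around each relevant facet), corresponding to the several inequivalent ways the three parameters can be ordered, and for the $\tilde\Crm_r$ diagram the convention $L(t)\ge L(t')$ of Convention~\ref{convention} and the possibility $b_\al=2$ (reflected in the two cases for $b_i$ in the proof of Proposition~\ref{c-min}) mean the root system $\tPhi$ attached to $\tW$ can be of type $(\Arm_1)^r$, $\Drm_r$, or $\Crm_r$ depending on which generators lie in $S^\circ$ — so the geometric input of Lemma~\ref{dot-property} has to be invoked in each of these sub-cases. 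I would handle this by treating separately the facets $\cF\subset H_{\bs}$ (where $W^\circ$ is of type $\Arm_{r-1}$ and $\tW$ is of type $(\Arm_1)^r$ or $\Crm_r$) and the facets $\cF\subset H_{\bt}$ or $H_{\bt'}$ (where $\tW$ is of type $\Crm$, $\Brm$, or $\Drm$), and in each case reducing, via Theorem~\ref{theo:cmin}, to the already-known positive-parameter statement for $\tW$. Since all the geometry needed has been isolated in Section~\ref{lowest} and all the cell-theoretic reduction in Section~\ref{KL-lowest}, the remaining argument is, as in the $\tilde{\Brm}_n/\tilde{\Frm}_4/\tilde{\Grm}_2$ case, a matter of carefully enumerating the facets of $\bar{\fH}(\bm)$ and bookkeeping; hence I would present it largely by reference to the proof of Theorem~\ref{main2-BFG}, indicating only the modifications forced by the extra parameter and by Convention~\ref{convention}.
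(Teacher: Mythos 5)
Your proposal takes a genuinely different route from the paper's, and the crucial step does not go through. In your step (iii) you want to conclude $N^L_\si = \dot\bigcup N^{L'}_{\si'}$ from the Bremke--Xi--\cite{jeju2} description of left cells in the lowest $L'$-cell. But the sets $N^{L'}_{\si'}$ partition $\cb^{L'}_{\min}$, and $\cb^{L'}_{\min}$ is in general a \emph{proper} subset of $\cb^{L}_{\min}$: degenerating a parameter to $0$ enlarges the lowest two-sided cell, as Example~\ref{B2} already shows, and the inclusions in your step (i) are in fact the reverse, $\UC^L(A_0)\subseteq\UC^{L'}(A_0)$ and hence $\cb^{L'}_{\min}\subseteq\cb^L_{\min}$. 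So the $N^{L'}_{\si'}$ cannot cover $N^L_\si$; the elements of $\cb^L_{\min}\setminus\cb^{L'}_{\min}$ lie in $L'$-cells which are not of this form and which, by the characterisation in Proposition~\ref{c-min}, do not even admit a decomposition $x\, w_{\la'}\, z$ with $\la'$ an $L'$-special point, which also undermines your step (ii). Knowing the left-cell structure \emph{inside} $\cb^{L'}_{\min}$ is thus not enough: one must control how $N^L_\si$ sits relative to $L'$-cells lying \emph{outside} the $L'$-lowest cell, i.e. one must show $N^L_\si$ is closed under $\leq_{\cL}$ for $L'$, and that involves $M$-polynomials sensitive to all of $W$, not a geometric subdivision of quarters.

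That control is exactly what Sections 7 and 8 of the paper supply, by machinery absent from your plan. Rather than relating $L$-quarters to $L'$-quarters, the paper builds a generic Iwahori--Hecke algebra $\bH=\cH(W,S,\bL)$ over a totally ordered abelian group $\bGa$ chosen, via a positive subset $\Pos(\Phi)$ with $\Phi\in\cP_+(\nZ[\bar{S}])$, so that the $S^+$-parameters are asymptotically dominant. The key technical content is a degree estimate in $\bH$ --- Lemma~\ref{KLasym} on Kazhdan--Lusztig and $M$-polynomials, together with the $\bff$-bounds of \S\ref{standard} feeding into Theorem~\ref{bound} --- which verifies conditions {\bf I1}--{\bf I5} of the generalised induction theorem (Theorem~\ref{I1-I5}) and thereby shows each $N^{\leq}_\si$ is a left \emph{ideal} for $\leq_\cL$ in $\bH$, a statement strictly stronger than anything internal to $\cb^{L'}_{\min}$. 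Only then is this transported to a concrete $\cH(W,S,L')$ via the specialisation morphism $\theta^{L'}$ (Proposition~\ref{specialisation}, using the bound $\bGa_+(N_0)$ from Lemma~\ref{gen-bound}); in type $\tilde{\Crm}$, because of Convention~\ref{convention} and the possibility $b_\al=2$, this requires a case-by-case choice of $\Phi$ (Cases 1--4 in \S\ref{generic}), and Claims~\ref{84}--\ref{87} exhibit, for each of the chambers $\cC_1,\dots,\cC_6$ and their bounding facets, a $\Phi$ making the sign-preservation hypothesis hold, which is what pins down the constants $\bm$. None of this --- the generic algebra, the degree bounds, the specialisation criterion, the four-way case split --- appears in your proposal, and it is precisely the part of the argument that resolves the difficulty your step (iii) runs into.
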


\begin{Rem}  
Theorem~\ref{main2-C} is stronger than Theorem 1. Indeed, let $L$ be a non-negative weight function on $W$ which vanishes on a proper non-empty subset $S^{\circ}$ of $S$. Then, Theorem 1 only gives us informations on weight functions $L'$ satisfying $L'(s^{+})=L(s^{+})$ for all $s^{+}\in S^{+}$. However, in Theorem~\ref{main2-C}, we may have $L'(s^{+})\neq L(s^{+})$ for some $s^{+}\in S^{+}$. For instance, if $L(\bt)=L(\bt')>0$ and $L(\bs)=0$, then Theorem~\ref{main2-C} implies that  for all weight functions $L'$ such that 
$$\{L'\mid L'(\bt)\geq L'(\bt'), L'(\bt')>m_{2}\cdot L'(\bs), L'(\bt)-L'(\bt')<m_{5}\cdot L'(\bs) \}$$
any $L$-cell contained in $\cb^{L}_{\min}$ is a union of $L'$-cells. But Theorem~1 does not tell us anything in this case as we do not have $L'(\bt)=L(\bt)=L(\bt')=L'(\bt')$.  We now show in more details that Theorem~\ref{main2-C} implies Theorem~1.
\begin{enumerate}
\item[(i)]  Assume that $S^{\circ}=\bs$ and $L(\bt)=L(\bt')$.  Let $m\geq \max\{m_{1},m_{2}\}$. Then if $L'$ satisfies $L'(\bt)=L'(\bt')>mL'(\bs)$ we must have $L'\in\bar{\cC_{1}}\cap\bar{\cC_{1}'}$. But $\bar{\cC_{1}}\cap\bar{\cC_{1}'}$ contains $L$ in its closure, therefore Theorem ~\ref{main2-C} tells us that any $L$-cells included in $\cb^{L}_{\min}$ is a union of $L'$ cells (see also Claim~\ref{87}). 
\item[(ii)]  Assume that $S^{\circ}=\bs$  and $L(\bt)>L(\bt')$.  Let $m$ be such that 
$$\frac{L(\bt)-L(\bt')}{m_{5}}m>L(\bt)\text{ and } m>m_{2}.$$
Then if $L'$ satisfies $L'(\bt)=L(\bt)>mL'(\bs)$ and $L'(\bt')=L(\bt')>mL'(\bs)$ we must have
$$L'(\bt')-L'(\bt)>m_{5}L'(\bs) \text{ and } L'(\bt')>m_{2}L'(\bs)$$
that is $L'\in \cC_{2}\cup\cC_{3}\cup (\ov{\cC_{2}}\cap \ov{\cC_{3}})$. Then Theorem 1 then follows from Theorem~\ref{main2-C} (see also Claim~\ref{86}).
\item[(iii)] Assume that $S^{\circ}=\bs\cup\bt'$.  Let $m$ be  such that  $m>2m_{3}$. Then if $L'$ satisfies $L'(\bt)=L(\bt)>mL'(\bs)$ and $L'(\bt)=L(\bt)>mL'(\bt')$ we must have
$$2L'(\bt)>2m_{3}(L'(\bt')+L'(\bs))$$
that is $L'\in \cC_{3}\cup\cC_{4}\cup (\ov{\cC_{3}}\cap \ov{\cC_{4}})$. Then Theorem~1 then follows from Theorem~\ref{main2-C} (see also Claim \ref{84}).
\item[(iv)] Assume that $S^{\circ}=\bt'\cup\bt$ that is $L\in H_{\bt'}\cap H_{\bt}$.  Let $m$ be such that  $m>\frac{2}{m_{6}}$. Then if $L'$ satisfies
$L'(\bs)=L(\bs)>mL'(\bt)$ and $L'(\bs)=L(\bs)>mL'(\bt')$ we must have $m_{6}L'(\bs)>L'(\bt)+L'(\bt')$. In other words $L'\in \cC_{6}$ and Theorem~1 follows from Theorem~\ref{main2-C} (see also Claim~\ref{85}).
\item[(v)] Assume that $S^{\circ}=\bt'$. Then the result is trivial since $\cb^{L}_{\min}=\cb^{L'}_{\min}$ for all $L,L'$ such that 
\begin{itemize}
\item  $L(\bt),L(\bt')>L(\bt')=0$
\item $L'(\bs)=L(\bs),L'(\bt)=L(\bt)>L'(\bt')$ and $L'(\bt')>0$.
\end{itemize}
\end{enumerate}
\end{Rem}

\begin{Rem}
\label{problem}
In this remark, we explain why we do need an hyperplane of the form $H_{(\bt-\bt')-m_{5}\bs}$ in our finite set of hyperplanes in Theorem 
\ref{main2-C}, eventhough the lowest two-sided cell is the same whether the weight function lies in $\cC_{1}$ or $\cC_{2}$.
Assume that $W$ is of type $\tilde{\Crm}_{2}$. It is shown in \cite{jeju4,comp} that Conjecture \ref{SC} holds for the following set of hyperplanes
$$\fH:=\{\cH_{(1,0,0)},\cH_{(0,1,0)},\cH_{(0,0,1)},\cH_{(\e,\e,0)},\cH_{(0,\e,\e)},\cH_{(\e,0,\e)},\cH_{(\e,\e,\e)},\cH_{(\e,2\e,\e)}\}.$$
We describe this set of hyperplanes in Figure \ref{essential-C2}, projecting on the affine hyperplane with equation $\bs^{\ast}(\mu)=1$.

\begin{figure}[h!]
\caption{ Hyperplanes in $\fH$.}
\label{essential-C2}
\begin{center}
\psset{xunit=1.5cm}
\psset{yunit=1.5cm}
\begin{pspicture}(0,-0.3)(5.5,5.3)
\psgrid[subgriddiv=1,griddots=10,gridlabels=8 pt](0,0)(5,5)

\psline(0,0)(5,5)
\psline(1,0)(1,1)
\psline(1,1)(5,1)
\psline(1,0)(0.5,.5)
\psline(1,0)(5,4)
\psline(2,0)(1,1)
\psline(2,0)(5,3)


\psline(0,1)(1,1)

\psline(.5,.5)(0,1)

\psline(1,1)(1,5)

\psline(1,1)(0,2)

\psline(0,2)(3,5)
\psline(0,1)(4,5)


\rput(3.5,.5){$A_{1}$}
\rput(2,.5){$A_{2}$}
\rput(1.5,.2){$A_{3}$}
\rput(1.2,.5){$A_{4}$}
\rput(1.5,.8){$A_{5}$}


\rput(4.5,1.5){$C_{3}$}
\rput(3,1.5){$C_{2}$}
\rput(2,1.5){$C_{1}$}

\rput(.5,.25){$B_{2}$}
\rput(.8,.5){$B_{1}$}

\psline{->}(0,0)(0,5)
\psline{->}(0,0)(5,0)
\end{pspicture}
\end{center}
\end{figure}
 
In the figure below, we show the partition of $W$ into cells for a weight function in $C_{1}$ and for a weight function $L'$ such that $L'(\bs)=0$ and $L'(\bt)>L'(\bt')>0$. The set $\cb^{L'}_{\min}$ consists of the yellow alcoves. We see that  $\cb^{L'}_{\min}$ is NOT a union of cells of $(W,S,L)$. Hence, we need the hyperplane $H_{(\bt-\bt')-m_{5}\bs}$ so that there are no weight function $L'$ such that  $L'(\bs)=0$ and $L'(\bt)>L'(\bt')>0$ and which lies  in the closure of $\cC_{1}$ (see Figure \ref{fm}).

\psset{linewidth=.13mm}
\begin{textblock}{10}(2,6)
\psset{unit=.5cm}

\begin{center}
\begin{pspicture}(-6,-6)(6,6)

\pspolygon[fillstyle=solid,fillcolor=lightgray](0,0)(0,1)(.5,.5)

\pspolygon[fillstyle=solid,fillcolor=red!5!yellow!42!](0,0)(0,-6)(-6,-6)
\pspolygon[fillstyle=solid,fillcolor=red!5!yellow!42!](1,-1)(1,-6)(6,-6)
\pspolygon[fillstyle=solid,fillcolor=red!5!yellow!42!](2,0)(6,0)(6,-4)
\pspolygon[fillstyle=solid,fillcolor=red!5!yellow!42!](1,1)(6,6)(6,1)
\pspolygon[fillstyle=solid,fillcolor=red!5!yellow!42!](1,3)(1,6)(4,6)
\pspolygon[fillstyle=solid,fillcolor=red!5!yellow!42!](0,2)(0,6)(-4,6)
\pspolygon[fillstyle=solid,fillcolor=red!5!yellow!42!](-1,1)(-6,1)(-6,6)
\pspolygon[fillstyle=solid,fillcolor=red!5!yellow!42!](-2,0)(-6,0)(-6,-4)
\pspolygon[fillstyle=solid,fillcolor=green!80!black!70!](-1,0)(-2,0)(-6,-4)(-6,-5)
\pspolygon[fillstyle=solid,fillcolor=green!80!black!70!](1,0)(2,0)(6,-4)(6,-5)
\pspolygon[fillstyle=solid,fillcolor=green!80!black!70!](1,2)(1,3)(4,6)(5,6)
\pspolygon[fillstyle=solid,fillcolor=green!80!black!70!](-0.5,1.5)(-5,6)(-4,6)(0,2)(0,1)


\pspolygon[fillstyle=solid,fillcolor=PineGreen](0,0)(.5,.5)(6,-5)(6,-6)
\pspolygon[fillstyle=solid,fillcolor=PineGreen](0,0)(-.5,.5)(-6,-5)(-6,-6)
\pspolygon[fillstyle=solid,fillcolor=PineGreen](1,1)(6,6)(5,6)(.5,1.5)
\pspolygon[fillstyle=solid,fillcolor=PineGreen](-1,1)(-6,6)(-5,6)(-.5,1.5)%


\pspolygon[fillstyle=solid,fillcolor=orange](0,1)(-.5,1.5)(-1,1)(-.5,.5)
\pspolygon[fillstyle=solid,fillcolor=orange](0,1)(.5,1.5)(1,1)(.5,.5)

\pspolygon[fillstyle=solid,fillcolor=BurntOrange](0,0)(0,1)(-.5,.5)

\pspolygon[fillstyle=solid,fillcolor=RedOrange](0,1)(.5,1.5)(0,2)

\pspolygon[fillstyle=solid,fillcolor=Aquamarine](0,0)(0,-1)(0.5,-.5)

\pspolygon[fillstyle=solid,fillcolor=NavyBlue](0.5,1.5)(0,2)(0,6)(1,6)(1,2)
\pspolygon[fillstyle=solid,fillcolor=NavyBlue](.5,.5)(1,1)(6,1)(6,0)(1,0)
\pspolygon[fillstyle=solid,fillcolor=NavyBlue](.5,-.5)(0,-1)(0,-6)(1,-6)(1,-1)
\pspolygon[fillstyle=solid,fillcolor=NavyBlue](-.5,.5)(-1,1)(-6,1)(-6,0)(-1,0)

\psline(-6,-6)(-6,6)
\psline(-5,-6)(-5,6)
\psline(-4,-6)(-4,6)
\psline(-3,-6)(-3,6)
\psline(-2,-6)(-2,6)
\psline(-1,-6)(-1,6)
\psline(0,-6)(0,6)
\psline(1,-6)(1,6)
\psline(2,-6)(2,6)
\psline(3,-6)(3,6)
\psline(4,-6)(4,6)
\psline(5,-6)(5,6)
\psline(6,-6)(6,6)

\psline(-6,-6)(6,-6)
\psline(-6,-5)(6,-5)
\psline(-6,-4)(6,-4)
\psline(-6,-3)(6,-3)
\psline(-6,-2)(6,-2)
\psline(-6,-1)(6,-1)
\psline(-6,0)(6,0)
\psline(-6,1)(6,1)
\psline(-6,2)(6,2)
\psline(-6,3)(6,3)
\psline(-6,4)(6,4)
\psline(-6,5)(6,5)
\psline(-6,6)(6,6)

\psline(0,0)(1,1)
\psline(0,1)(1,0)

\psline(1,0)(2,1)
\psline(1,1)(2,0)

\psline(2,0)(3,1)
\psline(2,1)(3,0)

\psline(3,0)(4,1)
\psline(3,1)(4,0)

\psline(4,0)(5,1)
\psline(4,1)(5,0)

\psline(5,0)(6,1)
\psline(5,1)(6,0)

\psline(-1,0)(0,1)
\psline(-1,1)(0,0)

\psline(-2,0)(-1,1)
\psline(-2,1)(-1,0)

\psline(-3,0)(-2,1)
\psline(-3,1)(-2,0)

\psline(-4,0)(-3,1)
\psline(-4,1)(-3,0)

\psline(-5,0)(-4,1)
\psline(-5,1)(-4,0)

\psline(-6,0)(-5,1)
\psline(-6,1)(-5,0)

\psline(0,1)(1,2)
\psline(0,2)(1,1)

\psline(1,1)(2,2)
\psline(1,2)(2,1)

\psline(2,1)(3,2)
\psline(2,2)(3,1)

\psline(3,1)(4,2)
\psline(3,2)(4,1)

\psline(4,1)(5,2)
\psline(4,2)(5,1)

\psline(5,1)(6,2)
\psline(5,2)(6,1)

\psline(-1,1)(0,2)
\psline(-1,2)(0,1)

\psline(-2,1)(-1,2)
\psline(-2,2)(-1,1)

\psline(-3,1)(-2,2)
\psline(-3,2)(-2,1)

\psline(-4,1)(-3,2)
\psline(-4,2)(-3,1)

\psline(-5,1)(-4,2)
\psline(-5,2)(-4,1)

\psline(-6,1)(-5,2)
\psline(-6,2)(-5,1)

\psline(0,2)(1,3) 
\psline(0,3)(1,2) 

\psline(1,2)(2,3) 
\psline(1,3)(2,2) 

\psline(2,2)(3,3) 
\psline(2,3)(3,2) 
 
\psline(3,2)(4,3) 
\psline(3,3)(4,2) 

\psline(4,2)(5,3) 
\psline(4,3)(5,2) 

\psline(5,2)(6,3) 
\psline(5,3)(6,2) 

\psline(-1,2)(0,3) 
\psline(-1,3)(0,2) 

\psline(-2,2)(-1,3) 
\psline(-2,3)(-1,2) 

\psline(-3,2)(-2,3) 
\psline(-3,3)(-2,2) 

\psline(-4,2)(-3,3) 
\psline(-4,3)(-3,2) 

\psline(-5,2)(-4,3) 
\psline(-5,3)(-4,2) 

\psline(-6,2)(-5,3) 
\psline(-6,3)(-5,2)

\psline(0,3)( 1,4) 
\psline(0,4)( 1,3) 

\psline(1,3)( 2,4) 
\psline(1,4)( 2,3) 

\psline(2,3)( 3,4) 
\psline(2,4)( 3,3) 
 
\psline(3,3)( 4,4) 
\psline(3,4)( 4,3) 

\psline(4,3)( 5,4) 
\psline(4,4)( 5,3) 

\psline(5,3)( 6,4) 
\psline(5,4)( 6,3) 

\psline(-1,3)( 0,4) 
\psline(-1,4)( 0,3) 

\psline(-2,3)( -1,4) 
\psline(-2,4)( -1,3) 

\psline(-3,3)( -2,4) 
\psline(-3,4)( -2,3) 

\psline(-4,3)( -3,4) 
\psline(-4,4)( -3,3) 

\psline(-5,3)( -4,4) 
\psline(-5,4)( -4,3) 

\psline(-6,3)( -5,4) 
\psline(-6,4)( -5,3)

\psline(0,4)(1,5)
\psline(0,5)(1,4)

\psline(1,4)(2,5)
\psline(1,5)(2,4)

\psline(2,4)(3,5)
\psline(2,5)(3,4)

\psline(3,4)(4,5)
\psline(3,5)(4,4)

\psline(4,4)(5,5)
\psline(4,5)(5,4)

\psline(5,4)(6,5)
\psline(5,5)(6,4)

\psline(-1,4)(0,5)
\psline(-1,5)(0,4)

\psline(-2,4)(-1,5)
\psline(-2,5)(-1,4)

\psline(-3,4)(-2,5)
\psline(-3,5)(-2,4)

\psline(-4,4)(-3,5)
\psline(-4,5)(-3,4)

\psline(-5,4)(-4,5)
\psline(-5,5)(-4,4)

\psline(-6,4)(-5,5)
\psline(-6,5)(-5,4)

\psline(0,5)(1,6)
\psline(0,6)(1,5)

\psline(1,5)(2,6)
\psline(1,6)(2,5)

\psline(2,5)(3,6)
\psline(2,6)(3,5)

\psline(3,5)(4,6)
\psline(3,6)(4,5)

\psline(4,5)(5,6)
\psline(4,6)(5,5)

\psline(5,5)(6,6)
\psline(5,6)(6,5)

\psline(-1,5)(0,6)
\psline(-1,6)(0,5)

\psline(-2,5)(-1,6)
\psline(-2,6)(-1,5)

\psline(-3,5)(-2,6)
\psline(-3,6)(-2,5)

\psline(-4,5)(-3,6)
\psline(-4,6)(-3,5)

\psline(-5,5)(-4,6)
\psline(-5,6)(-4,5)

\psline(-6,5)(-5,6)
\psline(-6,6)(-5,5)

\psline(0,0)(1,-1)
\psline(0,-1)(1,0)

\psline(1,0)(2,-1)
\psline(1,-1)(2,0)

\psline(2,0)(3,-1)
\psline(2,-1)(3,0)

\psline(3,0)(4,-1)
\psline(3,-1)(4,0)

\psline(4,0)(5,-1)
\psline(4,-1)(5,0)

\psline(5,0)(6,-1)
\psline(5,-1)(6,0)

\psline(-1,0)(0,-1)
\psline(-1,-1)(0,0)

\psline(-2,0)(-1,-1)
\psline(-2,-1)(-1,0)

\psline(-3,0)(-2,-1)
\psline(-3,-1)(-2,0)

\psline(-4,0)(-3,-1)
\psline(-4,-1)(-3,0)

\psline(-5,0)(-4,-1)
\psline(-5,-1)(-4,0)

\psline(-6,0)(-5,-1)
\psline(-6,-1)(-5,0)

\psline(0,-1)(1,-2)
\psline(0,-2)(1,-1)

\psline(1,-1)(2,-2)
\psline(1,-2)(2,-1)

\psline(2,-1)(3,-2)
\psline(2,-2)(3,-1)

\psline(3,-1)(4,-2)
\psline(3,-2)(4,-1)

\psline(4,-1)(5,-2)
\psline(4,-2)(5,-1)

\psline(5,-1)(6,-2)
\psline(5,-2)(6,-1)

\psline(-1,-1)(0,-2)
\psline(-1,-2)(0,-1)

\psline(-2,-1)(-1,-2)
\psline(-2,-2)(-1,-1)

\psline(-3,-1)(-2,-2)
\psline(-3,-2)(-2,-1)

\psline(-4,-1)(-3,-2)
\psline(-4,-2)(-3,-1)

\psline(-5,-1)(-4,-2)
\psline(-5,-2)(-4,-1)

\psline(-6,-1)(-5,-2)
\psline(-6,-2)(-5,-1)

\psline(0,-2)(1,-3) 
\psline(0,-3)(1,-2) 

\psline(1,-2)(2,-3) 
\psline(1,-3)(2,-2) 

\psline(2,-2)(3,-3) 
\psline(2,-3)(3,-2) 
 
\psline(3,-2)(4,-3) 
\psline(3,-3)(4,-2) 

\psline(4,-2)(5,-3) 
\psline(4,-3)(5,-2) 

\psline(5,-2)(6,-3) 
\psline(5,-3)(6,-2) 

\psline(-1,-2)(0,-3) 
\psline(-1,-3)(0,-2) 

\psline(-2,-2)(-1,-3) 
\psline(-2,-3)(-1,-2) 

\psline(-3,-2)(-2,-3) 
\psline(-3,-3)(-2,-2) 

\psline(-4,-2)(-3,-3) 
\psline(-4,-3)(-3,-2) 

\psline(-5,-2)(-4,-3) 
\psline(-5,-3)(-4,-2) 

\psline(-6,-2)(-5,-3) 
\psline(-6,-3)(-5,-2)

\psline(0,-3)( 1,-4) 
\psline(0,-4)( 1,-3) 

\psline(1,-3)( 2,-4) 
\psline(1,-4)( 2,-3) 

\psline(2,-3)( 3,-4) 
\psline(2,-4)( 3,-3) 
 
\psline(3,-3)( 4,-4) 
\psline(3,-4)( 4,-3) 

\psline(4,-3)( 5,-4) 
\psline(4,-4)( 5,-3) 

\psline(5,-3)( 6,-4) 
\psline(5,-4)( 6,-3) 

\psline(-1,-3)( 0,-4) 
\psline(-1,-4)( 0,-3) 

\psline(-2,-3)( -1,-4) 
\psline(-2,-4)( -1,-3) 

\psline(-3,-3)( -2,-4) 
\psline(-3,-4)( -2,-3) 

\psline(-4,-3)( -3,-4) 
\psline(-4,-4)( -3,-3) 

\psline(-5,-3)( -4,-4) 
\psline(-5,-4)( -4,-3) 

\psline(-6,-3)( -5,-4) 
\psline(-6,-4)( -5,-3)

\psline(0,-4)(1,-5)
\psline(0,-5)(1,-4)

\psline(1,-4)(2,-5)
\psline(1,-5)(2,-4)

\psline(2,-4)(3,-5)
\psline(2,-5)(3,-4)

\psline(3,-4)(4,-5)
\psline(3,-5)(4,-4)

\psline(4,-4)(5,-5)
\psline(4,-5)(5,-4)

\psline(5,-4)(6,-5)
\psline(5,-5)(6,-4)

\psline(-1,-4)(0,-5)
\psline(-1,-5)(0,-4)

\psline(-2,-4)(-1,-5)
\psline(-2,-5)(-1,-4)

\psline(-3,-4)(-2,-5)
\psline(-3,-5)(-2,-4)

\psline(-4,-4)(-3,-5)
\psline(-4,-5)(-3,-4)

\psline(-5,-4)(-4,-5)
\psline(-5,-5)(-4,-4)

\psline(-6,-4)(-5,-5)
\psline(-6,-5)(-5,-4)

\psline(0,-5)(1,-6)
\psline(0,-6)(1,-5)

\psline(1,-5)(2,-6)
\psline(1,-6)(2,-5)

\psline(2,-5)(3,-6)
\psline(2,-6)(3,-5)

\psline(3,-5)(4,-6)
\psline(3,-6)(4,-5)

\psline(4,-5)(5,-6)
\psline(4,-6)(5,-5)

\psline(5,-5)(6,-6)
\psline(5,-6)(6,-5)

\psline(-1,-5)(0,-6)
\psline(-1,-6)(0,-5)

\psline(-2,-5)(-1,-6)
\psline(-2,-6)(-1,-5)

\psline(-3,-5)(-2,-6)
\psline(-3,-6)(-2,-5)

\psline(-4,-5)(-3,-6)
\psline(-4,-6)(-3,-5)

\psline(-5,-5)(-4,-6)
\psline(-5,-6)(-4,-5)

\psline(-6,-5)(-5,-6)
\psline(-6,-6)(-5,-5)

\rput(0,-6.7){Partition of $W$ into cells for $L\in C_{1}$.}

\end{pspicture}
\end{center}
\end{textblock}

\begin{textblock}{10}(9.55,6)
\psset{unit=.5cm}
\begin{center}
\begin{pspicture}(-6,-6)(6,6)

\pspolygon[fillstyle=solid,fillcolor=lightgray](0,0)(0,1)(.5,.5)

\pspolygon[fillstyle=solid,fillcolor=red!5!yellow!42!](0,0)(0,-6)(-6,-6)
\pspolygon[fillstyle=solid,fillcolor=red!5!yellow!42!](1,-1)(1,-6)(6,-6)
\pspolygon[fillstyle=solid,fillcolor=red!5!yellow!42!](2,0)(6,0)(6,-4)
\pspolygon[fillstyle=solid,fillcolor=red!5!yellow!42!](1,1)(6,6)(6,1)
\pspolygon[fillstyle=solid,fillcolor=red!5!yellow!42!](1,3)(1,6)(4,6)
\pspolygon[fillstyle=solid,fillcolor=red!5!yellow!42!](0,2)(0,6)(-4,6)
\pspolygon[fillstyle=solid,fillcolor=red!5!yellow!42!](-1,1)(-6,1)(-6,6)
\pspolygon[fillstyle=solid,fillcolor=red!5!yellow!42!](-2,0)(-6,0)(-6,-4)

\pspolygon[fillstyle=solid,fillcolor=green!80!black!70!](0.5,0.5)(1,1)(6,-4)(6,-5)
\pspolygon[fillstyle=solid,fillcolor=green!80!black!70!](-0.5,0.5)(-1,1)(-6,-4)(-6,-5)
\pspolygon[fillstyle=solid,fillcolor=green!80!black!70!](0.5,1.5)(5,6)(4,6)(0,2)
\pspolygon[fillstyle=solid,fillcolor=green!80!black!70!](-0.5,1.5)(-5,6)(-4,6)(-0,2)

\pspolygon[fillstyle=solid,fillcolor=PineGreen](0,0)(.5,.5)(6,-5)(6,-6)
\pspolygon[fillstyle=solid,fillcolor=PineGreen](0,0)(-.5,.5)(-6,-5)(-6,-6)
\pspolygon[fillstyle=solid,fillcolor=PineGreen](1,1)(6,6)(5,6)(.5,1.5)
\pspolygon[fillstyle=solid,fillcolor=PineGreen](-1,1)(-6,6)(-5,6)(-.5,1.5)%


\pspolygon[fillstyle=solid,fillcolor=orange](0,1)(-.5,1.5)(-1,1)(-.5,.5)
\pspolygon[fillstyle=solid,fillcolor=orange](0,1)(.5,1.5)(1,1)(.5,.5)


\pspolygon[fillstyle=solid,fillcolor=lightgray](0,0)(0,1)(-.5,.5)


\pspolygon[fillstyle=solid,fillcolor=RedOrange](0,1)(.5,1.5)(0,2)

\pspolygon[fillstyle=solid,fillcolor=RedOrange](0,1)(-.5,1.5)(0,2)

\pspolygon[fillstyle=solid,fillcolor=red!5!yellow!42!](0,0)(0,-6)(1,-6)(1,-1)
\pspolygon[fillstyle=solid,fillcolor=red!5!yellow!42!](1,1)(6,1)(6,0)(2,0)
\pspolygon[fillstyle=solid,fillcolor=red!5!yellow!42!](-1,1)(-6,1)(-6,0)(-2,0)
\pspolygon[fillstyle=solid,fillcolor=red!5!yellow!42!](0,2)(0,6)(1,6)(1,3)

\psline(-6,-6)(-6,6)
\psline(-5,-6)(-5,6)
\psline(-4,-6)(-4,6)
\psline(-3,-6)(-3,6)
\psline(-2,-6)(-2,6)
\psline(-1,-6)(-1,6)
\psline(0,-6)(0,6)
\psline(1,-6)(1,6)
\psline(2,-6)(2,6)
\psline(3,-6)(3,6)
\psline(4,-6)(4,6)
\psline(5,-6)(5,6)
\psline(6,-6)(6,6)

\psline(-6,-6)(6,-6)
\psline(-6,-5)(6,-5)
\psline(-6,-4)(6,-4)
\psline(-6,-3)(6,-3)
\psline(-6,-2)(6,-2)
\psline(-6,-1)(6,-1)
\psline(-6,0)(6,0)
\psline(-6,1)(6,1)
\psline(-6,2)(6,2)
\psline(-6,3)(6,3)
\psline(-6,4)(6,4)
\psline(-6,5)(6,5)
\psline(-6,6)(6,6)

\psline(0,0)(1,1)
\psline(0,1)(1,0)

\psline(1,0)(2,1)
\psline(1,1)(2,0)

\psline(2,0)(3,1)
\psline(2,1)(3,0)

\psline(3,0)(4,1)
\psline(3,1)(4,0)

\psline(4,0)(5,1)
\psline(4,1)(5,0)

\psline(5,0)(6,1)
\psline(5,1)(6,0)

\psline(-1,0)(0,1)
\psline(-1,1)(0,0)

\psline(-2,0)(-1,1)
\psline(-2,1)(-1,0)

\psline(-3,0)(-2,1)
\psline(-3,1)(-2,0)

\psline(-4,0)(-3,1)
\psline(-4,1)(-3,0)

\psline(-5,0)(-4,1)
\psline(-5,1)(-4,0)

\psline(-6,0)(-5,1)
\psline(-6,1)(-5,0)

\psline(0,1)(1,2)
\psline(0,2)(1,1)

\psline(1,1)(2,2)
\psline(1,2)(2,1)

\psline(2,1)(3,2)
\psline(2,2)(3,1)

\psline(3,1)(4,2)
\psline(3,2)(4,1)

\psline(4,1)(5,2)
\psline(4,2)(5,1)

\psline(5,1)(6,2)
\psline(5,2)(6,1)

\psline(-1,1)(0,2)
\psline(-1,2)(0,1)

\psline(-2,1)(-1,2)
\psline(-2,2)(-1,1)

\psline(-3,1)(-2,2)
\psline(-3,2)(-2,1)

\psline(-4,1)(-3,2)
\psline(-4,2)(-3,1)

\psline(-5,1)(-4,2)
\psline(-5,2)(-4,1)

\psline(-6,1)(-5,2)
\psline(-6,2)(-5,1)

\psline(0,2)(1,3) 
\psline(0,3)(1,2) 

\psline(1,2)(2,3) 
\psline(1,3)(2,2) 

\psline(2,2)(3,3) 
\psline(2,3)(3,2) 
 
\psline(3,2)(4,3) 
\psline(3,3)(4,2) 

\psline(4,2)(5,3) 
\psline(4,3)(5,2) 

\psline(5,2)(6,3) 
\psline(5,3)(6,2) 

\psline(-1,2)(0,3) 
\psline(-1,3)(0,2) 

\psline(-2,2)(-1,3) 
\psline(-2,3)(-1,2) 

\psline(-3,2)(-2,3) 
\psline(-3,3)(-2,2) 

\psline(-4,2)(-3,3) 
\psline(-4,3)(-3,2) 

\psline(-5,2)(-4,3) 
\psline(-5,3)(-4,2) 

\psline(-6,2)(-5,3) 
\psline(-6,3)(-5,2)

\psline(0,3)( 1,4) 
\psline(0,4)( 1,3) 

\psline(1,3)( 2,4) 
\psline(1,4)( 2,3) 

\psline(2,3)( 3,4) 
\psline(2,4)( 3,3) 
 
\psline(3,3)( 4,4) 
\psline(3,4)( 4,3) 

\psline(4,3)( 5,4) 
\psline(4,4)( 5,3) 

\psline(5,3)( 6,4) 
\psline(5,4)( 6,3) 

\psline(-1,3)( 0,4) 
\psline(-1,4)( 0,3) 

\psline(-2,3)( -1,4) 
\psline(-2,4)( -1,3) 

\psline(-3,3)( -2,4) 
\psline(-3,4)( -2,3) 

\psline(-4,3)( -3,4) 
\psline(-4,4)( -3,3) 

\psline(-5,3)( -4,4) 
\psline(-5,4)( -4,3) 

\psline(-6,3)( -5,4) 
\psline(-6,4)( -5,3)

\psline(0,4)(1,5)
\psline(0,5)(1,4)

\psline(1,4)(2,5)
\psline(1,5)(2,4)

\psline(2,4)(3,5)
\psline(2,5)(3,4)

\psline(3,4)(4,5)
\psline(3,5)(4,4)

\psline(4,4)(5,5)
\psline(4,5)(5,4)

\psline(5,4)(6,5)
\psline(5,5)(6,4)

\psline(-1,4)(0,5)
\psline(-1,5)(0,4)

\psline(-2,4)(-1,5)
\psline(-2,5)(-1,4)

\psline(-3,4)(-2,5)
\psline(-3,5)(-2,4)

\psline(-4,4)(-3,5)
\psline(-4,5)(-3,4)

\psline(-5,4)(-4,5)
\psline(-5,5)(-4,4)

\psline(-6,4)(-5,5)
\psline(-6,5)(-5,4)

\psline(0,5)(1,6)
\psline(0,6)(1,5)

\psline(1,5)(2,6)
\psline(1,6)(2,5)

\psline(2,5)(3,6)
\psline(2,6)(3,5)

\psline(3,5)(4,6)
\psline(3,6)(4,5)

\psline(4,5)(5,6)
\psline(4,6)(5,5)

\psline(5,5)(6,6)
\psline(5,6)(6,5)

\psline(-1,5)(0,6)
\psline(-1,6)(0,5)

\psline(-2,5)(-1,6)
\psline(-2,6)(-1,5)

\psline(-3,5)(-2,6)
\psline(-3,6)(-2,5)

\psline(-4,5)(-3,6)
\psline(-4,6)(-3,5)

\psline(-5,5)(-4,6)
\psline(-5,6)(-4,5)

\psline(-6,5)(-5,6)
\psline(-6,6)(-5,5)

\psline(0,0)(1,-1)
\psline(0,-1)(1,0)

\psline(1,0)(2,-1)
\psline(1,-1)(2,0)

\psline(2,0)(3,-1)
\psline(2,-1)(3,0)

\psline(3,0)(4,-1)
\psline(3,-1)(4,0)

\psline(4,0)(5,-1)
\psline(4,-1)(5,0)

\psline(5,0)(6,-1)
\psline(5,-1)(6,0)

\psline(-1,0)(0,-1)
\psline(-1,-1)(0,0)

\psline(-2,0)(-1,-1)
\psline(-2,-1)(-1,0)

\psline(-3,0)(-2,-1)
\psline(-3,-1)(-2,0)

\psline(-4,0)(-3,-1)
\psline(-4,-1)(-3,0)

\psline(-5,0)(-4,-1)
\psline(-5,-1)(-4,0)

\psline(-6,0)(-5,-1)
\psline(-6,-1)(-5,0)

\psline(0,-1)(1,-2)
\psline(0,-2)(1,-1)

\psline(1,-1)(2,-2)
\psline(1,-2)(2,-1)

\psline(2,-1)(3,-2)
\psline(2,-2)(3,-1)

\psline(3,-1)(4,-2)
\psline(3,-2)(4,-1)

\psline(4,-1)(5,-2)
\psline(4,-2)(5,-1)

\psline(5,-1)(6,-2)
\psline(5,-2)(6,-1)

\psline(-1,-1)(0,-2)
\psline(-1,-2)(0,-1)

\psline(-2,-1)(-1,-2)
\psline(-2,-2)(-1,-1)

\psline(-3,-1)(-2,-2)
\psline(-3,-2)(-2,-1)

\psline(-4,-1)(-3,-2)
\psline(-4,-2)(-3,-1)

\psline(-5,-1)(-4,-2)
\psline(-5,-2)(-4,-1)

\psline(-6,-1)(-5,-2)
\psline(-6,-2)(-5,-1)

\psline(0,-2)(1,-3) 
\psline(0,-3)(1,-2) 

\psline(1,-2)(2,-3) 
\psline(1,-3)(2,-2) 

\psline(2,-2)(3,-3) 
\psline(2,-3)(3,-2) 
 
\psline(3,-2)(4,-3) 
\psline(3,-3)(4,-2) 

\psline(4,-2)(5,-3) 
\psline(4,-3)(5,-2) 

\psline(5,-2)(6,-3) 
\psline(5,-3)(6,-2) 

\psline(-1,-2)(0,-3) 
\psline(-1,-3)(0,-2) 

\psline(-2,-2)(-1,-3) 
\psline(-2,-3)(-1,-2) 

\psline(-3,-2)(-2,-3) 
\psline(-3,-3)(-2,-2) 

\psline(-4,-2)(-3,-3) 
\psline(-4,-3)(-3,-2) 

\psline(-5,-2)(-4,-3) 
\psline(-5,-3)(-4,-2) 

\psline(-6,-2)(-5,-3) 
\psline(-6,-3)(-5,-2)

\psline(0,-3)( 1,-4) 
\psline(0,-4)( 1,-3) 

\psline(1,-3)( 2,-4) 
\psline(1,-4)( 2,-3) 

\psline(2,-3)( 3,-4) 
\psline(2,-4)( 3,-3) 
 
\psline(3,-3)( 4,-4) 
\psline(3,-4)( 4,-3) 

\psline(4,-3)( 5,-4) 
\psline(4,-4)( 5,-3) 

\psline(5,-3)( 6,-4) 
\psline(5,-4)( 6,-3) 

\psline(-1,-3)( 0,-4) 
\psline(-1,-4)( 0,-3) 

\psline(-2,-3)( -1,-4) 
\psline(-2,-4)( -1,-3) 

\psline(-3,-3)( -2,-4) 
\psline(-3,-4)( -2,-3) 

\psline(-4,-3)( -3,-4) 
\psline(-4,-4)( -3,-3) 

\psline(-5,-3)( -4,-4) 
\psline(-5,-4)( -4,-3) 

\psline(-6,-3)( -5,-4) 
\psline(-6,-4)( -5,-3)

\psline(0,-4)(1,-5)
\psline(0,-5)(1,-4)

\psline(1,-4)(2,-5)
\psline(1,-5)(2,-4)

\psline(2,-4)(3,-5)
\psline(2,-5)(3,-4)

\psline(3,-4)(4,-5)
\psline(3,-5)(4,-4)

\psline(4,-4)(5,-5)
\psline(4,-5)(5,-4)

\psline(5,-4)(6,-5)
\psline(5,-5)(6,-4)

\psline(-1,-4)(0,-5)
\psline(-1,-5)(0,-4)

\psline(-2,-4)(-1,-5)
\psline(-2,-5)(-1,-4)

\psline(-3,-4)(-2,-5)
\psline(-3,-5)(-2,-4)

\psline(-4,-4)(-3,-5)
\psline(-4,-5)(-3,-4)

\psline(-5,-4)(-4,-5)
\psline(-5,-5)(-4,-4)

\psline(-6,-4)(-5,-5)
\psline(-6,-5)(-5,-4)

\psline(0,-5)(1,-6)
\psline(0,-6)(1,-5)

\psline(1,-5)(2,-6)
\psline(1,-6)(2,-5)

\psline(2,-5)(3,-6)
\psline(2,-6)(3,-5)

\psline(3,-5)(4,-6)
\psline(3,-6)(4,-5)

\psline(4,-5)(5,-6)
\psline(4,-6)(5,-5)

\psline(5,-5)(6,-6)
\psline(5,-6)(6,-5)

\psline(-1,-5)(0,-6)
\psline(-1,-6)(0,-5)

\psline(-2,-5)(-1,-6)
\psline(-2,-6)(-1,-5)

\psline(-3,-5)(-2,-6)
\psline(-3,-6)(-2,-5)

\psline(-4,-5)(-3,-6)
\psline(-4,-6)(-3,-5)

\psline(-5,-5)(-4,-6)
\psline(-5,-6)(-4,-5)

\psline(-6,-5)(-5,-6)
\psline(-6,-6)(-5,-5)

\rput(0,-6.7){Partition of $W$ into cells for $L'$. }

\end{pspicture}
\end{center}
\end{textblock}
\end{Rem}


$\ $\\
\vspace{6cm}

\section{Proof of Theorem \ref{main2} in the generic setting}
\subsection{Hypothesis and notation}
\label{generic}
Let $(W,S)$ be an irreducible affine Weyl group generated by $S$.  Let $S=S^{\circ}\cup S^{+}$ be a partition of $S$ such that no element of $S^{\circ}$ is conjugate to an element of $S^{+}$ and $S^{\circ},S^{+}\neq S$. For a subset $I$ of $S$ we set $I^{\circ}=I\cap S^{\circ}$ and $I^{+}=I\cap S^{+}$. 
We denote by $\bar{S}$ the set of conjugacy classes in $S$ in $W$ and we set
$$\bar{S}^{+}=\{\o\in \bar{S}|\o\subset S^{+}\}\text{ and }\bar{S}^{\circ}=\{\o\in \bar{S}|\o\subset S^{\circ}\}$$
As in the previous section, $\nZ[\bar{S}]$ denotes the free $\nZ$-module with basis $\bar{S}$ and $V'=\nR\otimes_{\nZ} \nZ[\bar{S}]$. 
We identify $\nZ[\bar{S}]$ with $\nZ^{|S|}$.\\

A subset $X$ of $\nZ[\bar{S}]$ is called positive if the following three conditions hold
\begin{enumerate}
\item $\nZ[\bar{S}]=X\cup (-X)$;
\item $X+X\subset X$;
\item $X\cap (-X)$ is a subgroup of $\nZ[\bar{S}]$.
\end{enumerate}
Any positive subset $X$ defines a total order $\leq_{X}$ on $\bGa:=\nZ[\bar{S}]/(X\cap (-X))$ simply by setting
$$\ga\geq_{X} 0\Longleftrightarrow  \text{all the representatives of $\g$ belong to $X$}.$$

\bigskip

We briefly explain how to classify all the positive subsets of $\nZ[\bar{S}]$. Let $\cP(\nZ[\bar{S}])$ be the set of all sequences $(\varphi_{1},\ldots, \varphi_{d})$ such that $\varphi_{i}$ is a non-zero linear form defined on  $\ker(\varphi_{i-1})\subset V'$, with the convention that $\varphi_{0}=0$.
Then we can associate to $\Phi=(\varphi_{1},\ldots, \varphi_{d})\in \cP(\nZ[\bar{S}])$ a positive subset $\text{Pos}(\Phi)$ of $\nZ[\bar{S}]$ by setting 
$$\text{Pos}(\Phi)=\{\ga\in \nZ[\bar{S}]\mid \exists\ 0\leq k\leq d-1; \ga\in\ker\va_{k}\text{ and } \va_{k+1}(\ga)>0\}\cup \ker\va_{d}.$$
It can be shown that all positive subsets can be obtained this way. We denote by $\cP_{+}(\nZ[\bar{S}])$ the subset of $\cP(\nZ[\bar{S}])$ which consists of all sequences $\Phi=(\va_{1},\ldots,\va_{d})$ such that 
\begin{equation*}
\va_{1}=\sum_{\o\in\bar{S}^{+}} \o^{\ast} \quad\text{ and }\quad \va_{k}(\o)>0 \text{ for all $\o\in \bar{S}$}.
\end{equation*}
\bigskip 

\begin{quotation}{\bf Hypothesis. }{\it
From now on and until the end of this section, we fix a positive subset $X=\text{Pos}(\Phi)$ such that $\Phi\in  \cP_{+}(\nZ[\bar{S}])$. In type $\tilde{\Crm}$ we  assume that $\bt\geq \bt'$. }
\end{quotation}

\bigskip
\noindent
To simplify the notation, we will denote by $\leq$ the total order on $\bGa$ instead of~$\leq_{X}$. 
\begin{Exa}
\label{lex}
Assume that $W$ is of type $\tilde{\Brm}_r$, $\tilde{\Frm}_4$ or $\tilde{\Grm}_2$ and let $\bar{S}^{+}=\{\bs\}$ and  $\bar{S}^{\circ}=\{\bt\}$. Let $X\in  \cP_{+}(\nZ[\bar{S}])$. Then we have $\va_{1}=\bs^{\ast}$ and $\ker(\va_{1})=\nR \bt$. Since we assumed that $\va_{2}(\bt)>0$ we must have $\va_{2}=\kappa\bt^{\ast}$ where $\kappa>0$. It follows that $\bGa=\nZ[\bar{S}]$ and that the order on $\bGa$ is simply the lexicographic order:
$$(i,j)<(i',j')\Longleftrightarrow i<i' \text{ or }(i=i'\text{ and } j<j').$$
Assume that $W$ is of type $\tilde{\Crm}_{r}$ and that $S^{+}=\bt$. Let $\va_{1}=\bt^{\ast}$ and $\va_{2}$ be defined by $\va_{2}(0,j,k)=bj+ck$ for $b,c\in\nN$. Then we have $\ker(\va_{2})=\sg (0,-c,b)\sd$. Finally we define $\va_{3}$ by $\va_{3}(0,-c,b)=1$ and we extend it by linearity. 
Then the order associated to $(\va_{1},\va_{2},\va_{3})$ can be describe as follows:
{\small $$\{(i,j,k)\mid (i,j,k)>0\}=\{(i,j,k)\mid i>0\}\cup \{(0,j,k)\mid bj+ck>0\}\cup \{(0,-kc,kb)\mid k>0\}~\SS{\blacksquare}$$}
\end{Exa}

\bigskip
\noindent
Let $\bL: W\longrightarrow \bGa$ be the weight function defined by $\bL(s)=\o_{i}$ if $s\in \o_{i}$. Let $\bA$ be the group algebra of $\bGa$ over $\nZ$. Recall that we use the exponential notation for  $\bA$
$$\bA=\bigoplus_{\ga\in\bGa} \nZ v^{\ga} \text{ where } v^{\ga}.v^{\ga'}=v^{\ga+\ga'}.$$
Let $\bH=\cH(W,S,\bL)$ be the associated Hecke algebra. We will denote by  $\bT_{x}$ the element of the standard bases of $\bH$, by $\bC_{x}$ the elements of the Kazhdan-Lusztig basis of $\bH$ and by $\bP_{x,y},\bM_{x,y}$ the polynomials in $\bA$ defined in Section \ref{Iwahori-Hecke}. 
We set
$$
\renewcommand{\arraystretch}{1.5}
\begin{array}{cllcllcccccc}
\bA_{< 0}&=\underset{\g< 0}{\bigoplus} \nZ v^{\g} &,&\bA_{\geq 0}&=\underset{\g\geq  0}{\bigoplus} \nZ v^{\g}
\end{array}
$$
and 
$$\bH_{< 0}=\underset{w\in W}{\bigoplus} \bA_{<0}\bT_{w}.$$

We denote by $^{+}:\bGa\rightarrow \bGa$ (respectively $^{\circ}$) the map induced by the projection of $\nZ[\bar{S}]$ onto $\oplus_{\o\in\bar{S}^{+}} \nZ \o$ (respectively $\oplus_{\o\in\bar{S}^{\circ}}\nZ \o$).
For $a=\sum_{\g\in\bGa} a_{\g}v^{\g}\in \bA$ we define 
$$\deg(a)=\max\{\g\in\bGa\mid a_{\g}\neq 0\}.$$
We will write $\deg^{+}(a)$ instead of $\deg(a)^{+}$. 
\begin{Rem}
\label{neg}
Note that if $a=\sum a_{\ga}v^{\ga}$ satisfies $\va_{1}(\deg^{+}(a))<0$, then $a\in \bA_{<0}$. Indeed, if $a_{\ga}\neq 0$, then $\ga=\ga^{+}+\ga^{\circ}$ where $\ga^{+}\leq \deg^{+}(a)$. Applying $\va_{1}$ yields $\va_{1}(\ga)=\va_{1}(\ga^{+})+0\leq \va_{1}(\deg^{+}(a))<0$ that is $\ga<0$ as required.\finl 
\end{Rem}

\medskip

In this section we will have to distinguish the following cases. 
(We keep the notation of Section \ref{typeBFG} and \ref{typeC}.) \\

{\bf Case 1.} $W$ is of type $\tilde{\Brm}_r$, $\tilde{\Frm}_4$ or $\tilde{\Grm}_2$. \\

{\bf Case 2.} $W$ is of type $\tilde{\Crm}_r$, $\bar{S}^{+}\neq \{\bt,\bt'\}$. \\

{\bf Case 3.} $W$ is of type $\tilde{\Crm}_r$, 
$\bar{S}^{+}=\{\bt,\bt'\}$ and $(-1,k,1)<0$ for all $k>0$. \\

{\bf Case 4.}  $W$ is of type $\tilde{\Crm}_r$, 
$\bar{S}^{+}=\{\bt,\bt'\}$ and $(-1,k,1)>0$ for some $k>0$. \\

\noindent
If we are in Case (1)--(3), we define the weight function $\bL^{+}$ by 
$$\text{$\bL^{+}(\o)=\bL(\o)$ if $\o\in \bar{S}^{+}$ and $\bL^{+}(\o)=0$ if $\o\in \bar{S}^{\circ}.$}$$ 
Hence we have $\bL^{+}(w)=(\bL(w))^{+}$.\\

\noindent
If we are in Case (4), we define the weight function $\bL^{+}$ by 
$$\text{$\bL^{+}(t)=\bL^{+}(t')=\bt$  and $\bL^{+}(s)=0\text{ if $s\in \bs$}$}.$$
Note that in this case, we have  $\bL^{+}(w)\neq (\bL(w))^{+}$.\\

\noindent
Recall that, for a weight function $L$,  we say that a hyperplane $H$ of direction $\al$ is of maximal $L$-weight if $L_{H}=L_{\al}$ where $L_{\al}=\max_{n\in\nZ} H_{\al,n}$. 
Let $H,H'\in \cF$. Then we have either \cite[Lemma 2.2]{Bremke} (a) $L_{H}=L_{H'}$ or (b) $W$ is of type $\tilde{\Crm}_r$, $H$ contains a face of type $t_{1}$ and $H'$ a face of type $t_{2}$ and $\{t_{1},t_{2}\}=\{t,t'\}$ and  $L(t)\neq L(t')$.

\bigskip
\noindent
\begin{Lem}
\label{Lplus-spe}
Let $\la$ be a $\bL^{+}$-special point and let $H$ be an hyperplane orthogonal to $\a$ which contains $\la$ and such that $\bL^{+}_{H}>0$. Then in Case 1--3, $\bL_{H}$ is of maximal $\bL$-weight. In Case 4, we may have $\bL_{H}=\bt'<\bt=\bL_{\al}$.
\end{Lem}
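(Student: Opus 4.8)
The plan is to run a short case analysis mirroring the partition into Cases~1--4, reducing everything to two inputs. The first is \cite[Lemma~2.2]{Bremke}: outside type $\tilde{\Crm}_r$ any two parallel hyperplanes have the same $\bL$-weight, and in type $\tilde{\Crm}_r$ two hyperplanes orthogonal to a common root $\al$ can differ in $\bL$-weight only when $\al=2\e_i$ is a long root, in which case the $H_{2\e_i,m}$ split (by parity of $m$) into those of type $t$, of $\bL$-weight $\bt$, and those of type $t'$, of $\bL$-weight $\bt'\le\bt$ by Convention~\ref{convention}; so $\bL_{2\e_i}=\bt$. Hence ``$\bL_H=\bL_\al$'' is a genuine restriction only for $H$ orthogonal to such a $2\e_i$ and of type $t'$, and the whole task is to rule this out under the hypotheses of Cases~1--3. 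The second input is the description of $\spe_{\bL^{+}}(W)$ from Section~\ref{L-special}.

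In Case~1 there is nothing to prove since $W$ is not of type $\tilde{\Crm}$. In Case~2 I would use $\bar{S}^{+}\ne\{\bt,\bt'\}$ together with the relabelling of Convention~\ref{convention} (which, when exactly one of $t,t'$ lies in $S^{+}$, names it $t$): this forces $\bL^{+}(t')=0$, so every type-$t'$ hyperplane has $\bL^{+}$-weight $0$. Then an $H$ with $\bL^{+}_H>0$ is either orthogonal to some $2\e_i$ and of type $t$, giving $\bL_H=\bt=\bL_{2\e_i}$, or orthogonal to a root all of whose orthogonal hyperplanes are $\Om$-conjugate, giving $\bL_H=\bL_\al$ at once. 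In Cases~1 and~2 the hypothesis on $\la$ is not needed.

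Case~3 is the substantive one, and is where being $\bL^{+}$-special is used. Here $\bar{S}^{+}=\{\bt,\bt'\}$, so $S^{\circ}=\bs$, the function $\bL^{+}$ vanishes exactly on $\bs$, and the hyperplanes $H$ with $\bL^{+}_H>0$ are precisely those orthogonal to some $2\e_i$ (of either type). First I would show $\bt>\bt'$ strictly: if $\bt=\bt'$ then in $\bGa$ one has $(-1,k,1)=k\bs$, which is $>0$ for $k\ge1$ (as $\bs>0$, since $\va_k(\o)>0$ for all $\o\in\bar{S}$), contradicting the inequality defining Case~3; with Convention~\ref{convention} this yields $\bt>\bt'$, hence $\bL^{+}(t)>\bL^{+}(t')$. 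By Section~\ref{L-special}, $\spe_{\bL^{+}}(W)$ is then $\sg\Phi^{\bL^{+}}\sd$; and since for $\bL^{+}$ every root $\e_i\pm\e_j$ has weight $0$ while every $2\e_i$ has weight $\bt>0$, we get $\Phi^{\bL^{+}}=\{\pm2\e_i\}$, so $\sg\Phi^{\bL^{+}}\sd$ is the sublattice generated by the $2\e_i$. The points of this sublattice meet $H_{2\e_i,m}$ only for $m$ even, i.e.\ only the type-$t$ hyperplanes; therefore any $H$ containing an $\bL^{+}$-special $\la$ with $\bL^{+}_H>0$ is of type $t$, so $\bL_H=\bt=\bL_{2\e_i}$, completing Case~3.

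For the last assertion I would exhibit a failure in Case~4 when $\bt>\bt'$. There $\bL^{+}(t)=\bL^{+}(t')=\bt$ by definition, so $\spe_{\bL^{+}}(W)=\{v\mid\langle v,\check{\al}\rangle\in\nZ\text{ for all }\al\in\Phi^{\bL^{+}}\}$ with $\Phi^{\bL^{+}}=\{\pm2\e_i\}$, which contains points on type-$t'$ hyperplanes: the vertex $\la=\e_1$ is $\bL^{+}$-special and lies on $H=H_{2\e_1,1}$, of type $t'$, so $\bL^{+}_H=\bL^{+}(t')=\bt>0$ while $\bL_H=\bL(t')=\bt'<\bt=\bL_{2\e_1}$. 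I expect the only real difficulty to be organizational bookkeeping in type $\tilde{\Crm}_r$: listing, for each admissible position of $S^{\circ}$ in $S$, the root system $\Phi^{\bL^{+}}$ and the hyperplanes of positive $\bL^{+}$-weight, and reading off from Section~\ref{L-special} whether $\spe_{\bL^{+}}(W)$ avoids the type-$t'$ hyperplanes; once the $t/t'$ partition of the $2\e_i$-hyperplanes is in place, each case is a one-line check.
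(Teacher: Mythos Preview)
Your proposal is correct and follows the same case-by-case architecture as the paper's proof. A couple of minor remarks on the comparison.

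In Case~2 your argument is actually cleaner than the paper's: you observe directly that once $\bL^{+}(t')=0$, every hyperplane $H$ with $\bL^{+}_H>0$ is either of type $t$ or orthogonal to a short root, hence of maximal $\bL$-weight, so the hypothesis that $\la$ is $\bL^{+}$-special is not even needed. The paper instead invokes the specialness of $\la$ to pin down $S_\la=\{t,s_1,\dots,s_{n-1}\}$ in the subcase $\bt\in\bar{S}^{+}$. Both are fine; yours is shorter. One small imprecision: your justification that $\bL^{+}(t')=0$ appeals to Convention~\ref{convention} as a relabelling of $S^{+}$. That is not literally what the Convention says; the correct reference is the standing Hypothesis of Section~\ref{generic} that $\bt\ge\bt'$ in $\bGa$. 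From this and $\va_1=\sum_{\o\in\bar{S}^{+}}\o^{\ast}$ one sees that $\bt'\in\bar{S}^{+}$ with $\bt\in\bar{S}^{\circ}$ would give $\va_1(\bt-\bt')=-1<0$, contradicting $\bt\ge\bt'$; hence $\bt'\in\bar{S}^{\circ}$ in Case~2, which is exactly your conclusion.

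In Case~3 you make explicit why $\bt>\bt'$ strictly (from the defining inequality $(-1,k,1)<0$ for all $k>0$) and then carry out the lattice computation $\spe_{\bL^{+}}(W)=\langle\Phi^{\bL^{+}}\rangle=(2\nZ)^n$, which meets only the type-$t$ hyperplanes $H_{2\e_i,m}$ with $m$ even. The paper compresses this to the single sentence ``$S_\la=\{t,s_1,\dots,s_{n-1}\}$'', which encodes the same fact. Your explicit counterexample in Case~4 is correct and more informative than the paper's ``the result is clear''.
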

\begin{proof}
Let $\la$ be a $\bL^{+}$-special point and let $H$ be an hyperplane which contains $\la$ and such that $\bL^{+}_{H}>0$. In Case 1, the result is clear since any hyperplane is of maximal weight. In Case 2, since $\bt>\bt'$ and $\bar{S}^{+}\neq \{\bt,\bt'\}$, we must have 
$\bt'\subset \bar{S}^{\circ}$. If $\bt\in\bar{S}^{\circ}$, then since $\bL^{+}_{H}>0$ we have $L_{H}=\bs$ and $H$ is of maximal weight.  If $\bt'\in\bar{S}^{+}$, then since $\la$ is a $\bL^{+}$-special we have $S_{\la}=\{t,s_{1},\ldots,s_{n-1}\}$ and we have $L_{H}=\bt$ or $L_{H}=\bs$ and the result follows. In Case 3, since $\bt>\bt'$, we must have $S_{\la}=\{t,s_{1},\ldots,s_{n-1}\}$ and the result follows as above. Finally in Case 4, the result is clear. 
\end{proof}
\begin{Rem}
\label{diff-case}
In Case 3, if $\deg^{+}(a)\leq (-1,0,1)$ then $a\in \bA_{<0}$. Indeed, we have $\deg(a)=\deg^{+}(a)+\ga$ where $\ga\in \ker(\va_{1})$. But then $\ga=(0,k,0)$ for some $k\in\nZ$. Therefore $\deg(a)=(-1,k,1)<0$.\finl
\end{Rem}


\subsection{Generalized induction of Kazhdan-Lusztig cells}
Recall that every element $w\in\cb^{\bL^{+}}_{\min}$ can be uniquely written under the form $x_{w}a_{w}\hw_{\la_{\si}}b_{\si}$ where $\si\in\Om^{\bL^{+}}_{0}$, $a_{w}\in W_{S_{\la_{\si}}^{\circ}}$ and $x_{w}\in X_{\la_{\si}}$. For $\si\in\Om_{0}^{\bL^{+}}$, we set 
\begin{align*}
N^{\bL^{+}}_{\si}&:=\{w\in W\mid w=xaw_{\la_{\si}}b_{\si}, x\in X_{\la_{\si}}, a\in W_{S^{\circ}_{\la_{\si}}}\},\\
U_{\si}&:=\{w\in W\mid w=aw_{\la_{\si}}b_{\si}, a\in W_{S^{\circ}_{\la_{\si}}}\}\\
\end{align*}
and
\begin{align*}
N^{\leq}_{\si}&:=\bigcup_{\si'\in\Om^{L}_{0},b_{\si'}\leq b_{\si}} N^{\bL^{+}}_{\si'},\\
U^{\leq}_{\si}&:=\bigcup_{\si'\in\Om^{L}_{0},b_{\si'}\leq b_{\si}} U_{\si'}.
\end{align*}
For $u=a\hw_{\la_{\si}}y_{\si}\in U_{\si}$, we set $X_{u}:= X_{\la_{\si}}$. 

\medskip

\begin{Th}
\label{I1-I5}
For all $\si\in\Om^{\bL^{+}}_{0}$, the set $U^{\leq}_{\si}$ together with the collection of subsets $\{X_{u}\ |\ u\in U^{\leq}_{\si}\}$ satisfy the following condition
\begin{enumerate}
\item[{\bf I1}.] for all $u\in U^{\leq}_{\si}$, we have $e\in X_{u}$,
\item[{\bf I2}.] for all $u\in U^{\leq}_{\si}$ and $x\in X_{u}$ we have $\ell(xu)=\ell(x)+\ell(u)$,
\item[{\bf I3}.] for all $u,v\in U^{\leq}_{\si}$ such that $u\neq v$ we have $X_{u}u\cap X_{v}v=\emptyset$,
\item[{\bf I4}.] the submodule $\cM:=\sg \bT_{x}\bC_{u}|\ u\in U^{\leq}_{\si},\ x\in X_{u}\sd_{\cA}\subseteq \cH$ is a left ideal.
\item[{\bf I5}.] for all $v\in U^{\leq}_{\si}$ and $y\in X_{v}$ we have
$$\bT_{y}\bC_{v}\equiv \bT_{yv}+\underset{u\in U^{\leq}_{\si}}{\sum_{\ell(xu)< \ell(yv)}} a_{xu,yv}\bT_{xu} \mod \bH_{<0} $$
\end{enumerate}
\end{Th}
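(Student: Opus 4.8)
The five conditions \textbf{I1}--\textbf{I5} are precisely the hypotheses required to apply the generalized induction of Kazhdan-Lusztig cells from \cite{jeju3}, and the strategy is to verify them by reducing everything to the already-established structure of the lowest two-sided cell of $W$ for the positive weight function $\bL^{+}$. Recall from Theorem~\ref{theo:cmin} and Section~\ref{tildeL} that $\cb^{\bL^{+}}_{\min}(W)=W^{\circ}\cdot\cb^{\bL^{+}}_{\min}(\tW)$ and that each element of this set has a unique normal form $x_{w}a_{w}\hw_{\la_{\si}}b_{\si}$, with $N^{\bL^{+}}_{\si}$, $U_{\si}$ and $X_{u}=X_{\la_{\si}}$ as defined just above. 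Conditions \textbf{I1}, \textbf{I2} and \textbf{I3} are the purely combinatorial ones. \textbf{I1} holds because $e\in X_{\la_{\si}}$ always. \textbf{I2} is exactly statement~(1) of Lemma~\ref{dot-property}: for $u=a_{w}\hw_{\la_{\si}}b_{\si}$ and $x=x_{w}\in X_{\la_{\si}}$ we have $x_{w}\add a_{w}\add \hw_{\la_{\si}}\add b_{\si}$, so $\ell(xu)=\ell(x)+\ell(u)$. \textbf{I3} follows from the uniqueness of the normal form $x_{w}a_{w}\hw_{\la_{\si}}b_{\si}$ established in the lemma preceding Lemma~\ref{dot-property}, since distinct $u,v\in U^{\leq}_{\si}$ (i.e. distinct pairs $(\la_{\si'},a)$ with $b_{\si'}\leq b_{\si}$) give disjoint sets $X_{u}u$, $X_{v}v$ of elements of $\cb^{\bL^{+}}_{\min}$.

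The substantive work is in \textbf{I4} and \textbf{I5}, which are tied together. First I would show that the $\cA$-submodule $\cM=\sg \bT_{x}\bC_{u}\mid u\in U^{\leq}_{\si},\ x\in X_{u}\sd_{\cA}$ equals the $\cA$-span of $\{\bC_{w}\mid w\in N^{\leq}_{\si}\}$, or more precisely that it is a left ideal. The natural route is to prove by induction on $\ell(x)$ that for $u\in U^{\leq}_{\si}$ and $x\in X_{u}$, the element $\bT_{s}\bT_{x}\bC_{u}$ lies in $\cM$ for every $s\in S$. When $sx<x$ this is immediate from the quadratic relation. When $sx>x$, one analyzes whether $sx$ still lies in $X_{\la_{\si}}$ (the coset representative set): if it does, $\bT_{s}\bT_{x}\bC_{u}=\bT_{sx}\bC_{u}\in\cM$; if it does not, then $x^{-1}sx\in W_{\la_{\si}}$ and one must express $\bT_{s}\bT_{x}\bC_{u}$ using the multiplication formula for $\bT_{s}\bC_{v}$ together with the behaviour of $\bC_{u}$ under left multiplication by elements of $\cH(W_{\la_{\si}})$ — here is where statement~(2) of Lemma~\ref{dot-property} enters, controlling which elements $w'<u$ with $w'\in\cb^{\bL^{+}}_{\min}$ can appear, guaranteeing that they again have normal forms of the required shape with $b_{\si'}\leq b_{\si}$. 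This is the step I expect to be the main obstacle: the bookkeeping of how the $M$-polynomials and $\bP$-polynomials interact with the $\WC$-structure of $\hw_{\la_{\si}}$ and the coset decomposition, and in particular checking that nothing escapes $U^{\leq}_{\si}$, is delicate and will need the case analysis packaged in Lemma~\ref{dot-property}.

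For \textbf{I5}, the congruence $\bT_{y}\bC_{v}\equiv \bT_{yv}+\sum a_{xu,yv}\bT_{xu}\pmod{\bH_{<0}}$ should be extracted from the same induction: since $\bC_{v}\in \bT_{v}+\bigoplus_{z<v}\bA_{<0}\bT_{z}$ and $\ell(yv)=\ell(y)+\ell(v)$ by \textbf{I2}, we get $\bT_{y}\bC_{v}\in \bT_{yv}+\bH_{<0}+\sum_{z<v}\bA\bT_{yz}$, and one must argue that after collecting terms modulo $\bH_{<0}$ the surviving basis elements $\bT_{w}$ are indexed by $w=xu$ with $u\in U^{\leq}_{\si}$, $x\in X_{u}$, and $\ell(xu)<\ell(yv)$; the coefficients $a_{xu,yv}\in\cA$ are then defined by this expansion. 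The key point making the top term exactly $\bT_{yv}$ (rather than something with a negative-degree correction) is again \textbf{I2}. Once \textbf{I1}--\textbf{I5} are in hand, the theorem is proved; the actual construction of left cells inside $\cb^{\bL^{+}}_{\min}$ via this data is then deferred to the subsequent sections where \cite{jeju3} is invoked. Throughout, I would treat the four cases of Section~\ref{generic} uniformly wherever possible, isolating the type $\tilde{\Crm}$ subtleties (the role of $\bL^{+}$ versus $(\bL)^{+}$ in Case~4, and Lemma~\ref{Lplus-spe}) only where the maximal-weight hyperplane structure genuinely differs.
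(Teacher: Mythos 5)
Your treatment of \textbf{I1}--\textbf{I3} matches the paper, and your outline for \textbf{I4} (induction on $\ell(x)$, the three cases for $sx$, the use of Lemma~\ref{dot-property}(2) together with control of the $\bM$-polynomials via Lemma~\ref{KLasym} to keep everything inside $U^{\leq}_{\si}$) is essentially the argument given in the text. But your sketch for \textbf{I5} has a genuine gap, and it is precisely where the hardest work of the section lives. You write that from $\bC_{v}\in \bT_{v}+\bigoplus_{z<v}\bA_{<0}\bT_{z}$ one gets $\bT_{y}\bC_{v}\in \bT_{yv}+\bH_{<0}+\sum_{z<v}\bA\bT_{yz}$, implicitly treating $\bT_{y}\bT_{z}$ as if it were $\bT_{yz}$. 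It is not: $\bT_{y}\bT_{z}=\sum_{w}\bff_{y,z,w}\bT_{w}$ and the structure constants $\bff_{y,z,w}$ have, in general, \emph{positive} degree. So the fact that $\bP_{z,v}\in\bA_{<0}$ does not make $\bP_{z,v}\bT_{y}\bT_{z}$ disappear modulo $\bH_{<0}$, and your "one must argue" step is exactly where an actual argument is needed.

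The paper handles this by proving \textbf{I5} as a free-standing result (Theorem~\ref{bound}), whose proof is the technical heart of Section 7. It combines three ingredients that your proposal does not supply: (i) the bound $\deg^{+}(\bP_{y,w})\le \bL(u_{y})^{+}-\bL(w^{\circ}_{\la})^{+}$ from Lemma~\ref{KLasym}, which measures how far $y$ is from containing a longest $\bL^{+}$-heavy factor; (ii) the geometric degree bound $\deg(\bff_{x,y,z})\le c^{\bL}_{x,y}$ (Theorem~\ref{bound first}), obtained from the hyperplane-counting Lemmas~\ref{Lem1}, \ref{Lem2}, where $c^{\bL}_{x,y}$ is built from the directions $\alpha$ with $H(A_{0},yA_{0})\cap H(yA_{0},xyA_{0})\cap\cF_{\alpha}\neq\emptyset$; and (iii) a case-by-case verification (Cases 1--4 of Section~\ref{generic}) that when these bounds are combined, a term $\bP_{y,w}\bff_{x,y,z}\bT_{z}$ can survive modulo $\bH_{<0}$ only if $z\in N^{\bL^{+}}_{\si'}$ with $b_{\si'}<b_{\si}$, i.e. only if $z=xu$ with $u\in U^{\leq}_{\si}$. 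The extra subtlety in type $\tilde{\Crm}$ (Case~4, where $\bL^{+}(w)\neq(\bL(w))^{+}$ and a hyperplane of $\bL^{+}$-weight $>0$ through an $\bL^{+}$-special point need not have maximal $\bL$-weight, cf.\ Lemma~\ref{Lplus-spe}) shows up exactly here, in the estimates on $c^{\bL}_{x,y}(\alpha)$. Also note the logical order: in the paper \textbf{I5} is proved \emph{before} and independently of \textbf{I4}; it is not extracted from the same induction as you suggest.
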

Assuming that this theorem holds, we would get, using the Generalised Induction Theorem \cite[Theorem 6.3]{jeju4}, that the set
$$N_{\si}^{\leq}=\{xu\mid u\in U^{\leq}_{\si}, x\in X_{u}\}$$
is a left ideal of $(W,\bL)$ (i.e. $y\leq_{\cL} w\in N_{\si}^{\leq}$ implies $y\in N_{\si}^{\leq}$) for all $\si\in\Om^{\bL^{+}}_{0}$. In particular it would be a union of left cells. Then by an easy induction on the length of $b_{\si}\in W$, we would get that each $N_{\si}$ is a union of left cells. In turn, since $\cb^{\bL^{+}}_{\min}$ is stable by taking the inverse, this would implie that $\cb^{\bL^{+}}_{\min}$ is indeed a union of two-sided cells of $(W,\bL)$. 

\begin{Rem}
Condition {\bf I5} is stated slightly differently in \cite[\S 6]{jeju4}:
{\it for all $v\in U$, $y\in X_{v}$ we have}
$$T_{y}C_{v}\equiv T_{yv}+\sum_{xu\sqs yv} a_{xu,yv}T_{x}C_{u} \mod \cH_{<0}$$
where $\sqs$ denotes a preorder such that $xu\sqs yv$ implies $\ell(xu)<\ell(yv)$. It is a straightforward induction on $\ell(xu)$ to show that those two conditions are equivalent.\finl 
\end{Rem}


\subsection{Kazhdan-Lusztig Polynomials and $M$-polynomials}

Let $x\in W$ and let $I\subset S$ be such that $W_{I}$ is finite. There exist unique $x'\in W_{I}$ and $d_{x}\in X^{-1}_{I}$ such that $x=x'd_{x}$. Next $x'\in W_{I}$ can be uniquely written as $x'=au$ where $a\in W_{I^{\circ}}$ and $u$ has minimal length in the coset $W_{I^{\circ}}x'$ of $W_{I}$. We will write $x=a^{I}_{x}u^{I}_{x}d^{I}_{x}$ for this decomposition or simply $x=a_{x}u_{x}d_{x}$ if it is clear from the context what the subset $I$ should be. We denote by $w^{\circ}_{I}$ (not to confuse with $w_{I^{\circ}}$) the element of minimal length in the coset $W_{I^{\circ}}w_{I}$. 

\medskip

\begin{Rem}
Note that, for all $a\in W_{I^{\circ}}$ and all $t\in I^{+}$, we must have $taw^{\circ}_{I}<aw^{\circ}_{I}$ since the number of elements of $I^{+}$ appearing in any reduced expression of  $aw^{\circ}_{I}$ is maximal.\finl 
\end{Rem}

\bigskip

\begin{Lem}
\label{KLasym}
Let $I\subset S$ be such that $W_{I}$ is finite and let $y\in W$ be such that $y=a\hw_{I}z$ where $a\in W_{I^{\circ}}$ and $z\in X^{-1}_{I}$. Let $x=a_{x}u_{x}d_{x}<y$.  Then
$$\deg^{+}(\bP_{x,y})\leq \bL(u_{x})^{+}-\bL(w^{\circ}_{I})^{+}$$
Furthermore, for all $s\in S^{\circ}$ such that $sx<x<y<sy$ we have
$$\bM^{s}_{x,y}\neq 0 \Longrightarrow u_{x}=w^{\circ}_{I}.$$
\end{Lem}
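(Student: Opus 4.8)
The plan is to prove the degree bound and the statement about $\bM$-polynomials simultaneously, by induction on $\ell(y)$ with a secondary induction on $\ell(y)-\ell(x)$; throughout one exploits that $v_{s}=1$ for $s\in S^{\circ}$, so that the recursion formulas for the polynomials $\bP_{\cdot,\cdot}$ and $\bM^{s}_{\cdot,\cdot}$ recalled in \S\ref{Iwahori-Hecke} simplify. First I would record two reductions. Since $\bL$ vanishes on $S^{\circ}$, Proposition~\ref{addition L} gives $\bL(u_{x})=\bL(a_{x}u_{x})$, and both $\bL(u_{x})$ and $\bL(w^{\circ}_{I})$ have support in $\bar{S}^{+}$, so $\bL(u_{x})^{+}=\bL(u_{x})$ and $\bL(w^{\circ}_{I})^{+}=\bL(w^{\circ}_{I})$; moreover, under the identification $W_{I^{\circ}}\backslash W_{I}\simeq \tW_{\tilde I}$ coming from \eqref{eq:wtilde}, the element $w^{\circ}_{I}$ corresponds to the longest element of $\tW_{\tilde I}$, and since $\tL=\bL|_{\tW_{\tilde I}}$ is \emph{positive}, that longest element is the unique element of maximal $\tL$-value in the transversal. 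Hence $u_{x}=w^{\circ}_{I}$ if and only if $\bL(u_{x})=\bL(w^{\circ}_{I})$, and since $\bL(u_{x})\le\bL(w^{\circ}_{I})$ always (as $u_x$ is a subexpression of $w^{\circ}_I$ up to left multiplication by $W_{I^{\circ}}$), the ``furthermore'' clause will follow from the degree bound as soon as one also knows that $\bM^{s}_{x,y}\ne 0$ forces $\deg^{+}(\bP_{x,y})\ge 0$ --- which will fall out of the induction, using that a nonzero bar-invariant element of $\bA$ has nonnegative $\deg^{+}$.

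For the inductive step of the degree bound the main manoeuvre is to peel off an $I^{\circ}$-descent. If $a\ne e$, choosing $s\in I^{\circ}$ with $sa<a$ gives $sy=(sa)\hw_{I}z$, again of the required form and of strictly smaller length; a short computation with coset decompositions shows that passing from $x$ to $sx$ leaves the $u$-part unchanged. If $sx>x$ then $\bP_{x,y}=\bP_{sx,y}$ and one concludes by the secondary induction; if $sx<x$ then
$$\bP_{x,y}=\bP_{x,sy}+\bP_{sx,sy}-\!\!\!\sum_{\substack{x\le w<sy\\ sw<w}}\!\!\!\bP_{x,w}\,\bM^{s}_{w,sy},$$
where the first two terms are controlled by the primary induction. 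For a term of the sum with $\bM^{s}_{w,sy}\ne 0$, the ``furthermore'' clause applied to the pair $(w,sy)$ (available by the primary induction, as $\ell(sy)<\ell(y)$) forces $u_{w}=w^{\circ}_{I}$, so $w$ is itself of the required form; then the primary induction bounds $\deg^{+}(\bP_{x,w})$, while the defining relation for $\bM^{s}_{w,sy}$ together with bar-invariance and the inductive $\bP$-bounds yields $\deg^{+}(\bM^{s}_{w,sy})\le 0$, so the summand obeys the bound. When $a=e$ and $z\ne e$ one argues in the same spirit, peeling off a right descent $s$ of $z$ so that $ys=\hw_{I}(zs)$ is still of the required form with $zs\in X_{I}^{-1}$, and using the ``right-handed'' recursion formulas; here the reflections removed need not lie in $S^{\circ}$, so one must in addition feed in the classical degree estimates for $\bM$-polynomials from \cite{bible}. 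Finally, the base case $a=z=e$ reads $y=\hw_{I}\in W_{I}$, which forces $x\in W_{I}$, $d_{x}=e$ and $u_{x}<\hw_{I}$; via the parabolic factorization of Kazhdan--Lusztig polynomials \cite{bible} together with $W_{I}=W_{I^{\circ}}\ltimes\tW_{\tilde I}$, this reduces to the standard bound $\deg\widetilde{\bP}_{\cdot,\,\widetilde w_{0}}\le\tL(\widetilde w_{0})-\tL(\cdot)$ for the longest element $\widetilde w_{0}$ of $\tW_{\tilde I}$ with the positive weight function $\tL$.

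The hard part will be the correction sums $\sum_{w}\bP_{x,w}\,\bM^{s}_{w,sy}$: the induction only closes because $\bM^{s}_{w,sy}\ne 0$ genuinely forces $w$ into the special form (through the ``furthermore'' clause applied at strictly smaller length) and because one can extract the clean bound $\deg^{+}(\bM^{s}_{w,sy})\le 0$; keeping every inductive parameter strictly decreasing along each branch --- especially in the $a=e$, $z\ne e$ branch, where the reflections being removed lie outside $S^{\circ}$ and the simplification $v_{s}=1$ is unavailable --- is the delicate bookkeeping. Once the $\bP$-bound is in hand the ``furthermore'' clause is immediate: for $s\in S^{\circ}$ with $sx<x<y<sy$ the congruence $\bM^{s}_{x,y}\equiv\bP_{x,y}+\sum_{x<w<y,\,sw<w}\bP_{x,w}\bM^{s}_{w,y}\pmod{\bA_{<0}}$ together with bar-invariance of $\bM^{s}_{x,y}$ forces $\deg^{+}(\bP_{x,y})\ge 0$, hence $\bL(u_{x})^{+}=\bL(w^{\circ}_{I})^{+}$ and therefore $u_{x}=w^{\circ}_{I}$.
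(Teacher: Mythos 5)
Your proposed induction peels $I^{\circ}$-descents from the $W_{I^{\circ}}$-factor $a$ of $y$, and then falls back to peeling right descents of $z$ once $a=e$. The paper does something genuinely different and, I think, essentially forced: it peels \emph{left} descents of the lower element $x$, ordering the induction by the lexicographic pair $\cP(x,y)=(\ell(y)-\ell(x),\,\ell(w_{I^{\circ}})-\ell(a_{x}))$. If $\bL(u_{x})^{+}<\bL(w^{\circ}_{I})^{+}$ one shows there is always some $r\in I$ with $rx>x$; if $r\in I^{+}$ the recursion has no $\bM$-correction at all and $\bL(u_{rx})^{+}$ strictly increases, while if $r\in I^{\circ}$ the recursion only produces $\bM^{r}$-polynomials with $r\in S^{\circ}$. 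Because of this the only $\bM$-polynomials ever encountered have $\bL(r)^{+}=0$, which is exactly what closes the degree estimate.

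Your $a=e,\ z\neq e$ branch is the genuine gap. Peeling a right descent $s$ of $z$ makes $s$ an arbitrary generator, typically in $S^{+}$. The correction terms $\bP_{x,w}\bM^{s}_{w,ys}$ then carry a factor whose \emph{only} available bound is the classical $\deg(\bM^{s}_{w,ys})<\bL(s)$, hence $\deg^{+}(\bM^{s}_{w,ys})$ can be as large as (roughly) $\bL(s)^{+}=\bL(s)>0$; feeding this into the inductive bound $\deg^{+}(\bP_{x,w})\le \bL(u_{x})^{+}-\bL(w^{\circ}_{I})^{+}$ gives $\deg^{+}(\bP_{x,w}\bM^{s}_{w,ys})\le \bL(u_{x})^{+}-\bL(w^{\circ}_{I})^{+}+\bL(s)$, which overshoots the target. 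There is also no ``furthermore'' clause for $s\in S^{+}$ to force $u_{w}=w^{\circ}_{I}$ and thereby restrict $w$ to a form the induction controls. You acknowledge the difficulty but ``feed in the classical degree estimates'' does not close the gap; to make this branch work you would need a compensating strict improvement on $\deg^{+}(\bP_{x,w})$ that the induction as set up does not provide. (A secondary problem in the same branch: after $x\mapsto xs$ the decomposition $x=a_{x}u_{x}d_{x}$ can jump across Deodhar cases and $u_{xs}$ is not simply $u_{x}$; the paper never has to face this because it only multiplies $x$ on the left by elements of $I$.)

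Two smaller but real errors. First, $\bL$ does \emph{not} vanish on $S^{\circ}$ and $v_{s}\neq 1$ for $s\in S^{\circ}$: in the generic group $\bGa=\nZ[\bar{S}]/(X\cap(-X))$ each $\bL(s)=\o_{i}$ is a nonzero element (only its projection $\bL(s)^{+}$ to the $\bar{S}^{+}$-span vanishes). Your simplified recursion formulas as written are therefore incorrect, even though the $\deg^{+}$-estimates you draw from them survive because $\bL(s)^{+}=0$. Second, the last paragraph argues the ``furthermore'' clause from ``$\bM^{s}_{x,y}\neq 0$ forces $\deg^{+}(\bP_{x,y})\ge 0$''; since $\bP_{x,y}\in\bA_{<0}$ for $x<y$ this reads as forcing $\deg^{+}(\bP_{x,y})=0$, which is plausible but needs the order-theoretic fact that $\gamma<0$ implies $\gamma^{+}\le 0$ in $\bGa$ (true here, but worth saying); more importantly, the deduction ``$\bL(u_{x})^{+}=\bL(w^{\circ}_{I})^{+}\Rightarrow u_{x}=w^{\circ}_{I}$'' needs a short argument using positivity of $\tL$ and the coset structure of $W_{I^{\circ}}\backslash W_{I}$, which you only gesture at. None of these last points is fatal, but the $a=e$ branch is.
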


\begin{proof}
We prove the result by induction. To this end, to any element $x,y\in W$ satisfying the hypothesis of the lemma, we associate a pair 
$$\cP(x,y):=(\ell(y)-\ell(x), \ell_{0}-\ell(a_{x}))$$
where $\ell_{0}=\ell(w_{I^{\circ}})$ and $x=a_{x}u_{x}d_{x}$.
We order such pairs by the usual lexicographic order. Let $x<y$. If $\bL(u_{x})^{+}=\bL(w^{\circ}_{I})^{+}$ then the result is clear. Thus we may assume that $\bL(u_{x})^{+}<\bL(w^{\circ}_{I})^{+}$.

\medskip
\noindent
First assume that there exists $t\in I^{+}$ such that $tx>x$. Then, since $ty<y$, we have 
$$\bP_{x,y}=v^{-\bL(t)}\bP_{tx,y}$$
and the result follows by induction. 

\medskip
\noindent
Next assume that $tx<x$ for all $t\in I^{+}$.  Since we supposed that $\bL(u_{x})^{+}<\bL(w^{\circ}_{I})^{+}$, there exists  $s\in I_{0}$ such that $sx>x$.
If $sy<y$ then
$$\bP_{x,y}=v^{-\bL(s)}\bP_{sx,y}$$
and the result follows by induction since $\ell_{0}-\ell(sa_{x})=\ell_{0}-\ell(a_{x})-1$. \\
If $sy>y$ then we have
\begin{equation*}
\tag{$\dag$}\bP_{sx,sy}=v^{\bL(s)}\bP_{sx,y}+\bP_{x,y}-\underset{sz<z}{\sum_{sx\leq z<y}}\bP_{sx,z}\bM^{s}_{z,y}.
\end{equation*}
By induction we know that 
$$\deg^{+}(v^{\bL(s)}\bP_{sx,y})\leq \bL(u_{x})^{+}-\bL(w^{\circ}_{I})^{+}.$$
Further if $\bM^{s}_{z,y}\neq 0$ then $\bL(u_{z})^{+}=\bL(w^{\circ}_{I})^{+}$ where  $z=a_{z}u_{z}d_{z}$. We know that $\deg(\bM_{x,y}^{s})< \bL(s)$ (see \cite[\S 6.3]{bible}). Thus if $\bM^{s}_{z,y}\neq 0$, we get using the induction hypothesis 
$$\deg_{+}(\bP_{sx,z}\bM^{s}_{z,y})\leq  \bL(u_{x})^{+}-\bL(w^{\circ}_{I})^{+}.$$ 
Now we have
$$\cP(sx,sy)=(\ell(sy)-\ell(sx), \ell_{0}-\ell(sx_{0}))=(\ell(y)-\ell(x), \ell_{0}-\ell(a_{x})-1)<\cP_{x,y}.$$
Hence by induction 
$$\deg^{+}(\bP_{sx,sy})\leq \bL(u_{sx})^{+}-\bL(w^{\circ}_{I})^{+}=\bL(u_{x})^{+}-\bL(w^{\circ}_{I})^{+}.$$
The result follows using $(\dag)$.
\end{proof}

\begin{Rem}
The same proof can easily be generalised to any Coxeter group $(W,S)$. Assume that $W$ is finite and let $w_{S}$ be the longest element of $W$. Using the previous lemma, we can show that
$$W_{S^{\circ}}\quad\text{and}\quad W_{S^{\circ}}w_{0}$$
are union of cells of $(W,S,\bL)$. Indeed, let $y\in W_{S^{\circ}}w_{S}$ and let $w\in W_{S^{\circ}}$ be such that $y=w\hw_{S}$ and $w\add \hw_{S}$. Let $x\in W$ be such that $x\leq_{\cL} y$. We may assume that there exists $s\in S$ such that $sx<x<y<sy$ and $\bM^{s}_{x,y}\neq 0$. Note that $y<sy$ we implies that $s\in S^{\circ}$. Write $x=a^{S}_{x}u^{S}_{x}d^{S}_{x}$. Since $\bM^{s}_{x,y}\neq 0$ the previous lemma implies that $\bL(u^{S}_{x})=\bL(\hw_{S})$ where  which in turn implies that $x\in W_{S^{\circ}}w_{S}$. Thus we have shown that $W_{S^{\circ}}w_{S}$ is a left ideal of $W$ and thus a union of (left, right and two-sided) cells of $(W,\bL)$. Multiplying by the longest element sends (left, right and two-sided) cells to (left, right and two-sided) cells thus we get that $W_{S^{\circ}}$ is also a union of (left, right, two-sided) cells of $(W,\bL)$. This argument provides an alternative proof of Theorem 1.1 in \cite{jeju3} when $W$ is finite.\finl   
\end{Rem}


\subsection{Multiplication of the standard basis}
\label{standard}
We set
$$\bT_{x}\bT_{y}=\sum_{w} \bff_{x,y,w}\bT_{w}\text{ where $\bff_{x,y,w}\in\bA$.}$$
Following \cite[\S 2.3]{jeju2}, we want to study the degree of the polynomials $\bff_{x,y,w}$. We will need more precise result than in \cite{jeju2}, but the method of the proof is similar. \\

We introduce some notation. For $\al\in\Phi^{+}$, we set $\cF_{\al}=\{H_{\al,n}\mid n\in\nZ\}$. For $x,y\in W$ we set
\begin{align*}
H_{x,y}&=\{H\in \mathcal{F}\mid H\in H(A_{0},yA_{0})\cap H(yA_{0},xyA_{0})\},\\
I_{x,y}&=\{\al\in\Phi^{+}\mid H_{x,y}\cap \cF_{\al}\neq \emptyset\}.
\end{align*}
For $\al\in I_{x,y}$ we set
$$c^{\bL}_{x,y}(\al)=\underset{H\in H_{x,y}\cap \cF_{\al}}{\text{max }} \bL_{H}.$$
Let 
$$c^{\bL}_{x,y}=\underset{\al\in I_{x,y}}{\sum}c^{\bL}_{x,y}(\al).$$
The following two lemmas can be found in \cite{jeju2}. 
\begin{Lem}
\label{Lem1}
Let $x,y\in W$ and $s\in S$ be such that $xs>x$. Then
$$I_{x,sy}\subseteq I_{xs,y}.$$
\end{Lem}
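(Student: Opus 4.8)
The statement to prove is Lemma~\ref{Lem1}: for $x,y\in W$ and $s\in S$ with $xs>x$, we have $I_{x,sy}\subseteq I_{xs,y}$.

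\medskip

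\emph{Plan of proof.} The plan is to argue directly through the geometry of alcoves and separating hyperplanes, using Proposition~\ref{length-hyp} throughout. First I would unwind the definitions: $\al\in I_{x,sy}$ means there is some integer $n$ with $H_{\al,n}\in H(A_0,syA_0)\cap H(syA_0,xsyA_0)$, i.e.\ $H_{\al,n}$ separates both $A_0$ from $syA_0$ and $syA_0$ from $xsyA_0$. I want to produce from this a hyperplane of direction $\al$ lying in $H_{xs,y}=H(A_0,yA_0)\cap H(yA_0,xsyA_0)$. Note that $xsyA_0$ is the same alcove whether we write it as $(xs)(yA_0)$ or as $x(syA_0)$, so the ``outer'' alcove is unchanged; only the ``middle'' alcove changes from $syA_0$ to $yA_0$, and the common first alcove $A_0$ is fixed. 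So the content is: a hyperplane of direction $\al$ separating $A_0$ from $syA_0$ and $syA_0$ from $xsyA_0$ forces the existence of a (possibly different) hyperplane of direction $\al$ separating $A_0$ from $yA_0$ and $yA_0$ from $xsyA_0$.

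\medskip

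The key geometric input is the relation between $yA_0$ and $syA_0$: since $xs>x$ is equivalent (by taking inverses, or directly) to $sy^{-1}\cdot$-type length conditions — more precisely, I would use that $xs > x$ means $H(A_0,xsA_0) = H(A_0,xA_0)\sqcup\{H^*\}$ for a single hyperplane $H^*$, namely the one separating $xA_0$ and $xsA_0$, and translate this into a statement about $H(yA_0,xsyA_0)$ versus $H(syA_0,xsyA_0)$ via the $\Om$-action (right multiplication, under which the $W$-action on the left is equivariant). The cleanest route: for any alcoves $A,B$ and any $g$ acting on the right, $H(Ag,Bg)=H(A,B)g$, so counting/containment statements transport. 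I would apply this with $g$ chosen so that the triple $(A_0, yA_0, xsyA_0)$ is compared with $(A_0, syA_0, xsyA_0)$, controlling the symmetric differences of the three relevant separating sets by the single hyperplane coming from $xs>x$. Concretely, one shows that $H(A_0,yA_0)$ and $H(A_0,syA_0)$ differ by at most one hyperplane (the one between $yA_0$ and $syA_0$), and similarly $H(yA_0,xsyA_0)$ and $H(syA_0,xsyA_0)$ differ by at most that same hyperplane; a case analysis on whether that single hyperplane has direction $\al$ or not, and on which side it lies, then yields the claim. If $\al$ is realized in $H_{x,sy}$ by a hyperplane $H_{\al,n}$ that is \emph{not} the exceptional one, it survives into $H_{xs,y}$ directly; if it \emph{is} the exceptional one, one checks another hyperplane of direction $\al$ takes over, or else the separation condition for membership in the intersection still holds.

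\medskip

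The main obstacle I anticipate is bookkeeping the ``at most one hyperplane'' statements precisely and handling the boundary case where the exceptional hyperplane itself has direction $\al$: one must verify that losing it from one of the two sets in the intersection is compensated, so that $I_{xs,y}$ still contains $\al$. This is exactly the kind of argument carried out in \cite[\S 2.3]{jeju2}, and I would follow that template, being careful that here we work with the full hyperplane arrangement $\cF$ (not $\cF^L$) so no weight considerations enter — it is a purely combinatorial-geometric statement about the Coxeter arrangement. Once the case analysis is organized, each case is a one-line verification using Proposition~\ref{length-hyp}(2) and the additivity of separating-hyperplane sets along a gallery.
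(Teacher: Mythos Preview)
Your geometric approach is correct and is essentially the argument in \cite{jeju2}, which is all the paper does here: it states the lemma and cites that reference without reproducing a proof.

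One remark that sharpens your plan considerably: the ``boundary case'' you worry about---where the witnessing hyperplane $H_{\al,n}\in H_{x,sy}$ coincides with the unique hyperplane $H_0$ separating $yA_0$ from $syA_0$---never occurs. Indeed, by Proposition~\ref{length-hyp}(1) one has $|H(yA_0,xsyA_0)|=\ell(xs)$ and $|H(syA_0,xsyA_0)|=\ell(x)$; since $H(yA_0,xsyA_0)=\{H_0\}\,\triangle\,H(syA_0,xsyA_0)$ and $\ell(xs)=\ell(x)+1$, one is forced to have $H_0\notin H(syA_0,xsyA_0)$. Thus any $H\in H_{x,sy}$ automatically satisfies $H\neq H_0$, and the symmetric-difference bookkeeping you describe gives $H\in H(A_0,yA_0)\cap H(yA_0,xsyA_0)=H_{xs,y}$ directly. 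No compensating hyperplane is needed; the case split collapses to a single line.
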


\begin{Lem}
\label{Lem2}
Let $x,y\in W$ and $s\in S$ be such that $xs>x$ and $sy<y$. Let $\al\in\Phi^{+}$ and $n\in\nZ$ be such that $H_{\al,n}$ is the unique hyperplane separating $yA_{0}$ and $syA_{0}$. There is an injective map $\varphi$ from $I_{x,y}$ to $I_{xs,y}-\{ \al \}$. Furthermore if $\beta\in I_{x,y}$ we have either $\va(\beta)=\beta$ or $\va(\beta)=\pm\sigma_{H_{\al,0}}(\beta)$.
\end{Lem}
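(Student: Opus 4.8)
The final statement to be proved is Lemma~\ref{Lem2}: given $x,y\in W$ and $s\in S$ with $xs>x$ and $sy<y$, and $H_{\al,n}$ the unique hyperplane separating $yA_0$ and $syA_0$, there is an injective map $\va\colon I_{x,y}\to I_{xs,y}-\{\al\}$ with $\va(\be)\in\{\be,\ \pm\si_{H_{\al,0}}(\be)\}$ for $\be\in I_{x,y}$. Since the paper states that this is taken from \cite{jeju2}, the proof is essentially a matter of recording the geometric argument carefully; I would proceed as follows.

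First I would set up the geometry. Write $y'=sy$, so $yA_0 = s\,y'A_0$ and the alcoves $y'A_0$ and $yA_0$ share the face lying on $H_{\al,n}$; concretely $yA_0 = y'A_0\,\si_{H_{\al,n}}$ in the right-action convention of Section~\ref{geometric} (here I use that left-multiplication by $s\in S$ on an alcove $y'A_0$ is reflection in the wall separating $y'A_0$ from $sy'A_0$). Using Proposition~\ref{length-hyp}(1), $\ell(x)=|H(A_0,xA_0)|$ and the defining property of $H_{x,y}$ in terms of $H(A_0,yA_0)\cap H(yA_0,xyA_0)$, I would first observe that the reflection $\si_{H_{\al,n}}$ conjugates $H(yA_0,xyA_0)$ to $H(y'A_0, (xs)y'A_0)$, because $xyA_0 = x\,y'A_0\,\si_{H_{\al,n}} = (xs)\,y'A_0$ once one checks, using $xs>x$, that the hyperplane through the shared face of $yA_0$ and $y'A_0$ is moved correctly. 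This identification already exhibits a natural bijection on the level of separating hyperplanes between the two ``second factors'' $H(yA_0,xyA_0)$ and $H(y'A_0,(xs)y'A_0)$.

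Next I would define $\va$. Given $\be\in I_{x,y}$, pick a hyperplane $H_{\be,m}\in H_{x,y}=H(A_0,yA_0)\cap H(yA_0,xyA_0)$. The reflection $\si_{H_{\al,n}}$ sends $H_{\be,m}$ to a hyperplane of direction either $\be$ (if $\be\perp\al$ or, more precisely, $\si_{H_{\al,0}}(\be)=\be$) or direction $\pm\si_{H_{\al,0}}(\be)$; and a translate of $H_{\al,n}$ by $-n\check\al/(\al,\al)$-type bookkeeping shows that $\si_{H_{\al,n}}(H_{\be,m})$ lies in $H(y'A_0,(xs)y'A_0)$. The remaining point is that this image hyperplane also lies in $H(A_0,y'A_0)$: here I would use $sy<y$, i.e. $H_{\al,n}\in H(A_0,yA_0)$, together with $H_{\be,m}\in H(A_0,yA_0)$, and a standard ``alcove between'' argument (the kind used in Bremke's Proposition~5.5, cf. the proof of Proposition~\ref{c-min}) to rule out the image being a wall of $y'A_0$ or landing on the wrong side; equivalently one checks $H_{\al,n}\notin H(A_0,y'A_0)$ and that $\si_{H_{\al,n}}$ permutes $H(A_0,yA_0)-\{H_{\al,n}\}$ appropriately. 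Then set $\va(\be)$ to be the direction of $\si_{H_{\al,n}}(H_{\be,m})$; one must verify this is independent of the choice of $m$ (if two translates $H_{\be,m},H_{\be,m'}$ both lie in $H_{x,y}$ their images have the same direction, which is automatic), so $\va$ is well defined as a map into $I_{xs,y}$, and $\al\notin\operatorname{im}\va$ because $\si_{H_{\al,n}}$ fixes the direction $\al$-hyperplanes setwise but those cannot simultaneously lie in $H(A_0,yA_0)$ and, after reflection, in $H(y'A_0,(xs)y'A_0)$ — I would pin this down using Proposition~\ref{length-hyp}(2) applied to $xs\add$ (something), i.e.\ the fact that no $\al$-direction hyperplane separating $y'A_0$ from $(xs)y'A_0$ is also among those counted toward $\ell(y')$.

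Finally, injectivity: if $\va(\be_1)=\va(\be_2)$, then applying $\si_{H_{\al,n}}$ (an involution) to the respective representative hyperplanes recovers $H_{\be_1,m_1}$ and $H_{\be_2,m_2}$ from hyperplanes of the same direction in $H(y'A_0,(xs)y'A_0)$; combined with Lemma~\ref{Lem1} ($I_{x,sy}\subseteq I_{xs,y}$) to control which directions can occur, this forces $\be_1=\be_2$. The step I expect to be the main obstacle is the verification that $\si_{H_{\al,n}}(H_{\be,m})$ genuinely lands in $H(A_0,y'A_0)\cap H(y'A_0,(xs)y'A_0)$ rather than on a wall of $y'A_0$ or on the wrong side of $A_0$ — this is where the hypotheses $xs>x$ and $sy<y$ both get used, and where one has to argue carefully with the half-space descriptions $V_H^{\pm}$ and the relative positions of $A_0$, $y'A_0$, $yA_0$, $xyA_0$; everything else is essentially bookkeeping with reflections and Proposition~\ref{length-hyp}.
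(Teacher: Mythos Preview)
The paper gives no proof of this lemma---it merely records that Lemmas~\ref{Lem1} and~\ref{Lem2} ``can be found in \cite{jeju2}''---so there is nothing here to compare your plan against. That said, your plan has a concrete error.

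Your central computation is wrong. You assert that $\si_{H_{\al,n}}$ carries $H(yA_0,xyA_0)$ to $H(y'A_0,(xs)y'A_0)=H(syA_0,xyA_0)$. But $\si_{H_{\al,n}}$, as an element of $\Om$ acting on the right and commuting with the left $W$-action, sends $yA_0\mapsto syA_0$ and $xyA_0\mapsto xsyA_0$; hence it carries $H(yA_0,xyA_0)$ to $H(syA_0,xsyA_0)$. Your equation $xyA_0=x\,y'A_0\,\si_{H_{\al,n}}$ only says that $\si_{H_{\al,n}}$ maps $y'A_0=syA_0$ back to $yA_0$; it does not say $xyA_0$ is fixed. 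Even after this correction, the two conditions you go on to verify amount to membership in $H(A_0,syA_0)\cap H(syA_0,xsyA_0)=H_{x,sy}$, not in $H_{xs,y}=H(A_0,yA_0)\cap H(yA_0,xsyA_0)$. And the ``remaining point'' you flag cannot be settled this way either: $\si_{H_{\al,n}}$ sends $A_0$ to $y^{-1}syA_0$, so it does not carry $H(A_0,yA_0)$ to $H(A_0,syA_0)$ in general.

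A route that does work keeps $yA_0$ as the base point throughout and compares $H(yA_0,xyA_0)$ with $H(yA_0,xsyA_0)$ directly: since $xyA_0$ and $xsyA_0$ share a single wall $H'$, these two sets differ only in $H'$, so any $H_{\be,m}\in H_{x,y}$ with $H_{\be,m}\neq H'$ already lies in $H_{xs,y}$ and one may take $\va(\be)=\be$. The substance of the lemma lies in the exceptional case $H_{\be,m}=H'$ and in excluding $\al$ from the image; your plan does not isolate or address that case.
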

Using these two lemmas, one can obtain the following bound on the degree of $f_{x,y,z}$ in terms of $x$ and $y$. 
\begin{Th}
\label{bound first}
We have $\deg(\bff_{x,y,z})\leq c^{\bL}_{x,y}$ for all $z\in W$. 
\end{Th}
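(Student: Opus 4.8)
\textbf{Proof plan for Theorem~\ref{bound first}.}

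The plan is to argue by a double induction on $\ell(y)$ and $\ell(x)$, following the strategy of \cite[\S 2.3]{jeju2} but keeping track of the refined quantity $c^{\bL}_{x,y}$. First I would dispose of the base cases: if $y=1$ then $\bT_x\bT_y=\bT_x$ and there is nothing to prove, while if $x=1$ the same holds. For the inductive step I would pick $s\in S$ with $sy<y$ (so $\ell(sy)=\ell(y)-1$) and write $\bT_y=\bT_s\bT_{sy}$, hence $\bT_x\bT_y=(\bT_x\bT_s)\bT_{sy}$. Two cases then occur according to whether $xs>x$ or $xs<x$. If $xs<x$, then $\bT_x\bT_s=(v_s-v_s^{-1})\bT_x+\bT_{xs}$ via the quadratic relation, so $\bT_x\bT_y=(v_s-v_s^{-1})\bT_x\bT_{sy}+\bT_{xs}\bT_{sy}$; since $\ell(sy)<\ell(y)$, the induction hypothesis bounds the degrees of $\bff_{x,sy,w}$ and $\bff_{xs,sy,w}$ by $c^{\bL}_{x,sy}$ and $c^{\bL}_{xs,sy}$ respectively, and one has to check $c^{\bL}_{x,sy}+\bL(s)\le c^{\bL}_{x,y}$ and $c^{\bL}_{xs,sy}\le c^{\bL}_{x,y}$. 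The first of these uses that the hyperplane separating $yA_0$ and $syA_0$ lies in $H_{x,y}$ (because $sy<y<\dots$) and carries weight exactly $\bL(s)$; this is where Proposition~\ref{geom-L} / Proposition~\ref{length-hyp} and the geometric description of $H_{x,y}$ enter.

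The genuinely substantive case is $xs>x$, where $\bT_x\bT_s=\bT_{xs}$ and so $\bT_x\bT_y=\bT_{xs}\bT_{sy}$; here $\ell(xs)>\ell(x)$ but $\ell(sy)<\ell(y)$, so the induction hypothesis applies to the pair $(xs,sy)$ and yields $\deg(\bff_{xs,sy,z})\le c^{\bL}_{xs,sy}$ for all $z$. It then remains to prove the purely combinatorial-geometric inequality $c^{\bL}_{xs,sy}\le c^{\bL}_{x,y}$. This is exactly what Lemmas~\ref{Lem1} and~\ref{Lem2} are designed for: Lemma~\ref{Lem2} produces an injection $\varphi: I_{x,y}\hookrightarrow I_{xs,sy}\setminus\{\alpha\}$ (here $\alpha$ is the direction of the hyperplane separating $yA_0$ from $syA_0$), where for each $\beta\in I_{x,y}$ one has $\varphi(\beta)=\beta$ or $\varphi(\beta)=\pm\sigma_{H_{\alpha,0}}(\beta)$. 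Since $\sigma_{H_{\alpha,0}}$ is an isometry fixing $\FC$ setwise and preserving the weight function on hyperplanes (weights depend only on the $\Om$-orbit of the hyperplane, and $\sigma_{H_{\alpha,0}}\in\Om$), reflecting a family of parallel hyperplanes through $H_{\alpha,0}$ does not change the maximum weight occurring, so $c^{\bL}_{xs,sy}(\varphi(\beta))\ge c^{\bL}_{x,y}(\beta)$ — wait, I need the inequality the other way, so more carefully: I would instead set things up so that the injection runs $I_{xs,sy}\to I_{x,y}$ (or use Lemma~\ref{Lem1} for the direction containing $\alpha$), matching each direction contributing to $c^{\bL}_{xs,sy}$ with a direction contributing at least as much to $c^{\bL}_{x,y}$, and then sum over the directions. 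The bookkeeping of which of $I_{x,y}$, $I_{xs,y}$, $I_{x,sy}$, $I_{xs,sy}$ to compare — and reconciling the hypotheses of Lemmas~\ref{Lem1}--\ref{Lem2} (stated for $xs>x$, $sy<y$) with the case at hand — is the main obstacle, and is where I expect the argument to require the most care.

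I would present the final write-up as: (i) reduce to the inductive step via the quadratic relation as above; (ii) in the case $xs<x$, verify the two weight inequalities directly from the definition of $c^{\bL}$ together with the fact that $H(A_0,yA_0)\cap H(yA_0,xyA_0)$ changes in a controlled way when $y$ is replaced by $sy$ and $x$ by $xs$; (iii) in the case $xs>x$, invoke Lemmas~\ref{Lem1} and~\ref{Lem2} to get the injection between the relevant sets $I_{\bullet,\bullet}$ of root directions, observe that the weights $\bL_H$ are $\sigma_{H_{\alpha,0}}$-invariant (again because they only depend on $\Om$-orbits, \cite[Lemma 2.1, Lemma 2.2]{Bremke}), and conclude $c^{\bL}_{xs,sy}\le c^{\bL}_{x,y}$ by summing the pointwise inequalities. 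The anticipated hard part is making step (iii) airtight for \emph{all} directions simultaneously, in particular handling the special direction $\alpha$ separating $yA_0$ and $syA_0$ and the type~$\tilde{\Crm}$ phenomenon where two parallel hyperplanes can carry different weights, so that the ``maximum over $H\in H_{x,y}\cap\cF_\alpha$ of $\bL_H$'' is not disturbed by the reflection/injection.
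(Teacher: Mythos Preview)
Your overall strategy (induction combined with Lemmas~\ref{Lem1} and~\ref{Lem2}) is the right one, and it is the same route the paper takes, following \cite[\S 2.3]{jeju2}. However, the induction is set up in the wrong direction, and this is exactly the source of the confusion you flag yourself (``wait, I need the inequality the other way'').

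Concretely: you peel $s$ off the \emph{left of $y$} (choosing $sy<y$) and then split on whether $xs>x$ or $xs<x$. In your case $xs>x$ you reduce to the pair $(xs,sy)$ and need $c^{\bL}_{xs,sy}\le c^{\bL}_{x,y}$. But Lemma~\ref{Lem1}, applied with $y$ replaced by $sy$, gives $I_{x,s(sy)}=I_{x,y}\subseteq I_{xs,sy}$ --- the opposite inclusion. Likewise, in your case $xs<x$ you need $c^{\bL}_{x,sy}+\bL(s)\le c^{\bL}_{x,y}$; Lemma~\ref{Lem2} (applied with $x\to xs$, so $(xs)s=x>xs$) yields an injection $I_{xs,y}\hookrightarrow I_{x,y}\setminus\{\alpha\}$, which concerns $I_{xs,y}$ and not $I_{x,sy}$. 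So neither case goes through with the lemmas as stated.

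The fix is simply to peel from the \emph{right of $x$}: write $x=x's$ with $x's>x'$, and split on $sy>y$ versus $sy<y$. Then Lemma~\ref{Lem1} (with $x\to x'$) gives $I_{x',sy}\subseteq I_{x's,y}=I_{x,y}$, handling the first case; and in the second case Lemma~\ref{Lem2} (with $x\to x'$) gives an injection $I_{x',y}\hookrightarrow I_{x,y}\setminus\{\alpha\}$, with the missing direction $\alpha$ contributing at least $\bL(s)$. This is precisely the chain of inequalities the paper writes out in the proof of Theorem~\ref{bound} (the displayed sequence $c^{\bL}_{x,y}\ge c^{\bL}_{x_1s_{i_1},y_1}\ge c^{\bL}_{x_1,y_1}+\bL(s_{i_1})\ge\cdots$), where $x=s_N\cdots s_1$ is reduced and one peels $s_1,s_2,\ldots$ off from the right. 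The ``furthermore'' clause of Lemma~\ref{Lem2} (that $\varphi(\beta)\in\{\beta,\pm\sigma_{H_{\alpha,0}}(\beta)\}$) is what lets you upgrade the inclusion of index sets to the weighted inequality on $c^{\bL}$, since $\sigma_{H_{\alpha,0}}\in\Om$ and weights are $\Om$-invariant; this is also where the type~$\tilde\Crm$ subtlety you anticipate lives.
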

Note that this implies that $\deg^{+}(\bff_{x,y,z})\leq (c^{\bL}_{x,y})^{+}$. 

Let $x=s_{N}\ldots s_{1}$ be a reduced expression of $x$. We denote $\fI_{x,y}$ the collection of all subsets $I=\{i_{1},\ldots,i_{p}\}$ such that $1\leq i_{1}<\ldots<i_{p}\leq k$ and
$$s_{i_{t}}\ldots \hs_{i_{t-1}}\ldots \hs_{i_{1}}\ldots s_{1}y<\hs_{i_{t}}\ldots \hs_{i_{t-1}}\ldots \hs_{i_{1}}\ldots s_{1}y.$$
For all $I=\{i_{1},\ldots,i_{p}\}\in\fI_{x,y}$ and all $1\leq k\leq p$, we set 
$$x_{k}=s_{N}\ldots s_{i_{p}}\ldots \hat{s}_{i_{k}} \text{ and }  y_{k}= \hs_{i_{k}}\ldots \hs_{i_{k-1}}\ldots \hs_{i_{1}}\ldots s_{1}y$$
and
$$z_{I}=s_{N}\ldots \hs_{i_{p}}\ldots \hs_{i_{1}}\ldots s_{1}y.$$
Then we have \cite[Proof of Proposition 5.1]{Bremke}
$$\bT_{x}\bT_{y}=\sum_{I\in\fI_{x,y}} \left(\prod^{p}_{k=1} (v^{\bL(s_{i_{k}})}-v^{-\bL(s_{i_{k}})})\right) T_{z_{I}}.$$
Hence
$$\deg(\bff_{x,y,z_{I}})=\sum_{k=1}^{p}\bL(s_{i_{k}}).$$
Fix $k$ such that $2\leq k\leq p$. Using the previous lemmas, there exists an injective map $\varphi_{k}$ such that 
$$
\begin{array}{ccccccccccc}
I_{x_{k},y_{k}}=I_{s_{N}\ldots s_{i_{p}}\ldots\hat{s}_{i_{k}},\hs_{i_{k}}\ldots \hs_{i_{k-1}}\ldots \hs_{i_{1}}\ldots s_{1}y}\overset{\va_{k}}{\longrightarrow} 
I_{s_{N}\ldots s_{i_{p}}\ldots s_{i_{k}},\hs_{i_{k}}\ldots \hs_{i_{k-1}}\ldots \hs_{i_{1}}\ldots s_{1}y}\\
\subseteq I_{s_{N}\ldots s_{i_{p}}\ldots s_{i_{k}}(s_{i_{k}-1}\ldots s_{i_{k-1}+1}),\hs_{i_{k-1}}\ldots \hs_{i_{1}}\ldots s_{1}y}=I_{x_{k-1},y_{k-1}}
\end{array}$$
Thus we have a sequence 
$$I_{x_{p},y_{p}}\overset{\va_{p}}{\longrightarrow} I_{x_{p}s_{i_{p}},y_{p}}\subseteq I_{x_{p-1},y_{p-1}}\overset{\va_{p-1}}{\longrightarrow} I_{x_{p-1}s_{i_{p-1}},y_{p-1}}\ldots I_{x_{1},y_{1}}\overset{\va_{1}}{\longrightarrow} I_{x_{1}s_{i_{1}},y_{1}}\subseteq I_{x,y}.$$
If we denote by $\al_{i_{k}}$ the positive root such that the only hyperplane separating $y_{k}A_{k}$ and $s_{i_{k}}y_{k}A_{0}$ lies in $\cF_{\al_{i_{k}}}$ then we have
$$\{\al_{i_{1}},\varphi_{1}(\al_{i_{2}}), \ldots,(\varphi_{1}\ldots \varphi_{p-1})(\al_{i_{p}})\}\subseteq I_{x,y}.$$

\begin{Th}
\label{bound}
Let $w=a_{w}w^{\circ}_{\la_{\si}}b_{\si}\in\cb^{\bL^{+}}_{\min}$. Let $x\in X_{\la_{\si}}$. We have
$$T_{x}C_{w}\equiv T_{xw}+\sum_{z\in N_{\si'}, b_{\si'}<b_{\si}} a_{z} T_{z}\ \mod \bH_{<0}.$$
\end{Th}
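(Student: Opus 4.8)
The aim is to compute $\bT_x\bC_w$ modulo $\bH_{<0}$ when $w = a_w w^\circ_{\la_\si}b_\si\in\cb^{\bL^+}_{\min}$ and $x\in X_{\la_\si}$. Writing $\bC_w=\sum_{y\le w}\bP_{y,w}\bT_y$, we have $\bT_x\bC_w=\sum_{y\le w}\bP_{y,w}\bT_x\bT_y=\sum_{y\le w}\sum_z\bP_{y,w}\bff_{x,y,z}\bT_z$, so the task is to control the degree of each product $\bP_{y,w}\bff_{x,y,z}$ and to identify which terms survive mod $\bH_{<0}$. The $y=w$ term contributes $\bP_{w,w}\bff_{x,w,z}\bT_z$; since $x\in X_{\la_\si}$ and $w\in N^{\bL^+}_\si$, Lemma~\ref{dot-property}(1) gives $x\add a_w\add w^\circ_{\la_\si}\add b_\si$, hence $\ell(xw)=\ell(x)+\ell(w)$ and $\bT_x\bT_w=\bT_{xw}$, producing the leading term $\bT_{xw}$. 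So the real work is to show that every other contribution, once its degree in $\bGa$ is examined via the $\va_1$-projection (Remark~\ref{neg}), either lies in $\bH_{<0}$ or is a $\bT_z$ with $z\in N_{\si'}$ for some $\si'$ with $b_{\si'}<b_\si$.

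First I would fix $y<w$ and decompose $y=a_y u_y d_y$ with respect to $I=S_{\la_\si}$ as in Section~7.3, and similarly analyze $z$. By Lemma~\ref{KLasym}, $\deg^+(\bP_{y,w})\le \bL(u_y)^+-\bL(w^\circ_{\la_\si})^+$, which is $\le 0$ and is $<0$ unless $u_y=w^\circ_{\la_\si}$, i.e.\ unless $y=a_y w^\circ_{\la_\si}d_y$ lies in a translate $U_{\si'}$-coset. Next, for the structure constants, I would use Theorem~\ref{bound first}: $\deg^+(\bff_{x,y,z})\le (c^{\bL}_{x,y})^+$, together with the combinatorial description of $c^\bL_{x,y}$ via the injections $\varphi_k$ (Lemmas~\ref{Lem1} and~\ref{Lem2}) recorded at the end of Section~7.4. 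The crucial geometric input is that, because $x\in X_{\la_\si}$, the hyperplanes in $H(A_0,xA_0)$ do not pass through $\la_\si$; combined with the fact that $y$ is ``close to'' a special point when $\bP_{y,w}$ has $\va_1$-degree $0$, one gets that the positive roots contributing to $c^\bL_{x,y}$ are precisely those of maximal $\bL$-weight only in a controlled way, so that $\deg^+(\bP_{y,w}\bff_{x,y,z})\le 0$ with equality forcing both $y\in U_{\si'}$-type and $z$ of the form $x'y'$ with $x'\in X_{\la_{\si'}}$ and $b_{\si'}\le b_\si$. Finally, the possibility $b_{\si'}=b_\si$ with $z\ne xw$ must be excluded: here I would invoke Lemma~\ref{dot-property}(2), which says that any $w'<a_w\hw_{\la_\si}b_\si$ in $\cb^{\bL^+}_{\min}$ either has smaller $a$-part with the same $b_\si$, or genuinely smaller $b$-part; translating by $x$ and using I3/the uniqueness of the $X_uu$ decomposition, the ``same $b_\si$, smaller $a$-part'' terms get absorbed into lower-degree corrections, leaving only $z\in N_{\si'}$ with $b_{\si'}<b_\si$, exactly as claimed. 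One also has to handle the four cases of Section~7.1 (types $\tilde\Brm/\tilde\Frm/\tilde\Grm$ versus $\tilde\Crm$ with its subcases), chiefly using Lemma~\ref{Lplus-spe} and Remark~\ref{diff-case} to know that in Case~3 a term with $\deg^+\le(-1,0,1)$ is automatically in $\bA_{<0}$.

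The argument should proceed by induction, most naturally on $\ell(x)$ (reducing $x=sx'$ with $s\in S^\circ$, $sx'>x'$, and using the multiplication formula $\bT_s\bC_w$ from Section~\ref{Iwahori-Hecke} together with the induction hypothesis and Lemma~\ref{KLasym}'s second statement, which controls which $\bM^s_{z,w}$ are nonzero), or alternatively by induction on $\ell(w)$ peeling off generators of $b_\si$. I expect the main obstacle to be the bookkeeping in the ``equality case'' of the degree estimate: showing that when $\deg^+(\bP_{y,w}\bff_{x,y,z})=0$ the resulting $z$ really does lie in some $N_{\si'}$ with $b_{\si'}<b_\si$ (and not in $N_\si\setminus\{xw\}$), which is where the precise geometry of $L$-quarters, the hypothesis $x\in X_{\la_\si}$, and Lemma~\ref{dot-property}(2) must be combined carefully; the case-by-case verification for type $\tilde\Crm$ (Cases 2--4), where parallel hyperplanes may have different weights, will require the extra care already flagged in Lemma~\ref{Lplus-spe} and Remarks~\ref{neg}, \ref{diff-case}.
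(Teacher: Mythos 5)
Your first two paragraphs correctly identify the paper's strategy: expand $\bT_x\bC_w=\sum_{y\le w}\bP_{y,w}\bT_x\bT_y$, peel off $\bT_{xw}$ via Lemma~\ref{dot-property}(1), sort the remaining $y<w$ into (i) $y=a'_ww^\circ_{\la_\si}b_\si\in U_\si$ with $a'_w<a_w$, (ii) $y\in N_{\si'}$ with $b_{\si'}<b_\si$, and (iii) $y\notin\cb^{\bL^+}_{\min}$, then control degrees of $\bP_{y,w}\bff_{x,y,z}$ via Lemma~\ref{KLasym}, Theorem~\ref{bound first}, and the geometry of $X_{\la}$. The type-(i) terms are killed for the trivial reason that $\bT_x\bT_y$ is a single standard basis element and $\bP_{y,w}\in\bA_{<0}$; the substance of the proof is the two Claims showing that type-(ii) terms only contribute $\bT_z$ with $z\in N_{\si'}$ (same $\si'$) and type-(iii) terms contribute nothing mod $\bH_{<0}$.

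Your third paragraph, however, goes wrong: the paper's proof of this theorem is \emph{not} an induction on $\ell(x)$ or $\ell(w)$ — it is a single direct degree computation. The machinery you invoke there (the $\bT_s\bC_w$ multiplication formula, the nonvanishing criterion for $\bM^s_{z,w}$ from the second statement of Lemma~\ref{KLasym}) is actually what the paper uses for the \emph{separate} verification of condition {\bf I4} in Section~7.5, not for Theorem~\ref{bound} (which is {\bf I5}). Conflating these two would lead you to try to prove the wrong thing inductively. Also, the geometric input is not that hyperplanes in $H(A_0,xA_0)$ avoid $\la_\si$; rather, one writes $y=a_yu_\la d_y$ with $\la$ the $\bL^+$-special point in $\ov{yA_0}$ conjugate to $\la_\si$, and uses $x\in X_\la$ to show that the hyperplanes through $\la$ separating $A_0$ from $yA_0$ do not lie in $H(yA_0,xyA_0)$ — this pins down $I_{x,y}\cap\Phi^{\bL^+}$ inside $\{\alpha_{k+1},\dots,\alpha_n\}$ and is what makes the bound $(c^\bL_{x,y})^+\le\bL(v)^+$ (Cases 1--3) work, with the $\tilde{\Crm}$ subtleties handled as in Lemma~\ref{Lplus-spe} and Remark~\ref{diff-case}. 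You correctly flag that the bookkeeping of the near-equality case is the hard part, but you do not say what actually does the work there: in Case 4 one tracks the two coordinates $i\bt+k\bt'$ and compares $i'+k'$ with $i+k$, which is not captured by ``smaller $a$-part versus smaller $b$-part.''
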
 
\begin{proof}
We have 
\begin{align*}
T_{x}C_{w}&=T_{x}T_{w}+T_{x}\left(\sum_{y< w} P_{y,w}T_{y} \right)\\
&=T_{xw}+\sum_{y\in\cb^{\bL^{+}}_{\min}}P_{y,w}T_{x}T_{y}+\sum_{y\notin\cb^{\bL^{+}}_{\min}} P_{y,w}T_{x}T_{y}\\
&=T_{xw}+\sum_{a'_{w}<a_{w}} P_{y,w}T_{x}T_{a'_{w}w^{\circ}_{\la_{\si}}d_{\si}}+\underset{b_{\si'}<b_{\si}}{\sum_{y\in N_{\si'}}}P_{y,w}T_{x}T_{y}+\sum_{y\notin\cb^{\bL^{+}}_{\min}} P_{y,w}T_{x}T_{y}\\
&=T_{xw}+\sum_{a'_{w}<a_{w}} P_{y,w}T_{xa'_{w}w^{\circ}_{\la_{\si}}d_{\si}}+\underset{b_{\si'}<b_{\si}}{\sum_{y\in N_{\si'}}}P_{y,w}T_{x}T_{y}+\sum_{y\notin\cb^{\bL^{+}}_{\min}} P_{y,w}T_{x}T_{y}\\
&\equiv T_{xw}+\underset{b_{\si'}<b_{\si}}{\sum_{y\in N_{\si'}}}P_{y,w}T_{x}T_{y}+\sum_{y\notin\cb^{\bL^{+}}_{\min}} P_{y,w}T_{x}T_{y}\quad\mod \bH_{<0}
\end{align*}
Fix a $y$ in the on of the sum above. Let $\la$ be the unique $\bL^{+}$-special point which is contained in the closure of $yA_{0}$ and which lies in the same orbit as $\la_{\si}$ (i.e. $W_{\la}=W_{\la_{\si}}$). Write $y=a_{y}u_{\la}d_{y}$ where $d_{y}\in X^{-1}_{\la}$, $u_{\la}$ has minimal length in the coset $W_{S^{\circ}_{\la}}yd^{-1}_{y}$ of $W_{\la}$ and $a_{y}\in W_{S^{\circ}_{\la}}$. 

\medskip


Let $a_{y}u_{\l}=s_{k}\ldots s_{1}$ be a reduced expression and let $v=s_{n}\ldots s_{k+1}$ be such that $\bL^{+}(s_{n}\ldots s_{1})=\nu_{\bL^{+}}$ and $\ell(s_{n}\ldots s_{1})=n$. 
Let $H_{i}$ be the unique hyperplane which separates $s_{i}\ldots s_{1}d_{y}A_{0}$  and $s_{i+1}\ldots s_{1}d_{y}A_{0}$ and let $\al_{i}\in\Phi^{+}$ be such that $H_{i}\in\cF_{\al_{i}}$.  Let $1\leq i\leq k$ and assume that $H_{i}=H_{\al_{i},n}$ where $n>0$ (the case $n\leq 0$ is similar). We have $H_{i}\in H(A_{0},yA_{0})$ and since $\la\in H_{i}\cap \ov{yA_{0}}$ we see that
$$n<\sg \mu,\check{\al_{i}}\sd<n+1\text{ for all $\mu\in yA_{0}$}.$$
It follows that $H_{\al_{i},m}\notin H(A_{0},yA_{0})$ for all $m\geq n+1$. Next, since $x\in X_{\la_{\si}}=X_{\la}$, we see that $H_{i}\notin H(yA_{0},xyA_{0})$ and it follows that $H_{\al_{i},m}\notin H(yA_{0},xyA_{0})$ for all $m\leq n$. Finally, since $\la$ is a $\bL^{+}$-special point, we must have 
$$\{\al_{i}\mid  1\leq i\leq n\}=\Phi^{+}\cap \Phi^{\bL^{+}}.$$
 It follows that 
$$I_{x,y}\cap \Phi^{\bL^{+}}\subseteq \{\al_{k+1},\ldots,\al_{n}\}.$$
By Lemma \ref{KLasym}, we know that 
\begin{equation*}
 \deg^{+}(\bP_{y,w})\leq \bL(u_{\la})^{+}-\bL(w^{\circ}_{\la})^{+}=-\bL(v)^{+}.
\end{equation*}
Therefore, we have
\begin{equation*}
\tag{$\ast$}\deg^{+}(\bP_{x,y}\bff_{x,y,z})\leq \deg^{+}(\bff_{x,y,z}) -\bL(v)^{+}\leq   (c^{\bL}_{x,y})^{+} -\bL(v)^{+}.
\end{equation*}
If we are in Case 1--3, all the hyperplane which contains $\la$ must be of maximal weight, hence we have $\bL(s_{i})=\bL_{\al_{i}}$ for all $i$. Hence 
\begin{equation*}
\tag{1}(c^{\bL}_{x,y})^{+}\leq \sum^{n}_{i=k+1}\bL(s_{i})=\bL(v)^{+}.
\end{equation*}
In Case 4, we may   have $\bL^{+}(s_{i})=\bt'<\bt$. Let $i,k,i',k'\in\nN$ be such that 
 \begin{equation*}
\tag{2}\bL(v)^{+}=i\bt+k\bt'\text{ and }(c^{\bL}_{x,y})^{+}=i'\bt+k'\bt'.
\end{equation*}  
 Then by the work above we know that $i'+k'\leq i+k$.
\bigskip

\begin{Claim}
If $y\in N^{\bL^{+}}_{\si'}$ then $\bP_{x,y}\bff_{x,y,z}\in\bA_{<0}$ whenever $z\notin N^{\bL^{+}}_{\si'}$.
\end{Claim}
\begin{proof}
Let $x=s_{N}\ldots s_{1}$ be a reduced expression of $x$. There exists $I=\{i_{1},\ldots,i_{p}\}\in\fI_{x,y}$ such that $z_{I}=z$. 
Assume that $z_{I}\notin N^{\bL^{+}}_{\si}$. Then there exist $\al\in\Phi^{\bL^{+}}$ and  $i_{k+1}-1>M>i_{k}$ such that 
$$s_{M}\ldots \hs_{i_{k}}\ldots \hs_{i_{1}}\ldots s_{1}yA_{0}\notin U^{\bL^{+}}_{\al}(A_{0})\text{ and } s_{M+1}\ldots \hs_{i_{k}}\ldots \hs_{i_{1}}\ldots s_{1}yA_{0}\in U^{\bL^{+}}_{\al}(A_{0})$$
and the unique hyperplane which separates these two alcoves is a wall of $N^{\bL^{+}}_{\si'}$. 
That means that in the sequence
$$I_{x_{p},y_{p}}\overset{\va_{p}}{\longrightarrow} I_{x_{p}s_{i_{k}},y_{p}}\subseteq I_{x_{p-1},y_{p-1}}\overset{\va_{p-1}}{\longrightarrow} I_{x_{p-1}s_{i_{p-1}},y_{p-1}}\ldots I_{x_{1},y_{1}}\overset{\va_{1}}{\longrightarrow} I_{x_{1}s_{i_{1}},y_{1}}\subseteq I_{x,y}$$
we have $H\in I_{x_{k+1}s_{i_{k+1}},y_{k+1}}$ but $H\notin  I_{x_{k},y_{k}}$. Further, if there is an hyperplane of direction $\al$ in  $I_{x_{k},y_{k}}$ then it can't be of maximal weight. Hence we have
$$c^{\bL}_{x_{k},y_{k}}\geq c^{\bL}_{x_{k+1}s_{i_{k+1}},y_{k+1}}+\bC_{\al}$$
where
$$\bC_{\al}=\begin{cases}
\bL_{\al} &\mbox{ if we are in Case 1.}\\
\bL_{\al}\text{ or }\bt-\bt' &\mbox{ if we are in Case 2.}\\
\bt\text{ or }\bt-\bt' &\mbox{ if we are in Case 3.}\\
\bt \text{ or }\bt' &\mbox{ if we are in Case 4.}\\
\end{cases}$$
Next, by repetitive use of Lemmas \ref{Lem1} and \ref{Lem2} we get the following inequalities 
\begin{align*}
c^{\bL}_{x,y}&\geq c^{\bL}_{x_{1}s_{i_{1}},y_{1}}\geq  c^{\bL}_{x_{1},y_{1}}+\bL(s_{i_{1}})\\
&\geq c^{\bL}_{x_{2}s_{i_{2}},y_{2}}+\bL(s_{i_{1}})\geq c^{\bL}_{x_{2},y_{2}}+\sum^{2}_{\ell=1} \bL(s_{i_{\ell}})\\
&\ldots \\
&\geq c^{\bL}_{x_{k}s_{i_{k}},y_{k}}+\sum^{k-1}_{\ell=1} \bL(s_{i_{\ell}})\geq c^{\bL}_{x_{k},y_{k}}+\sum^{k}_{\ell=1} \bL(s_{i_{\ell}})\\
&\geq c^{\bL}_{x_{k+1}s_{i_{k+1}},y_{k+1}}+\sum^{k}_{\ell=1} \bL(s_{i_{\ell}})+\bC_{\al}\\
&\geq c^{\bL}_{x_{k+1},y_{k+1}}+\sum^{k+1}_{\ell=1} \bL(s_{i_{\ell}})+\bC_{\al}\\
&\geq \ldots \geq c^{\bL}_{x_{p},y_{p}}+\sum_{\ell=1}^{p}\bL(s_{i_{p}})+\bC_{\al}.
\end{align*}
Hence, we get $(\bc^{\bL}_{x,y})^{+}\geq \deg^{+}(\bff_{x,y,z})+ \bC^{+}_{\al}$. \\

\noindent
If we are in Case 1 or in Case 2 with $\bC_{\al}=\bL_{\al}$ then we have, using (1)
$$\deg^{+}(\bff_{x,y,z})-\bL(v)^{+}\leq (c^{\bL}_{x,y})^{+}-\bL(v)^{+}-\bC_{\al}^{+}\leq- \bL_{\al}.$$
But we know that $\va_{1}(\bL_{\al})<0$ hence
$\va_{1}(\deg^{+}(\bP_{x,y}\bff_{x,y,z}))<0$ and $\bP_{x,y}\bff_{x,y,z}$ lies in $\bA_{<0}$; see Remark \ref{neg}.\\

\noindent
If we are in Case 2 and $C_{\al}=\bt-\bt'$, then, we must have $\bt\in \bar{S}^{+}$ and $\bt'\notin \bar{S}^{\circ}$. Therefore we have $C_{\al}^{+}=\bt$ and we can conclude as above.  \\

\noindent
If we are in Case 3 then $C_{\al}\geq \bt-\bt'$ and we get
$$\deg^{+}(\bff_{x,y,z})-\bL(v)^{+}\leq (c^{\bL}_{x,y})^{+}-\bL(v)^{+}-\bC_{\al}^{+}\leq (-1,0,1).$$
But since we are in Case 3, this implies that $\bP_{x,y}\bff_{x,y,z}\in\bA_{<0}$; see Remark \ref{diff-case}.\\
 
 \noindent
Finally, if we are in Case 4 then $\bC_{\al}\geq \bt'$. Using (2), we get
\begin{align*}
\deg^{+}(\bff_{x,y,z})-\bL(v)^{+}&\leq (c^{\bL}_{x,y})^{+}-\bL(v)^{+}-\bt'\\
&\leq  (i'+k') \bt-(i+k) \bt'-\bt'\\
&=(i'+k',0,-(i+k)-1)
\end{align*}
and since $(i+k)\geq (i'+k')\geq 0$ we have $\va_{1}((i'+k',0,-(i+k)-1))<0$. The result follows; see Remark \ref{neg}.
\end{proof}

\begin{Claim}
If $y\notin \cb^{\bL^{+}}_{\min}$ then  $\bP_{x,y}\bff_{x,y,z}\in\bA_{<0}$. 
\end{Claim}
\begin{proof}
Since $y\notin \cb^{\bL^{+}}_{\min}$ we have  $y\in U^{\bL^{+}}_{\al_{m}}(A_{0})$ for some $k+1\leq m\leq n$. Then
$$c^{\bL}_{x,y}(\al_{m})=
\begin{cases}
0&\mbox{ if we are in Case 1.}\\
0 \text{ or }\bt' &\mbox{ if we are in Case 2.}\\
0 \text{ or }\bt'  &\mbox{ if we are in Case 3.}\\
0 &\mbox{ if we are in Case 4.}\\
\end{cases}$$
We have 
$$(c^{\bL}_{x,y})^{+}\leq \sum^{n}_{i=m+1} \bL_{\al_{i}}+c_{x,y}^{\bL}(\al_{m})^{+}.$$

\noindent
If we are in Case (1) or in Case 2 with $c^{\bL}_{x,y}(\al_{m})=0$,  then $\bL(s_{i})=\bL_{\al_{i}}$ for all $1\leq i\leq n$ (see Lemma \ref{Lplus-spe}). Hence
$$(c^{\bL}_{x,y})^{+}-\bL(v)^{+}\leq -\bL_{\al_{m}}$$
and the result follows using $(\ast)$.\\

\noindent
If we are in Case (2) with $c^{\bL}_{x,y}(\al_{m})=\bt'$. Then 
$$(c^{\bL}_{x,y})^{+}-\bL(v)^{+}\leq -\bL_{\al_{m}}+(c^{\bL}_{x,y})^{+}= -\bt.$$
and the result follows using $(\ast)$ and Remark \ref{neg}.\\

\noindent
If we are in Case (3), then $c^{\bL}_{x,y}(\al_{m})\leq \bt'$. Then $\bL(s_{i})=\bL_{\al_{i}}$ for all $1\leq i\leq n$ and 
$$(c^{\bL}_{x,y})^{+}-\bL(v)^{+}\leq -\bL_{\al_{m}}+(c^{\bL}_{x,y})^{+}\leq  -\bt+\bt'.$$
and the result follows; ; see Remark \ref{diff-case}.\\

\noindent
Finally, if we are in Case 4, then using (2) we must have $i+k>i'+k'$ since $y\in U^{\bL^{+}}_{\al_{m}}(A_{0})$. The result follows, arguying in a similar fashion as at the end of the previous Claim.  
\end{proof}

\noindent
The theorem follows easily from the two claims and the expression: 
$$T_{x}C_{w}= T_{xw}+\underset{b_{\si'}<b_{\si}}{\sum_{y\in N_{\si'}}}P_{y,w}T_{x}T_{y}+\sum_{y\notin\cb^{\bL^{+}}_{\min}} P_{y,w}T_{x}T_{y}\quad\mod \bH_{<0}$$
\end{proof}

\subsection{Proof of Conditions {\bf I1}--{\bf I5}}

Condition {\bf I1} it is clear. Condition {\bf I2} is a direct consequence of Lemma \ref{dot-property}. Condition {\bf I3} follows from the fact that $\cb^{\bL^{+}}_{\min}$ is a disjoint union of the sets $N^{\bL^{+}}_{\si}$. Condition {\bf I5} is Theorem \ref{bound}. Hence we only to prove {\bf I4}, that is, we need to show that the $\bA$-module 
 $$\cM:=\sg \bT_{x}\bC_{u}|\ u\in U^{\leq}_{\si},\ x\in X_{u}\sd_{\cA}\subseteq \cH$$is a left ideal of $\cH$.
Let $u=a_{u}\hw_{\la_{\si}}b_{\si}\in U_{\la,z}$ and $x\in X_{\la_{\si}}$. It is enough to show that $T_{s}T_{x}C_{u}\in \cM$ for all $s\in S$. There is 3 cases to consider:
\begin{enumerate}
\item $sx>x$ and $sx\in X_{\la_{\si}}$;
\item $sx<x$ and $sx\in X_{\la_{\si}}$;
\item $sx>x$ and $sx\notin X_{\la_{\si}}$.
\end{enumerate} 
The result is clear in the first two cases since we have respectively
$$\bT_{s}\bT_{x}\bC_{u}=\bT_{sx}\bC_{u}\in  \cM$$
and 
$$\bT_{s}\bT_{x}C_{u}=(\bT_{sx}+(v^{\bL(s)}-v^{-\bL(s)})\bT_{x})\bC_{u} \in \cM$$
Assume that we are in the third case. Then by Deodhar's lemma (see \cite[Lemma 2.1.2]{Geck-Pfeiffer}), there exists $s'\in S_{\la_{\si}}$ such that $sx=xs'$. If $s'u<u$ we get 
$$\bT_{s}\bT_{x}\bC_{u}=\bT_{sx}\bC_{u}=\bT_{x}\bT_{s'}\bC_{u}=v^{\bL(s')}\bT_{x}\bC_{u}\in  \cM$$
 as required. Assume $s'u>u$. Note that this implies that $s'\in S^{\circ}$ since for all $s'\in S^{+}$ we have $s'u<u$. Then
 $$\bT_{s}\bT_{x}\bC_{u}=\bT_{x}\bT_{s'}\bC_{u}=\bT_{x}\big(\bC_{s'u}+\sum_{z<w} \bM^{s'}_{z,u}\bC_{z}\big).$$
In the above sum, we know that the term $\bT_{x}\bC_{s'u}\in \cM$. If $\bM_{z,u}^{s'}\neq 0$ then by Lemma \ref{KLasym}, we must have $z\in\cb^{\bL^{+}}_{\min}$ which in turn implies that by Lemma \ref{dot-property}, that either $z=a_{z}\hw_{\la_{\si}}b_{\si}$ 
with $a_{z}<a_{u}$ (in which case $\bT_{z}\bC_{u}\in \cM$) or $z=a_{z}\hw_{\la_{\si'}}b_{\si'}$  with $b_{\si'}<b_{\si}$. From there, the result follows by an easy induction on the length of~$b_{\si}$.


\section{Proof of Theorem \ref{main2}}
Let $\Ga$ be a totally ordered group abelian group. In this section we study the relation between $\bH=(W,S,\bL)$ as define in the previous section and $\cH=\cH(W,S,L)$ where $L\in\text{Weight}(W,\Ga)$. The element of $\bH$ and $\bA$ will be written with a bold symbols.

\subsection{Specialisation}
Let $L:W\rightarrow \Ga$ be weight function. Then the map $\theta^{\bL}_{\Ga}:\bGa\rightarrow \Ga$ which sends $\bL(s)$ to $L(s)$ is a group homomorphism. Further, this homomorphism induces a morphism of $\nZ$-algebras $\theta_{\bA}^{L}: \bA\rightarrow \cA$ which sends $v^{\bL(s)}$ to $v^{L(s)}$. If $\cH$ is viewed as a $\bA$-algebra through $\theta^{L}_{\bA}$, then there is a unique morphism of $\bA$-algebras $\theta^{L}_{\bH}:\bH\rightarrow \cH$ such that $\theta(\bT_{x})=T_{x}$ for all $x\in W$.

\bigskip

We recall the some result of \cite{jeju1,Geck-F4}. 
Let $N\in \nN$ and let $X_{N}=\{z\in W\mid \ell(z)\leq N\}$. We now define three subsets $\bGa_{+}^{1}(N)$, $\bGa_{+}^{2}(N)$, $\bGa_{+}^{3}(N)\subset \bGa$. First, let $\bGa_{+}^{1}(N)$ be the set of all elements $\gamma >0\in\bGa$ such that $v^{-\ga}$ occurs with a non-zero coefficient in a polynomial $\mathbf{P}_{z_{1},z_{2}}$ for some $z_{1}<z_{2}\in X_{N}$. Next for any $z_{1},z_{2}$ in $X_{N}$ such that $\mathbf{M}_{z_{1},z_{2}}^{s}\neq 0$ for some $s$, we write $\mathbf{M}_{z_{1},z_{2}}^{s}=n_{1}v^{\g_{1}}+\ldots+n_{\ell}v^{\g_{\ell}}$ where $0\neq n_{i}\in \mathbb{Z}$, $\gamma_{i}\in \bGa$ and $\g_{i}-\g_{i-1}>0$ for $2\leq i\leq \ell$. Let $\bGa_{+}^{2}(N)$ be the set of all elements  $\g_{i}-\g_{i-1}>0$ arising in this way, for any $z_{1},z_{2}\in X_{N}$ and $s\in S$. Finally let $\bGa_{+}^{3}(N)$ be the set of all elements $\gamma>0 \in \bGa$ such that $v^{-\g}$ occurs with a non-zero coefficient in a polynomial of the form 
$$\underset{z;z_{1}\leq z<z_{2};sz<z}{\sum}\mathbf{ P}_{z_{1},z}\mathbf{ M}_{z,z_{2}}^{s}-v_{s}\mathbf{ P}_{z_{1},z_{2}}$$
for some $z_{1},z_{2}\in X_{N}$ and $s\in S$.  We set $\bGa_{+}(N)=\bGa_{+}^{1}(N)\cup\bGa_{+}^{2}(N)\cup \bGa_{+}^{3}(N)$. 

\begin{Prop}(see \cite[Proposition 3.3]{jeju1})
\label{specialisation}
Let $L:W\rightarrow \Ga$ be a weight function such that the ring homomorphism $\theta_{L}$
satisfies the condition 
\begin{equation*}
\theta^{L}_{\bGa}(\bGa_{+}(N))\subseteq \{\g\mid \g>0\}. \tag{$\ast$}
\end{equation*} 
Then, for all $x,y\in X_{N}$, we have $\theta^{L}_{\bA}(\bP_{x,y})=P_{x,y}$ and $\theta^{L}_{\bA}(\bM^{s}_{x,y})=M^{s}_{x,y}$. In particular $\theta^{L}_{\bH}(\bC_{x})=C_{x}$.  
\end{Prop}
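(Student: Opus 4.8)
The statement to prove is Proposition \ref{specialisation}, which asserts that under the hypothesis $(\ast)$ on the specialization homomorphism $\theta^L$, the Kazhdan--Lusztig polynomials and $M$-polynomials of $\bH$ specialize correctly to those of $\cH$, and consequently $\theta^L_\bH(\bC_x) = C_x$ for $x \in X_N$. Since this is quoted from \cite[Proposition 3.3]{jeju1}, the proof is essentially a matter of tracking the inductive definitions of the Kazhdan--Lusztig basis and checking that specialization does not create cancellations or sign changes that would destroy the defining axioms. The plan is to argue by induction on $\ell(y)$ (equivalently on the pair $(\ell(y),\ell(y)-\ell(x))$) simultaneously for the families $\bP_{x,y}$, $\bM^s_{x,y}$.

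First I would recall the two characterizing conditions for $\bC_y$: bar-invariance, $\ov{\bC_y} = \bC_y$, and the shape condition $\bC_y \in \bT_y + \bigoplus_{x<y} \bA_{<0}\bT_x$. The image $\theta^L_\bH(\bC_y)$ is automatically bar-invariant for $\cH$ because $\theta^L_\bA$ commutes with the bar involutions (it sends $v^{\bL(s)} \mapsto v^{L(s)}$ and both bars negate exponents). The only thing that can fail is the shape condition: a priori $\theta^L_\bA(\bP_{x,y})$ might land in $\cA_{\geq 0}$ if a negative exponent $-\gamma$ occurring in $\bP_{x,y}$ gets sent by $\theta^L_\bGa$ to something that is no longer $>0$. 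Hypothesis $(\ast)$, which says $\theta^L_\bGa(\bGa_+(N)) \subseteq \{\gamma \mid \gamma > 0\}$, is precisely designed to rule this out: $\bGa^1_+(N)$ collects exactly the exponents $\gamma$ such that $v^{-\gamma}$ appears in some $\bP_{z_1,z_2}$ with $z_1 < z_2 \in X_N$. So the plan is: for $x < y$ in $X_N$, write $\bP_{x,y} = \sum n_\gamma v^{-\gamma}$ with each $\gamma \in \bGa^1_+(N) \subseteq$ the positive cone, apply $\theta^L_\bA$, and observe that every resulting exponent $-\theta^L_\bGa(\gamma)$ is still $<0$. Hence $\theta^L_\bA(\bP_{x,y}) \in \cA_{<0}$ for $x<y$ and $=1$ for $x=y$ (the constant term is preserved). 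It then follows that $\theta^L_\bH(\bC_y)$ satisfies both defining conditions of $C_y$, so by uniqueness $\theta^L_\bH(\bC_y) = C_y$ and $\theta^L_\bA(\bP_{x,y}) = P_{x,y}$.

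For the $M$-polynomials I would similarly use the inductive characterization: $\bM^s_{x,y}$ is determined by $\ov{\bM^s_{x,y}} = \bM^s_{x,y}$ together with $\bM^s_{x,y} - \sum_{x<z<y,\ sz<z} \bP_{x,z}\bM^s_{z,y} - v_s \bP_{x,y} \in \bA_{<0}$. Bar-invariance of $\theta^L_\bA(\bM^s_{x,y})$ is again automatic. For the congruence, the sets $\bGa^2_+(N)$ and $\bGa^3_+(N)$ were introduced exactly to control, respectively, the ``gaps'' between consecutive exponents of $\bM$-polynomials (so that $\theta^L$ does not collapse distinct monomials into a cancelling pair or reverse their order) and the negative exponents in the error term $\sum \bP_{z_1,z}\bM^s_{z,z_2} - v_s \bP_{z_1,z_2}$. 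Using $(\ast)$ applied to these two sets, together with the already-established $\theta^L_\bA(\bP_{x,y}) = P_{x,y}$ from the induction, one checks that $\theta^L_\bA(\bM^s_{x,y})$ satisfies the characterizing congruence and bar-symmetry for $M^s_{x,y}$ in $\cH$, hence equals it by uniqueness. The final claim $\theta^L_\bH(\bC_x) = C_x$ is then immediate from $C_x = \sum_y P_{y,x} T_y$ and $\theta^L_\bA(\bP_{y,x}) = P_{y,x}$.

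The main obstacle, and the only genuinely delicate point, is making sure that specialization does not introduce \emph{cancellation among monomials of the same degree} in the $M$-polynomials: two exponents $\gamma_{i-1} < \gamma_i$ in $\bGa$ with $\gamma_i - \gamma_{i-1} > 0$ could in principle be identified by $\theta^L_\bGa$, and if the corresponding coefficients were opposite this would lower the degree unexpectedly and break the inductive control. This is why the definition of $\bGa^2_+(N)$ records the \emph{differences} $\gamma_i - \gamma_{i-1}$ rather than the exponents themselves, and why hypothesis $(\ast)$ forces $\theta^L_\bGa(\gamma_i - \gamma_{i-1}) > 0$, i.e. $\theta^L_\bGa$ stays strictly monotone on the monomials actually occurring. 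Once this monotonicity is in hand the rest is a routine transcription of the uniqueness argument for the Kazhdan--Lusztig basis, so I would keep that part brief and refer to \cite[\S 5--6]{bible} and \cite[Proposition 3.3]{jeju1} for the mechanical verifications.
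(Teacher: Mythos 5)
Your proposal is essentially the argument given in the cited reference \cite[Proposition 3.3]{jeju1}, which the paper invokes without reproducing: a joint induction using the uniqueness characterization of $\bC_y$ and $\bM^s_{x,y}$ (bar-invariance plus degree conditions), with bar-invariance preserved automatically under $\theta^L$ and the degree conditions preserved precisely because $\bGa^1_+(N)$, $\bGa^2_+(N)$, $\bGa^3_+(N)$ were defined to record the finitely many exponents whose positivity the specialization must respect. You also correctly flag the role of $\bGa^2_+(N)$ (preventing monomial collisions in the $M$-polynomials), so the sketch is faithful to the intended proof.
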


The following Lemma is a straightforward generalisation of \cite[Lemma 3.4]{Geck-F4}.

\begin{Lem}
\label{gen-bound}
We have
$$\bGa_{+}(N)\subset\{(\g_{1},\ldots,\g_{|S|})\in\nZ^{|S|}\mid -N\leq\g_{i}\leq N\}$$
\end{Lem}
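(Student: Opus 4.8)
\textbf{Proof plan for Lemma \ref{gen-bound}.}

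The plan is to argue by induction on the length of the pairs $z_1 < z_2 \in X_N$, exactly as in \cite[Lemma 3.4]{Geck-F4}, and to track how the relevant exponents grow. First I would recall that every $\bGamma$-exponent $\gamma$ appearing in the definition of $\bGamma_+(N)$ is obtained from the recursive formulas of Section \ref{Iwahori-Hecke} applied to elements of $X_N$: the Kazhdan-Lusztig polynomials $\bP_{z_1,z_2}$, the $M$-polynomials $\bM^s_{z_1,z_2}$, and the auxiliary polynomials $\sum_{z_1 \le z < z_2, sz<z} \bP_{z_1,z} \bM^s_{z,z_2} - v_s \bP_{z_1,z_2}$. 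Writing each exponent in coordinates $\gamma = (\gamma_1,\dots,\gamma_{|S|}) \in \nZ^{|S|}$ with respect to the basis $\bar S$, the claim is that $|\gamma_i| \le N$ for all $i$.

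The key steps, in order, would be: (1) For $\bP_{z_1,z_1} = 1$ the exponent is $0$, which is trivially bounded. (2) For the inductive step on $\bP_{z_1,z_2}$ with $sz_2 < z_2$, use relations (1) and (2) of Section \ref{Iwahori-Hecke}: in case $sz_1 > z_1$ one has $\bP_{z_1,z_2} = v_s^{-1} \bP_{sz_1,z_2}$, which only shifts one coordinate by $-L(s)$-degree, i.e. decreases one $\gamma_i$ by $1$; since $\ell(sz_1) \ge \ell(z_1)$ stays in $X_N$ — more precisely the \emph{difference} $\ell(z_2) - \ell(z_1)$ shrinks — the bound is preserved. In case $sz_1 < z_1$ one uses $\bP_{z_1,z_2} = v_s \bP_{z_1,sz_2} + \bP_{sz_1,sz_2} - \sum \bP_{z_1,z} \bM_{z,sz_2}$; each summand has exponents bounded inductively, and multiplication by $v_s$ adds $1$ to a single coordinate, while the product $\bP_{z_1,z}\bM_{z,sz_2}$ needs the bound $\deg \bM^s_{z,sz_2} < L(s)$ (i.e. strictly less than $1$ in the relevant coordinate, \cite[\S 6.3]{bible}) so that the total degree in that coordinate does not exceed what a direct $\bP$ computation gives. (3) For $\bM^s_{z_1,z_2}$, use its defining inductive congruence together with $\ov{\bM^s_{z_1,z_2}} = \bM^s_{z_1,z_2}$ and $\deg \bM^s_{z_1,z_2} < L(s)$; the symmetry forces the exponents to lie in a symmetric interval, and the recursion keeps them within $[-N,N]$. (4) For the third family of polynomials, combine the bounds from (2) and (3), noting again that $v_s \bP_{z_1,z_2}$ only perturbs one coordinate by $1$.

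The main obstacle I expect is step (2)–(3): making precise that the "one step" perturbations (multiplying by $v_s^{\pm 1}$, or the degree-$<L(s)$ contribution of an $M$-polynomial) never push any coordinate past $N$, because one must simultaneously control that the length-difference parameter driving the induction genuinely decreases at each recursive call and that each call stays inside $X_N$. This is exactly the bookkeeping carried out in \cite[Lemma 3.4]{Geck-F4} for the one-parameter case; here one simply does it coordinate-by-coordinate in $\nZ^{|S|}$, observing that each recursion in Section \ref{Iwahori-Hecke} involves at most one generator $s$ at a time and hence affects at most the single coordinate indexed by the conjugacy class of $s$. Since all the recursions terminate after at most $N$ steps (bounded by $\ell(z_2) - \ell(z_1) \le N$), no coordinate can move by more than $N$ from its starting value $0$, giving $-N \le \gamma_i \le N$ as claimed.
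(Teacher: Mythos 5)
Your proposal has a genuine gap in the step where you claim that ``each recursion in Section~\ref{Iwahori-Hecke} involves at most one generator~$s$ at a time and hence affects at most the single coordinate indexed by the conjugacy class of~$s$.'' This is not true. While the explicit multiplication by $v_s^{\pm 1}$ does shift only the $s$-coordinate, the recursion for $\bP_{y,w}$ also contains the products $\bP_{y,z}\bM^s_{z,sw}$, and the exponents appearing in an $M$-polynomial are by no means supported on a single coordinate of $\nZ[\bar S]$. The constraint $\deg(\bM^s_{z,sw}) < L(s)$ is a bound with respect to the chosen total order $\le$ on $\bGa$, which places almost no restriction on the individual coordinates $\gamma_i$ (for instance, with a lexicographic order dominated by the $s$-coordinate, an exponent $(1,-k)$ for arbitrarily large $k$ can satisfy $(1,-k) < L(s)$). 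Multiplying $\bP_{y,z}$ by $\bM^s_{z,sw}$ therefore adds two exponent vectors, each of which can be spread over all of $\bar S$, and this is not a ``one coordinate per step'' perturbation. The same remark applies to the defining recursion of the $M$-polynomials themselves, where again a sum of products $\bP_{y,z}\bM^s_{z,w}$ appears.

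Because of this, the inductive argument cannot be carried out coordinate-by-coordinate; it must instead track a single scalar measure of the size of each exponent vector, simultaneously for the $\bP$- and $\bM^s$-polynomials. Concretely, if for each $f = \sum_\gamma a_\gamma v^\gamma \in \bA$ one sets $\mathrm{wt}(f) = \max\{ \sum_i |\gamma_i| \,:\, a_\gamma \ne 0\}$, then $\mathrm{wt}(fg) \le \mathrm{wt}(f) + \mathrm{wt}(g)$, and the correct inductive statement bounds $\mathrm{wt}(\bP_{y,w})$ and $\mathrm{wt}(\bM^s_{y,w})$ by (a linear function of) $\ell(w) - \ell(y)$. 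With such a bound, the product terms are genuinely controlled because the sum of the two length-gaps appearing in $\bP_{y,z}\bM^s_{z,sw}$ is again at most $\ell(sw)-\ell(y)$; this is the point your argument misses. One also needs the bar-invariance of $\bM^s_{y,w}$ to control its negative-degree part from its non-negative-degree part. Finally, note that $\bGa^2_+(N)$ consists of \emph{differences} of consecutive exponents of an $M$-polynomial, so even after the norm bound on the exponents themselves, an extra factor of two enters in bounding these differences; you have said nothing about this case, and it requires attention (indeed, it is not obvious that the constant $N$ rather than $2N$ is the right one here; for the application in Section~8 the precise constant is irrelevant, but a proof must say something). None of these issues is fatal to the overall strategy, which is indeed the one in \cite[Lemma~3.4]{Geck-F4}, but the specific justification you give for the coordinate-wise bound would not survive scrutiny.
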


\bigskip

We now give an outline of the proof of Theorem \ref{main2}. First, note that the proof of Theorem \ref{I1-I5} in  the previous section only involved elements of bounded length, say by $N_{0}\in \nN$.  Let $(W,S)$ be an irreducible affine Weyl group and assume that  $S=S^{+}\cup S^{\circ}$ is such that no element of $S^{+}$ is conjugate to an element of $S^{\circ}$. Let $X\in \cP_{+}(\nZ[\bar{S}])$ as in Section \ref{generic} and $\bGa=\nZ[\bar{S}]/(X\cap (-X))$. Then, using $N_{0}$ and the previous lemma, we will determine a set of weight functions $\cC$ (which will correspond at the end to a union of chambers) such that for all $L\in \cC$ we have $$\theta^{L}_{\bGa}(\bGa_{+}(N_{0}))\subseteq \{\g\mid \g>0\}.$$ 
For all such weight functions, we can apply the same proof as before to $(W,S,L)$ by Proposition \ref{specialisation} and we get that  $N^{\bL^{+}}_{\si}$, for all $\si\in\Om^{\bL^{+}}_{0}$ is a union of cells of $(W,S,L)$. But then, it will be easy to see that $N^{\bL^{+}}_{\si}=N^{L'}_{\si}$ for all weight function $L'$ which takes some zero values and which lies in $\ov{\cC}$.  
Finally by changing the sets $S^{+}$ and $S^{\circ}$ (and also the subset $X$ in type $\tilde{\Crm}$), we will eventually cover all the cases and determine the constants such that Theorem \ref{main2-BFG} and Theorem \ref{main2-C} holds. The constants we are determining are nowhere near the best we can find as one can see by looking at the essential hyperplanes in type $\tilde{\Crm}_{2}$ in Remark \ref{problem}.

\subsection{Affine Weyl group of type $\tilde{\Brm}_r$, $\tilde{\Frm}_4$ or $\tilde{\Grm}_2$.} 
\label{proof-BFG}
We keep the notation of Section \ref{typeBFG}. Let $W$ be an irreducible affine Weyl group of type $\tilde{\Brm}_r$, $\tilde{\Frm}_4$ or $\tilde{\Grm}_2$. \\

First let $\bar{S}^{+}=\{\bs\}$ and $\bar{S}^{\circ}=\{\bt\}$. Let $\Phi\in \cP_{+}(\nZ[\bar{S}])$. Then $\bGa=\nZ[\bar{S}]$ and the order is the lexicographic order (see Example \ref{lex}).
\begin{Claim}
Let $L\in\text{Weight}(W,\Ga)$ be such that $L(\bs)>N_{0}\cdot L(\bt)$. Then 
$$\theta^{L}_{\bGa}(\bGa_{+}(N_{0}))\subset  \{v^{\g}\mid \g>0\}$$
\end{Claim}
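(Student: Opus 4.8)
The plan is to combine the structural bound on $\bGa_+(N_0)$ from Lemma \ref{gen-bound} with the explicit description of the order on $\bGa$ in this case. First I would recall that, since $\bar S^+=\{\bs\}$ and $\bar S^\circ=\{\bt\}$, we have $\bGa=\nZ[\bar S]\simeq\nZ^2$ and, by Example \ref{lex}, the order $\leq$ is the lexicographic order: $(i,j)>0$ if and only if $i>0$, or $i=0$ and $j>0$. Correspondingly, for a weight function $L\in\text{Weight}(W,\Ga)$, the homomorphism $\theta^L_{\bGa}:\bGa\to\Ga$ sends $(i,j)\mapsto iL(\bs)+jL(\bt)$. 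So the content of the Claim is that under the hypothesis $L(\bs)>N_0\cdot L(\bt)$ (which in particular, since $\va_1(\bt)>0$ forces $L(\bt)\ge 0$ and $L(\bs)>0$, puts $L$ in the facet $\cC_1$), every element $\ga\in\bGa_+(N_0)$ — i.e. every $\ga>0$ in the lexicographic order that actually arises — is sent to a strictly positive element of $\Ga$.

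The key step is then the following elementary estimate. Let $\ga=(i,j)\in\bGa_+(N_0)$. By Lemma \ref{gen-bound} we have $-N_0\le i\le N_0$ and $-N_0\le j\le N_0$. Since $\ga>0$ lexicographically, either $i>0$ or ($i=0$ and $j>0$). In the second case $\theta^L_{\bGa}(\ga)=jL(\bt)>0$ because $j>0$ and $L(\bt)>0$ on this facet (indeed $L(\bt)\ge 0$, and $L(\bt)=0$ would force $L$ to lie on $H_{\bt}$, contrary to $L\in\cC_1$; alternatively one simply notes $jL(\bt)\ge 0$ and handles $L(\bt)=0$ by absorbing $\bt$ into $S^\circ$, but in the present setting $\bt=\bar S^\circ$ already). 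In the first case $i\ge 1$, so
\[
\theta^L_{\bGa}(\ga)=iL(\bs)+jL(\bt)\ge L(\bs)+jL(\bt)>N_0L(\bt)+jL(\bt)=(N_0+j)L(\bt)\ge 0,
\]
using $L(\bs)>N_0L(\bt)$ and $j\ge -N_0$; and this last quantity is $\ge 0$, with equality only if $L(\bt)=0$ and then $\theta^L_{\bGa}(\ga)=iL(\bs)>0$ directly since $i\ge 1$ and $L(\bs)>0$. In every case $\theta^L_{\bGa}(\ga)>0$, which is exactly the assertion $\theta^L_{\bGa}(\bGa_+(N_0))\subseteq\{\g\mid \g>0\}$.

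I do not expect any real obstacle here: the Claim is a routine consequence of the boundedness Lemma \ref{gen-bound} together with the lexicographic description of the order, and the only mild care needed is to separate the cases $i>0$ and $i=0$ and to note that the hypothesis $L(\bs)>N_0L(\bt)$ dominates the ``error term'' $jL(\bt)$ with $|j|\le N_0$. (The genuine work of the section lies elsewhere — in verifying that, once $\theta^L_{\bA}(\bP_{x,y})=P_{x,y}$ and $\theta^L_{\bA}(\bM^s_{x,y})=M^s_{x,y}$ hold for elements of length $\le N_0$ via Proposition \ref{specialisation}, the proof of Theorem \ref{I1-I5} transports verbatim to $(W,S,L)$, and in identifying $N^{\bL^+}_{\si}$ with $N^{L'}_{\si}$ for the boundary weight functions $L'$; but that is beyond the statement at hand.)
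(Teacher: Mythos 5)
Your proof is correct and follows essentially the same route as the paper's: invoke Lemma \ref{gen-bound} to confine $\bGa_+(N_0)$ to $|i|,|j|\le N_0$, use the lexicographic order from Example \ref{lex}, and for $i\ge 1$ estimate $iL(\bs)+jL(\bt)\ge L(\bs)-N_0L(\bt)>0$. You are slightly more careful than the paper in spelling out the $i=0,\,j>0$ case, where one needs $L(\bt)>0$; the paper's proof tacitly skips this, and your confused aside about ``absorbing $\bt$ into $S^\circ$'' should just be dropped — the honest point, which you also make, is that for $L(\bt)=0$ the Claim is applied only indirectly, via the separate identification $N^{L^+}_\si=N^{\bL^+}_\si$ in the surrounding text.
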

\begin{proof}
Since the order on $\bGa$ is the lexicographic order, we must have
$$\bGa_{+}(N_{0})\subset \{(i,j)\mid i> 0, -N_{0}\leq i,j,k\leq N_{0}\}\cup \{(0,j)\mid j>0\}.$$
Let $i>0$ and $-N_{0}\leq j\leq N_{0}$. Then
$$i\cdot \bL(s)+j\cdot \bL(t)>iN_{0}\cdot L(\bt) -N_{0}\cdot L(\bt)\geq 0.$$
The result follows.
\end{proof}
Thus Condition $(\ast)$ in Proposition \ref{specialisation} holds for all weight functions satisfying the hypothesis of the lemma. Therefore we get that $N^{\bL^{+}}_{\si}$ is a union of cells of $(W,S,L)$.  But one can easily see that for all weight functions $L^{+}:W\rightarrow \Ga$ such that $L^{+}(\bs)>0$ and $L^{+}(\bt)=0$ we have  $\Om^{\bL^{+}}_{0}=\Om^{L^{+}}_{0}$ and $N^{L^{+}}_{\si}=N^{\bL^{+}}_{\si}$.

\bigskip

Next let $\bar{S}^{+}=\{\bt\}$ and $\bar{S}^{\circ}=\{\bs\}$. Then we get that the order on $\bGa$ is as follows
$$(i,j)<(i',j')\Longleftrightarrow j<j' \text{ or }(j=j'\text{ and } i<i').$$
Arguying as above, we get that for all weight functions  $L\in \text{Weight}(W,\Ga)$ such that $L(\bt)>N_{0}\cdot L(\bs)$ and all weight functions $L^{+}:W\rightarrow \Ga$ such that $L^{+}(\bt)>0$ and $L^{+}(\bs)=0$, the sets $N^{L^{+}}_{\si}$ are union of left cells of $(W,S,L)$. 

\bigskip

Finally, putting all this together, we get Theorem \ref{main2} holds for the finite set of rational hyperplanes $\fH(N_{0},1/N_{0})$. 

\subsection{Affine Weyl group of type $\tilde{\Crm}$}
\label{proof-C} 
We keep the notation of Section \ref{typeC}. We have $\bar{S}=\{\bt,\bs,\bt'\}$. \\

\begin{Claim}
\label{84}
Let $\bar{S}^{+}=\{\bt\}$ and $\bar{S}^{\circ}=\{\bs,\bt'\}$ and let $L\in\text{Weight}(W,\Ga)$ be  such that $L(\bt)>N_{0}\cdot L(\bs)+N_{0}\cdot L(\bt')$. Then, there exists $\Phi=(\va_{1},\ldots,\va_{d})\in  \cP_{+}(\nZ[\bar{S}])$ such that 
$$\theta^{L}_{\bGa}(\bGa_{+}(N_{0}))\subset  \{v^{\g}\mid \g>0\}$$
where $\bGa$ is the totally ordered abelian group associated to $\Pos(\Phi)$. 
\end{Claim}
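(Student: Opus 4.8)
The statement is a specialisation claim: I want to exhibit an ordering $\Phi=(\va_1,\dots,\va_d)\in\cP_+(\nZ[\bar S])$ — hence a totally ordered abelian group $\bGa$ — for which the condition $(\ast)$ of Proposition \ref{specialisation} holds for the particular weight function $L$, under the hypothesis $L(\bt)>N_0 L(\bs)+N_0 L(\bt')$. Since $\bar S^+=\{\bt\}$, every $\Phi\in\cP_+(\nZ[\bar S])$ has $\va_1=\bt^\ast$. The natural choice is to take $\va_1=\bt^\ast$, then $\va_2$ to be a suitable positive form on $\ker(\bt^\ast)=\nR\bs\oplus\nR\bt'$, and $\va_3$ to finish off the remaining rank-one kernel. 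The freedom in $\va_2,\va_3$ is exactly what I will use to make the specialisation homomorphism $\theta^L_{\bGa}$ send $\bGa_+(N_0)$ into the positive cone of $\Ga$.

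\textbf{Key steps.} First I would invoke Lemma \ref{gen-bound}: $\bGa_+(N_0)\subset\{(\g_1,\g_2,\g_3)\in\nZ^3\mid -N_0\le\g_i\le N_0\}$, where coordinates are with respect to $\bt,\bs,\bt'$. Since $\Phi$ is chosen in $\cP_+(\nZ[\bar S])$, every element of $\bGa_+(N_0)$ is $>0$ in $\bGa$; I must show its image under $\theta^L_{\bGa}$ is $>0$ in $\Ga$. Decompose any $\g=(i,j,k)$ with $|i|,|j|,|k|\le N_0$ that is positive in $\bGa$ into two cases according to the lexicographic-type order defined by $\Phi$: (a) $\va_1(\g)=i>0$, i.e. $i\ge 1$; then
$$\theta^L_{\bGa}(\g)=iL(\bt)+jL(\bs)+kL(\bt')\ge L(\bt)-N_0L(\bs)-N_0L(\bt')>0$$
by hypothesis. (b) $i=0$ and $\g$ is positive by virtue of $\va_2$ or $\va_3$; here I only need that $\g$, viewed inside $\ker(\bt^\ast)$, is positive for the order induced on $\ker(\bt^\ast)$ by $(\va_2,\va_3)$ implies $jL(\bs)+kL(\bt')>0$. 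For this to work automatically I should pick $\va_2,\va_3$ compatibly with $L$ restricted to $\ker(\bt^\ast)$ — e.g. if $L(\bs),L(\bt')$ are $\nQ$-linearly independent in $\Ga$ take $\va_2$ to be any form whose sign on $(0,j,k)$ agrees with the sign of $jL(\bs)+kL(\bt')$; if they are dependent, say $pL(\bs)=qL(\bt')$, take $\va_2=p\bs^\ast+q\bt'^\ast$ (rescaled to have positive values on all of $\bar S$) and then $\va_3$ detecting the remaining kernel line with the sign dictated by $L$. In all sub-cases one obtains $\theta^L_{\bGa}(\g)>0$.

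\textbf{Main obstacle.} The only delicate point is the bookkeeping in case (b): ensuring the chosen $\va_2,\va_3$ genuinely lie in $\cP_+(\nZ[\bar S])$ (i.e. $\va_1=\sum_{\o\in\bar S^+}\o^\ast=\bt^\ast$ and $\va_k(\o)>0$ for all $\o\in\bar S$) while simultaneously being ``coherent'' with $L$ on $\ker(\bt^\ast)$, so that $\bGa$-positivity of a short vector forces $\Ga$-positivity of its image. Since the values $L(\bs),L(\bt')$ may span a subgroup of $\Ga$ of rank $1$ or $2$, one has to split into these arithmetic cases; but in each the construction is explicit and finite, and the positivity of $\va_k$ on $\bar S$ is arranged by adding a large positive multiple of $\bt^\ast$ (which does not change values on $\ker\bt^\ast$). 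Once $\Phi$ is fixed, the verification above is immediate from Lemma \ref{gen-bound} and the hypothesis on $L(\bt)$, so the claim follows. This then feeds, via Proposition \ref{specialisation} and the argument of Section \ref{generic}, into the proof that $N^{L^+}_\si$ is a union of left cells of $(W,S,L)$ for the relevant $L$; combined with the analogous Claims \ref{86}, \ref{85}, \ref{87} and the case analysis over all choices of $S^+\subset S$ and orderings $X$, it yields Theorem \ref{main2-C}.
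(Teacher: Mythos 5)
Your approach is essentially the paper's: fix $\va_1=\bt^\ast$, invoke Lemma~\ref{gen-bound} to restrict attention to $\g=(i,j,k)$ with $|i|,|j|,|k|\le N_0$, dispose of $i\ge 1$ exactly as you do (this is word-for-word the paper's use of the hypothesis $L(\bt)>N_0L(\bs)+N_0L(\bt')$), and then choose the remaining forms on $\ker(\bt^\ast)$ compatibly with $L$. Where you differ is only in how $\va_2$ is produced and in whether to append $\va_3$. The paper builds $\va_2$ uniformly, without any $\nQ$-dependence case split: it enumerates the finitely many rationals $\cE=\{\pm k/j : 1\le |j|,|k|\le N_0\}$, sets $b/c=\max\{r\in\cE : L(\bs)\ge rL(\bt')\}$ and $\va_2(0,j,k)=bj+ck$, and then proves only the one-sided implication $(\dag)$: $bj+ck>0\Rightarrow jL(\bs)+kL(\bt')>0$ for $|j|,|k|\le N_0$. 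You instead split on whether $L(\bs),L(\bt')$ are $\nQ$-dependent and say you want the \emph{sign} of $\va_2(0,j,k)$ to \emph{agree} with that of $jL(\bs)+kL(\bt')$. That is stronger than what is needed (and in the dependent case unachievable: one then forces $\va_2$ to vanish exactly where $L$ does, and so the agreement is only $\ge 0\Leftrightarrow\ge 0$, not $>0\Leftrightarrow>0$); only the one-sided implication on the bounded box is used. A small slip: with $pL(\bs)=qL(\bt')$ the form aligned with $L$ is $q\bs^\ast+p\bt'^\ast$, not $p\bs^\ast+q\bt'^\ast$. Finally, you extend to $\Phi=(\va_1,\va_2,\va_3)$, whereas the paper stops at $(\va_1,\va_2)$ and works with the quotient $\bGa$; your version is tidier on the well-definition of $\theta^L_{\bGa}$, but if you do add $\va_3$ in the $\nQ$-dependent case you then have to justify that no $\g\ne 0$ in $\ker\va_2\cap[-N_0,N_0]^3$ actually occurs in $\bGa_+(N_0)$ (since $\theta^L$ sends it to $0$, not $>0$); the paper sidesteps this by quotienting that line away. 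None of this changes the substance: it is the same construction and the same verification.
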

\begin{proof}
Since $\bar{S}^{+}=\{\bt\}$ we set $\va_{1}=\bt^{\ast}$. Hence
$$\ker(\va_{1})=\{(0,j,k)\mid j,k\in\nR\}.$$
Now we want to find a linear map $\va_{2}:\ker(\va_{1})\rightarrow \nR$ defined by $\va_{2}((0,j,k))=bj+ck$ (where $b,c\geq 0$) such that 
the following property holds for all $-N_{0}\leq i,j\leq N_{0}$:
\begin{equation*}
\tag{$\dag$}bj+ck>0\text{ then }L(\bs)j+L(\bt)k>0
\end{equation*}

To do so we proceed as in \cite[\S 3]{Geck-F4}. Set $\cE:=\{x\in\nQ\mid x=\pm\frac{k}{j}\text{ where } j,k\neq 0 \text{ and } -N_{0}\leq j,k\leq N_{0}\}$ and write $\cE=\{x_{1},\ldots,x_{n}\}$ where $x_{1}<x_{2}<\ldots<x_{n}$. We set $x_{0}=0$ and $x_{n+1}=+\infty$. Let  $b,c\geq 0$ be integers such that  
$$x_{k}=\frac{b}{c}=\max\{r\in\cE\mid L(\bs)\geq rL(\bt')\}.$$
Note that we must have $x_{k+1}L(\bt')>L(\bs)$. 
Then we claim that property $(\dag)$ holds. 
Let $-N_{0}\leq j,k\leq N_{0}$ be such that  $bj+ck>0$. If $j>0$ and $k<0$ then $b/c>-k/j$ and we have
$$L(\bs)>-\frac{k}{j}L(\bt')\text{ that is } jL(\bs)+kL(\bt')>0$$
as required. If $j<0$ and $k>0$ (in this case we have $x_{k+1}<\infty$) then $b/c<-k/j$ but this forces $x_{k+1}\leq -k/j$. Hence
$$-\frac{k}{j}L(\bt')>L(\bs)\text{ that is } jL(\bs)+kL(\bt')>0$$
as required. \\
Finally we set $\va_{2}:\ker(\va_{1})\rightarrow \nR$ by $\va_{2}((0,j,k))=bj+ck$ where $b,c\geq 0$ are chosen as above. Then  $\Pos(\va_{1},\va_{2})\in \cP_{+}(\nZ[\bar{S}])$ and we have 
$$\bGa_{+}(N_{0})\subset \{(i,j,k)\mid i> 0, -N_{0}\leq i,j,k\leq N_{0}\}\cup \{(0,j,k)\mid L(\bs)j+L(\bt')k>0\}.$$
The result follows easily from our assumptions on $L$. 
\end{proof}
Let $L\in\text{Weight}(W,\Ga)$ and $\Phi$ as in the previous claim. By Proposition \ref{specialisation}, we know that $N_{\si}^{\bL^{+}}$ (for all $\si\in\Om^{\bL^{+}}_{0}$) is a union of left cells in $(W,S,L)$. But one can see that  $N_{\si}^{\bL^{+}}=N_{\si}^{L^{+}}$ for all  $L^{+}\in \text{Weight}(W,\Ga)$ such that  $L^{+}(\bt)>0$ and $L^{+}(\bs)=L^{+}(\bt')=0$ and all $\si\in \Om^{\bL^{+}}_{0}=\Om_{0}^{L^{+}}$. 

\begin{Rem}
If we set $m_{3}=N_{0}$, the above claim implies that Theorem 6.9 holds for all positive weight functions lying in $\cC_{3}\cup\cC_{4}\cup (\bar{\cC_{3}}\cap\bar{\cC_{4}})$ and all non-negative weight functions lying  in $H_{\bs}\cap H_{\bt'}$.
\end{Rem}

\begin{Claim}
\label{85}
Let $\bar{S}^{+}=\{\bs\}$ and $\bar{S}^{\circ}=\{\bt,\bt'\}$ and let $L\in\text{Weight}(W,\Ga)$ be  such that $L(\bs)>N_{0}\cdot L(\bt)+N_{0}\cdot L(\bt')$. Then, there exists $\Phi=(\va_{1},\ldots,\va_{d})\in  \cP_{+}(\nZ[\bar{S}])$ such that 
$$\theta^{L}_{\bGa}(\bGa_{+}(N_{0}))\subset  \{v^{\g}\mid \g>0\}$$
where $\bGa$ is the totally ordered abelian group associated to $\Pos(\Phi)$.
\end{Claim}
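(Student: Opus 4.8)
\textbf{Plan of proof for Claim \ref{85}.}
The argument will follow exactly the template of the previous claim (Claim \ref{84}), only with the roles of the relevant generators interchanged. Here $\bar S^{+}=\{\bs\}$, so I would set $\va_{1}=\bs^{\ast}$; consequently $\ker(\va_{1})=\{(i,0,k)\mid i,k\in\nR\}$. I then need a linear form $\va_{2}:\ker(\va_{1})\to\nR$, of the shape $\va_{2}((i,0,k))=bi+ck$ with $b,c\ge 0$, arranged so that for all $-N_{0}\le i,k\le N_{0}$ one has the implication $bi+ck>0\Rightarrow L(\bt)i+L(\bt')k>0$. (Note the built-in hypothesis $\bt\ge\bt'$ from Section~\ref{generic} does not obstruct this, since that ordering is a condition on $\bGa$, and $\bGa$ is precisely what I am constructing.)

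The construction of $b,c$ is the same combinatorial device used in \cite[\S3]{Geck-F4} and reproduced in the proof of Claim~\ref{84}: form the finite set $\cE=\{\pm k/i\mid i,k\neq 0,\ -N_{0}\le i,k\le N_{0}\}\subset\nQ$, list it as $x_{1}<\dots<x_{n}$, put $x_{0}=0$, $x_{n+1}=+\infty$, and choose $b/c$ to be the largest element of $\cE$ that is $\le L(\bt)/L(\bt')$ (interpreting this as $b/c$ arbitrary large if $L(\bt')=0$, or equivalently working with the pair $(L(\bt),L(\bt'))$ directly). Then $x_{k+1}L(\bt')>L(\bt)$, and a sign analysis — splitting into the cases $i>0,k<0$ and $i<0,k>0$, exactly as before — yields the desired implication. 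One sets $\Phi=(\va_{1},\va_{2},\dots)$, completing the sequence by any $\va_{3}$ with $\va_{3}(\o)>0$ on the remaining one-dimensional kernel, so that $\Phi\in\cP_{+}(\nZ[\bar S])$.

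With this $\Phi$ fixed and $\bGa=\Pos(\Phi)$, I then read off
$$\bGa_{+}(N_{0})\subset\{(i,j,k)\mid j>0,\ -N_{0}\le i,j,k\le N_{0}\}\cup\{(i,0,k)\mid L(\bt)i+L(\bt')k>0\},$$
using Lemma~\ref{gen-bound} for the bound $-N_{0}\le i,j,k\le N_{0}$ and the definition of $\Pos(\va_{1},\va_{2},\dots)$ for the decomposition into the $\va_{1}$-positive part and the $\ker\va_{1}$ part. Applying $\theta^{L}_{\bGa}$: an element with $j>0$ maps to $iL(\bt)+jL(\bs)+kL(\bt')\ge jL(\bs)-N_{0}L(\bt)-N_{0}L(\bt')>0$ by the hypothesis $L(\bs)>N_{0}L(\bt)+N_{0}L(\bt')$ together with $j\ge 1$; an element in $\ker\va_{1}$ maps to $iL(\bt)+kL(\bt')>0$ by construction of $\va_{2}$. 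Hence $\theta^{L}_{\bGa}(\bGa_{+}(N_{0}))\subseteq\{\g\mid \g>0\}$, as required.

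The step most likely to require care — the ``main obstacle'' — is verifying the sign implication for $\va_{2}$ cleanly in the degenerate subcases ($i$ or $k$ equal to $0$, and $L(\bt')=0$), since the ratio $b/c$ must be chosen so that both the $i>0,k<0$ branch (needing $b/c>-k/i$) and the $i<0,k>0$ branch (needing $b/c<-k/i$, forced up to $x_{k+1}$) go through simultaneously; but this is precisely the maximality property of $b/c$ inside the finite set $\cE$, so it works verbatim as in Claim~\ref{84}. After the claim, the same concluding remark applies: by Proposition~\ref{specialisation} the sets $N^{\bL^{+}}_{\si}$ are unions of left cells of $(W,S,L)$, and one checks $N^{\bL^{+}}_{\si}=N^{L^{+}}_{\si}$ for every $L^{+}\in\text{Weight}(W,\Ga)$ with $L^{+}(\bs)>0$ and $L^{+}(\bt)=L^{+}(\bt')=0$, which (setting $m_{6}=2N_{0}$) gives the statement for the chamber $\cC_{6}$ and the non-negative weight functions in $H_{\bt}\cap H_{\bt'}$.
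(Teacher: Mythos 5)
Your argument reproduces the paper's proof almost verbatim: $\va_1=\bs^*$ (which, incidentally, silently corrects a typo in the paper, whose proof mistakenly writes $\va_1=\bt^*$ even though the subsequent computation of $\ker\va_1$ and the $j>0$ condition make clear $\bs^*$ is intended), then the Geck-style construction of $\va_2$ on $\ker\va_1$ by approximating $L(\bt)/L(\bt')$ from below inside the finite set $\cE$, and finally the estimate $jL(\bs)-N_0L(\bt)-N_0L(\bt')>0$ for the $\va_1$-positive part. The optional $\va_3$ you append and the remark about $\bt\ge\bt'$ (which indeed holds automatically because Convention~\ref{convention} forces $L(\bt)\ge L(\bt')$, hence $b\ge c$) are harmless additions.

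One small slip, not in the proof of the Claim itself but in your closing remark: the constant should be $m_6=1/N_0$, not $m_6=2N_0$. The chamber $\cC_6$ lies on the side $L(\bt)+L(\bt')<m_6\,L(\bs)$ of $H_{(\bt+\bt')-m_6\bs}$, and the Claim's hypothesis $L(\bs)>N_0\bigl(L(\bt)+L(\bt')\bigr)$ rewrites as $L(\bt)+L(\bt')<\tfrac{1}{N_0}L(\bs)$; for $\cC_6$ to fall inside the region covered by the Claim one therefore needs $m_6\le 1/N_0$, matching the paper.
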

\begin{proof}
Since $\bar{S}^{+}=\{\bs\}$ and we set $\va_{1}=\bt^{\ast}$. Hence
$$\ker(\va_{1})=\{(i,0,k)\mid i,k\in\nR\}.$$
Arguing as in the proof of the previous claim, there exist integers $a,c\geq 0$ such that if we define $\va_{2}:\ker(\va_{1})\rightarrow \nR$ by $\va_{2}((i,0,k))=ai+ck$ then $\va_{2}((i,0,k))>0$ implies $L(\bt)i+L(\bt')k>0$. For such $\va_{2}$ we have $\Pos(\va_{1},\va_{2})\in \cP_{+}(\nZ[\bar{S}])$ and 
$$\bGa_{+}(N_{0})\subset \{(i,j,k)\mid j> 0, -N_{0}\leq i,j,k\leq N_{0}\}\cup \{(i,0,k)\mid \bL(\bt)j+\bL(\bt')k>0\}.$$
The result follows easily from our assumptions on $L$. 
\end{proof}
Let $L\in\text{Weight}(W,\Ga)$ and $\Phi$ as in the previous claim. Arguying as above,we get that $N_{\si}^{L^{+}}$ is a union of left cells of $(W,S,L)$ for all $L^{+}\in \text{Weight}(W,\Ga)$ such that $L^{+}(\bs)>0$ and $L^{+}(\bt)=L^{+}(\bt')=0$.

\begin{Rem}
If we set $m_{6}=1/N_{0}$, the above claim implies that Theorem 6.9 holds for all positive weight functions lying in $\cC_{6}\cup (\bar{\cC_{6}}\cap\bar{\cC'_{6}})$
and all non-negative weight functions lying in $H_{\bt}\cap H_{\bt'}$.
\end{Rem}

\begin{Claim}
\label{86}
Let $\bar{S}^{+}=\{\bt,\bt'\}$ and $\bar{S}^{\circ}=\{\bs\}$ and let $L\in\text{Weight}(W,\Ga)$ be  such that $L(\bt)>N^{2}_{0}\cdot L(\bs)$, $L(\bt')>N^{2}_{0}\cdot L(\bs)$ and $L(\bt)-L(\bt')>N_{0}\cdot L(\bs)$. Then, there exists $\Phi=(\va_{1},\ldots,\va_{d})\in  \cP_{+}(\nZ[\bar{S}])$ such that 
$$\theta^{L}_{\bGa}(\bGa_{+}(N_{0}))\subset  \{v^{\g}\mid \g>0\}.$$
where $\bGa$ is the totally ordered abelian group associated to $\Pos(\Phi)$.
\end{Claim}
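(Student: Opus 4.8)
The statement to establish is Claim~\ref{86}: for type $\tilde{\Crm}$ with $\bar S^+=\{\bt,\bt'\}$ and $\bar S^\circ=\{\bs\}$, and for a weight function $L$ satisfying the stated inequalities $L(\bt)>N_0^2 L(\bs)$, $L(\bt')>N_0^2 L(\bs)$, $L(\bt)-L(\bt')>N_0 L(\bs)$, one must produce a sequence $\Phi=(\va_1,\dots,\va_d)\in\cP_+(\nZ[\bar S])$ whose associated order $\bGa$ makes $\theta^L_{\bGa}$ send $\bGa_+(N_0)$ into the positives. The structure will mirror Claims~\ref{84} and~\ref{85} exactly, but now the kernel chain has length three because $\bar S^+$ has two elements. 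First I would set $\va_1=\bt^{\ast}+\bt'^{\ast}$ (the mandatory first form in $\cP_+$, being $\sum_{\o\in\bar S^+}\o^\ast$), so that $\ker(\va_1)=\{(i,j,-i)\mid i,j\in\nR\}$ (identifying $(i,j,k)$ with $i\bt+j\bs+k\bt'$). Then $\va_1$ applied to $(i,j,k)$ with $-N_0\le i,j,k\le N_0$ gives $i+k$; the dangerous elements of $\bGa_+(N_0)$ are those with $i+k=0$, i.e. $(i,j,-i)$, and $\theta^L_{\bGa}$ sends $(i,j,-i)$ to $i\,L(\bt)+j\,L(\bs)-i\,L(\bt')=i\,(L(\bt)-L(\bt'))+j\,L(\bs)$. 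Using $|i|,|j|\le N_0$ and the hypothesis $L(\bt)-L(\bt')>N_0 L(\bs)$, this is $>0$ as soon as $i>0$; when $i=0$ we are reduced to the sub-kernel.

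Next, on $\ker(\va_1)$ I would need $\va_2$ handling the case $i+k=0$ with $i\ne 0$, and then $\va_3$ on $\ker(\va_2)$. Concretely, choose $\va_2((i,j,-i))=i$ (up to a positive scalar this is forced to be positive on $\bt,\bt',\bs$ once we check the signs; one must verify $\va_2(\bt)>0$, $\va_2(\bt')>0$, $\va_2(\bs)>0$ as required by $\cP_+$ — here $\va_2$ of $\bt=(1,0,0)$ restricted to the kernel: actually $\bt\notin\ker\va_1$, so the positivity condition $\va_k(\o)>0$ must be interpreted via the projection, which is the standard convention in the excerpt and I would just cite it). On $\ker(\va_2)=\{(0,j,0)\mid j\in\nR\}$ set $\va_3((0,j,0))=j$. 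With this choice, $\bGa_+(N_0)\subseteq\{(i,j,k)\mid i+k>0\}\cup\{(i,j,-i)\mid i>0\}\cup\{(0,j,0)\mid j>0\}$, and on each of the three pieces $\theta^L_{\bGa}$ lands in the positives: the first piece by $L(\bt)>N_0^2L(\bs)$ and $L(\bt')>N_0^2L(\bs)$ (so $L(\bt)+L(\bt')$ dominates $2N_0\cdot N_0 L(\bs)\ge |j|L(\bs)+|\text{correction}|$ — I would write out the one–line estimate), the second by the computation above, the third trivially. The verification that $\Phi\in\cP_+(\nZ[\bar S])$ (each $\va_k$ nonzero on $\ker\va_{k-1}$, and $\va_1=\sum_{\bar S^+}\o^\ast$, and $\va_k(\o)>0$ for all $\o\in\bar S$) is routine and I would dispatch it in a sentence.

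The main obstacle — and it is mild — is bookkeeping the sign conditions: unlike Claims~\ref{84}--\ref{85} where $\bar S^+$ is a singleton and $\ker\va_1$ is two-dimensional with a clean ``$\cE$-construction'', here $\va_1$ already mixes $\bt$ and $\bt'$, so I must be careful that the intermediate forms $\va_2,\va_3$ genuinely satisfy $\va_k(\o)>0$ for every $\o\in\bar S$ (not merely on the kernel), which is what membership in $\cP_+$ demands. I expect this forces $\va_2$ to be not literally $(i,j,-i)\mapsto i$ but rather a small perturbation such as $\va_2=\bt^\ast$ restricted, or a positive combination, and one checks its restriction to $\ker\va_1$ is still ``$i$'' up to scaling. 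Once that is pinned down, the containment $\bGa_+(N_0)\subseteq\{(i,j,k)\mid -N_0\le i,j,k\le N_0\}$ from Lemma~\ref{gen-bound} plus the three elementary inequalities finish the proof. After the claim, exactly as in the treatment following Claims~\ref{84}--\ref{85}, I would invoke Proposition~\ref{specialisation} to conclude $N^{\bL^+}_\si$ is a union of left cells of $(W,S,L)$ for all such $L$, observe $N^{\bL^+}_\si=N^{L^+}_\si$ for the relevant degenerate $L^+$ (here $L^+(\bt),L^+(\bt')>0$, $L^+(\bs)=0$), and record the consequence for $\fH(\bm)$ with $m_1,m_2,m_5$ taken of size $N_0^2$ or $N_0$ as appropriate — this is the piece feeding case (ii) of the Remark following Theorem~\ref{main2-C}.
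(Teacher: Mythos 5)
Your overall plan matches the paper's proof exactly: the same choices $\va_1 = \bt^* + \bt'^*$, $\va_2 = \bt^*$, $\va_3 = \bs^*$, giving $\Pos(\va_1,\va_2,\va_3) \in \cP_+(\nZ[\bar S])$, and the same three-piece outer bound
$$\bGa_+(N_0)\subset \{(i,j,k)\mid i+k>0,\ |i|,|j|,|k|\le N_0\}\ \cup\ \{(i,j,-i)\mid i>0,\ |i|,|j|\le N_0\}\ \cup\ \{(0,j,0)\mid j>0\}.$$
Your hedge about whether $\va_2$ must be perturbed resolves exactly as you guessed: the paper takes $\va_2=\bt^*$ and $\va_3=\bs^*$ outright. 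Your treatment of the middle piece ($\theta^L(i,j,-i)=i(L(\bt)-L(\bt'))+jL(\bs)>0$ for $i\ge1$, $|j|\le N_0$, using $L(\bt)-L(\bt')>N_0L(\bs)$) and of the trivial third piece is also the same as the paper's.

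The gap is in your handling of the first piece. You assert this follows by a ``one-line estimate'' using only $L(\bt),L(\bt')>N_0^2L(\bs)$, arguing that ``$L(\bt)+L(\bt')$ dominates $2N_0^2 L(\bs)$.'' But the elements $(i,j,k)$ with $i+k\ge1$ and $|i|,|j|,|k|\le N_0$ do \emph{not} give $\theta^L$ close to $L(\bt)+L(\bt')$; the value $iL(\bt)+kL(\bt')$ can drop far below that when $i$ is very negative. The worst case is $i=-(N_0-1)$, $k=N_0$, $j=-N_0$, for which
$$\theta^L\bigl(-(N_0-1),-N_0,N_0\bigr)\ =\ L(\bt')-(N_0-1)\bigl(L(\bt)-L(\bt')\bigr)-N_0L(\bs),$$
and controlling the sign of this quantity is the actual content of the first-piece estimate; it is precisely why both $N_0^2$ bounds \emph{and} the constraint on $L(\bt)-L(\bt')$ enter. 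The paper makes this explicit in a closing remark that isolates exactly this element as the one that determines the $N_0^2$ in the hypothesis. You should replace the claimed ``one-liner'' with the full estimate: split $iL(\bt)+kL(\bt')=(i+k)L(\bt')+i(L(\bt)-L(\bt'))$, treat $i\ge0$ (easy, uses only $L(\bt')>N_0^2L(\bs)$) and $i<0$ (where the $L(\bt)-L(\bt')$ hypothesis must be brought in) separately, and check the extreme element above by hand. Without this the verification of the first piece is not done.
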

\begin{proof}
Since $\bar{S}^{+}=\{\bt,\bt'\}$ we set $\va_{1}=\bt^{\ast}+\bt'^{\ast}$. Hence
$$\ker(\va_{1})=\{(i,j,-i)\mid i,j\in\nR\}.$$
We define $\va_{2}:=\bt^{\ast}$ and $\va_{3}=\bs^{\ast}$. Then $\Pos(\va_{1},\va_{2},\va_{3})\in \cP_{+}(\nZ[\bar{S}])$ and we have 
\begin{align*}
\bGa_{+}(N_{0})\subset &\{(i,j,k)\mid i+k>0, -N_{0}\leq i,j,k\leq N_{0}\}\\
&\cup \{(i,j,-i)\mid i>0, -N_{0}\leq i,j\leq N_{0}\}\\
&\cup \{0,j,0\mid j>0\}.
\end{align*}
The result follows from our assumptions on $L$. Note that the $N^{2}_{0}$ in the hypothesis of the lemma comes from the fact that we need to have 
$$\theta_{\bGa}^{L}(-(N_{0}-1),-N_{0},N_{0})>0.$$
That is 
$$L(\bt')-(N_{0}-1)(L(\bt)-L(\bt'))-N_{0}L(\bs)>0.$$
The last inequality will hold whenever $L(\bt')>N_{0}^{2}L(\bs)$ and $L(\bt)-L(\bt')>N_{0}\cdot~L(\bs)$. 
\end{proof}
Let $L\in\text{Weight}(W,\Ga)$ and $\Phi$ as in the previous claim. Arguying as above,we get that $N_{\si}^{L^{+}}$ is a union of left cells of $(W,S,L)$ for all   $L^{+}\in \text{Weight}(W,\Ga)$ such that $L^{+}(\bt)>L^{+}(\bt')>0$ and $L^{+}(\bs)=0$. 
\begin{Rem}
If we set $m_{1}=m_{2}=N^{2}_{0}$ and $m_{5}=N_{0}$, the above claim implies that Theorem 6.9 holds for all positive weight functions lying  in $\cC_{2}\cup\cC_{3}\cup (\bar{\cC_{2}}\cap\bar{\cC_{3}})$
and all non-negative weight functions in $H_{\bs}\cap H_{\bt'}$.
\end{Rem}

\begin{Claim}
\label{87}
Let $\bar{S}^{+}=\{\bt,\bt'\}$ and $\bar{S}^{\circ}=\{\bs\}$ and let $L\in\text{Weight}^{+}(W,\Ga)$ be  such that $L(\bt)>N^{2}_{0}\cdot L(\bs)$, $L(\bt')>N^{2}_{0}\cdot L(\bs)$. Then, there exists $\Phi=(\va_{1},\ldots,\va_{d})\in  \cP_{+}(\nZ[\bar{S}])$ such that  $(1,j_{0},-1)>0$ for some $j_{0}\in\nN$ and 
$$\theta^{L}_{\bGa}(\bGa_{+}(N_{0}))\subset  \{v^{\g}\mid \g>0\}$$
where $\bGa$ is the totally ordered abelian group associated to $\Pos(\Phi)$.
\end{Claim}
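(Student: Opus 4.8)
\textbf{Proof plan for Claim~\ref{87}.}
The structure is the same as in Claims~\ref{84}--\ref{86}: since $\bar{S}^{+}=\{\bt,\bt'\}$ we are forced to take $\va_{1}=\bt^{\ast}+\bt'^{\ast}$, so that
$$\ker(\va_{1})=\{(i,j,-i)\mid i,j\in\nR\}.$$
The new feature, compared with Claim~\ref{86}, is that we are now in Case~4 of Section~\ref{generic}: we want $(-1,k,1)>0$ (equivalently $(1,j_{0},-1)>0$ for some $j_{0}$), i.e.\ the order on $\ker(\va_{1})$ must be set up so that moving from $\bt$ towards $\bt'$ is ``increasing''. Concretely, I would define $\va_{2}:\ker(\va_{1})\to\nR$ to be a positive multiple of $\bt'^{\ast}$ restricted to $\ker(\va_{1})$, or more precisely $\va_{2}((i,j,-i))=\kappa\cdot(\text{combination})$ chosen so that $\va_{2}((-1,k,1))>0$ for all $k$, which amounts to requiring the sign pattern $(-1,0,1)>0$; then set $\va_{3}=\bs^{\ast}$ to break the remaining tie on $\ker(\va_{2})=\sg(0,1,0)\sd$. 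One checks $\Pos(\va_{1},\va_{2},\va_{3})\in\cP_{+}(\nZ[\bar{S}])$ since $\va_{1}=\sum_{\o\in\bar{S}^{+}}\o^{\ast}$ and each $\va_{k}$ is positive on every $\o\in\bar{S}$ (here one uses that $\bt,\bt'$ appear with positive sign in $\va_{1}$, and that $\va_{2}((0,0,-1))$ and $\va_{2}((0,0,1))$ have the right sign because the $\bt'$-direction dominates inside $\ker(\va_{1})$, and $\bs$ appears with positive coefficient in $\va_{3}$).

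Next I would describe $\bGa_{+}(N_{0})$ using Lemma~\ref{gen-bound}:
\begin{align*}
\bGa_{+}(N_{0})\subset &\{(i,j,k)\mid i+k>0,\ -N_{0}\leq i,j,k\leq N_{0}\}\\
&\cup \{(i,j,-i)\mid \va_{2}((i,j,-i))>0,\ -N_{0}\leq i,j\leq N_{0}\}\\
&\cup \{(0,j,0)\mid j>0\}.
\end{align*}
For the first set, $\theta^{L}_{\bGa}(i,j,k)=iL(\bt)+jL(\bs)+kL(\bt')$, and since $L(\bt),L(\bt')>N_{0}^{2}L(\bs)$ while $i+k\geq 1$, $|i|,|k|\leq N_{0}$ and $|j|\leq N_{0}$, this is bounded below by roughly $N_{0}^{2}L(\bs)-N_{0}^{2}L(\bs)-N_{0}L(\bs)$ in the worst case, so I must choose the scaling in the hypothesis carefully — exactly as in Claim~\ref{86}, where the worst vector $(-(N_{0}-1),-N_{0},N_{0})$ dictated the $N_{0}^{2}$ bound. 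Here, because we do \emph{not} assume $L(\bt)-L(\bt')>N_{0}L(\bs)$, the delicate vectors are those of the form $(-m,j,m)$ with $\va_{2}((-m,j,m))>0$; but for those the relevant estimate only involves $L(\bt')-L(\bt)$ with the \emph{favourable} sign coming from the choice of $\va_{2}$, together with $|j|L(\bs)\leq N_{0}L(\bs)$, which is again dominated by $N_{0}^{2}L(\bs)$. The third set gives $jL(\bs)>0$, trivially positive. So $(\ast)$ of Proposition~\ref{specialisation} holds.

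Finally, as in the previous claims, I would invoke Proposition~\ref{specialisation} to transport the bounds on $\bP_{x,y}$ and $\bM^{s}_{x,y}$ for $\ell\le N_{0}$ (where $N_{0}\ge N_{0}'$ is the length bound from the proof of Theorem~\ref{I1-I5}) to $(W,S,L)$, conclude that each $N^{\bL^{+}}_{\si}$ is a union of left cells of $(W,S,L)$, and observe $N^{\bL^{+}}_{\si}=N^{L^{+}}_{\si}$ and $\Om^{\bL^{+}}_{0}=\Om^{L^{+}}_{0}$ for any $L^{+}$ with $L^{+}(\bt),L^{+}(\bt')>0$, $L^{+}(\bs)=0$ (here $\bL^{+}$ is as in Case~4, so $\bL^{+}(t)=\bL^{+}(t')=\bt$). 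The main obstacle, as in Claim~\ref{86}, is purely bookkeeping: pinning down the precise numerical threshold (the $N_{0}^{2}$) by identifying the extremal lattice vectors in $\bGa_{+}(N_{0})$ whose image under $\theta^{L}_{\bGa}$ is closest to $0$, and making sure the choice of $\va_{2}$ (the Case~4 sign convention $(-1,0,1)>0$) is the one that keeps those images positive; everything else is a direct adaptation of the earlier arguments.
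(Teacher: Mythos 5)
Your general strategy (take $\va_1=\bt^\ast+\bt'^\ast$, pick a tie-breaking $\va_2$, describe $\bGa_+(N_0)$ via Lemma~\ref{gen-bound}, and specialize via Proposition~\ref{specialisation}) is the right one, and you correctly recognize that Claim~\ref{87} is meant to put us in Case~4.  The concrete choice of $\va_2$, however, is wrong, and the gap is not merely bookkeeping.

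You insist that $\va_2((-1,k,1))>0$ for \emph{all} $k$, which forces the $\bs$-coefficient of $\va_2$ to vanish and leaves $\va_2((i,j,-i))=-\kappa i$ for some $\kappa>0$, i.e.\ $(-1,0,1)>0$.  But then $\va_2((1,j_0,-1))=-\kappa<0$ for every $j_0$, so $(1,j_0,-1)<0$ for all $j_0$, which directly contradicts the conclusion of Claim~\ref{87}.  The conditions $(-1,k,1)>0$ for some $k>0$ (Case~4) and $(1,j_0,-1)>0$ for some $j_0$ are \emph{not} equivalent; they become compatible exactly when $\va_2$ has a \emph{strictly positive} coefficient on $j$, which your choice excludes.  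Moreover, your positivity check is circular: you claim the vectors $(-m,j,m)$ with $m>0$ contribute $L(\bt')-L(\bt)$ ``with the favourable sign coming from the choice of $\va_2$,'' but the sign of $L(\bt')-L(\bt)$ is a property of the given $L$, not something $\va_2$ controls.  Under Convention~\ref{convention}, $L(\bt)\geq L(\bt')$, so for $\g=(-1,-1,1)$ (which lies in $\ker(\va_1)$ with $\va_2(\g)=\kappa>0$, hence $\g>0$ for your order) one has $\theta^L_{\bGa}(\g)=L(\bt')-L(\bt)-L(\bs)\leq -L(\bs)<0$, and nothing in the hypotheses $L(\bt),L(\bt')>N_0^2L(\bs)$ rescues this.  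So $\theta^L_{\bGa}(\bGa_+(N_0))\subset\{\g>0\}$ fails for your $\Phi$.

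The paper instead sets $\va_2((i,j,-i))=di+bj$ with \emph{integers} $d,b\geq 0$ and $b>0$, chosen \emph{adaptively} from $L$ exactly as in the proof of Claim~\ref{84}: $d/b$ is the largest element of $\cE\cup\{0\}$ not exceeding $(L(\bt)-L(\bt'))/L(\bs)$, so that $di+bj>0$ implies $(L(\bt)-L(\bt'))i+L(\bs)j>0$ on the box $|i|,|j|\leq N_0$.  The strictly positive $\bs$-coefficient $b>0$ is what simultaneously yields $(1,j_0,-1)>0$ for large $j_0$ and $(-1,k,1)>0$ for large $k$ (Case~4), and the adaptivity of $d/b$ to the actual ratio $(L(\bt)-L(\bt'))/L(\bs)$ is what makes the specialization bound go through.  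Your proposal has a fixed, non-adaptive $\va_2$ pointing in the wrong direction, and it would need to be replaced by the Claim~\ref{84}-style construction for the proof to work.
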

\begin{proof}
Since $\bar{S}^{+}=\{\bt,\bt'\}$ we set $\va_{1}=\bt^{\ast}+\bt'^{\ast}$. Hence
$$\ker(\va_{1})=\{(i,j,-i)\mid i,j\in\nR\}.$$
Arguing as in the proof of Claim \ref{84}  there exist integers $d,b\geq 0$ (with $b>0$ so that $(1,j_{0},-1)>0$ for some $j_{0}$) such that if we define $\va_{2}:\ker(\va_{1})\rightarrow \nR$ by $\va_{2}((i,j,-i))=di+bj$ then $\va_{2}((i,j,-i))>0$ implies $(L(\bt)-L(\bt'))i+L(\bs)j>0$. 
For such $\va_{2}$ we have $\Pos(\va_{1},\va_{2})\in \cP_{+}(\nZ[\bar{S}])$ and 
\begin{align*}
\bGa_{+}(N_{0})\subset &\{(i,j,k)\mid i+k>0, -N_{0}\leq i,j,k\leq N_{0}\}\\
&\cup \{(i,j,-i)\mid (L(\bt)-L(\bt'))\cdot i+L(\bs)\cdot j>0\}.
\end{align*}

The result follows from our assumptions on $L$. 
\end{proof}
Let $L\in\text{Weight}(W,\Ga)$ and $\Phi$ as in the previous claim (so that we are in Case 4). Arguying as above,we get that $N_{\si}^{L^{+}}$ is a union of left cells of $(W,S,L)$ for all   $L^{+}\in \text{Weight}(W,\Ga)$ such that $L^{+}(\bt)=L^{+}(\bt')>0$ and $L^{+}(\bs)=0$. 

\begin{Rem}
If we set $m_{1}=m_{2}=N^{2}_{0}$, then above claim implies that Theorem 6.9 holds for all positive weight functions lying  in $\cC_{1}\cup (\bar{\cC_{1}}\cap\bar{\cC'_{1}})$
and all non-negative weight functions in $H_{\bs}\cap H_{\bt-\bt'}$.

\end{Rem}

Finally we still have to consider the case $\bar{S}^{+}=\{\bt,\bs\}$ and $\bar{S}^{\circ}=\{\bt'\}$. However in this case, there is nothing to prove since  for all $L,L^{+}\in\text{Weight}(W,\Ga)$ such that 
\begin{itemize}
\item $L(\bs),L(\bt)>L(\bt')$
\item  $L^{+}(\bt)=L(\bt)$, $L^{+}(\bs)=L(\bs)$ and $L^{+}(\bt')=0$
\end{itemize}
we have $\cb^{L}_{\min}=\cb^{L^{+}}_{\min}$. Indeed, in both cases, we have $\WC^{\max}=\{w_{0}\}$ where $w_{0}$ is the longest element of the group generated by $t,s_{1},\ldots,s_{n-1}$. 

\bigskip

The proof of Theorem \ref{main2-C} for type $C$ is now complete.

\end{document}